\newcommand{\bfgreek}[1]{\bm{\@nameuse{up#1}}}
\newtheorem{theorem}{Theorem}[section]
\newtheorem{corollary}{Corollary}[section]
\newtheorem{lemma}{Lemma}[section]
\newtheorem{ass}{Assumption}[section]
\newcommand{\A}{\mathbb A}
\newcommand{\R}{\mathbb R}
\newcommand{\Q}{\mathbb Q}
\newcommand{\Z}{\mathbb Z}
\newcommand{\C}{\mathbb C}
\newcommand{\D}{\mathbb D}
\newcommand{\cO}{\mathcal O}
\newcommand{\fl}{\mathfrak l}
\newcommand{\fp}{\mathfrak p}
\newcommand{\GL}{\mathrm{GL}}
\newcommand{\aInd}{{}^{\rm a}{\rm Ind}}
\newcommand{\sfv}{{\sf v}}
\newcommand{\mirp}[1]{K_p^{(4)}(#1)}
\newcommand{\mirf}[1]{K_f^{(4)}(#1)}
\newcommand{\congp}[1]{K_p^{(2)}(#1)}
\newcommand{\congf}[1]{K_f^{(2)}(#1)}
\title[Eisenstein cohomology and congruences]{Eisenstein cohomology and congruences for the ratios of Rankin--Selberg $L$-functions}
\author{\bf P. Narayanan \ \ \ \& \ \ \ A. Raghuram}
\date{\today}
\subjclass[2020]{11F33; 11F67, 11F66, 11F70, 11F75, 22E55}
\address{Dept.\,of Mathematics, Indian Institute of Science Education and Research, Dr.\,Homi Bhabha Road, Pune 411008, INDIA.}
\email{narayanan.p@students.iiserpune.ac.in}
\address{Dept.\,of Mathematics, Fordham University at Lincoln Center, New York, NY 10023, USA.} 
\email{araghuram@fordham.edu}
\begin{document}

\begin{abstract} 
A well-known principle states that a congruence between objects should give rise to a corresponding congruence between the 
special values of $L$-functions attached to these 
objects. In this article, using the machinery of Eisenstein cohomology after refining it for integral cohomology, we prove an instance of 
this principle for the ratios of critical values for Rankin--Selberg $L$-functions attached to pairs of holomorphic cuspforms. 
\end{abstract}

\maketitle

\tableofcontents

\section{Introduction}
A principle, with origins in Iwasawa theory, says that a congruence between objects should give rise to a congruence between 
the special values of $L$-functions attached to these objects. 
See Vatsal \cite{vatsal} for an instance of this principle, and for a brief discussion of its historical origins.
In accordance with such a principle, we prove that a congruence 
between modular forms gives rise to a congruence between the ratio of special values of Rankin--Selberg $L$-functions of these forms with an auxiliary modular form. 
 The proof uses Eisenstein cohomology as in \cite{harder-raghuram} but after refining that framework to deal with integral cohomology.

\smallskip

Suppose $h(z) = \sum a(n,h) e^{2\pi i nz}$ 
and $h'(z) = \sum a(n,h') e^{2\pi i nz}$ are primitive holomorphic modular cuspforms of levels $N$ and $N'$,  weights 
$k$ and $k'$, with nebentypus characters $\chi$ and $\chi',$ respectively. Denote this as 
$h \in S_k(N,\chi)^{\textup{new}}$ and $h' \in S_{k'}(N',\chi')^{\textup{new}}$.
Let $\Q(h,h')$ be the number field obtained by adjoining the Fourier coefficients $\{a(n,h)\}$ and $\{a(n,h')\}$ to $\Q.$ 
Assume that $k' > k.$ A well-known theorem of Shimura \cite{shimura-periods} says that for 
$D_N(s, h, h'),$ the degree-$4$ Rankin--Selberg $L$-function attached to the pair $(h, h')$, and for any integer $m$ with 
$k \leq m < k',$ we have: 
$D_N(m,h,h') \ \approx \ (2\pi i)^{l+1-2m} \, \mathfrak{g}(\chi) \, u^+(f')u^-(f'),$
where $\approx$ means equality up to an element of $\Q(h, h')$, $u^\pm(h')$ are the two periods attached to $h'$ by Shimura, and 
$\mathfrak{g}(\chi)$ is the Gauss sum of $\chi.$ 
The integers $k \leq m < k'$ are all the critical points for $D_N(s, h ,h').$ 
Suppose $k' \geq k+2$, and we look at two successive critical values, then the only change in the right hand side is 
$(2\pi i)^{-2}$ which may be seen to be exactly accounted for 
by the $\Gamma$-factors at infinity. Suppose $L(s, h \times h')$ denotes the completed degree-$4$ $L$-function attached to $(h,h')$, 
then we deduce, for any $m$ with $k \leq m < m+1 < k',$ that  
$$
\frac{L(m, h \times h')}{L(m+1, h \times h')} \in \Q(h, h'). 
$$
Now suppose $h'' \in S_{k'}(N',\chi')^{\textup{new}}.$ We say $h''$ is congruent to $h'$ 
if in some number field $E$, considered as a subfield of $\C$, large enough to contain all the rationality fields, 
and some prime ideal $\fl$ of the ring of integers $\cO_E$ of $E$, and for some positive integer $n$ 
the congruence $a(p,h') \equiv a(p,h'') \pmod{\fl^n}$ holds for all rational primes $p$. If $n > 1$ such a congruence is often called a super-congruence. 
Both the $L$-functions $L(s, h \times h')$ and $L(s, h \times h'')$ have the same set of critical points. 
Under some general hypotheses we prove:
\begin{equation}
	h' \equiv h'' \pmod{\mathfrak{l}^n} \ \implies \ 
	\frac{L(m,h \times h')}{L(m+1, h\times h')} \equiv \frac{L(m, h \times h'')}{L(m+1, h\times h'')} \pmod{\mathfrak{l}^n}. 
\end{equation}
This is proven in Thm.\,\ref{thm: second-main-theorem-on-l-values} 
when $\fl$ is away from a finite set of primes (about which we will say more presently),  
the mod-$\fl$ Galois representations attached to $h'$ and $h''$ are irreducible, 
and the levels $N$ and $N'$ are square-free and relatively prime. 
If we drop the hypothesis on the levels then in Thm.\,\ref{thm:reg-sem-poly} a weaker congruence is proven. 
A natural variation by taking the congruent modular forms to be of lower weight, i.e., if $k' < k$, is addressed in Thm.\,\ref{thm: varying-the-lower-weight}.

\smallskip

In a companion paper \cite{narayanan-raghuram}, we computationally verified such a congruence in several concrete examples. The methods we use 
in this paper proving the theorems are completely independent of the companion paper. 
The reader is also referred to Vatsal \cite{vatsal} in which is proved that a congruence between modular forms gives rise to a 
congruence between special values of their degree-$2$ $L$-functions divided by certain periods provided the periods are canonically chosen. 
Vatsal's theorem was a major motivation for our own work. In our situation, one of the advantages is that for ratios of $L$-values 
there is no need to appeal to any periods, and their canonical normalization is a moot point. 
The ratio of successive $L$-values appears naturally in Langlands's famous constant term theorem. 
In the work of the second author with Harder \cite{harder-raghuram} 
this theorem of Langlands was interpreted in the general framework of Eisenstein cohomology. 
We prove the main results of this paper (Thm.\,\ref{thm:reg-sem-poly}, Thm.\,\ref{thm: second-main-theorem-on-l-values}) 
using the tools developed in \cite{harder-raghuram} but after refining that framework to deal with integral cohomology.

\smallskip 

In results concerning congruences and special values of $L$-functions, the congruence prime $\mathfrak{l}$ is usually assumed to be not among a finite set of primes; 
see, for example, Hida \cite{h81a}, Vatsal \cite{vatsal}, or Balasubramanyam and Raghuram \cite{balasubramanyam-raghuram}.  
For the main results of our paper, for similar reasons, we need to avoid certain finite sets of primes: (i) 
small primes with respect to the weight $\mathsf{S}_{\textup{weight}}= \{ \mathfrak{p} \subset \mathcal{O}_E \ | \ p\leq k, \ p \leq k'\};$ 
(ii) primes diving the levels $\mathsf{S}_{\textup{level}} = \{ \mathfrak{p} \subset \mathcal{O}_E \:\: |\:\: p| NN'\};$
(iii) primes supporting torsion in integral Eisenstein cohomology $\mathsf{S}_{\textup{Eis}}$ (see \ref{section: another-integral-structure} for the precise definition); 
and (iv) primes coming from archimedean considerations $\mathsf{S}_{c_{\infty}}.$ 

\smallskip

For the introduction, let us now adumbrate the proofs. 
Under the hypotheses of the theorems, the congruent modular forms $h'$ and $h''$ give rise to 
two cohomology classes that are congruent modulo $\mathfrak{l}^n$; this is possible since 
the Hecke algebra is Gorenstein; see Thm.\,\ref{thm:congruent-coh-classes}. These cohomology classes are in the inner cohomology of a locally symmetric space for 
$\GL(2)$ with coefficients in an integral local system attached to a lattice in a highest weight module determined by the weight $k'$. 
Tensoring with a similarly defined cohomology class attached to $h$, gives rise to cohomology classes on $\GL(2) \times \GL(2)$
which are congruent modulo $\mathfrak{l}^n$. We then go through the formalism of rank-one 
Eisenstein cohomology of \cite{harder-raghuram} on $\GL(4)$ for the $(2,2)$-parabolic subgroup. 
The configuration of maps in (6.18) of {\it loc.\,cit.}\ is then reworked at an integral level for this specific context. 
This configuration of maps is at the heart of affording a cohomological interpretation of Langlands's theorem mentioned above: 
the constant term of an Eisenstein series is essentially the standard intertwining operator between two induced representations. 
While working integrally, one needs a delicate control on integral structures on cohomology groups and such induced representations 
which necessitates avoiding a finite set $\mathsf{S}_{\textup{Eis}}$ of primes. One also needs delicate control on the local standard 
intertwining operator; at the archimedean place it gives rise to a rational number forcing us to avoid the finite set $\mathsf{S}_{c_{\infty}}$ of primes 
supporting that number;  
and at finite ramified primes the local computation is explicitly carried out, but only in the presence of mild ramification 
explaining the condition on the levels $N$ and $N'$ in Thm.\,\ref{thm: second-main-theorem-on-l-values}.

\medskip

{\Small \noindent {\it Acknowledgements:} 
The authors are grateful to the Institute for Advanced Study, Princeton, for a summer collaborator's grant in 2023 when this project got started. 
The authors thank Baskar Balasubramanyam, Haruzo Hida, Jacques Tilouine, and Eric Urban for some invaluable comments and feedback 
during the course of the work. The first author is supported by the CSIR fellowship for his Ph.D.}

\medskip
\section{Preliminaries}
 
\subsection{Basic notations}

Let $G_n$ be the algebraic group ${\rm GL}_n/\mathbb{Q}$ with the chain of subgroups
$G_n \supset B_n = T_nU_n \supset T_n \supset Z_n,$
where $B_n$ is the Borel subgroup of all upper triangular matrices in $G_n$, 
$T_n$ the torus consisting of all the diagonal matrices, $U_n$ the unipotent radical of $B_n$, and 
$Z_n$ the center of $G_n.$ Let $X^*(T_n)$ denote the group of characters of $T_n$; it is free abelian on the basis 
$\mathbf{e}_1, \mathbf{e}_2, \dots \mathbf{e}_n,$ where $\mathbf{e}_i: \textup{diag}(t_1, t_2, \dots, t_n) \mapsto t_i$ for $i=1,2,\dots, n.$ 
Then $\bfgreek{delta}_n = \mathbf{e}_1 + \mathbf{e}_2 + \dots + \mathbf{e}_n$ is the determinant character. 
Let $\rho_n$ denote half the sum of positive roots. 
Let $\Delta_n$ denote the set of roots and $\Delta^+_n$ is the subset of positive roots with respect to $B_n.$ 
Let $\Pi_n$ denote the set of simple roots $\mathbf{e}_i - \mathbf{e}_{i+1}$ for $i = 1, \dots, n-1.$
Let $W_{n}$ denote the Weyl group of $G_n$ which will be identified with the set of permutation matrices. 

\smallskip

We will be using rank-one Eisenstein cohomology for the ambient group $G_4 = {\rm GL}_4/\mathbb{Q}$; in that context, 
$P$ will denote the standard parabolic subgroup of $G_4$ of $(2,2)$ block upper-triangular matrices, 
corresponding to the deletion of the simple root
 $\mathbf{e}_2 - \mathbf{e}_3$ of $\Pi_4.$  Let $U_P$ denote the unipotent radical of $P$ and $\kappa: P \rightarrow P/U_P \cong M_P$ be the projection onto the Levi quotient  
 $M_P \cong G_2 \times G_2.$ 
 (The notation $G_2$ for ${\rm GL}_2/\Q$ will cause no confusion as we do not need any exceptional group in this article.)
 The simple roots of $M_P$ are $\Pi_{M_P} = \{ \mathbf{e}_1 - \mathbf{e}_2, \mathbf{e}_3- \mathbf{e}_4\}.$ 
 The Weyl group of $M_P$ is denoted $W^{M_P}$ which is isomorphic to $\mathbb{Z}/2\mathbb{Z} \times \mathbb{Z}/2\mathbb{Z}$ 
 and realized as a subgroup of the Weyl group $W_4 \simeq S_4$ of $G_4$. 
The set of Kostant representatives for $P$ is 
$W^P = \{w \in W_4\,\, | \,\,  w^{-1}\alpha > 0, \forall \alpha \in \Pi_{M_P}\},$ giving a complete set of representatives  
for the right cosets $W_M\backslash W_{4};$ there are six elements in $W^P.$

\smallskip

A number field $E$ is a finite extension of $\Q$; often $E$ is assumed to be Galois over $\Q$ and containing all the Fourier coefficients of various modular forms at hand. 
Let $\fl$ be a prime ideal in the ring of integers $\cO_E$ of $E$; assume $l \geq 5$, where $l$ is the rational prime lying below $\mathfrak{l}.$
Also, fix an embedding $\iota: \hat{\bar{E}}_{\mathfrak{l}} \cong \mathbb{C}$. And for any $\mathcal{O}_E$-module, say $M$, the notation 
$M\otimes \C$ will always mean $M\otimes_{\iota}\C.$ The embedding $\iota$ will often be dropped from notation.

\medskip
\subsection{Sheaves and cohomology}

\subsubsection{Locally symmetric spaces}
Let $\A$ (resp., $\A_f$) be the ring of adeles (resp., finite adeles) of $\Q$. 
Let $K_{n,\infty} = {\rm SO}(n) \times Z_n(\mathbb{R})^0 \subset G_n(\R),$ where ${\rm SO}(n)$ is the usual compact special orthogonal group, and 
$(\cdot)^0$ denotes the connected component of the identity. 
Let $K_f \subset G_n(\mathbb{A}_f)$ be an open compact subgroup. The
adelic locally symmetric space is the double-coset space:
$
	S^{(n)}_{K_f}:= G_n(\mathbb{Q}) \backslash G_n(\mathbb{A}) / K_{n,\infty} \cdot K_f.
$

\subsubsection{Highest weight representations}
 Suppose $\mu = b_1 \mathbf{e}_1 + \cdots + b_n\mathbf{e}_n$ is a dominant integral weight, i.e., 
$b_1,\dots, b_n \in \Z$, and $b_1 \geq \cdots \geq b_n$. 
For such a weight $\mu$, let $\mathcal{M}_{\mu,\mathbb{Q}}$ denote the finite-dimensional absolutely irreducible representation of $G_n = \GL_n/\mathbb{Q}$ with highest weight 
$\mu$. 
For $A = E, E_\mathfrak{l}, \mathbb{C},$ define $\mathcal{M}_{\mu, A} : = \mathcal{M}_{\mu} \otimes_\Q A.$
For an integer $m \in \Z$ put $\mu(m) = \mu + m \bfgreek{delta}_n$; then $\mathcal{M}_{\mu(m)} = \mathcal{M}_\mu \otimes {\rm det}^m.$

\subsubsection{Sheaves and their cohomology}
Let $\pi: G_n(\mathbb{A})/ K_{n,\infty}\times K_f \rightarrow S^{(n)}_{K_f}$ be the projection. 
For $A= E, E_\mathfrak{l}, \mathbb{C},$ define a sheaf $\widetilde{\mathcal{M}}_{\mu,A}$
whose sections over an open $U \subset S^{(n)}_{K_f}$ are given by:
\begin{equation*}
	\widetilde{\mathcal{M}}_{\mu,A}(U) = \{ s: \pi^{-1}(U)\rightarrow\mathcal{M}_{\mu,A}\,\,|\,\, s(\gamma. g) = \gamma s(g)\,\, \forall \gamma \in G_2(\mathbb{Q})\}.
\end{equation*}
Let $H^\bullet (S^{(n)}_{K_f}, \widetilde{\mathcal{M}}_{\mu,A})$ denote the sheaf cohomology groups. 
For $A= E, E_\mathfrak{l}, \mathbb{C},$ one has isomorphisms for changing the base: 
$H^\bullet (S^{(2)}_{K_f}, \widetilde{\mathcal{M}}_{\mu,A}) \cong H^\bullet (S^{(2)}_{K_f}, \widetilde{\mathcal{M}}_{\mu})\otimes_\Q A.$
If $K_{1,f} \subset K_{2,f}$ then there is a natural map 
$H^\bullet (S^{(n)}_{K_{2,f}}, \widetilde{\mathcal{M}}_{\mu,A}) \rightarrow H^\bullet (S^{(n)}_{K_{1,f}}, \widetilde{\mathcal{M}}_{\mu,A}),$ letting us  
define
\[ H^\bullet (S^{(n)}, \widetilde{\mathcal{M}}_{\mu,A}) = \textup{colim}_{\substack{K_f}} H^\bullet (S^{(n)}_{K_f}, \widetilde{\mathcal{M}}_{\mu,A}).\]

\subsubsection{The mirahoric congruence subgroups}
For a prime $p$ and an integer $n_p \geq 0$, define  
$$
K_p^{(2)}(n_p) \ := \ 
\{ g \in \GL_2(\Z_p) \ | \ g \equiv \begin{pmatrix} * & * \\ 0 & 1 \end{pmatrix} \pmod{p^{n_p}} \}, \quad {\rm and}
$$
$$
K_p^{(4)}({n_p}) \ := \ 
\left\{ g \in \GL_4(\Z_p) \ | \ g \equiv \begin{pmatrix}
	* & * & * & * \\ * & * & * & * \\ * & * & * & * \\ 0 & 0 & 0 & 1 \end{pmatrix} \pmod{p^{n_p}} \right\}.
$$
Let $N = \prod_p p^{v_p(N)} = \prod_p p^{n_p}$ be a positive integer. Define 
$K_f^{(2)}(N)$ to be the subgroup of $\GL_2(\hat{\mathbb{Z}})$ defined as $K^{(2)}(N) = \prod_{p<\infty } K_p(n_p)$, and  
$K_f^{(4)}(N)$ be the subgroup of $\GL_4(\hat{\mathbb{Z}})$ by $K_f^{(4)}(N) = \prod_{p<\infty } K_p^{(4)}(n_p)$.

\subsubsection{Inner cohomology of $\GL_2/\mathbb{Q}$}
\label{sec:inner-coh-gl2} Up until Sect.\,\ref{section: modifications-when-the-level-is-small} assume $N\geq 3$.
For the  level structure $\congf{N}$ in $G_2$, abbreviate $S^{(2)}_1(N) = S^{(2)}_{K_1(N)} $. 
For $A= E, E_\mathfrak{l}, \mathbb{C},$ let $H_!^{1}(S_1^{(2)}(N), \widetilde{\mathcal{M}}_{\mu,E})$ denote the inner cohomology group, 
by which one means the image of cohomology with compact supports inside full cohomology.  
Let $\mathsf{S}$ denote any set of finite places containing all primes dividing $N$ and the infinite place, then the action of the commutative 
Hecke algebra 
$\mathcal{H}_2^{\mathsf{S}} = \bigotimes'_{p \nmid N} C^\infty_c(G_2(\mathbb{Q}_p)/\!\!/G_2(\mathbb{Z}_p))$ on 
$H_!^1(S_1^{(2)}(N), \widetilde{\mathcal{M}}_{\mu,E})$ is semi-simple. 
The inner-spectrum is denoted $\textup{Coh}_!(G_2, \mu, \congf{N})$ which consists of the 
set of all isomorphism classes of eigencharacters of $\mathcal{H}_2^{\mathsf{S}}$ which appear in 
$H_!^{1}(S_1^{(2)}(N), \widetilde{\mathcal{M}}_{\mu,E}).$

\subsubsection{Representation at infinity}
\label{sec:rep-infty}
We will assume the weight $\mu$ is regular. Given an absolutely simple Hecke module $\sigma_f \in \textup{Coh}_!(G_2, \mu, \congf{N})$ and an embedding 
$\iota : E \to \C$, the module ${}^\iota\sigma_f$ is the $\congf{N}$-invariants of the finite part of a cuspidal automorphic representation which--up to a minor 
abuse of notation--will be denoted by ${}^\iota\sigma$. The $\iota$ is fixed and will be dropped from notation. 
The representation at infinity ${}\sigma_\infty$ is an essentially discrete 
series representation $\mathbb{D}_{\mu}$ of $\GL_2(\R)$ such that 
such that the relative Lie algebra cohomology 
$H^{1}(\mathfrak{g}_2, K_{2, \infty}, \mathbb{D}_{\mu} \otimes \mathcal{M}_{\mu, \mathbb{C}})$ is nonzero. The notations are as in \cite[Sect.\,3.1]{harder-raghuram}.

\medskip
\subsection{Integral structures on cohomology groups for $\GL_2$}

\subsubsection{Highest weights for modular forms}
For an integer $k \geq 2$, define: 
$$
\mu_k = (k-2)(\mathbf{e}_1 - \mathbf{e}_2)/2+ (k/2 -1) \bfgreek{delta}_2 \ = \ (k-2){\bf e}_1 + 0 {\bf e}_2.
$$ 
The underlying $\Q$-vector space of $\mathcal{M}_{\mu_k, \mathbb{Q}}$
consists of homogenous polynomials of degree $k-2$ in two variables $X$ and $Y$ with coefficients in $\Q$. Also, if $k > 2$ then $\mu_k$ is a regular weight. 
Similarly, an integer $k' \geq 2$ determines a $\mu' = \mu_{k'}.$ If the $k$ is clear from context then write 
$\mu := \mu_k$; similarly $\mu' = \mu_{k'}$. Hereafter the weights $\mu$ and $\mu'$ will be assumed to be regular.

\subsubsection{Integral sheaves}
Assume $N\geq 3.$ One can also re-define the sheaf $\widetilde{\mathcal{M}}_{\mu,E}$ with respect to the projection:
\[
\pi_1: G_2(\mathbb{Q})\backslash \Bigl( G_2(\mathbb{R})/K_{2, \infty} \times G(\mathbb{A}_f)\Bigr) \rightarrow S^{(2)}_1(N).
\] 
In this case, sections over an open set $U$ are given by:
\begin{multline*}
	\widetilde{\mathcal{M}}_{\mu,E} = 
	\{ \tilde{s}: \pi_1^{-1}(U) \rightarrow {\mathcal{M}_{\mu, E}\otimes \mathbb{A}_E^{(\infty)}}| \;\; \text{ $\tilde{s}$ is locally constant, } 
	\\ g_f\cdot \tilde{s}(x_\infty, g_f ) \in \mathcal{M}_{\mu,E}\; \text{ and }\;\tilde{s}(x_\infty, g_f k_f) =  k_f^{-1}\cdot s(x_\infty, g_f),
	\;\; \forall\; k_f \in \congf{N}\}.
\end{multline*}

Take $\mathcal{M}_{\mu,\Z}$ to be the $\Z$--lattice generated by $\{X^j Y^{k-2-j} : 0 \leq j \leq k-2\}.$  
It is clear that for $A = \mathcal{O}_E, E, \mathcal{O}_\mathfrak{l}, E_\mathfrak{l}, \mathbb{C},$ one has 
$\mathcal{M}_{\mu,\mathcal{O}_E}\otimes A = \mathcal{M}_{\mu,A}$. 
It is also clear that $\mathcal{M}_{\mu,\mathcal{O}_E} \otimes \hat{\mathcal{O}}_E$ is stable under the action of $K_1(N).$ 
For $A = \mathcal{O}_E, E, \mathcal{O}_\mathfrak{l}, E_\mathfrak{l}, \mathbb{C},$ define:
	\begin{multline*}
	\widetilde{\mathcal{M}}_{\mu,A} = \{ \tilde{s}: \pi_1^{-1}(U) \rightarrow \mathcal{M}_{\mu, \mathcal{O}_E}\otimes \hat{\mathcal{O}}_E| \;\; \text{ $\tilde{s}$ 
	is locally constant, } \\ g_f\cdot \tilde{s}(x_\infty, g_f ) \in M_{\mu,\mathcal{O}_E}\otimes A\; \text{ and }
	\;\tilde{s}(x_\infty, g_f k_f) =  k_f^{-1}\cdot s(x_\infty, g_f), \;\; \forall\; k_f \in \congf{N}\}.
\end{multline*}

\subsubsection{Classical cohomology groups}
 Let $\mathbb{H}$ denote the complex upper half space which is acted upon by ${\rm SL_2}(\mathbb{R})$ in the usual way. 
The group $\Gamma_1(N) = {\rm GL}_2^+(\mathbb{Q}) \cap \congf{N}$ is the congruence subgroup of 
${\rm SL}_2(\Z)$ of matrices which are congruent to $\left(\begin{smallmatrix} 1 & * \\ 0 & 1 \end{smallmatrix}\right)$ modulo $N.$
Put $X_1(N) = \Gamma_1(N) \backslash \mathbb{H}.$ Let $\pi_{\mathbb{H}}: \mathbb{H} \rightarrow X_1(N)$ be the canonical projection. 
For $A = \mathcal{O}_E, E, \mathcal{O}_\mathfrak{l}, E_\mathfrak{l}, \mathbb{C},$ 
define a sheaf $\underline{\mathcal{M}}_{\mu, A}$ on $X_1(N)$ whose sections over an open set $U \subset X_1({N})$ are 
$$
\{ s:\pi_{\mathbb{H}}^{-1}(U) \rightarrow \mathcal{M}_{\mu,A} \ | \ s \text{ is locally constant, and} \ 
s(\gamma( z)) = \gamma \cdot s(z), \ \forall \gamma \in \Gamma_1({N}), \ z\in \mathbb{H}\}.
$$

Given $g_f \in G_2(\mathbb{A}_f),$ one can express $g_f = \gamma k_f$ for some $\gamma \in G_2(\mathbb{Q})$ and $k_f \in \congf{N}.$ 
The map $G(\mathbb{Q})(g_\infty, g_f)K_{2, \infty}\congf{N} \mapsto \Gamma_1(N) \gamma^{-1}g_\infty\cdot \sqrt{-1}$ 
is a homeomorphism between $S_1^{(2)}(N) \xrightarrow{\sim} X_1(N),$ giving then an isomorphism of sheaves 
$\widetilde{\mathcal{M}}_{\mu,A} \xrightarrow{\sim} \underline{\mathcal{M}}_{\mu,A}$, from which one has:
$$
H^1_!(S^{(2)}_1(N), \widetilde{\mathcal{M}}_{\mu,A}) \ \cong \ H^1_!(X_1(N), \underline{\mathcal{M}}_{\mu,A}), \quad 
A = \mathcal{O}_E, E, \mathcal{O}_\mathfrak{l}, E_\mathfrak{l}, \mathbb{C}.
$$ 

Furthermore, there is also a canonical isomorphism 
$$
H_!^1(X_1(N), \underline{\mathcal{M}}_{\mu,A}) \cong H_!^1(\Gamma_1(N), \mathcal{M}_{\mu,A}),
$$
where the latter is the parabolic cohomology group defined by Shimura \cite[Chap.\,8]{iataf}; see also Hida \cite[Appendix]{lfe}. 
Since $\mathcal{O}_\mathfrak{l}, E, E_\mathfrak{l}, \mathbb{C}$ are all flat $\mathcal{O}_E$--modules one gets
\begin{equation*}
	H_!^1(\Gamma_1(N), \mathcal{M}_{\mu,\Z})\otimes A \cong H_!^1(\Gamma_1(N), \mathcal{M}_{\mu,A})\quad \text{ for } A = \mathcal{O}_E, \mathcal{O}_\mathfrak{l}, E, E_\mathfrak{l}, \mathbb{C}.
\end{equation*}
See Hida \cite[p.\,168]{lfe}. This in turn implies
\begin{equation}
H_!^1(S_1^{(2)}(N), \widetilde{\mathcal{M}}_{\mu,\mathcal{O}_E}) \otimes A \cong	
H_!^1(S_1^{(2)}(N), \widetilde{\mathcal{M}}_{\mu,A}) \quad \text{ for } A = 
\mathcal{O}_\mathfrak{l}, E, E_\mathfrak{l}, \mathbb{C}. 
\label{eqn: functorial-properties-of-cohomology-groups}
\end{equation}

\subsubsection{$\pm 1$ eigenspaces in cohomology}
For $A = E, E_\mathfrak{l}, \mathbb{C},$ the group ${\rm O}(2)/ {\rm SO}(2) \cong \Z/2\Z$ 
acts on $H^1_!(S^{(2)}_1(N),\widetilde{M}_{\mu,A}).$ For a character $\epsilon$ of ${\rm O}(2)/ {\rm SO}(2)$, the 
$\epsilon$-eigenspace  
will be denoted $H^1_!(S^{(2)}_1(N),\widetilde{M}_{\mu,A})(\epsilon)$. 
When $A{^\circ} = \mathcal{O}_E, \mathcal{O}_\mathfrak{l},$ the notation $H^1_!(S^{(2)}_1(N),\widetilde{M}_{\mu,A{^\circ}})(\epsilon)$ 
means the image of $H^1_!(S^{(2)}_1(N),\widetilde{M}_{\mu,A{^\circ}})$ inside 
$H^1_!(S^{(2)}_1(N),\widetilde{M}_{\mu,A})(\epsilon)$ for $A =E, E_\mathfrak{l}$, respectively.

\subsubsection{Avoiding torsion in integral cohomology}
\label{sec:avoiding-torsion}
For an integer $N \geq 1$, define a finite set of prime ideals: 
\begin{equation}
\mathsf{S}_N := \{ \mathfrak{p} \ |\  \mbox{$\fp$ is a prime ideal of $\mathcal{O}_E$ which divides $6N$} \}.
\end{equation}
Similarly, for an integer $k \geq 2$, define: 
\begin{equation}
\mathsf{S}_{k} \:= \ \{ \mathfrak{p} \ |\  \mbox{$\fp$ is a prime ideal of $\mathcal{O}_E$ over any prime $p \leq k$} \}.
\end{equation}
 If $\fl \not\in \mathsf{S}_N \cup \mathsf{S}_{k}$ then by 
Hida \cite[($1.14_b$)]{h81a}, the group 
$H^1_!(S_1^{(2)}(N), \widetilde{\mathcal{M}}_{\mu,\mathcal{O}_E})$ has no $\fl$-torsion.

\subsubsection{Tate twists}  For $A = \mathcal{O}_E, E, \mathcal{O}_\mathfrak{l}, E_\mathfrak{l}, \mathbb{C}$ it is clear that 
$\underline{\mathcal{M}}_{\mu,A} \cong \underline{\mathcal{M}}_{\mu(m),A}$ since the sheaves are defined by the action of 
$\gamma \in {\rm SL}_2(\mathbb{R}).$ We fix \textit{one} integral structure, namely the image of 
\begin{equation}
	H^1_!(\Gamma_1(N), {\mathcal{M}}_{\mu,\mathcal{O}_E}) \,\, (\text{resp}., \,\, H^1_!(\Gamma_1(N), {\mathcal{M}}_{\mu,\mathcal{O}_\mathfrak{l}}) )
\end{equation} 
in all of the cohomology groups 
$H^1_!(S_1^{(2)}(N), \widetilde{\mathcal{M}}_{\mu(m),E})$ 
(resp., $H^1_!(S_1^{(2)}(N), \widetilde{\mathcal{M}}_{\mu(m),E_\mathfrak{l}})$) with $m \in \mathbb{Z}.$ The notations
$\tilde{H}^1_!(S^{(2)}_1(N), \widetilde{\mathcal{M}}_{\mu,\mathcal{O}_E})$ and $\tilde{H}^1_!(S^{(2)}_1(N), \widetilde{M}_{\mu,\mathcal{O}_\mathfrak{l}})$ 
will be used to denote the images respectively. This is done to ensure there are no torsion cohomology classes. If we avoid a suitable finite set of primes as in 
\ref{sec:avoiding-torsion} then there is no $\fl$-torsion and we may simplify the notation $\tilde{H}^1_!(...)$ to  $H^1_!(...)$. 
It should be kept in mind that 
the twists appear when one considers the action of an integral Hecke algebra on integral cohomology. 
In general, we shall reserve the notation $\tilde{H}^\bullet(...)$ to denote the image of the cohomology with integral coefficients inside the 
cohomology with rational coefficients.

\subsubsection{Modifications when the level is small}
 \label{section: modifications-when-the-level-is-small} 
 The group $\Gamma_1(N)$ acts freely on $\mathbb{H}$ only when $N\geq 3.$ 
 When $N=1 \text{ or }2$ we follow Hida \cite[Sect.\,5.3]{mfg} to define the integral cohomology groups. 
 Define $K^{(2)}_{f,0}(3) := \{\gamma \in \GL_2(\hat{\mathbb{Z}})\,|\, \gamma \equiv \mathbf{1}_2 \pmod{3 \hat{\mathbb{Z}}}\}$ and set 
 $K_{f,0}^{(2)}(N) = \congf{N} \cap K_{f,0}^{(2)}(3).$
Note that $K_{f,0}^{(2)}(N) \cap \GL^+_2(\mathbb{Q}) = \Gamma_1(N) \cap \Gamma(3)$. 
Put $\mathcal{M}_{\mu,\Z[1/6]} = \mathcal{M}_{\mu,\mathbb{Z}} \otimes \mathbb{Z}[1/6].$ Then for $N =1,2,$ we have:
\begin{multline*}
	H^1_!(S^{(2)}_{K_{f,0}(N)} , \widetilde{\mathcal{M}}_{\mu, \Z[1/6]}) \cong 
	H^1_!(\Gamma_1(N) \cap \Gamma(3) \backslash \mathbb{H}, \widetilde{\mathcal{M}}_{\mu, \Z[1/6]}) \\ \cong 
	H^1_!(\Gamma_1(N), \mathcal{M}_{\mu, \Z[1/6]})^{\Gamma_1(N)/ \Gamma_1(N) \cap \Gamma(3)} \cong 
	H^1_!(\Gamma_1(N), \mathcal{M}_{\mu, \Z[1/6]}).
\end{multline*}
The last isomorphism is because $6$ is invertible in the ring $\Z[1/6]$ and the index of 
$\Gamma_1(N) \cap \Gamma(3)$ in $\Gamma_1(N)$ divides $24$. Since $l \geq 5$ we have
$H_!^1(\Gamma_1(N), \mathcal{M}_{\mu,\Z[1/6]})\otimes A \cong H_!^1(\Gamma_1(N), \mathcal{M}_{\mu,A})$ for 
$A =  \mathcal{O}_\mathfrak{l}, E, E_\mathfrak{l}, \mathbb{C}.$ 
So for $N = 1, 2,$ we fix $S_1^{(2)}(N) = S^{(2)}_{K_{f,0}(N)}$, $X_1(N) = \Gamma_1(N) \cap \Gamma(3) \backslash \mathbb{H},$ and 
$\mathcal{M}_{\mu, \Z[1/6]}$ to be the lattice in the coefficient system. 

\medskip
\subsection{Cohomology of $M_P = G_2 \times G_2$}

\subsubsection{K\"unneth isomorphisms}
\label{sec:kunneth-levi}
The weights $\mu$ and $\mu'$ give a highest weight $\mu + \mu'$ for $M_P$. For $A= E, E_\mathfrak{l}, \mathbb{C},$ let $H^2(S^{M_P}_{N \times N'}, \widetilde{\mathcal{M}}_{\mu +\mu', A})$ 
 denote the cohomology groups at degree $2$ of the locally 
symmetric space associated to the Levi $M_P$ with level structure $\congf{N} \times \congf{N'},$
and coefficient system $\widetilde{\mathcal{M}}_{\mu +\mu', A}$. 
If the weights $\mu$ and $\mu'$ are regular, then the notion of inner and strongly inner cohomology in \cite{harder-raghuram} coincide. 
Moreover, we have a K\"unneth isomorphism: 
$$
H^2_{!}(S^{M_P}_{N \times N'}, \widetilde{\mathcal{M}}_{\mu +\mu', A}) \cong 
H^1_!(S_1^{(2)}(N), \widetilde{\mathcal{M}}_{\mu, A}) \otimes_A H^1_!(S_1^{(2)}(N'), \widetilde{\mathcal{M}}_{\mu', A}).
$$

\subsubsection{Integral structures}
\label{sec:integral-kunneth-levi}
Via the K\"unneth isomorphism, the image of 
 $$
 \tilde{H}^1_!(S_1^{(2)}(N'), \widetilde{\mathcal{M}}_{\mu, \mathcal{O}_E}) 
 \otimes_{\mathcal{O}_E} 
 \tilde{H}^1_!(S_1^{(2)}(N'), \widetilde{\mathcal{M}}_{\mu', \mathcal{O}_E}) \ \hookrightarrow \ 
 H^2_{!}(S^{M_P}_{N \times N'}, \widetilde{\mathcal{M}}_{\mu +\mu', E})
 $$ 
gives an $\mathcal{O}_E$--lattice 
which will be denoted $\tilde{H}^2_{!}(S^{M_P}_{N' \times N}, \widetilde{\mathcal{M}}_{\mu' +\mu, \mathcal{O}_E})$.
Similarly, an $\cO_\fl$-lattice: 
$$
\tilde{H}^2_{!}(S^{M_P}_{N \times N'}, \widetilde{\mathcal{M}}_{\mu +\mu', \mathcal{O}_{E_\fl}}) \ \subset \ 
\tilde{H}^2_{!}(S^{M_P}_{N \times N'}, \widetilde{\mathcal{M}}_{\mu +\mu', E_\fl}).
$$

\smallskip

Suppose for the moment, $R$ and $S$ are commutative rings with $1$ and $R \rightarrow S$ is a ring homomorphism, and if $M$ and $N$ are  $R$-modules then
$(M\otimes_R N) \otimes_R S \cong (M\otimes_R S) \otimes_S (N\otimes_R S).$ 
Applying this for $\cO_E \hookrightarrow A$, for $A =\mathcal{O}_\mathfrak{l}, E, E_\mathfrak{l}, \mathbb{C},$ we get
\begin{equation}
	\tilde{H}^2_{!}(S^{M_P}_{N \times N'}, \widetilde{\mathcal{M}}_{\mu +\mu', \mathcal{O}_E})\otimes_{\mathcal{O}_E} A 
	\cong \tilde{H}^2_{!}(S^{M_P}_{N \times N'}, \widetilde{\mathcal{M}}_{\mu +\mu', A}). \label{eqn: functorial-relations-for-the-levi-1}
\end{equation}

\smallskip
There will be variations on the cohomology of the Levi (as when we look at both sides of an intertwining operator), 
but the same recipe as above will be adopted for all variations.

\medskip
\subsection{Rankin--Selberg $L$-functions}

\subsubsection{Classical Rankin--Selberg $L$-functions}
\label{section: classical-rankin-selberg}
For integers $N', N \geq 1$, Dirichlet characters $\chi'$ and $\chi$ of levels $N'$ and $N$, respectively, and 
integers $k' > k \geq 2$ consider primitive forms 
$h' \in S_{k'}(N',\chi')^{\textup{new}}$ and $h \in S_k(N,\chi)^{\textup{new}}$, with Fourier expansions: 
$h'(z) = \sum_{n=1}^\infty a(n,h')q^n$ and $h(z) = \sum_{n=1}^\infty a(n,h)q^n,$ where as usual $q = e^{2\pi i z}.$
By a primitive form one means an eigenform, newform, and normalized as $a(1,h) = a(1,h') = 1$. 
For $s\in \mathbb{C}$ with $\Re(s) \gg 0$ define the finite-part of the Rankin--Selberg $L$-function as a Dirichlet series:
$$
L^{(\infty)}(s, h \times h') \ := \ L^{(M)}(s, \chi \chi') \left( \sum_{n=1}^\infty a(n,h) \, a(n,h')n^{-s}\right),
$$
where $M$ is the least common multiple of $N$ and $N',$ and $L^{(M)}(s, \chi'\chi)$ denotes the Dirichlet $L$-function attached to the character $\chi\chi'$ 
of level $M$ with the Euler factors at $p|M$ deleted. 
From Shimura \cite[Lem.\,1]{shimura-cusp} one has an Euler product: $L^{(\infty)}(s, h \times h') =\Pi_{p < \infty} L_p(s, h \times h' ).$ 
Keeping the assumption $k' > k$ in mind, the archimedean factor is defined by:
$$	
L_\infty(s, h \times h') \ := \ (2\pi)^{-2s} \Gamma(s)\Gamma(s + 1 - k).
$$
The completed $L$-function is defined by: $L(s, h \times h') = L_\infty(s, h \times h') L^{(\infty)}(s, h \times h').$
It is well-known that $L(s, h \times h')$ can be analytically continued to all of the complex plane, and satisfies a functional equation, towards which, 
define the action of complex conjugation via: 
$h'^\rho = \sum_{n=1}^\infty \overline{a(n,h')} q^n$ and $h^\rho = \sum_{n=1}^\infty \overline{a(n,h)}q^n.$ 
Then $h'$ and $h$ are newforms in $S_{k'}(N', \chi'^{-1})$ and $S_k(N, \chi^{-1}),$ respectively; 
see, for example, Miyake \cite[Thm.\,4.6.15]{miyake}. 
Also, $\overline{a(p,h)} = \chi(p)^{-1} a(p,h)$ and $\overline{a(p,h')} = \chi(p)^{-1} a(p,	h');$ see, for example, 
Shimura \cite[Prop.\,3.56]{iataf}. The functional equation then can be roughly stated as
$$
L(k'+k - 1 -s, h \times h') \ \approx \ L(s,h^\rho \times h'^\rho). 
$$
See Hida \cite[Thm.\,9.1]{hida-measure} for the precise factors involved. For our purposes it is enough to observe that for ratios of 
$L$-functions one has: 
\begin{equation}\label{eqn: relation-between-ratios}
	\dfrac{L(k'+k - 1 - s, h \times h')}{L(k'+k -s,h \times h')} = c(N,N',k,k')\dfrac{L(s, h^\rho\times h'^\rho)}{L(s-1, h^\rho \times h'^\rho)}, 
\end{equation}
where $c(N,N',k,k') \in \mathbb{Q}^\times$ is a constant which depends only on the prime factors of $N$ and $N'$ and on the weights $k'$ and $k$.

\smallskip
\subsubsection{Critical points for classical Rankin--Selberg $L$-functions}

The line of symmetry for the functional equation is $\Re(s) = (k+k'-1)/2.$  
An integer $m$ is {\it critical} for $L(s, h \times h'),$ if the archimedean factors on both sides of the functional equation are finite at $s = m,$ i.e., 
if $\Gamma(m)\Gamma(m+1-k)$ and $\Gamma(k+k' -1-m)\Gamma(k' -m)$ are finite. Therefore the critical set is:
\begin{equation}
\label{eqn:critical-set}
\{m \in \mathbb{Z} \,|\, k \leq m \leq k'-1\}.
\end{equation}
The number of critical points is $k'-k$. The condition $k' > k$ was imposed to guarantee the existence of critical points. 
For the main results on congruences for ratios of successive critical values, we will furthermore need to assume that $k'-k > 2.$

\subsubsection{Relation between classical and automorphic $L$-functions} \label{section: relation-between-classical-and-automorphic-l-functions}
Given primitive forms $h \in S_k(N,\chi)^{\textup{new}}$ and $h' \in S_{k'}(N',\chi')^{\textup{new}}$ as above, 
consider highest weights  
$\mu = \mu_k = (k-2,0)$ and $\mu' = \mu_{k'} = (k'-2,0),$ and Hecke modules in inner cohomology 
$\sigma \in \text{Coh}_!(G_2, \mu)$ and $\sigma' \in \text{Coh}_!(G_2,\mu')$, such that with respect to the embedding $\iota$
\begin{equation}
\label{eqn:choice-of-repns}
\sigma \cong \Pi(\mathbf{h})|\cdot|^{-k/2+1}, \quad \sigma' \cong \Pi(\mathbf{h}'^\rho)|\cdot|^{-k'/2+1}, 
\end{equation}
where $\mathbf{h}$  and $\mathbf{h}'^\rho$ are the $\C$ valued automorphic forms attached to $h$ and $h'^\rho$ respectively and $\Pi(\mathbf{h})$ and $\Pi(\mathbf{h}'^\rho)$ are the auotomorphic reprenstations generated by them. 
The reason for taking $h'^\rho$ for $\sigma'$ (instead of $h'$ itself) will become clear in \eqref{eqn:auto-L-fn-classical-L-fn} below. 
The reader is referred to Raghuram and Tanabe \cite{raghuram-tanabe} for details of the dictionary between the modular forms and cohomological cuspidal representations. 
In particular, one has the following relations: 
$$
L(s, \sigma) \ = \ L(s+ \frac12, h), \quad L(s, \sigma^{\sf v}) \ = \ L(s+ k - \frac32, h), 
$$ 
and similarly for $\sigma'$, $h'$ and $k'$. Furthermore, for an integer $m$, the Tate-twist $\sigma(-m)$ has cohomology with respect to $\mu(m)$, 
then we have the following equality (up to a nonzero constant) between the automorphic-representation theoretic and the classical 
Rankin--Selberg $L$-functions:
\begin{equation}
\label{eqn:auto-L-fn-classical-L-fn}
L(s, \sigma(-m) \times  \sigma'^{\sf v}) \ = \ L(s + k' -m -1, h \times h').
\end{equation}
The nonzero constant alluded to above will not play a role in this paper as we only consider the ratio of critical values. 
All this applies just the same to the pair $h \in S_k(N,\chi)^{\textup{new}}$ and $h'' \in S_{k'}(N',\chi')^{\textup{new}}$.

\medskip
\subsubsection{Setting-up the context of Eisenstein cohomology}
\label{sec:set-up-eis-coh}
To apply the machinery of \cite{harder-raghuram}, we will be looking at the intertwining operator 
between algebraically and parabolically induced representations:
$$
T_{\rm st}(s)|_{s = -2} : \aInd_P^G\left(\sigma(-m) \times \sigma')\right) \ \longrightarrow \ 
\aInd_P^G(\sigma'(2) \times \sigma(-m-2)), 
$$
which, as in {\it loc.\,cit.}, gives a rationality result for the ratio:
$$
\frac{L(-2, \sigma(-m) \times \sigma'^{\sf v})}{L(-1, \sigma(-m) \times \sigma'^{\sf v})} \ = \ 
\frac{L(-2-m, \sigma \times \sigma'^{\sf v})}{L(-1-m, \sigma \times \sigma'^{\sf v})} \ = \ 
\frac{L(k' -m -3, h \times h')}{L(k' -m -2, h \times h')},
$$
provided $m$ satisfies the constraints imposed by the combinatorial lemma (\cite[Lem.\,7.14]{harder-raghuram})
which is exactly equivalent to the above $L$-values being critical; from 
\eqref{eqn:critical-set} this imposes the following bounds on permissible Tate-twists $m$:
$$
-1 \ \leq \ m \ \leq \ k'-k-3.
$$
Furthermore, to carry out \cite{harder-raghuram}, the data $(\mu_k(m), \mu_{k'})$ needs to be on the right of the unitary axis 
(required for a certain Eisenstein series to to be holomorphic), which is the condition:
$$
-2 + \frac{k'-2}{2} - \left(\frac{k-2}{2} + m \right) \geq 0 \quad \iff \quad m \leq \frac{k'-k}{2} - 2.
$$
Hence, as $m$ varies from $-1$ to $\frac{k'-k}{2} - 2$, we are looking at the string of ratios of $L$-values from the rightmost up to a little more than the central value:
$$
\frac{L(k' -2, h \times h')}{L(k' -1, h \times h')}, \ \ 
\frac{L(k' -3, h \times h')}{L(k' -2, h \times h')}, \ \dots, \ \ 
\frac{L(\lfloor \frac{k+k' -1}{2} \rfloor, h \times h')}{L( \lfloor \frac{k+k' + 1}{2} \rfloor, h \times h')}. 
$$
If we are on the left of the unitary axis, then reversing the direction of the intertwining operator and using the functional equation 
offers the possibility of a result for all successive ratios critical values exactly as in \cite{harder-raghuram}; see the discussion in Sect.\,\ref{subsection: left-of-unitary-axis}.

\medskip
\section{Hecke algebras and Gorenstein property}
\label{sec:Hecke-Gorenstein}

\subsection{Classical Hecke algebras} For $A= \mathcal{O}_E, E, \mathcal{O}_\mathfrak{l}, E_\mathfrak{l},$ define an $A$ sub-module $S_k({N}, A)$ of $S_k(N)$: 
$$
S_k({N},  A) = \{ f = \sum_{n=1}^\infty a(n, f) q^n \in S_k(N) \ |\ \forall n \in \mathbb{N}, \,\, a(n,f) \in A\}
$$ 
where $S_k({N}) := S_k(\Gamma_1(N))$ is the $\C$-vector space of classical cusp forms. Here the isomorphism $\iota : \hat{\overline{E}}_\mathfrak{l} \cong \mathbb{C}$ is used implicitly.  
Recall a theorem of Shimura, Deligne, Rapoport and Katz; see Hida \cite[Chap.\,3]{mfg}.
\begin{theorem}
	For $A= \mathcal{O}_E, E, \mathcal{O}_\mathfrak{l}, E_\mathfrak{l},$ the space $S_k({N},A)$ is an $A$-module of full rank in $S_k({N})$, i.e.,
	\begin{equation}
		S_k({N},  A) \otimes_{\iota} \mathbb{C} = S_k({N}). \label{eqn: functorial-cusp-forms}
	\end{equation}
\end{theorem}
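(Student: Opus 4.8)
The plan is to derive the identity \eqref{eqn: functorial-cusp-forms} for a general coefficient ring $A$ from the two anchor cases $A=E$ and $A=\mathcal{O}_E$, the crux of which is the arithmeticity of Fourier coefficients of Hecke eigenforms (Shimura) together with the $q$-expansion principle of Deligne--Rapoport and Katz, packaged as in Hida \cite[Ch.\,3]{mfg}. Put $r:=\dim_{\mathbb{C}}S_k(N)$. For each $A$ I will show that $S_k(N,A)$ is a finitely generated $A$-module of rank $r$ and that the ${}^{\iota}$-images of an $A$-basis span $S_k(N)$; as the target has $\mathbb{C}$-dimension $r$, this forces the natural map $S_k(N,A)\otimes_{A,\iota}\mathbb{C}\to S_k(N)$ to be an isomorphism. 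When $A$ is a field, finite generation and the bound on the rank are automatic from the inclusion $S_k(N,A)\hookrightarrow S_k(N)$, so only the spanning remains.

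\emph{The case $A=E$.} By Atkin--Lehner--Li theory every element of $S_k(N)$ is a $\mathbb{C}$-linear combination of the forms $z\mapsto g(dz)$, with $g$ a normalized newform of some level $M\mid N$ and $d\mid N/M$; by Shimura \cite{iataf} the $a(n,g)$ generate a number field, and $g^{\sigma}:=\sum_n\sigma(a(n,g))q^n$ is again a newform of level $M$ for every $\sigma\in\mathrm{Aut}(\mathbb{C})$. Hence the semilinear $\mathrm{Aut}(\mathbb{C})$-action on $S_k(N)$ through Fourier coefficients is well defined, $S_k(N,\overline{\mathbb{Q}})$ has full rank, and the fixed space $S_k(N)^{\mathrm{Aut}(\mathbb{C})}=S_k(N,\mathbb{Q})$ is a $\mathbb{Q}$-structure---this last step being Galois descent, equivalently the $q$-expansion principle for the canonical $\mathbb{Z}[1/N]$-model of $X_1(N)$ and of the sheaf of weight-$k$ cusp forms. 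Extending scalars gives $S_k(N,\mathbb{Q})\otimes_{\mathbb{Q}}E\subseteq S_k(N,E)$, so $S_k(N,E)$ has full rank and \eqref{eqn: functorial-cusp-forms} holds for $A=E$. (For $1\le N\le 3$ one first passes to an auxiliary rigidifying level, exactly as in Hida \cite{h81a} and Faltings--Jordan \cite{faltings-jordan}.)

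\emph{The case $A=\mathcal{O}_E$, and the $\mathfrak{l}$-adic cases.} Fixing an $E$-basis $f_1,\dots,f_r$ of $S_k(N,E)$ and using that eigenform coefficients are algebraic integers, I scale each $f_i$ by a suitable $c_i\in\mathcal{O}_E\setminus\{0\}$ so that $c_if_i\in\mathcal{O}_E[[q]]$, hence $c_if_i\in S_k(N,\mathcal{O}_E)$; this shows $S_k(N,\mathcal{O}_E)$ has rank $r$. The Sturm bound gives an integer $B=B(k,N)$ for which $f\mapsto(a(1,f),\dots,a(B,f))$ is injective on $S_k(N)$, so it embeds $S_k(N,\mathcal{O}_E)$ into $\mathcal{O}_E^{B}$; thus $S_k(N,\mathcal{O}_E)$ is finitely generated torsion-free over the Dedekind domain $\mathcal{O}_E$, hence projective of rank $r$, and $S_k(N,\mathcal{O}_E)\otimes_{\mathcal{O}_E,\iota}\mathbb{C}\xrightarrow{\ \sim\ }S_k(N)$. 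For $A=E_{\mathfrak{l}}$ I check $S_k(N,E_{\mathfrak{l}})=S_k(N,E)\otimes_EE_{\mathfrak{l}}$: given $f\in E_{\mathfrak{l}}[[q]]$ with ${}^{\iota}f\in S_k(N)$, the coordinates $c_i$ in ${}^{\iota}f=\sum_ic_i\,{}^{\iota}f_i$ are recovered by inverting an $r\times r$ minor of $(a(n,f_i))_{n\le B}$ that is invertible already over $E$, forcing $c_i\in E_{\mathfrak{l}}$. For $A=\mathcal{O}_{\mathfrak{l}}$ the analogous identity $S_k(N,\mathcal{O}_{\mathfrak{l}})=S_k(N,\mathcal{O}_E)\otimes_{\mathcal{O}_E}\mathcal{O}_{\mathfrak{l}}$ follows by choosing a basis of the free $\mathcal{O}_{\mathfrak{l}}$-module on the right that is represented by elements of $S_k(N,\mathcal{O}_E)$ and noting that if some coordinate of an $f\in\mathcal{O}_{\mathfrak{l}}[[q]]$ in this basis were not integral, then multiplying by an appropriate positive power of a uniformizer and reducing modulo $\mathfrak{l}$ would produce a nontrivial linear relation among the (linearly independent) reductions of the basis elements. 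In both cases \eqref{eqn: functorial-cusp-forms} then follows from the case $A=\mathcal{O}_E$ by flat base change along $\mathcal{O}_E\to E_{\mathfrak{l}}$ and $\mathcal{O}_E\to\mathcal{O}_{\mathfrak{l}}$ followed by $(-)\otimes_{\iota}\mathbb{C}$.

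\emph{Main obstacle.} The single nontrivial ingredient is the combination of Shimura's arithmeticity and $\mathrm{Aut}(\mathbb{C})$-equivariance of Fourier coefficients with the $q$-expansion principle---i.e.\ the existence and flatness of the canonical $\mathbb{Z}[1/N]$-models of $X_1(N)$ and of the cusp-form sheaf---which is precisely the recalled theorem of Shimura--Deligne--Rapoport--Katz; I would invoke it as a black box. (A slicker but less elementary route packages it directly as base change for $H^0$ of a flat coherent sheaf on the smooth proper $\mathbb{Z}[1/N]$-scheme $X_1(N)$, which then yields all four $A$ simultaneously.) Everything else---clearing denominators, the Sturm-bound finite-generation argument, and the uniformizer-and-reduction argument over $\mathcal{O}_{\mathfrak{l}}$---is routine; the only point requiring care is to carry the fixed embedding $\iota\colon E\hookrightarrow\mathbb{C}$, and a compatible embedding of $E_{\mathfrak{l}}$ into $\mathbb{C}$, consistently through all the base changes, so that ``$\otimes_{\iota}\mathbb{C}$'' has a uniform meaning.
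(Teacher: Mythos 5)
The paper itself offers no proof of this statement: it is recalled as a theorem of Shimura, Deligne--Rapoport and Katz, with a pointer to Hida \cite[Ch.\,3]{mfg}, so there is no internal argument to compare yours against. Your sketch essentially reconstructs the standard argument lying behind that citation: rationality of $S_k(N)$ over $\mathbb{Q}$ (hence over $E$) via newform theory and the $\mathrm{Aut}(\mathbb{C})$-action on Fourier coefficients --- equivalently the $q$-expansion principle for the arithmetic model of $X_1(N)$, which you correctly isolate as the single non-elementary input and invoke as a black box, just as the paper does by citation --- followed by bounded denominators to get a full-rank $\mathcal{O}_E$-lattice, the Sturm bound for finite generation, and base change to $E_\mathfrak{l}$ and $\mathcal{O}_\mathfrak{l}$ by linear algebra. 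This is a sound route; what it adds relative to the paper's treatment is only that the reduction of all four coefficient rings to the one geometric input is made explicit.

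One step you assert rather than justify deserves care. In the $\mathcal{O}_\mathfrak{l}$ case you use that the mod-$\mathfrak{l}$ reductions of the $q$-expansions of your chosen basis are linearly independent over the residue field. That is not the same as linear independence in $\bigl(S_k(N,\mathcal{O}_E)\otimes_{\mathcal{O}_E}\mathcal{O}_\mathfrak{l}\bigr)/\mathfrak{l}$: what is needed is that the lattice $S_k(N,\mathcal{O}_E)=S_k(N,E)\cap\mathcal{O}_E[[q]]$ is saturated at $\mathfrak{l}$, i.e.\ that an element of $S_k(N,\mathcal{O}_E)$ all of whose coefficients lie in $\mathfrak{l}$ already lies in $\mathfrak{l}\,S_k(N,\mathcal{O}_E)$. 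This is true and elementary --- for $b\in\mathfrak{l}^{-1}$ the form $bx$ again has integral coefficients and lies in $S_k(N,E)$, hence in the lattice, and $\mathfrak{l}\mathfrak{l}^{-1}=\mathcal{O}_E$ then gives $x\in\mathfrak{l}\,S_k(N,\mathcal{O}_E)$ --- but without some such remark (or an appeal to a mod-$\mathfrak{l}$ $q$-expansion principle or Sturm-type congruence bound) your parenthetical ``linearly independent'' is exactly the point that looks circular, since superficially it presupposes the identity $S_k(N,\mathcal{O}_\mathfrak{l})=S_k(N,\mathcal{O}_E)\otimes_{\mathcal{O}_E}\mathcal{O}_\mathfrak{l}$ being proved. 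With that small lemma supplied, your argument goes through.
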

 For $A = \mathcal{O}_E, E, \mathcal{O}_\mathfrak{l}, E_\mathfrak{l},$ define $h_k({N}, A) \subset \text{End}_{A}(S_k({N}, A))$ 
 to be the Hecke algebra over $A$ generated by the operators $T(p)$ for all primes $p$ and $T(p,p)$ for $p \nmid N.$ 
 Due to the perfect pairing $( \cdot , \cdot ): S_k(N,A) \times h_k(N,A) \rightarrow A,\,\, (f,T) \mapsto a(1,f|T)$, Hida \cite[Thm.\,3.17]{mfg},  
 one gets $h_k({N}, \mathcal{O}_E) \otimes A = h_k({N}, A)$ for $A = E, \mathcal{O}_\mathfrak{l}, E_\mathfrak{l}, \mathbb{C}$ and
 as $h_k(N,A)$-modules:
\begin{gather}
	 S_k(N,A) \ \cong \ \textup{Hom}_{A}(h_k(N,A) , A) 
 \label{eqn: duality-between-hecke-algebras-and-cusp-forms}
\end{gather}

\subsection{Formalism of a Gorenstein datum} \label{section: gorenstein-formalism}
Suppose $R$ is the ring of integers of a local field of characteristic $0.$ Let
$\mathfrak{l} \subset R$ will be the unique maximal principal ideal of $R$ generated by $\varpi_R.$
Let $\mathbb{T}$ be a commutative $R$-algebra with $1$ which is also finite and free as an $R$-module. Since $R$ is complete 
$\mathbb{T}$ is complete as well. It is well known that $\mathbb{T} $ has only finitely many maximal ideals and each such ideal 
$\mathfrak{L}$ defines an idempotent $e_\mathfrak{L} \in \mathbb{T}.$ Furthermore, $e_\mathfrak{L}\mathbb{T} \cong \mathbb{T}_\mathfrak{L}$ and 
$\mathbb{T} = \sum_{\mathfrak{L}} e_\mathfrak{L} \mathbb{T} \cong \oplus_{\mathfrak{L}} \mathbb{T}_\mathfrak{L},$ where the sum is over the finitely many maximal ideals. 
Let $H$ be a \textit{fixed} free $R$--module and also a $\mathbb{T}$--module (not necessarily free over $\mathbb{T}$) and $\mathfrak{L} \subset \mathbb{T}$ a \textit{fixed} maximal ideal. 
Observe $H_\mathfrak{L} \cong e_\mathfrak{L} H$ and so $H_\mathfrak{L} \hookrightarrow H.$
In applications we will be under the following assumptions:
	\begin{enumerate}
		\item  There is a $\mathbb{T}_\mathfrak{L}$ equivariant isomorphism $\Phi_\mathfrak{L}: \mathbb{T}_\mathfrak{L} \xrightarrow{\sim} H_\mathfrak{L}$,
		\item $\text{Hom}_{R-\text{mod}}\left(\mathbb{T}_\mathfrak{L}, R \right) \cong 
		\mathbb{T}_\mathfrak{L} \;\;\; \text{(as $\mathbb{T}_{\mathfrak{L}}$-modules)}.$ 
	\end{enumerate}
	
The second assumption is the definition of a ring (here $\mathbb{T}_\mathfrak{L}$) being Gorenstein. 
Hereafter call the tuple $(R, \mathbb{T}, H, \mathfrak{L})$ which satisfies the the above assumptions to be a \textit{freely Gorenstein datum}.
Let $\Psi_\mathfrak{L}$ denote the isomorphism $\text{Hom}_{R-\text{mod}}\left(\mathbb{T}_\mathfrak{L}, R \right) \cong \mathbb{T}_\mathfrak{L}$
as $\mathbb{T}_{\mathfrak{L}}$-modules. 
	
\subsection{Presence of two congruent morphisms}
	Assume now there are two \textit{distinct} $R$-algebra morphisms $\Theta', \Theta'': \mathbb{T} \rightarrow R$ 
	such that their compositions with the map $R \rightarrow R/\mathfrak{l}^n$ are the same, i.e.,
	\[ \overline{\Theta}'_n = \overline{\Theta}''_n, \]
	where, $\overline{\Theta}'_n := {\Theta'}\pmod{\mathfrak{l}^n}$ and $\overline{\Theta}_n'' := {\Theta''}\pmod{\mathfrak{l}^n}.$ Here $n$ is assumed to be a positive integer. In particular,
	the kernels of $\overline{\Theta}_1'$ and $\overline{\Theta}_1''$ are the same; 
	put $\mathfrak{L} := \ker{\overline{\Theta}_1' = \ker{\overline{\Theta}_1''}}$ which is a maximal ideal. Hence the morphisms $\Theta'$ and $\Theta''$ factors through $\mathbb{T}_\mathfrak{L}$ which will be denoted again by the same symbols. It will be \textit{assumed} that $(R, \mathbb{T}, H, \mathfrak{L})$ is a freely-Gorenstein datum.

	\begin{lemma}
	\label{lem:theta'-theta''}
		Under the $\mathbb{T}_\mathfrak{L}$-equivariant isomorphism  $\Psi_\mathfrak{L}$ one has
		\begin{gather*}
			\Psi_\mathfrak{L}(\Theta'), \, \Psi_\mathfrak{L}(\Theta'') \ \not\in \ \mathfrak{l}^n\mathbb{T}_{\mathfrak{L}},\qquad
			\Psi_\mathfrak{L}(\Theta') - \Psi_\mathfrak{L}(\Theta'') \ \in \ \mathfrak{l}^n \mathbb{T}_{\mathfrak{L}}.
		\end{gather*}
		The algebra $\mathbb{T}_\mathfrak{L}$ acts on $\Psi_\mathfrak{L}(\Theta')$ and $\Psi_\mathfrak{L}(\Theta'')$ by the characters 
		$\Theta'$ and $\Theta''$  respectively.
	\end{lemma}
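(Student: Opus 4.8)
The plan is to unwind the Gorenstein duality explicitly and track the image of the two characters under it. First I would recall the setup: we have the $\mathbb{T}_{\mathfrak{L}}$-equivariant isomorphism $\Psi_{\mathfrak{L}} : \mathrm{Hom}_{R\text{-mod}}(\mathbb{T}_{\mathfrak{L}}, R) \xrightarrow{\sim} \mathbb{T}_{\mathfrak{L}}$, where the source is a $\mathbb{T}_{\mathfrak{L}}$-module via $(t\cdot\varphi)(x) = \varphi(tx)$. The two algebra morphisms $\Theta', \Theta'' : \mathbb{T}_{\mathfrak{L}} \to R$ are in particular $R$-module maps, hence elements of the source, and since $\Theta'$ is a ring homomorphism one checks $t\cdot\Theta' = \Theta'(t)\,\Theta'$ as functionals (both sides send $x$ to $\Theta'(tx) = \Theta'(t)\Theta'(x)$); so $\mathbb{T}_{\mathfrak{L}}$ acts on the line $R\cdot\Theta'$ through the character $\Theta'$, and likewise for $\Theta''$. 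Transporting via the equivariant $\Psi_{\mathfrak{L}}$, the algebra $\mathbb{T}_{\mathfrak{L}}$ acts on $\Psi_{\mathfrak{L}}(\Theta')$ and $\Psi_{\mathfrak{L}}(\Theta'')$ through $\Theta'$ and $\Theta''$ respectively, which is the last sentence of the lemma.

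Next I would prove the non-membership $\Psi_{\mathfrak{L}}(\Theta') \notin \mathfrak{l}\mathbb{T}_{\mathfrak{L}}$. Since $\Psi_{\mathfrak{L}}$ is an isomorphism of $R$-modules it suffices to show $\Theta' \notin \mathfrak{l}\cdot\mathrm{Hom}_{R\text{-mod}}(\mathbb{T}_{\mathfrak{L}}, R)$. Because $\mathbb{T}_{\mathfrak{L}}$ is finite free over $R$, the module $\mathrm{Hom}_{R\text{-mod}}(\mathbb{T}_{\mathfrak{L}}, R)$ is also finite free, so $\mathfrak{l}\cdot\mathrm{Hom} = \{\varphi : \varphi(\mathbb{T}_{\mathfrak{L}}) \subset \mathfrak{l}R\}$. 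But $\Theta'(1) = 1 \notin \mathfrak{l}R$ since $\Theta'$ is a ring map and $\mathfrak{l}$ is proper; hence $\Theta' \notin \mathfrak{l}\cdot\mathrm{Hom}$, and therefore $\Psi_{\mathfrak{L}}(\Theta') \notin \mathfrak{l}\mathbb{T}_{\mathfrak{L}}$. The same argument applies verbatim to $\Theta''$.

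Finally, for the congruence $\Psi_{\mathfrak{L}}(\Theta') - \Psi_{\mathfrak{L}}(\Theta'') \in \mathfrak{l}\mathbb{T}_{\mathfrak{L}}$: by $R$-linearity of $\Psi_{\mathfrak{L}}$ this is equivalent to $\Theta' - \Theta'' \in \mathfrak{l}\cdot\mathrm{Hom}_{R\text{-mod}}(\mathbb{T}_{\mathfrak{L}}, R)$, i.e.\ that $(\Theta' - \Theta'')(x) \in \mathfrak{l}R$ for every $x \in \mathbb{T}_{\mathfrak{L}}$. This is precisely the hypothesis $\overline{\Theta}' = \overline{\Theta}''$, that the reductions mod $\mathfrak{l}$ agree. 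I expect the only mildly delicate point to be the identification $\mathfrak{l}\cdot\mathrm{Hom}_{R\text{-mod}}(\mathbb{T}_{\mathfrak{L}}, R) = \{\varphi : \varphi(\mathbb{T}_{\mathfrak{L}}) \subset \mathfrak{l}R\}$, which uses that $R$ is a discrete valuation ring (so $\mathfrak{l}$ is principal and $\mathbb{T}_{\mathfrak{L}}$ is free, making everything flat); given that, the rest is a direct unwinding of definitions, and no serious obstacle arises.
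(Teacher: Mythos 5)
Your proposal is correct and follows essentially the same route as the paper: the heart of both arguments is the identification of $\mathfrak{l}\cdot\mathrm{Hom}_{R\text{-mod}}(\mathbb{T}_\mathfrak{L},R)$ with the functionals taking values in $\mathfrak{l}$ (the paper phrases this as the base-change isomorphism $\mathrm{Hom}_{R\text{-mod}}(\mathbb{T}_\mathfrak{L},R)\otimes_R R/\mathfrak{l}\cong \mathrm{Hom}_{R\text{-mod}}(\mathbb{T}_\mathfrak{L},R/\mathfrak{l})$ proved by the same dual-basis computation you invoke), after which $\Theta'(1)=1\notin\mathfrak{l}$ gives the non-membership, the hypothesis $\overline{\Theta}'=\overline{\Theta}''$ gives the congruence, and the equivariance computation $t\cdot\Theta'=\Theta'(t)\Theta'$ gives the character action exactly as in the paper. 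The only cosmetic remark is that principality of $\mathfrak{l}$ is not actually needed for the identification, only freeness of $\mathbb{T}_\mathfrak{L}$ over $R$.
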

	
	\begin{proof}
		Since $\mathbb{T}$ is a free $R$-module, $\mathbb{T}_\mathfrak{L}$ is also a free $R$-module. Fix an $R$ basis $\{e_i\}_{i=1}^n$ of $\mathbb{T}_{\mathfrak{L}}$ to get a dual basis $\{e_i^\vee\}_{i=1}^n$ of $\text{Hom}_{R-\text{mod}}(\mathbb{T}_\mathfrak{L}, R).$
		The natural map $ \text{Hom}_{R-\text{mod}}(\mathbb{T}_\mathfrak{L}, R) \otimes_R R/\mathfrak{l}^n \longrightarrow \text{Hom}_{R-\text{mod}}(\mathbb{T}_\mathfrak{L}, R/\mathfrak{l}^n) $
		is $\sum_{i} e_i^\vee \otimes (r_i +\mathfrak{l}^n) \mapsto f : f(e_i) = r_i + \mathfrak{l}^n$ for all $i$.
		One checks the map is bijective.
		Using the isomorphism $\Psi_\mathfrak{L}$ we get the commutative diagram
		\[\begin{tikzcd}
			{\text{Hom}_{R-\text{mod}}\left(\mathbb{T}_\mathfrak{L}, R \right)} && {\mathbb{T}_\mathfrak{L}} \\
			{\text{Hom}_{R-\text{mod}}(\mathbb{T}_\mathfrak{L}, R) \otimes_R R/\mathfrak{l}^n} && {\mathbb{T}_\mathfrak{L} \otimes_R R/\mathfrak{l}^n} \\
			{\text{Hom}_{R-\text{mod}}(\mathbb{T}_\mathfrak{L}, R/ \mathfrak{l}^n)} && {\mathbb{T}_\mathfrak{L}/\mathfrak{l}^n\mathbb{T}_\mathfrak{L}.}
			\arrow["{\sim \;\;\text{via} \;\;\Psi_\mathfrak{L}}", from=1-1, to=1-3]
			\arrow[two heads, from=1-1, to=2-1]
			\arrow[two heads, from=1-3, to=2-3]
			\arrow["\sim", from=2-1, to=2-3]
			\arrow["\sim"', from=2-1, to=3-1]
			\arrow["\sim", from=2-3, to=3-3]
			\arrow["{\sim \;\;\text{via} \;\;\overline{\Psi}_\mathfrak{L}}", from=3-1, to=3-3]
		\end{tikzcd}\]
		
		Since $\overline{\Theta}_n' = \overline{\Theta}_n''$ in $\text{Hom}_{R-\text{mod}}(\mathtt{T}_\mathfrak{L}, R/\mathfrak{l}^n)$ we have that $ \overline{\Psi}_\mathfrak{L}(\overline{\Theta}'_n) - \overline{\Psi}_\mathfrak{L}(\overline{\Theta}''_n) = \overline{\Psi}_\mathfrak{L}(\overline{\Theta}'_n - \overline{\Theta}''_n) = 0$ in $\mathbb{T}_\mathfrak{L}/ \mathfrak{l}^n\mathbb{T}_\mathfrak{L}.$ Due to the commutativity of the above diagram the first assertion is true as ${\Theta}'(1)= {\Theta}''(1) = 1 \not\in \mathfrak{l}^n .$
		
		Now, for  the second claim. For all $t \in \mathbb{T}_\mathfrak{L}$ one has  $t\cdot \Theta' = \Theta'(t) \Theta'$ since for all $x \in \mathbb{T}_\mathfrak{L}$ we have $(t\cdot \Theta')(x) = \Theta'(x t)
		= \Theta'(t x) 
		= \Theta'(t) \Theta'(x).$ Since $\Psi_\mathfrak{L}$ is $\mathbb{T}_\mathfrak{L}$ equivariant it is $R$--linear as well and so $	t \cdot \Psi_\mathfrak{L}( \Theta') = \Psi_{\mathfrak{L}}(t\cdot \Theta')
			= \Psi_{\mathfrak{L}}(\Theta'(t) \Theta')
			= \Theta'(t) \Psi_{\mathfrak{L}}(\Theta').$
	\end{proof}

	\begin{lemma}
		Set $v(\Theta'):= (\Phi_\mathfrak{L} \circ \Psi_\mathfrak{L})(\Theta')$ and 
		$v(\Theta''):= (\Phi_\mathfrak{L} \circ \Psi_\mathfrak{L})(\Theta'').$ 
		On the vectors $v(\Theta')$ and $v(\Theta''),$ the algebra $\mathtt{T}_{\mathfrak{L}}$ acts by $\Theta'$ and $\Theta'',$ respectively. Moreover,
		$$
			v(\Theta'), \ v(\Theta'') \ \not\in \ \mathfrak{l}^n  H_\mathfrak{L}, \quad \text{and} \quad 
			v(\Theta')- v(\Theta'') \ \in \ \mathfrak{l}^n  H_{\mathfrak{L}}.
		$$ 
	\label{Lem:congruent-vectors}
	\end{lemma}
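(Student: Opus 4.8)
The plan is to transport the three conclusions of Lemma~\ref{lem:theta'-theta''} along the Gorenstein isomorphism $\Phi_\mathfrak{L}$. First I would record the structural inputs: by the freely-Gorenstein hypothesis we have a $\mathbb{T}_\mathfrak{L}$-equivariant isomorphism $\Phi_\mathfrak{L}\colon \mathbb{T}_\mathfrak{L} \xrightarrow{\sim} H_\mathfrak{L}$, and $H_\mathfrak{L} \cong e_\mathfrak{L}H \hookrightarrow H$ as $\mathbb{T}_\mathfrak{L}$-modules; on the other side, Lemma~\ref{lem:theta'-theta''} supplies the $\mathbb{T}_\mathfrak{L}$-equivariant isomorphism $\Psi_\mathfrak{L}\colon \mathrm{Hom}_{R\text{-mod}}(\mathbb{T}_\mathfrak{L},R) \xrightarrow{\sim} \mathbb{T}_\mathfrak{L}$ together with the facts $\Psi_\mathfrak{L}(\Theta'),\Psi_\mathfrak{L}(\Theta'')\notin \mathfrak{l}\mathbb{T}_\mathfrak{L}$, $\Psi_\mathfrak{L}(\Theta')-\Psi_\mathfrak{L}(\Theta'')\in \mathfrak{l}\mathbb{T}_\mathfrak{L}$, and that $\mathbb{T}_\mathfrak{L}$ acts on $\Psi_\mathfrak{L}(\Theta')$, $\Psi_\mathfrak{L}(\Theta'')$ through the characters $\Theta'$, $\Theta''$. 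The composite $\Phi_\mathfrak{L}\circ\Psi_\mathfrak{L}$ is then a $\mathbb{T}_\mathfrak{L}$-equivariant isomorphism $\mathrm{Hom}_{R\text{-mod}}(\mathbb{T}_\mathfrak{L},R)\xrightarrow{\sim} e_\mathfrak{L}H$, and $v(\Theta'),v(\Theta'')$ are its values at $\Theta',\Theta''$.

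For the action statement, I would note that $t\cdot\Psi_\mathfrak{L}(\Theta')=\Theta'(t)\Psi_\mathfrak{L}(\Theta')$ for all $t\in\mathbb{T}_\mathfrak{L}$ by the last assertion of Lemma~\ref{lem:theta'-theta''}; applying the $\mathbb{T}_\mathfrak{L}$-equivariant map $\Phi_\mathfrak{L}$ yields $t\cdot v(\Theta')=\Theta'(t)\,v(\Theta')$, so $\mathbb{T}_\mathfrak{L}$ acts on $v(\Theta')$ by the character $\Theta'$, and identically on $v(\Theta'')$ by $\Theta''$.

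For the integrality statements, the key observation is that $\Phi_\mathfrak{L}$, being an $R$-module isomorphism, carries $\mathfrak{l}\mathbb{T}_\mathfrak{L}$ bijectively onto $\mathfrak{l}H_\mathfrak{L}=\mathfrak{l}e_\mathfrak{L}H$, and therefore restricts to a bijection $\mathbb{T}_\mathfrak{L}\setminus\mathfrak{l}\mathbb{T}_\mathfrak{L}\ \xrightarrow{\sim}\ e_\mathfrak{L}H\setminus\mathfrak{l}e_\mathfrak{L}H$. Since $\Psi_\mathfrak{L}(\Theta'),\Psi_\mathfrak{L}(\Theta'')\notin\mathfrak{l}\mathbb{T}_\mathfrak{L}$, this gives $v(\Theta'),v(\Theta'')\notin\mathfrak{l}e_\mathfrak{L}H$; and since $\Psi_\mathfrak{L}(\Theta')-\Psi_\mathfrak{L}(\Theta'')\in\mathfrak{l}\mathbb{T}_\mathfrak{L}$, applying $\Phi_\mathfrak{L}$ and using $R$-linearity gives $v(\Theta')-v(\Theta'')=\Phi_\mathfrak{L}\bigl(\Psi_\mathfrak{L}(\Theta')-\Psi_\mathfrak{L}(\Theta'')\bigr)\in\mathfrak{l}e_\mathfrak{L}H$.

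I do not expect a genuine obstacle: the mathematical content is entirely contained in the construction of $\Psi_\mathfrak{L}$ (Lemma~\ref{lem:theta'-theta''}) and in the Gorenstein isomorphism $\Phi_\mathfrak{L}$ provided by the datum. The only point meriting a word of care is the identification $H_\mathfrak{L}\cong e_\mathfrak{L}H$ inside $H$ and its compatibility with multiplication by $\mathfrak{l}$, which is immediate since $e_\mathfrak{L}$ is an idempotent of $\mathbb{T}$ and hence commutes with the $R$-action on $H$.
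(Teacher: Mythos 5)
Your proposal is correct and is exactly the paper's argument: the paper simply says the lemma "follows from Lemma~\ref{lem:theta'-theta''} and the $\mathbb{T}_\mathfrak{L}$-equivariance of $\Psi_\mathfrak{L}$ and $\Phi_\mathfrak{L}$," which is precisely the transport-along-$\Phi_\mathfrak{L}$ argument you spell out. The extra details you give (that an $R$-module isomorphism carries $\mathfrak{l}\mathbb{T}_\mathfrak{L}$ onto $\mathfrak{l}e_\mathfrak{L}H$, and the identification $H_\mathfrak{L}\cong e_\mathfrak{L}H$) are the intended, routine verifications.
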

	\begin{proof}
	Follows from Lem.\,\ref{lem:theta'-theta''} and that both $\Psi_\mathfrak{L}$ and $\Phi_\mathfrak{L}$ are $\mathbb{T}_\mathfrak{L}$ equivariant.
	\end{proof}

\subsection{Specializing to a particular $\mathbb{T}$ and $H$}
\label{sec:particular-T-H}
Take $\mathbb{T} $ to be the Hecke algebra $ h_{k'}(N',\mathcal{O}_\mathfrak{l})$ 
and $H$ the cohomology group $H^1_!(X_1(N'), \widetilde{\mathcal{M}}_{\mu', \mathcal{O}_\mathfrak{l}})(\epsilon')$. 
Let $\Theta': h_{k'}(N', \mathcal{O}_\mathfrak{l}) \rightarrow \mathcal{O}_\mathfrak{l}$ 
(resp., $\Theta'': h_{k'}(N', \mathcal{O}_\mathfrak{l}) \rightarrow \mathcal{O}_\mathfrak{l}$) be the morphisms 
$T(p) \mapsto a(p,h')$ (resp., $T(p) \mapsto a(p,h'')$). Recall our context: $h' \equiv h'' \pmod{\mathfrak{l}^n}$ which implies 
$\overline{\Theta}'_n = \overline{\Theta}''_n$. 
Let $\mathfrak{L} \subset h_{k'}(N',\mathcal{O}_\mathfrak{l})$ 
be the maximal ideal determined by the hypothesis $\overline{\Theta}'_1 = \overline{\Theta}''_1.$
Assume $E$ is large so that $\mathbb{T}/\mathfrak{L} \cong  \mathcal{O}_\mathfrak{l}/\mathfrak{l}.$
Let ${\varrho}_{\Theta'} : G_{\mathbb{Q}} \rightarrow \GL_2(\mathbb{T}/\mathfrak{L})$ be the Galois representation modulo $l$ attached to the 
Hecke algebra morphism $\Theta'$, constructed by Deligne \cite{deligne}. It is a semi-simple representation determined by
\begin{equation}
	\textup{Trace} \,\, {\varrho}_{\Theta'}(\textup{Frob}_{p}) = a(p,h') \pmod{\mathfrak{l}}\quad \text{ and } \quad \det{{\varrho}_{\Theta'}(\textup{Frob}_p)} = \chi'(p) p^{k'-1} \pmod{\mathfrak{l}}. 
\end{equation}
for $p\nmid N'l .$ (See, for example, Gross \cite[p.\,483, Prop.\,11.1]{gross}.) 
Consider the twist ${\varrho}_{\Theta'}\chi'^{-1}$ of the representation of ${\varrho}_{\Theta'}$ by the character ${\chi'}^{-1}$ which satisfies: 
\begin{multline}
		\textup{Trace} \,\, {\varrho}_{\Theta'}\chi'^{-1}(\textup{Frob}_{p}) = a(p,h')\chi'^{-1}(p) \pmod{\mathfrak{l}}\quad \text{ and }\\ \quad 
		\det{{\varrho}_{\Theta'}\chi'^{-1}(\textup{Frob}_p)} = \chi'^{-2}(p) p^{k'-1} \pmod{\mathfrak{l}}.
\end{multline}
for $p \nmid N' l.$ This is the mod $l$ representation attached to the newform $h'^\rho = \sum_{n=1}^\infty \overline{a(n,h')}q^n$ 
or equivalently the Hecke algebra morphism 
$\Theta'^\rho: h_{k'}(N', \mathcal{\mathfrak{l}}) \rightarrow \mathcal{O}_\mathfrak{l},\,\, T(p) \mapsto \chi'^{-1}(p)\,\,a(p,h),$ 
due to $\overline{ a(p,h)}= \chi'^{-1}(p)\,\,a(p,h)$ for $p\nmid lN'.$
Also, ${\varrho}_{\Theta'}$ is irreducible if and only if ${\varrho}_{\Theta'}\chi'^{-1}$ is irreducible, and 
the maximal ideal of $h_{k'}(N', \mathcal{O}_\mathfrak{l})$ determined by $\Theta'$ and $\Theta'^\rho$ are the same.

\begin{theorem}
\label{thm:Hecke-Gorenstein}
	Assume $l > k'$ and $(l,6N') = 1$ and the Galois representation ${\varrho}_{\Theta'}$ is irreducible. Then 
	\begin{equation}
		(\mathcal{O}_\mathfrak{l}, \ h_{k'}(N', \mathcal{O}_\mathfrak{l}), \ 
		{H}^1_!(X_1(N'), \underline{\mathcal{M}}_{\mu',\mathcal{O}_\mathfrak{l}})(\epsilon'), \ \mathfrak{L})
	\end{equation}
	is a freely Gorenstein datum.
\end{theorem}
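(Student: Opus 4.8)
The plan is to verify the two defining conditions of a freely Gorenstein datum for the tuple $(\mathcal{O}_\mathfrak{l},\, h_{k'}(N',\mathcal{O}_\mathfrak{l}),\, H,\, \mathfrak{L})$, where $H := H^1_!(X_1(N'),\underline{\mathcal{M}}_{\mu',\mathcal{O}_\mathfrak{l}})(\epsilon')$ and $\mathfrak{L}=\ker\overline{\Theta}'$. First I would record the basic structural facts needed to even apply the formalism: $h_{k'}(N',\mathcal{O}_\mathfrak{l})$ is finite and free over $\mathcal{O}_\mathfrak{l}$ by \eqref{eqn: duality-between-hecke-algebras-and-cusp-forms}, and $H$ is a finite free $\mathcal{O}_\mathfrak{l}$-module carrying a $\mathbb{T}$-action: indeed the hypothesis $l > k'$ places $\mathfrak{l}$ outside the finite set of \ref{sec:avoiding-torsion}, so $H^1_!(X_1(N'),\underline{\mathcal{M}}_{\mu',\mathcal{O}_\mathfrak{l}}) \cong H^1_!(\Gamma_1(N'),\mathcal{M}_{\mu',\mathcal{O}_\mathfrak{l}})$ is $\mathfrak{l}$-torsion free, hence free over the discrete valuation ring $\mathcal{O}_\mathfrak{l}$; since $l$ is odd, cutting out the $\epsilon'$-eigenspace for complex conjugation gives a direct summand, still free over $\mathcal{O}_\mathfrak{l}$, preserved by the Hecke operators, and commuting with $-\otimes_{\mathcal{O}_\mathfrak{l}}E_\mathfrak{l}$.

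Next I would pin down the generic Hecke-module structure of $H$. The Eichler--Shimura isomorphism identifies $H\otimes_{\mathcal{O}_\mathfrak{l}}\mathbb{C}$ with the space $S_{k'}(N')$ of cusp forms as a Hecke module (with the $\rho$-twist built into the normalisation \eqref{eqn:choice-of-repns} and the definition of $\Theta'^\rho$; recall it was already noted that $\Theta'$ and $\Theta'^\rho$ determine the same $\mathfrak{L}$). Combined with the duality $S_{k'}(N',\mathcal{O}_\mathfrak{l})\cong\mathrm{Hom}_{\mathcal{O}_\mathfrak{l}}(h_{k'}(N',\mathcal{O}_\mathfrak{l}),\mathcal{O}_\mathfrak{l})$ of \eqref{eqn: duality-between-hecke-algebras-and-cusp-forms} and multiplicity one for newforms, this shows that after inverting $\mathfrak{l}$ the module $H\otimes_{\mathcal{O}_\mathfrak{l}}E_\mathfrak{l}$ is free of rank one over the semisimple algebra $h_{k'}(N',\mathcal{O}_\mathfrak{l})\otimes_{\mathcal{O}_\mathfrak{l}}E_\mathfrak{l}$; in particular $H_\mathfrak{L}\otimes_{\mathcal{O}_\mathfrak{l}}E_\mathfrak{l}$ is free of rank one over $\mathbb{T}_\mathfrak{L}\otimes_{\mathcal{O}_\mathfrak{l}}E_\mathfrak{l}$.

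For condition (1) I would then invoke mod-$\mathfrak{L}$ multiplicity one. Since $\varrho_{\Theta'}$ --- equivalently $\varrho_{\Theta'}\chi'^{-1}$, the residual representation attached to $h'^\rho$ --- is irreducible, the ideal $\mathfrak{L}$ is non-Eisenstein, and since furthermore $l > k'$, the results of Hida \cite{h81a} and Faltings--Jordan \cite{faltings-jordan} give that $H^1_!(X_1(N'),\underline{\mathcal{M}}_{\mu',\mathcal{O}_\mathfrak{l}})_\mathfrak{L}$ is free of rank two over $\mathbb{T}_\mathfrak{L}$; as $l$ is odd, the complex-conjugation involution splits it into two $\mathbb{T}_\mathfrak{L}$-direct summands $H^1_!(\cdots)(\pm)_\mathfrak{L}$, each then projective, hence free, over the local ring $\mathbb{T}_\mathfrak{L}$, and each of rank one by the generic count of the previous paragraph. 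In particular $H_\mathfrak{L}=H^1_!(\cdots)(\epsilon')_\mathfrak{L}$ is free of rank one over $\mathbb{T}_\mathfrak{L}$, which is exactly the required isomorphism $\Phi_\mathfrak{L}:\mathbb{T}_\mathfrak{L}\xrightarrow{\sim}H_\mathfrak{L}$.

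Finally, for condition (2) --- the Gorenstein property of $\mathbb{T}_\mathfrak{L}$ --- I would use Poincar\'e duality on the curve $X_1(N')$: the self-duality of $\mathcal{M}_{\mu'}$ up to a twist by $\det$, together with the fundamental class, furnishes a pairing on $H^1_!(X_1(N'),\underline{\mathcal{M}}_{\mu',\mathcal{O}_\mathfrak{l}})$ that becomes perfect after localising at the non-Eisenstein $\mathfrak{L}$ (again using $l>k'$), that is Hecke-equivariant up to the canonical adjunction involution of $\mathbb{T}_\mathfrak{L}$ (sending $T(p)$ to $\langle p\rangle^{-1}T(p)$, i.e.\ interchanging $\Theta'$ and $\Theta'^\rho$, and hence preserving $\mathfrak{L}$), and that interchanges the two complex-conjugation eigenspaces. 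Therefore $\mathrm{Hom}_{\mathcal{O}_\mathfrak{l}}(H_\mathfrak{L},\mathcal{O}_\mathfrak{l})$ is isomorphic, as a $\mathbb{T}_\mathfrak{L}$-module twisted by that involution, to $H^1_!(\cdots)(-\epsilon')_\mathfrak{L}$, which by condition (1) is again free of rank one over $\mathbb{T}_\mathfrak{L}$; absorbing the involution (a ring automorphism of $\mathbb{T}_\mathfrak{L}$) yields $\mathrm{Hom}_{\mathcal{O}_\mathfrak{l}}(\mathbb{T}_\mathfrak{L},\mathcal{O}_\mathfrak{l})\cong\mathbb{T}_\mathfrak{L}$ as $\mathbb{T}_\mathfrak{L}$-modules. (Alternatively one may quote directly that $h_{k'}(N',\mathcal{O}_\mathfrak{l})_\mathfrak{L}$ is Gorenstein under these hypotheses; cf.\ Mazur, Tilouine, Wiles, Hida \cite{h81a}, Faltings--Jordan \cite{faltings-jordan}.) I expect the genuine difficulty to be concentrated in condition (1), namely the mod-$\mathfrak{L}$ multiplicity-one bound $\dim_{\mathbb{T}/\mathfrak{L}}(H_\mathfrak{L}/\mathfrak{L}H_\mathfrak{L})=1$, for which the irreducibility of $\varrho_{\Theta'}$ and the bound $l>k'$ are both essential and for which one must appeal to the cited literature; the remaining steps are formal given that input, modulo careful bookkeeping of the $\rho$-twist.
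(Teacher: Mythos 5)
Your argument is correct in substance and rests on exactly the same external inputs as the paper --- Hida's torsion-freeness result for $l>k'$ and Faltings--Jordan \cite{faltings-jordan} for the deep content at the non-Eisenstein ideal $\mathfrak{L}$ --- but you draw the logical arrows in the opposite direction. The paper's proof is three sentences: $\mathcal{O}_\mathfrak{l}$-freeness of $H$ from $l>k'$; Gorenstein-ness of $h_{k'}(N',\mathcal{O}_\mathfrak{l})_\mathfrak{L}$ quoted directly from \cite[Theorem 2.1]{faltings-jordan}; and then the isomorphism $\Phi_\mathfrak{L}\colon \mathbb{T}_\mathfrak{L}\xrightarrow{\sim}H_\mathfrak{L}$ extracted from the $q$-expansion duality \eqref{eqn: duality-between-hecke-algebras-and-cusp-forms} together with Eichler--Shimura (Gorenstein $\Rightarrow$ multiplicity one). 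You instead take as primary the mod-$\mathfrak{L}$ multiplicity-one freeness statement (free of rank two, split by complex conjugation into two rank-one eigenspaces), which gives condition (1) directly, and then recover the Gorenstein property (2) from Poincar\'e duality, offering the direct citation only as an alternative; this is the standard converse implication (multiplicity one $+$ duality $\Rightarrow$ Gorenstein), and since Faltings--Jordan prove both statements, either organization is legitimate --- yours makes the cohomological content more explicit, the paper's is shorter. One small inaccuracy to fix: $h_{k'}(N',\mathcal{O}_\mathfrak{l})\otimes E_\mathfrak{l}$ need not be semisimple (the operators $T(p)$ for $p\mid N'$ can act non-semisimply on oldform components), so "free of rank one over a semisimple algebra" is not the right justification for the generic rank; what you actually need, and what suffices for your rank count once projectivity over the local ring $\mathbb{T}_\mathfrak{L}$ is known, is only that each complex-conjugation eigenspace of $H\otimes E_\mathfrak{l}$ has $E_\mathfrak{l}$-dimension equal to $\dim_{E_\mathfrak{l}}\bigl(h_{k'}(N',\mathcal{O}_\mathfrak{l})\otimes E_\mathfrak{l}\bigr)$, which follows from Eichler--Shimura and the duality \eqref{eqn: duality-between-hecke-algebras-and-cusp-forms} without any semisimplicity. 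With that adjustment, and keeping the bookkeeping that $\Theta'$ and $\Theta'^\rho$ cut out the same maximal ideal $\mathfrak{L}$ (as the paper notes), your proof goes through.
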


\begin{proof}
	The $\mathcal{O}_\mathfrak{l}$-freeness of $H^1_!(X_1(N'), \underline{\mathcal{M}}_{\mu',\mathcal{O}_\mathfrak{l}})(\epsilon')$ is due to the fact $l > k'$. 
	It is proved in Faltings and Jordan \cite[Thm.\,2.1]{faltings-jordan} that $h_{k'}(N',\mathcal{O}_\mathfrak{l})_\mathfrak{L}$ is Gorenstein. 
	The freeness condition in the definition of a freely Gorenstein datum follows from the perfect pairing between Hecke algebras and 
	cusp forms as in \eqref{eqn: duality-between-hecke-algebras-and-cusp-forms} and the Eichler-Shimura isomorphism. 
\end{proof}

\subsection{Explicit congruent cohomology classes}
\label{sec:explicit-coh-classes}

Lem.\,\ref{Lem:congruent-vectors} when applied to the particular context of Sect.\,\ref{sec:particular-T-H} gives the following

\begin{theorem} 
\label{thm:congruent-coh-classes}
Under the assumption on the congruence prime $l > k'$ and $(l,6N') = 1,$
	there are cohomology classes 
	${^\circ}v^{\epsilon'}_{\mu'}(h'^\rho)$ and ${^\circ}v^{\epsilon'}_{\mu'}(h''^\rho)$ in 
	$\tilde{H}^1_!(S_1^{(2)}({N}'), \widetilde{\mathcal{M}}_{\mu', \mathcal{O}_\mathfrak{l}})(\epsilon')$ 
	on which $h_{k'}({N'}, \mathcal{O}_\mathfrak{l})$ acts by $\Theta'^\rho$ and $\Theta''^\rho,$ respectively. 
	They are non-zero modulo $\mathfrak{l}^n$ but their difference is $0$ modulo $\mathfrak{l}^n$, i.e., 
	\begin{align} 
	\begin{aligned}
		{^\circ}v_{\mu'}^{\epsilon'}(h'^\rho), \ {^\circ}v_{\mu'}^{\epsilon'}(h''^\rho) &
		 \ \not\in \ \mathfrak{l}^n \tilde{H}^1_!(S_1^{(2)}({N}'), \widetilde{\mathcal{M}}_{\mu', \mathcal{O}_\mathfrak{l}})(\epsilon'), \\
		{^\circ}v^{\epsilon'}_{\mu'}(h'^\rho) - {^\circ}v^{\epsilon'}_{\mu'}(h''^\rho) &
		 \ \in \ \mathfrak{l}^n  \tilde{H}^1_!(S_1^{(2)}({N}'), \widetilde{\mathcal{M}}_{\mu', \mathcal{O}_\mathfrak{l}})(\epsilon').
	\end{aligned}
	\end{align}
\end{theorem}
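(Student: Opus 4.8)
The plan is to deduce the statement directly from the Gorenstein formalism of \S\ref{section: gorenstein-formalism} and its specialization in \S\ref{sec:Hecke-Gorenstein}, fed with the two congruent Hecke morphisms attached to $h'^\rho$ and $h''^\rho$ rather than to $h'$ and $h''$. We work under the standing hypotheses: $l>k'$, the mod-$\mathfrak{l}$ representation $\varrho_{\Theta'}$ is irreducible, $\mathfrak{l}\notin\mathsf{S}_{N'}\cup\mathsf{S}_{\mu'}$, and $a(n,h')\equiv a(n,h'')\pmod{\mathfrak{l}}$ for all $n$.

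First I would set up the right Gorenstein datum. As noted after the definition of $\varrho_{\Theta'}\chi'^{-1}$, irreducibility of $\varrho_{\Theta'}$ is equivalent to that of the mod-$\mathfrak{l}$ representation attached to $h'^\rho$, and the maximal ideal of $h_{k'}(N',\mathcal{O}_\mathfrak{l})$ cut out by $\Theta'^\rho$ coincides with $\mathfrak{L}=\ker\overline{\Theta}'$. Hence, by the preceding theorem, the tuple $(\mathcal{O}_\mathfrak{l},\,h_{k'}(N',\mathcal{O}_\mathfrak{l}),\,H^1_!(X_1(N'),\underline{\mathcal{M}}_{\mu',\mathcal{O}_\mathfrak{l}})(\epsilon'),\,\mathfrak{L})$ is a freely Gorenstein datum. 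Since $\overline{\chi'^{-1}(p)a(p,h')}=\overline{\chi'^{-1}(p)a(p,h'')}$ for all $p\nmid N'l$, the morphisms $\Theta'^\rho,\Theta''^\rho\colon h_{k'}(N',\mathcal{O}_\mathfrak{l})\to\mathcal{O}_\mathfrak{l}$, which are distinct by construction, satisfy $\overline{\Theta'^\rho}=\overline{\Theta''^\rho}$ with common kernel $\mathfrak{L}$; thus they meet the hypotheses of \S\ref{section: gorenstein-formalism}.

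Next, applying Lem.~\ref{lem:theta'-theta''} and the lemma immediately following it, with $\Theta',\Theta''$ replaced by $\Theta'^\rho,\Theta''^\rho$, produces vectors $v(\Theta'^\rho),v(\Theta''^\rho)\in e_\mathfrak{L}H$ on which $h_{k'}(N',\mathcal{O}_\mathfrak{l})_\mathfrak{L}$ acts through $\Theta'^\rho$ and $\Theta''^\rho$, with
\[
v(\Theta'^\rho),\ v(\Theta''^\rho)\ \notin\ \mathfrak{l}\,e_\mathfrak{L}H,\qquad
v(\Theta'^\rho)-v(\Theta''^\rho)\ \in\ \mathfrak{l}\,e_\mathfrak{L}H,
\]
where $H=H^1_!(X_1(N'),\underline{\mathcal{M}}_{\mu',\mathcal{O}_\mathfrak{l}})(\epsilon')$. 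Because $l>k'$ and $\mathfrak{l}\nmid N'$, the module $H$ is $\mathcal{O}_\mathfrak{l}$-free, so $e_\mathfrak{L}H$ is an $\mathcal{O}_\mathfrak{l}$-direct summand of $H$ and the inclusion $e_\mathfrak{L}H\hookrightarrow H$ is saturated; hence the non-vanishing and congruence conditions persist when the two vectors are viewed inside $H$. Finally I would transport them along the Hecke-equivariant identifications recalled earlier: the Eichler--Shimura isomorphism $H^1_!(X_1(N'),\underline{\mathcal{M}}_{\mu',\mathcal{O}_\mathfrak{l}})\cong H^1_!(\Gamma_1(N'),\mathcal{M}_{\mu',\mathcal{O}_\mathfrak{l}})$ and the homeomorphism $S_1^{(2)}(N')\xrightarrow{\sim}X_1(N')$ giving $H^1_!(S_1^{(2)}(N'),\widetilde{\mathcal{M}}_{\mu',\mathcal{O}_\mathfrak{l}})\cong H^1_!(X_1(N'),\underline{\mathcal{M}}_{\mu',\mathcal{O}_\mathfrak{l}})$, all of which are compatible with the $\epsilon'$-eigenspace decomposition; since $\mathfrak{l}\notin\mathsf{S}_{N'}\cup\mathsf{S}_{\mu'}$ there is no $\mathfrak{l}$-torsion by \S\ref{sec:avoiding-torsion}, so $\tilde{H}^1_!(S_1^{(2)}(N'),\widetilde{\mathcal{M}}_{\mu',\mathcal{O}_\mathfrak{l}})(\epsilon')=H^1_!(S_1^{(2)}(N'),\widetilde{\mathcal{M}}_{\mu',\mathcal{O}_\mathfrak{l}})(\epsilon')$. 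Defining ${}^\circ v^{\epsilon'}_{\mu'}(h'^\rho)$ and ${}^\circ v^{\epsilon'}_{\mu'}(h''^\rho)$ to be the images of $v(\Theta'^\rho)$ and $v(\Theta''^\rho)$, the Hecke-eigenvalue claims and the two displayed statements follow because every map in the chain is a Hecke-equivariant isomorphism of $\mathcal{O}_\mathfrak{l}$-modules, hence carries $\mathfrak{l}$-multiples to $\mathfrak{l}$-multiples and non-$\mathfrak{l}$-multiples to non-$\mathfrak{l}$-multiples.

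The only real obstacle is bookkeeping rather than mathematics: one must verify that each object in the chain --- the $\epsilon'$-eigenspace, the fixed integral structure $\tilde{H}^1_!$, the localization at $\mathfrak{L}$ --- is simultaneously $\mathcal{O}_\mathfrak{l}$-free and Hecke-equivariantly identified with the next, so that the freely Gorenstein datum genuinely applies to $H^1_!(S_1^{(2)}(N'),\widetilde{\mathcal{M}}_{\mu',\mathcal{O}_\mathfrak{l}})(\epsilon')$ and not merely to an abstract $\mathbb{T}$-module. This is precisely where $l>k'$ and $\mathfrak{l}\notin\mathsf{S}_{N'}\cup\mathsf{S}_{\mu'}$ are used, and where irreducibility of $\varrho_{\Theta'}$ enters, via Faltings--Jordan, to guarantee that $h_{k'}(N',\mathcal{O}_\mathfrak{l})_\mathfrak{L}$ is Gorenstein and that $H_\mathfrak{L}$ is free of rank one over it.
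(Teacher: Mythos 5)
Your proposal is correct and follows essentially the same route the paper intends: the theorem is an immediate consequence of the freely Gorenstein datum $(\mathcal{O}_\mathfrak{l}, h_{k'}(N',\mathcal{O}_\mathfrak{l}), H^1_!(X_1(N'),\underline{\mathcal{M}}_{\mu',\mathcal{O}_\mathfrak{l}})(\epsilon'), \mathfrak{L})$ together with Lem.\,\ref{lem:theta'-theta''} and the lemma following it, applied to $\Theta'^\rho,\Theta''^\rho$ (using that these cut out the same maximal ideal $\mathfrak{L}$), then transported through the torsion-free Hecke-equivariant identifications of the classical and adelic cohomology groups. Your added bookkeeping (saturation of $e_\mathfrak{L}H$ in $H$, the role of $l>k'$ and $\mathfrak{l}\notin\mathsf{S}_{N'}\cup\mathsf{S}_{\mu'}$) matches the hypotheses the paper invokes for the same purpose.
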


Also, fix a cohomology class for the cusp form $h$ which is non-zero modulo $\mathfrak{l}^n$:
\begin{equation}
	{^\circ}v^{\epsilon'}_{\mu}(h) \in \tilde{H}^1_!(S_1^{(2)}(N), \widetilde{\mathcal{M}}_{\mu,\mathcal{O}_\mathfrak{l}})(\epsilon' \otimes \sigma_f) 
	\setminus \mathfrak{l}^n\tilde{H}^1_!(S_1^{(2)}(N), \widetilde{\mathcal{M}}_{\mu,\mathcal{O}_\mathfrak{l}})(\epsilon' \otimes \sigma_f),
\end{equation}
where $\tilde{H}^1_!(S_1^{(2)}(N), \widetilde{\mathcal{M}}_{\mu,\mathcal{O}_\mathfrak{l}})(\epsilon' \otimes \sigma_f)$ is 
$\tilde{H}^1_!(S_1^{(2)}(N), \widetilde{\mathcal{M}}_{\mu,\mathcal{O}_\mathfrak{l}})(\epsilon')\cap 
\tilde{H}^1_!(S_1^{(2)}(N), \widetilde{\mathcal{M}}_{\mu,E_\mathfrak{l}})(\epsilon' \otimes \sigma_f).$ 

It is helpful to keep in mind that 
${^\circ}v^{\epsilon'}_{\mu'}(h'^\rho)= {^\circ}v^{\epsilon'}_{\mu'(-n)}(h'^\rho)$ and 
${^\circ}v^{\epsilon'}_{\mu}(h)= {^\circ}v^{\epsilon'}_{\mu(-m)}(h)$ for any $m,n\in \mathbb{Z}.$ Only the action of the Hecke algebra is different on these vectors.
Fix the notations
\begin{gather}
	\begin{split}
	{^\circ}v^{\epsilon'}_{\mu + \mu'}(h,h'^\rho) \ := \ {^\circ}v^{\epsilon'}_{\mu}(h) \otimes {^\circ}v^{\epsilon'}_{\mu'}(h'^\rho),\qquad
	{^\circ}v^{\epsilon'}_{\mu'(-2)+\mu(2)}(h'^\rho,h) \ := \ {^\circ}v^{\epsilon'}_{\mu'(-2)}(h'^\rho)\otimes {^\circ}v^{\epsilon'}_{\mu(2)}(h),\\
	{^\circ}v^{\epsilon'}_{\mu + \mu'}(h,h''^\rho) \ := \ {^\circ}v^{\epsilon'}_{\mu}(h) \otimes {^\circ}v^{\epsilon'}_{\mu'}(h''^\rho), \qquad
	{^\circ}v^{\epsilon'}_{\mu'(-2)+\mu(2)}(h''^\rho,h) \ := \ {^\circ}v^{\epsilon'}_{\mu'(-2)}(h''^\rho)\otimes {^\circ}v^{\epsilon'}_{\mu(2)}(h).
	\end{split}
\end{gather}
The vectors ${^\circ}v^{\epsilon'}_{\mu}(h) \otimes {^\circ}v_{\mu^?}^{\epsilon}(h^{?\rho})$  
are in 
$H^2_{!}(S_{N\times N'}^{M_P}, \widetilde{\mathcal{M}}_{\mu+\mu', E})(\tilde{\epsilon}'\otimes \sigma_f \otimes \sigma^?_f)$ for ${^?}\in \{ ', ''\}.$ 
Similar comments apply to the other vectors.

\begin{corollary}
	The vectors ${^\circ}v^{\epsilon'}_{\mu + \mu'}(h,h'^\rho), {^\circ}v^{\epsilon'}_{\mu + \mu'}(h,h''^\rho)$ 
	are not zero modulo $\mathfrak{l}^n$, but are congruent modulo $\mathfrak{l}^n$, i.e.,
	\begin{gather*}
		{^\circ}v^{\epsilon'}_{\mu + \mu'}(h,h'^\rho), \ {^\circ}v^{\epsilon'}_{\mu + \mu'}(h,h''^\rho) \ \not\in \ 
		\mathfrak{l}^n  H^2_{!}(S^{M_P}_{N\times N'}, \widetilde{\mathcal{M}}_{\mu+\mu', \mathcal{O}_\mathfrak{l}})(\tilde{\epsilon}'),\\
		{^\circ}v^{\epsilon'}_{\mu + \mu'}(h,h'^\rho) - {^\circ}v^{\epsilon'}_{\mu + \mu'}(h,h''^\rho) \ \in \ 
		\mathfrak{l}^n H^2_{!}(S^{M_P}_{N\times N'}, \widetilde{\mathcal{M}}_{\mu+\mu', \mathcal{O}_\mathfrak{l}})(\tilde{\epsilon}').
	\end{gather*}
	Similarly,
	\begin{gather*}
		{^\circ}v^{\epsilon'}_{\mu'(-2)+\mu(2)}(h'^\rho,h), \ {^\circ}v^{\epsilon'}_{\mu'(-2)+\mu(2)}(h''^\rho,h)  \ \not\in \ 
		\mathfrak{l}^n  H^2_{!}(S^{M_P}_{N'\times N}, \widetilde{\mathcal{M}}_{_{\mu'(-2)+\mu(2)}, \mathcal{O}_\mathfrak{l}})(\tilde{\epsilon}'),\\
		{^\circ}v^{\epsilon'}_{\mu'(-2)+\mu(2)}(h'^\rho,h) - {^\circ}v^{\epsilon'}_{\mu'(-2)+\mu(2)}(h''^\rho,h) \ \in \ 
		\mathfrak{l}^n H^2_{!}(S^{M_P}_{N'\times N}, \widetilde{\mathcal{M}}_{_{\mu'(-2)+\mu(2)}, \mathcal{O}_\mathfrak{l}})(\tilde{\epsilon}').
	\end{gather*}
\end{corollary}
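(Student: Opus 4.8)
The plan is to deduce the corollary formally from the preceding theorem by transporting its conclusions across the integral K\"unneth isomorphism of \S\ref{sec:integral-kunneth-levi}. Throughout I work under the standing hypothesis that $\mathfrak{l}$ lies outside $\mathsf{S}_{\textup{Eis}}\cup\mathsf{S}_{\textup{level}}\cup\mathsf{S}_{\textup{weight}}$, so that all the $\GL_2$- and $M_P$-cohomology groups with $\mathcal{O}_\mathfrak{l}$-coefficients appearing below are torsion-free, hence free of finite rank over the discrete valuation ring $\mathcal{O}_\mathfrak{l}$; in particular the two lattices
$$
L \ := \ \tilde{H}^1_!(S_1^{(2)}(N), \widetilde{\mathcal{M}}_{\mu,\mathcal{O}_\mathfrak{l}})(\epsilon'\times\sigma_f),
\qquad
L' \ := \ \tilde{H}^1_!(S_1^{(2)}(N'), \widetilde{\mathcal{M}}_{\mu',\mathcal{O}_\mathfrak{l}})(\epsilon')
$$
are free $\mathcal{O}_\mathfrak{l}$-modules (the second by the Gorenstein theorem above), and, by the very definition of the integral structure in \S\ref{sec:integral-kunneth-levi}, the K\"unneth map identifies $H^2_!(S^{M_P}_{N\times N'}, \widetilde{\mathcal{M}}_{\mu+\mu', \mathcal{O}_\mathfrak{l}})(\tilde{\epsilon}')$ with $L\otimes_{\mathcal{O}_\mathfrak{l}}L'$, Hecke- and sign-equivariantly, which is what permits one to restrict it to the $\tilde{\epsilon}'$-isotypic part.

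Next I would isolate the two elementary facts about modules over the DVR $\mathcal{O}_\mathfrak{l}$ that carry the argument. First, base change along $\mathcal{O}_\mathfrak{l}\to k := \mathcal{O}_\mathfrak{l}/\mathfrak{l}$ commutes with $\otimes_{\mathcal{O}_\mathfrak{l}}$ — the same formal identity used to obtain \eqref{eqn: functorial-relations-for-the-levi-1}, now for the ring map $\mathcal{O}_\mathfrak{l}\to k$ — so that $(L\otimes_{\mathcal{O}_\mathfrak{l}}L')\otimes_{\mathcal{O}_\mathfrak{l}}k \cong (L\otimes_{\mathcal{O}_\mathfrak{l}}k)\otimes_k(L'\otimes_{\mathcal{O}_\mathfrak{l}}k)$, and under this isomorphism a pure tensor $v\otimes v'$ goes to $\bar v\otimes\bar v'$; since the tensor product over a field of two nonzero vectors is nonzero, one gets $v\otimes v'\notin\mathfrak{l}(L\otimes_{\mathcal{O}_\mathfrak{l}}L')$ as soon as $v\notin\mathfrak{l}L$ and $v'\notin\mathfrak{l}L'$. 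Second, since $\mathfrak{l}$ is principal and $L'$ torsion-free, $L\otimes_{\mathcal{O}_\mathfrak{l}}(\mathfrak{l}L') = \mathfrak{l}(L\otimes_{\mathcal{O}_\mathfrak{l}}L')$.

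With these in hand the corollary is immediate. The preceding theorem gives $v := {}^\circ v^{\epsilon'}_\mu(h)\in L$ and $v' := {}^\circ v^{\epsilon'}_{\mu'}(h'^\rho), v'' := {}^\circ v^{\epsilon'}_{\mu'}(h''^\rho)\in L'$, all lying outside $\mathfrak{l}$ times their ambient lattice, with $v'-v''\in\mathfrak{l}L'$. Applying the first fact to $v\otimes v'$ and to $v\otimes v''$ shows that ${}^\circ v^{\epsilon'}_{\mu+\mu'}(h,h'^\rho)$ and ${}^\circ v^{\epsilon'}_{\mu+\mu'}(h,h''^\rho)$ are nonzero modulo $\mathfrak{l}$, and
$$
{}^\circ v^{\epsilon'}_{\mu+\mu'}(h,h'^\rho) - {}^\circ v^{\epsilon'}_{\mu+\mu'}(h,h''^\rho) \ = \ v\otimes(v'-v'') \ \in \ L\otimes_{\mathcal{O}_\mathfrak{l}}(\mathfrak{l}L') \ = \ \mathfrak{l}\bigl(L\otimes_{\mathcal{O}_\mathfrak{l}}L'\bigr),
$$
which is the asserted congruence in $\mathfrak{l}\,H^2_!(S^{M_P}_{N\times N'}, \widetilde{\mathcal{M}}_{\mu+\mu', \mathcal{O}_\mathfrak{l}})(\tilde{\epsilon}')$. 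The \textit{similarly} statement is obtained by the identical computation with $\mu+\mu'$ replaced by $\mu'(-2)+\mu(2)$ and the two tensor factors interchanged, using the observation recorded above that ${}^\circ v^{\epsilon'}_{\mu'}(h'^\rho) = {}^\circ v^{\epsilon'}_{\mu'(-2)}(h'^\rho)$ and ${}^\circ v^{\epsilon'}_\mu(h) = {}^\circ v^{\epsilon'}_{\mu(2)}(h)$ as cohomology classes (the underlying sheaves being unaffected by Tate twists — only the Hecke action changes), so that the same vectors, and hence the same non-vanishing and congruence, apply verbatim.

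The only step I expect to require genuine care is the first paragraph: verifying that, once the bad primes are removed, the integral K\"unneth isomorphism does identify the $\mathcal{O}_\mathfrak{l}$-lattice $H^2_!(S^{M_P}_{N\times N'}, \widetilde{\mathcal{M}}_{\mu+\mu', \mathcal{O}_\mathfrak{l}})(\tilde{\epsilon}')$ with $L\otimes_{\mathcal{O}_\mathfrak{l}}L'$ \emph{compatibly with reduction modulo $\mathfrak{l}$}. This is exactly where torsion-freeness — hence freeness over the DVR $\mathcal{O}_\mathfrak{l}$ — of the $\GL_2$-cohomology is needed, and is the reason the sets $\mathsf{S}_{\textup{Eis}}$, $\mathsf{S}_{\textup{level}}$, $\mathsf{S}_{\textup{weight}}$ are excluded. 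Everything after that is formal manipulation of tensor products of free modules over a discrete valuation ring.
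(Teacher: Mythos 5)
Your proposal is correct and is essentially the paper's own (implicit) argument: the corollary is stated without proof precisely because it follows by pushing the congruences of the preceding theorem through the integral K\"unneth structure of \ref{sec:integral-kunneth-levi} and using the two elementary facts about finite free modules over the DVR $\mathcal{O}_\mathfrak{l}$ that you isolate, together with the invariance of the classes under Tate twists for the second pair of assertions. The only imprecision is your identification of $H^2_{!}(S^{M_P}_{N\times N'}, \widetilde{\mathcal{M}}_{\mu+\mu', \mathcal{O}_\mathfrak{l}})(\tilde{\epsilon}')$ with $L\otimes_{\mathcal{O}_\mathfrak{l}}L'$, since the $\tilde{\epsilon}'$-part of the Levi lattice is the tensor product of the \emph{full} $\epsilon'$-lattices rather than with your $L$ (the $(\epsilon'\times\sigma_f)$-isotypic sublattice); but that sublattice is saturated, hence an $\mathcal{O}_\mathfrak{l}$-direct summand, so non-divisibility of ${}^\circ v^{\epsilon'}_{\mu}(h)$ by $\mathfrak{l}$ persists in the full lattice and your argument goes through verbatim.
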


The auxiliary cusp form $h$ has no bearing on the choice of the prime $\mathfrak{l}$. 
So we do not impose the hypothesis that $l \not\in \mathsf{S}_{k},$ where $k$ is the weight of the form $h$.

\bigskip
\section{Double coset representatives}
\label{section: double-coset-representatives}
Next, we need to consider certain specific vectors in induced representations built from the vectors in Sect.\,\ref{sec:explicit-coh-classes}. 
Towards this, while using Mackey theory, we need to understand certain double cosets. Specifically, in this section, 
we calculate a set of representatives $\Xi_p$ of the double cosets 
$P(\mathbb{Q}_p) \backslash {\rm GL}_4(\mathbb{Q}_p) / \mirp{n_p+n_p'},$ 
where $\mirp{n_p+n'_p}$ is the Mirahoric subgroup of ${\rm GL}_4(\mathbb{Z}_p)$ of level $n_p+n'_p$ and $P$ is the $(2,2)$ parabolic subgroup.

\medskip
\subsection{Calculation for the Borel and principal congruence subgroups}
Thm.\,\ref{thm: coset-borel-principal} and Cor.\,\ref{corollary: coser-rep-disjoint} below are essentially due to Januszewski \cite{fabian}. 
We follow closely the notation therein and reproduce the proof with a minor modification.
Recall the Iwahori decomposition of $\mathbb{Q}_p$ points of $G_n= {\rm GL}_n/\mathbb{Q}.$
\[ G_n(\mathbb{Q}_p) = \coprod_{w \in W_n} B_n(\mathbb{Q}_p) w I_n, \]
where $B_n$ is the set of upper triangular matrices, $W_n$ is the Weyl group identified with the set of permutation matrices and 
$I_n$ is the Iwahori subgroup of $G_n(\mathbb{Z}_p).$
Let $J_m$ denote the set of principal congruence subgroup of level $m$ of $G_n(\mathbb{Z}_p)$, i.e., 
the set of matrices of $g \in G_n(\mathbb{Z}_p)$ such that $g \equiv \mathbf{1}_N \pmod{p^m}.$ 
Let $R_m \subset \mathbb{Z}_p$ denote the complete set of coset representatives $\{0,p,p^2, \cdots, p^{m-1}\}$ of $\mathbb{Z}_p/p^m\mathbb{Z}_p.$ 
Then the set 
\[ \mathcal{R}_m = \{r = (r_{ij})_{ij} \in I_N | r_{ij} \in R_m\}\]
forms a complete set of (left) coset representatives for $I_n/J_m.$ 
For $w \in W_n$ we have the obvious inclusion:
$B_n(\mathbb{Q}_p)w s J_m \subset  B_n(\mathbb{Q}_p)wI_n,$ for $s \in \mathcal{R}_m.$
We have
\[  B_n(\mathbb{Q}_p) w I_n = \bigcup_{s \in \mathcal{R}_{m}}  B_n(\mathbb{Q}_p) w s J_m.\]
The cosets on the right need not be distinct. 
The following theorem shows that we can take the union over a smaller set of representatives  
and still get $B(\mathbb{Q}_p) w I_n.$ Before stating it we need some more notations.
Let $U_B$ denote the unipotent radical of $B_n$ consisting of strictly upper traingular matrices and $U_B^{-}$ denote the unipotent radical of 
the opposite Borel subgroup $B_n^{-}$ of lower triangular matrices of $G_n$.
For a fixed $w\in W_n$ define $B^w := B_n(\mathbb{Q}_p) \cap w I_n w^{-1}$ and 
$\mathcal{R}_m^w := \mathcal{R}_{m,B} \cap w^{-1} U_B^{-}(\mathbb{Q}_p) w.$
 
\begin{theorem}
\label{thm: coset-borel-principal}
	For a fixed $w \in W_n$ the set $\mathcal{R}^w_{m,B}$ forms a complete system of coset representatives for 
	$B(\mathbb{Q}_p) \backslash B(\mathbb{Q}_p)wI_n/J_m,$ i.e.,
	\[ B_n(\mathbb{Q}_p)wI_n = \bigcup_{s \in \mathcal{R}_{m,B}^w} B_n(\mathbb{Q}_p)w s J_m.\] 
\end{theorem}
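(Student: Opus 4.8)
The plan is to reduce the statement to a counting/injectivity assertion about the natural surjection
$\mathcal{R}_m \twoheadrightarrow B_n(\mathbb{Q}_p)\backslash B_n(\mathbb{Q}_p)wI_n/J_m$ and then identify a section of it supported on $\mathcal{R}_m^w$. First I would observe that surjectivity is already in hand: from the decomposition $I_n = \bigcup_{s\in\mathcal{R}_m} sJ_m$ we get $B_n(\mathbb{Q}_p)wI_n = \bigcup_{s\in\mathcal{R}_m} B_n(\mathbb{Q}_p)wsJ_m$, so it suffices to show that every coset $B_n(\mathbb{Q}_p)wsJ_m$ with $s\in\mathcal{R}_m$ already equals $B_n(\mathbb{Q}_p)ws'J_m$ for a unique $s'\in\mathcal{R}_m^w$. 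The key structural input is the factorization $I_n = (U_B^-(\mathbb{Q}_p)\cap I_n)\cdot B^w$-type decomposition obtained from the Iwahori factorization $I_n \cong (U_B^-\cap I_n)\,(T\cap I_n)\,(U_B\cap I_n)$, which at the level of $\mathcal{R}_m$ lets me write any $r\in\mathcal{R}_m$ as a product of a lower-triangular part, a diagonal part, and an upper-triangular part, all with entries in $R_m$.

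The heart of the argument is the conjugation analysis: for $b\in B^w = B_n(\mathbb{Q}_p)\cap wI_nw^{-1}$ one has $w^{-1}bw \in I_n$, so $b\,wI_n = w\,(w^{-1}bw)\,I_n = wI_n$, hence $B_n(\mathbb{Q}_p)w s J_m$ only depends on $s$ through its class modulo the left action of $B^w$ on $I_n/J_m$. Concretely, I would show that the subgroup $w^{-1}(B_n(\mathbb{Q}_p)\cap wI_nw^{-1})w \cap I_n$ splits $I_n$ (modulo $J_m$) as a product with $w^{-1}U_B^-(\mathbb{Q}_p)w\cap I_n$, so that each class in $B^w\backslash I_n/J_m$ has a unique representative lying in $w^{-1}U_B^-(\mathbb{Q}_p)w \cap \mathcal{R}_m$, which is exactly $\mathcal{R}_m^w$ by definition. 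Left-translating back by $w$ and then by $B_n(\mathbb{Q}_p)$ turns this into the claimed bijection $\mathcal{R}_m^w \xrightarrow{\sim} B_n(\mathbb{Q}_p)\backslash B_n(\mathbb{Q}_p)wI_n/J_m$.

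For disjointness I would argue that if $wsJ_m$ and $ws'J_m$ with $s,s'\in\mathcal{R}_m^w$ lie in the same $B_n(\mathbb{Q}_p)$-coset, then $ws'(s)^{-1}\cdots$ expands (using $s,s'\in I_n$) to show $w s' s^{-1} w^{-1} \in B_n(\mathbb{Q}_p)\cap wI_nw^{-1} = B^w$ modulo the $J_m$-ambiguity; but $s'(s)^{-1}$ is then forced to lie simultaneously in $w^{-1}U_B^-(\mathbb{Q}_p)w$ and in $w^{-1}B_n(\mathbb{Q}_p)w$, and since $U_B^-\cap B_n = \{1\}$ this forces $s\equiv s'\pmod{J_m}$, hence $s=s'$ as elements of the chosen representative set. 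The main obstacle I anticipate is the bookkeeping in the Iwahori/Bruhat factorization interacting with the truncation to $R_m = \{0,p,\dots,p^{m-1}\}$: one must check that when a product of two elements of $\mathcal{R}_m$ is renormalized into $(U_B^-\cap I_n)$-form, the resulting entries can still be taken in $R_m$, i.e. that the decomposition is compatible with reduction mod $p^m$ and does not leak out of the chosen coset representatives. This is where following Januszewski \cite{fabian} closely — with the "minor modification" flagged in the text, presumably the passage from the full level-$N$ congruence subgroup to $J_m$ — does the real work, and I would isolate it as a lemma on the Iwahori factorization of $I_n/J_m$ before assembling the four displayed facts.
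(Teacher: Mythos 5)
Your proposal is correct and is essentially the paper's argument: the same reduction of $B_n(\mathbb{Q}_p)\backslash B_n(\mathbb{Q}_p)wI_n/J_m$ to double cosets for $B^w$, followed by moving an arbitrary representative into $w^{-1}U_B^{-}(\mathbb{Z}_p)w$ by left multiplication coming from $B^w$ and then checking that its $\mathcal{R}_m$-representative lies in $\mathcal{R}_{m,B}^w$. The only difference is packaging: you phrase the key step as an Iwahori factorization of $I_n$ relative to $w^{-1}B_nw$, whereas the paper obtains exactly that statement from Januszewski's recursive construction (Assump.\,\ref{ass}, with the flagged modification), so the lemma you propose to isolate is the same input the paper defers to.
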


\begin{proof}
The map sending the coset $B^w w s J_m \mapsto B_n(\mathbb{Q}_p) w s J_m$, for 
$s \in \mathcal{R}_m$, is injective. Clearly, it is surjective as well. So it is enough to show that $w \mathcal{R}_{m,B}^w$ forms a system of representatives for 
$B^w\backslash B^w w I_n/ J_m.$ Consider the following
\begin{ass}
\label{ass} 
	One can find elements $u^{(0)} := \mathbf{1}_n, u^{(1)}, u^{(2)}, \cdots , u^{(n-1)} \in B^w,$ and recursively define 
	$r^{(0)}, r^{(1)}, \cdots, r^{(n)} \in I_n$ where $r^{(0)} = \mathbf{1}_n$ and $r^{(v+1)} = w^{-1}u^{(v)}w \cdot r^{(v)}$ for $v > 1$, 
	such that $r^{(n)} \in w^{-1}U_B^{-}(\mathbb{Z}_p)w.$ 
\end{ass}	
From this assumption it follows if we define $u := u^{(n-1)} \cdots u^{(1)} \in B^w$, then
$$
u \cdot w r w^{-1} \ = \ w \cdot w^{-1} u wr w^{-1} 
\ = \ 
w w^{-1} u^{(n-1)} \cdots u^{(1)} w r w^{-1}
\ = \ 
w r^{(n)} w^{-1} \in U_B^{-}(\mathbb{Z}_p).
$$
Suppose $s \in \mathcal{R}_m$ is a representative of the left coset $w^{-1}u w r J_m \in I_n / J_m$ then it follows 
	\[  wsJ_m = u wr J_m \implies B^w ws J_m = B^w wr J_m.\]
So $ws$ represents the same double coset as $wr$ in $B^w \backslash B^w wI_n/ J_m$ (equivalently, the same coset in $B(\mathbb{Q}_p)\backslash B(\mathbb{Q}_p)wI_n/ J_m$). 
Since $s \in w^{-1}u w r J_m$, this implies
$wsw^{-1} \in u w r w^{-1} J_m \subset U_B^{-}(\mathbb{Z}_p)J_m,$ hence 
$s \in w^{-1}U_B^{-}(\mathbb{Z}_p)w J_m,$ whence 
$s \in \mathcal{R}_m \cap w^{-1} U_B^{-}(\mathbb{Z}_p) w = \mathcal{R}_{m,B}^w.$
\end{proof}

Assump.\,\ref{ass} can be shown by changing the definition of the $(n-v, n-v)$ entry of $u^{(v)}$ as in 
Januszewski \cite[Prop.\,2.2]{fabian}. This is possible because here $u^{(v)} \in B_n(\mathbb{Q}_p)$ and not just in $U_B(\mathbb{Q}_p)$ as in \textit{loc.cit.}.

\begin{corollary} \label{corollary: coser-rep-disjoint}
	With the notations as in the previous theorem we have
	\[B_n(\mathbb{Q}_p)wI_n = \coprod_{s \in \mathcal{R}_{m,B}^w} B_n(\mathbb{Q}_p)w s J_m. \]
	In other words, the double cosets in the previous theorem are all disjoint.
\end{corollary}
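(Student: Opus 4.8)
\emph{Proof proposal.} The plan is to show that distinct $s, s' \in \mathcal{R}_{m,B}^w$ yield distinct double cosets $B_n(\mathbb{Q}_p)wsJ_m$ and $B_n(\mathbb{Q}_p)ws'J_m$; since Thm.\,\ref{thm: coset-borel-principal} already gives $B_n(\mathbb{Q}_p)wI_n = \bigcup_{s} B_n(\mathbb{Q}_p)wsJ_m$, this upgrades the union to a disjoint one. So I would suppose $s, s' \in \mathcal{R}_{m,B}^w$ with $B_n(\mathbb{Q}_p)wsJ_m = B_n(\mathbb{Q}_p)ws'J_m$ and deduce $s = s'$. Writing $ws' = bwsj$ for suitable $b \in B_n(\mathbb{Q}_p)$ and $j \in J_m$ and moving the $w$'s around gives $w^{-1}b^{-1}w = s\,j\,(s')^{-1}$, and conjugating by $w$,
\[
b^{-1} \ = \ (wsw^{-1})\,(wjw^{-1})\,(ws'w^{-1})^{-1}.
\]

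Next I would record the integrality facts needed to analyse this identity. Since $\mathcal{R}_{m,B}^w = \mathcal{R}_m \cap w^{-1}U_B^-(\mathbb{Z}_p)w$ (as in the last line of the proof of Thm.\,\ref{thm: coset-borel-principal}), the conjugates $\tilde s := wsw^{-1}$ and $\tilde s' := ws'w^{-1}$ lie in $U_B^-(\mathbb{Z}_p)$, hence in $\GL_n(\mathbb{Z}_p)$, so their inverses are again integral. Because $w$ is a permutation matrix, conjugation by $w$ preserves $G_n(\mathbb{Z}_p)$ and the reduction-mod-$p^m$ condition defining $J_m$, so $\tilde j := wjw^{-1} \in J_m$. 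Hence the right-hand side of the displayed identity lies in $\GL_n(\mathbb{Z}_p)$, and therefore $b^{-1} \in B_n(\mathbb{Q}_p) \cap \GL_n(\mathbb{Z}_p) = B_n(\mathbb{Z}_p)$.

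The crux is then a reduction modulo $p^m$. Using $\tilde j \equiv \mathbf{1}_n \pmod{p^m}$, the identity gives $\overline{b^{-1}} = \overline{\tilde s\,(\tilde s')^{-1}}$ in $\GL_n(\mathbb{Z}/p^m\mathbb{Z})$. The left-hand side is upper triangular, being the reduction of an element of $B_n(\mathbb{Z}_p)$, while $\tilde s\,(\tilde s')^{-1} \in U_B^-(\mathbb{Z}_p)$, so the right-hand side is lower triangular and unipotent; a matrix over $\mathbb{Z}/p^m\mathbb{Z}$ that is simultaneously upper triangular and lower-triangular unipotent must be $\mathbf{1}_n$. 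Thus $\tilde s \equiv \tilde s' \pmod{p^m}$, and conjugating back by $w^{-1}$ gives $s \equiv s' \pmod{p^m}$. Since $s \in \GL_n(\mathbb{Z}_p)$, this forces $s^{-1}s' \in J_m$, i.e.\ $sJ_m = s'J_m$; and because $\mathcal{R}_m$ is a fixed complete set of coset representatives for $I_n/J_m$ with $s, s' \in \mathcal{R}_m$, we conclude $s = s'$.

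I expect the genuine content — such as it is — to be concentrated in the bookkeeping of the second paragraph: checking that conjugation by the permutation matrix $w$ respects both $\mathbb{Z}_p$-integrality and the level-$p^m$ structure (so that $\tilde s, \tilde s' \in U_B^-(\mathbb{Z}_p)$ and $\tilde j \in J_m$), together with the elementary observation over $\mathbb{Z}/p^m\mathbb{Z}$ that an upper-triangular matrix which is also lower-triangular unipotent is the identity. Everything else is formal coset manipulation, and no input beyond Thm.\,\ref{thm: coset-borel-principal} and the construction of $\mathcal{R}_m$ is needed.
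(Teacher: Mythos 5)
Your proposal is correct and follows essentially the same route as the paper's proof: both extract an identity $ws'=bwsj$, use that $wsw^{-1}, ws'w^{-1}\in U_B^-(\mathbb{Z}_p)$ together with the stability of $J_m$ under conjugation, and force the resulting element to be trivial modulo $p^m$ by playing upper-triangularity against lower-triangular unipotence, concluding $s\equiv s'\pmod{J_m}$ and hence $s=s'$. The only cosmetic difference is that the paper first passes to $B^w$-double cosets (so the Borel element is integral by the definition $B^w=B_n(\mathbb{Z}_p)\cap wI_nw^{-1}$), whereas you deduce $b\in B_n(\mathbb{Z}_p)$ directly from the integrality of the right-hand side.
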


\begin{proof}
	Assume two of the cosets, $B_n(\mathbb{Q}_p) ws J_m = B_n(\mathbb{Q}_p) ws' J_m$ for $s, s' \in \mathcal{R}_{m,B}^w$ are the same. 
	This means $B^w ws J_m = B^w ws' J_m.$ Since $B^w = B_n(\mathbb{Q}_p) \cap w I_N w^{-1} = B_n(\mathbb{Z}_p) \cap w I_N w^{-1},$ 
	we see that there exists $u\in B^w \subset B_n(\mathbb{Z}_p)$ and $j \in J_m$ such that $u ws = ws' j.$
	Observing that $J_m$ is normal in $G_n(\mathbb{Z}_p)$ we get that
	$u = ws'w^{-1} \cdot w s^{-1} w^{-1} j'$  
	for some $j' \in J_m.$ But both the elements $ ws'w^{-1}$ and $w s^{-1} w^{-1}$ are in $U_B^{-}(\mathbb{Z}_p)$ from the previous theorem. 
	Combined with the fact that $u \in B_n(\mathbb{Z}_p)$ and $ u \in U_B^{-}(\mathbb{Z}_p)J_m$ we get 
	$u \equiv \textbf{1}_n \pmod {J_m}$  if and only if $u \in J_m.$
	Then for some $j'' \in J_m$ depending on $u$ we have 
	$u ws = ws j'' = ws'j$ because  $J_m$ is normal in $G_n(\mathbb{Z}_p).$ Hence $s\equiv s' \pmod{J_m}$, whence $s = s'.$
\end{proof}

\subsection{Calculation for the parabolic and the mirahoric subgroups}

Now we focus on $G_4 = \GL_4/\mathbb{Q}$, and $P$ the $(2,2)$ parabolic subgroup of $G_4$ with the Levi decomposition $P = M_PU_P.$ Let $U_P^-$ be the opposite unipotent radical contained in $B_4^-.$
The Iwahori decomposition gives 
\[ G_4(\mathbb{Q}_p) = \coprod_{w \in W^P} P(\mathbb{Q}_p) w I_4,\]
where $W^P$ is the set of Kostant representatives.
This is due to the fact that the Weyl group of the Levi quotient $W_{M_P} \subset P(\mathbb{Q}_p)$ and 
there is a bijection between $W_{M_P} \backslash W_4 \cong W^P.$ 
For a fixed $w \in W_4$ define $P^w := P(\mathbb{Q}_p) \cap wI_4 w^{-1}$ and $\mathcal{R}^w_m := \mathcal{R}_m \cap w^{-1} U_P^{-}(\mathbb{Q}_p) w.$ 
(This $\mathcal{R}_m^w$ is different from the one in the previous subsection; this abuse of notation will not cause any confusion.)

\begin{theorem}\label{thm: coset-parabolic-principal}
	For a fixed $w \in W^P$ the set $\mathcal{R}^w_m$ forms a complete system of coset representatives for 
	$P(\mathbb{Q}_p) \backslash P(\mathbb{Q}_p)wI_4/J_m,$ i.e., 
	\[ P(\mathbb{Q}_p)wI_4 = \coprod_{s \in \mathcal{R}_m^w} P(\mathbb{Q}_p)w s J_m.\]
	Moreover, we have from the Iwahori decomposition
	\[ G_4(\mathbb{Q}_p) = \coprod_{w \in W^P} \coprod_{s \in \mathcal{R}_m^w} P(\mathbb{Q}_p)w s J_m.\]
\end{theorem}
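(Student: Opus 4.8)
The plan is to run, in the parabolic setting, the same two-step argument that gave Thm.~\ref{thm: coset-borel-principal} and Cor.~\ref{corollary: coser-rep-disjoint}, systematically replacing the Borel $B_4$, its unipotent radical, and the opposite unipotent by $P$, $U_P$, and $U_P^{-}$. Fix $w\in W^P$. Since $\mathcal{R}_m$ is a full set of representatives for $I_4/J_m$, one has at once the (a priori redundant) covering $P(\mathbb{Q}_p)wI_4=\bigcup_{s\in\mathcal{R}_m}P(\mathbb{Q}_p)wsJ_m$, and, exactly as in the first line of the proof of Thm.~\ref{thm: coset-borel-principal}, the map $P^{w}wsJ_m\mapsto P(\mathbb{Q}_p)wsJ_m$ is a bijection, so everything can be carried out inside $P^{w}\backslash P^{w}wI_4/J_m$. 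What must then be shown is that the index set shrinks to $\mathcal{R}_m^{w}=\mathcal{R}_m\cap w^{-1}U_P^{-}(\mathbb{Q}_p)w$ and that the resulting union is disjoint.

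For the first point I would prove the $P$-analogue of Assump.~\ref{ass}: given $r\in\mathcal{R}_m$, build recursively $u^{(0)}=\mathbf{1}_4,u^{(1)},u^{(2)},\dots\in P^{w}$, and, with $u$ their product, arrange $uwrw^{-1}\in U_P^{-}(\mathbb{Z}_p)$. The construction is the one of Januszewski~\cite[Prop.~2.2]{fabian}, except that at each stage one is free to modify the entries of $u^{(v)}$ lying in the two $\GL_2$ Levi blocks of $P$ (and inside $U_P$ if needed) rather than only a single torus entry; this is permissible precisely because $u^{(v)}\in P(\mathbb{Q}_p)$ and not merely in $U_P^{-}(\mathbb{Q}_p)$, and concretely it amounts to using the Schur complement of $wrw^{-1}$ with respect to the $(2,2)$ block decomposition to clear the diagonal blocks to the identity and kill the upper-right block. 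Granting this, the proof of Thm.~\ref{thm: coset-borel-principal} transfers verbatim: choosing $s\in\mathcal{R}_m$ representing $w^{-1}uwrJ_m\in I_4/J_m$ gives $wsw^{-1}\in uwrw^{-1}J_m\subset U_P^{-}(\mathbb{Z}_p)J_m$, hence $s\in\mathcal{R}_m\cap w^{-1}U_P^{-}(\mathbb{Z}_p)w=\mathcal{R}_m^{w}$ and $P(\mathbb{Q}_p)wsJ_m=P(\mathbb{Q}_p)wrJ_m$, so the union over $\mathcal{R}_m^{w}$ already covers $P(\mathbb{Q}_p)wI_4$.

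Disjointness is the transcription of Cor.~\ref{corollary: coser-rep-disjoint}. Suppose $P(\mathbb{Q}_p)wsJ_m=P(\mathbb{Q}_p)ws'J_m$ with $s,s'\in\mathcal{R}_m^{w}$. Because $wI_4w^{-1}\subset G_4(\mathbb{Z}_p)$ we have $P^{w}=P(\mathbb{Z}_p)\cap wI_4w^{-1}$, so there are $u\in P^{w}$ and $j\in J_m$ with $uws=ws'j$; using that $J_m$ is normal in $G_4(\mathbb{Z}_p)$ one rewrites $u=(ws'w^{-1})(ws^{-1}w^{-1})j'$ for some $j'\in J_m$. Since $s,s'\in\mathcal{R}_m^{w}\subset G_4(\mathbb{Z}_p)$, the elements $ws'w^{-1}$ and $ws^{-1}w^{-1}$ lie in $U_P^{-}(\mathbb{Z}_p)$, so $u\in P(\mathbb{Z}_p)\cap U_P^{-}(\mathbb{Z}_p)J_m$; reducing modulo $p^m$, $u$ becomes simultaneously block upper triangular and block lower unipotent for the $(2,2)$ decomposition, i.e.\ $u\equiv\mathbf{1}_4\pmod{p^m}$, so $u\in J_m$. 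Feeding this back and using normality of $J_m$ once more yields $s\equiv s'\pmod{J_m}$, whence $s=s'$ as $\mathcal{R}_m$ consists of distinct representatives. The final assertion $G_4(\mathbb{Q}_p)=\coprod_{w\in W^P}\coprod_{s\in\mathcal{R}_m^{w}}P(\mathbb{Q}_p)wsJ_m$ is then immediate from the Iwahori decomposition $G_4(\mathbb{Q}_p)=\coprod_{w\in W^P}P(\mathbb{Q}_p)wI_4$ recalled just above the statement, the disjointness over distinct $w$ being part of that decomposition.

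The step I expect to be the real obstacle is the $P$-analogue of Assump.~\ref{ass}: one must check that the recursive clearing terminates and that every intermediate factor $u^{(v)}$ genuinely lies in $P^{w}=P(\mathbb{Q}_p)\cap wI_4w^{-1}$, not just in $P(\mathbb{Z}_p)$, which is a bookkeeping matter about how the Iwahori congruences interact with the $(2,2)$ block structure under conjugation by a Kostant representative $w\in W^{P}$. Since the passage from $B_4$ to $P$ only enlarges the group doing the clearing, heuristically this should be no harder — and typically easier — than the Borel case treated by Januszewski; but the membership $u^{(v)}\in P^{w}$ at each stage is exactly the point that has to be verified with care.
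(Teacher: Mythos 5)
Your covering and disjointness arguments coincide with the paper's: reduce to $P^w\backslash P^w wI_4/J_m$ via the bijection $P^w wsJ_m\mapsto P(\mathbb{Q}_p)wsJ_m$, use a $P$-analogue of Assump.\,\ref{ass} to trade an arbitrary representative for one in $\mathcal{R}_m^w$, and transcribe Cor.\,\ref{corollary: coser-rep-disjoint} (your observation that $u\in P(\Z_p)\cap U_P^{-}(\Z_p)J_m$ forces $u\equiv\mathbf{1}_4\pmod{p^m}$ is exactly the right point). The one place you diverge is the step you yourself flag as the obstacle, and there your write-up does leave a hole: you propose to re-run Januszewski's recursion from scratch for $P$, clearing the diagonal blocks and the upper-right block by Schur complements, and you never verify that the intermediate factors $u^{(v)}$ lie in $P^w=P(\mathbb{Q}_p)\cap wI_4w^{-1}$ — which, as you say, is the whole point. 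The paper avoids this entirely: since $B_4\subset P$ one has $B^w\subset P^w$, so the Borel-level Assump.\,\ref{ass}, already in hand, gives $u^{(1)},u^{(2)},u^{(3)}\in B^w\subset P^w$ with $wr^{(4)}w^{-1}\in U_B^{-}(\Z_p)$; and because $U_P^{-}\subset U_B^{-}$ with the discrepancy consisting precisely of the two sub-diagonal entries inside the Levi blocks, a single further explicit factor $u^{(4)}$ — the unipotent matrix with $-r^{(4)}_{\sigma(2)\sigma(1)}$ in position $(2,1)$ and $-r^{(4)}_{\sigma(4)\sigma(3)}$ in position $(4,3)$, visibly in $M_P(\Z_p)\subset P(\Z_p)$ and checked directly to lie in $wI_4w^{-1}$ — kills those entries, so that $u^{(4)}u^{(3)}u^{(2)}u^{(1)}\cdot wrw^{-1}\in U_P^{-}(\Z_p)$. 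Thus the membership issue you worry about reduces to one concrete matrix rather than every stage of a new recursion; your Schur-complement route is not wrong and could surely be made to work, but as written it postpones exactly the verification the theorem needs, whereas the paper's bootstrapping off the Borel case disposes of it in one line.
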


\begin{proof}
(The proof is essentially the same as Thm.\,\ref{thm: coset-borel-principal}.)
The map $P^w w s J_m \mapsto P(\mathbb{Q}_p)ws J_m$, for $s \in \mathcal{R}_m$, is injective. Clearly, it is surjective as well. 
So it is enough to show that $w\mathcal{R}_m^w$ forms a system of represetnatives for $P^w \backslash P^w w I_N/ J_m.$ 
Suppose Assump.\,\ref{ass} holds for $u \in U_B^{-}(\mathbb{Q}_p)$, giving 
$u^{(0)}, u^{(1)}, u^{(2)}, u^{(3)} \in B^w$ and $r^{(0)}, r^{(1)}, r^{(2)}, r^{(3)}, r^{(4)} \in I_4$. Define $u^{(4)} \in P(\mathbb{Z}_p)$ and $r^{(5)} \in I_4$ by
	\begin{equation*}
		u^{(4)} = \left(\begin{smallmatrix}
			1 & 0 & 0 & 0 \\
			-r^{(4)}_{\sigma_w(2)\sigma_w(1)} & 1 & 0 & 0\\
			0 & 0 & 1 & 0\\
			0 & 0 & -r^{(4)}_{\sigma_w(4)\sigma_w(3)} & 1
		\end{smallmatrix}\right), \qquad 
		r^{(5)} = w^{-1}u^{(4)}w \cdot r^{(4)}.
	\end{equation*}
Here $\sigma_w$ is the image of $w$ under the usual (group) isomorphism $W_4 \cong S_4,$ where $S_4$ is the permutation group of the set $\{1,2,3,4\}$.  It is clear that $u^{(4)} \in P^w$ and $w r^{(5)} w ^{-1} \in U_P^{-}(\mathbb{Z}_p).$ Therefore if $u = u^{(4)}u^{(3)}u^{(2)}u^{(1)}$ then 
$u \cdot w r w^{-1} \in w I_4 w^{-1} \cap U_P^{-}(\mathbb{Z}_p).$ The rest of the arguments are essentially the same as in 
Thm.\,\ref{thm: coset-borel-principal} and Cor.\,\ref{corollary: coser-rep-disjoint}.
\end{proof}

Recall the notations $n_p = v_p(N)$ and $ n'_p = v_p(N'),$ and $\mirp{n_p + n'_p}$ is the mirahoric subgroup of $\GL_4(\mathbb{Z}_p)$ of level 
$p^{n_p+n'_p}\mathbb{Z}_p$. Since $J_{n_p +n'_p} \subset \mirp{n_p+n'_p},$ from Thm.\,\ref{thm: coset-parabolic-principal} one gets

\begin{corollary}
	The set $\bigcup_{w\in W^P} \{ws | s \in \mathcal{R}_{n_p+n'_p}^w\}$ contains a complete set of coset representatives for 
	$P(\mathbb{Q}_P) \backslash G_4(\mathbb{Q}_p) / \mirp{n_p+n'_p}.$ 
\label{corollary: abstract-reprsentatives}
\end{corollary}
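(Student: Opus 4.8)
The plan is to deduce this immediately from Theorem \ref{thm: coset-parabolic-principal} by quotienting further. Theorem \ref{thm: coset-parabolic-principal} gives the decomposition
\[
G_4(\mathbb{Q}_p) = \coprod_{w \in W^P} \coprod_{s \in \mathcal{R}_m^w} P(\mathbb{Q}_p)\, w\, s\, J_m
\]
for the principal congruence subgroup $J_m$ at level $m = n_p' + n_p$. Since $J_{n_p'+n_p} \subseteq K_p^{n_p'+n_p}$ — this is exactly the defining congruence condition: a matrix congruent to $\mathbf{1}_4 \pmod{p^{n_p'+n_p}}$ is in particular congruent to $\left(\begin{smallmatrix} * & * & * & * \\ * & * & * & * \\ * & * & * & * \\ 0 & 0 & 0 & 1\end{smallmatrix}\right) \pmod{p^{n_p'+n_p}}$ — each $P(\mathbb{Q}_p)$-$J_m$ double coset is contained in a single $P(\mathbb{Q}_p)$-$K_p^{n_p'+n_p}$ double coset. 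First I would note that right-multiplying the displayed decomposition by $K_p^{n_p'+n_p}$ (which absorbs $J_m$ since $J_m \subseteq K_p^{n_p'+n_p}$) yields
\[
G_4(\mathbb{Q}_p) = \bigcup_{w \in W^P} \bigcup_{s \in \mathcal{R}_m^w} P(\mathbb{Q}_p)\, w\, s\, K_p^{n_p'+n_p},
\]
so the finite set $\{ws \mid w \in W^P,\ s \in \mathcal{R}_{n_p'+n_p}^w\}$ surjects onto $P(\mathbb{Q}_p) \backslash G_4(\mathbb{Q}_p) / K_p^{n_p'+n_p}$. This is precisely the assertion: the set contains a complete set of representatives. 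Passing from a disjoint union over $J_m$ to a (possibly non-disjoint) union over $K_p^{n_p'+n_p}$ requires nothing more than the inclusion of subgroups, so I would keep this step to a sentence.

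There is genuinely no hard part here; the statement is deliberately weak (it says "contains a complete set", not "is a complete set", so no disjointness or minimality is claimed), and it is the routine consequence of Theorem \ref{thm: coset-parabolic-principal} that its phrasing advertises. The only thing to be careful about is the verification that $J_{n_p'+n_p} \subseteq K_p^{n_p'+n_p}$, which is immediate from the definitions of the principal congruence subgroup $J_m$ and the mirahoric subgroup $K_p^{n_p}$ given earlier in the excerpt. I would therefore present the proof as: invoke the last display of Theorem \ref{thm: coset-parabolic-principal}, observe the subgroup inclusion, and conclude that the image of $\bigcup_{w \in W^P}\{ws \mid s \in \mathcal{R}_{n_p'+n_p}^w\}$ in the double coset space is everything. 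The actual combinatorial work — cutting down to an honest, disjoint set of double coset representatives $\Xi_p$ and making the local intertwining computation explicit — is deferred to later sections, where the mild-ramification hypothesis on the levels will be used; that is where any real obstacle lies, but it is outside the scope of this corollary.
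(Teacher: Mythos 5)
Your proposal is correct and matches the paper's own (one-line) argument: the corollary is deduced directly from Thm.\,\ref{thm: coset-parabolic-principal} with $m = n_p'+n_p$ via the inclusion $J_{n_p'+n_p} \subseteq K_p^{n_p'+n_p}$, which lets the mirahoric subgroup absorb $J_m$ and gives a (possibly non-disjoint) covering of the double coset space. The paper likewise defers the reduction to the explicit, smaller list of representatives $\xi_p^{(j)}$ to the subsequent lemmas, so nothing is missing here.
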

The representatives in the above corollary up to left action of $P(\mathbb{Q}_p)$ and right action of 
$K_p^{n'_p + n_p}$ need not be distinct.

\subsection{Explicit representatives}
\hfill

For $\xi_p \in P(\mathbb{Q}_p) \backslash \GL_4(\mathbb{Q}_p) / \mirp{n_p + n'_p}$ 
define  $K_p^P(\xi_p) = P(\mathbb{Q}_p) \cap \xi_p \mirp{n_p + n'_p} \xi_p^{-1}$ and 
$K_p^{M_P}(\xi_p) = \kappa(K_p^P(\xi_p))$ denote its projection onto the Levi quotient via the canonical map $\kappa_P: P \rightarrow P/U_P \cong M_P.$ 
Let us enumerate the set $W^P$ of Kostant representatives thus:
\begin{multline*} w_1=
	\left(\begin{smallmatrix}
		1 & 0 & 0 & 0 \\
		0 & 1 & 0 & 0 \\
		0 & 0 & 1 & 0 \\
		0 & 0 & 0 & 1
	\end{smallmatrix}\right), \,\, w_2= 
	\left(\begin{smallmatrix}
		1 & 0 & 0 & 0 \\
		0 & 0 & 1 & 0 \\
		0 & 1 & 0 & 0 \\
		0 & 0 & 0 & 1
	\end{smallmatrix}\right), \,\, w_3 =
	\left(\begin{smallmatrix}
		0 & 1 & 0 & 0 \\
		0 & 0 & 1 & 0 \\
		1 & 0 & 0 & 0 \\
		0 & 0 & 0 & 1
	\end{smallmatrix}\right), \\ w_4 = 
	\left(\begin{smallmatrix}
		1 & 0 & 0 & 0 \\
		0 & 0 & 0 & 1 \\
		0 & 1 & 0 & 0 \\
		0 & 0 & 1 & 0
	\end{smallmatrix}\right), \,\, w_5 = 
	\left(\begin{smallmatrix}
		0 & 1 & 0 & 0 \\
		0 & 0 & 0 & 1 \\
		1 & 0 & 0 & 0 \\
		0 & 0 & 1 & 0
	\end{smallmatrix}\right), \,\, w_6 = 
	\left(\begin{smallmatrix}
		0 & 0 & 1 & 0 \\
		0 & 0 & 0 & 1 \\
		1 & 0 & 0 & 0 \\
		0 & 1 & 0 & 0
	\end{smallmatrix}\right). \end{multline*}
Cor.\,\ref{corollary: abstract-reprsentatives} may be restated as that $\cup_{I=1}^6 \{w_i s | s \in \mathcal{R}_{n_p + n'_p}^{w_i}\}$ 
contains a complete set of coset representatives for $P(\mathbb{Q}_P) \backslash G_4(\mathbb{Q}_p) / \mirp{n_p+n_p'}.$
For $ u = \left(\begin{smallmatrix}
	1 & 0 & 0 & 0\\
	0 & 1 & 0 & 0\\
	x_1 & x_2 & 1 & 0\\
	x_3 & x_4 & 0 & 1
\end{smallmatrix}\right) \in U_P^{-}(\mathbb{Z}_p),$ and $i=4,5,6,$ the matrices $u_i := w_i^{-1}uw_i$, explicitly given by:
$$
 u_4= 
	\left(\begin{smallmatrix}
		1 & 0 & 0 & 0 \\
		x_1 & 1 & 0 & x_2\\
		x_3 & 0 & 1 & x_4 \\
		0 & 0 & 0 & 1
	\end{smallmatrix}\right), \quad 
	u_5= 
	\left(\begin{smallmatrix}
		1 & x_1 & 0 & x_2 \\
		0 & 1 & 0 & 0 \\
		0 & x_3 & 1 & x_4 \\
		0 & 0 & 0 & 1
	\end{smallmatrix}\right), \quad {\rm and} \quad 
	u_6 = 
	\left(\begin{smallmatrix}
		1 & 0 & x_1 & x_2 \\
		0 & 1 & x_3 & x_4 \\
		0 & 0 & 1 & 0 \\
		0 & 0 & 0 & 1
	\end{smallmatrix}\right), 
$$
are clearly in $\mirp{n_p + n'_p}.$ Therefore
$P(\mathbb{Q}_p) w_i s \mirp{n_p + n'_p} = P(\mathbb{Q}_p) w_i \mirp{n_p + n'_p}$ for $i=4,5,6,$ and $s \in \mathcal{R}_{n_p + n'_p}^{w_i}.$ 
Define
$
	\xi_p^{(0)} := \left(\begin{smallmatrix}
		1 & 0 & 0 & 0 \\
		0 & 1 & 0 & 0 \\
		0 & 0 & 1 & 0 \\
		0 & 1 & 0 & 1
	\end{smallmatrix}\right). 
$
\begin{lemma}
\label{Lem:some-coset-reductions}
\hfill
\begin{enumerate}
\item[(i)] \begin{multline*}P(\mathbb{Q}_p)w_4\mirp{n_p + n'_p} = 	P(\mathbb{Q}_p)w_5\mirp{n_p + n'_p} \\ =
		P(\mathbb{Q}_p)w_6\mirp{n_p + n'_p} =	P(\mathbb{Q}_p)\xi_p^{(0)}\mirp{n_p + n'_p}.\end{multline*}

\smallskip
\item[(ii)] $K_p^M(\xi_p^{(0)}) = \kappa_P( P(\mathbb{Q}_p) \cap \xi_p^{(0)}  \mirp{n_p + n'_p} {\xi_p^{(0)}}^{-1} )  = K_p(n_p + n'_p) \times \GL_2(\mathbb{Z}_p).$
\end{enumerate}
\end{lemma}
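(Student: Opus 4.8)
The two assertions have essentially the same flavour: they are explicit computations with $2\times 2$ blocks of $4\times 4$ matrices. For part (i), the plan is to show that each of $w_4, w_5, w_6$ lies in $P(\mathbb{Q}_p)\,\xi_p^{(0)}\,K_p^{n_p'+n_p}$, and conversely. From the discussion immediately preceding the lemma we already know $P(\mathbb{Q}_p)w_i s K_p^{n_p'+n_p} = P(\mathbb{Q}_p)w_i K_p^{n_p'+n_p}$ for $i=4,5,6$, so it suffices to compare the six-element double coset space down to these three plus identifying their common value. First I would write $\xi_p^{(0)} = w_i \cdot (w_i^{-1}\xi_p^{(0)})$ and check, for each $i\in\{4,5,6\}$, that $w_i^{-1}\xi_p^{(0)}$ (equivalently, a suitable further multiple by an element of $K_p^{n_p'+n_p}$) differs from an element of $P(\mathbb{Q}_p)$ only by something in $K_p^{n_p'+n_p}$; concretely one factors $\xi_p^{(0)} = p_i \, w_i \, k_i$ with $p_i\in P(\mathbb{Q}_p)$ and $k_i\in K_p^{n_p'+n_p}$, using that $\xi_p^{(0)}$ differs from the identity only in the $(4,2)$ entry, which is the "unipotent lower-triangular in the Levi-block sense" direction that the $u_i$ computation already exploited. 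Since each $w_i$ is a permutation matrix, these factorizations are pure bookkeeping once one tracks where the single off-diagonal $1$ of $\xi_p^{(0)}$ goes under conjugation by $w_i^{-1}$.

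For part (ii), the plan is to compute $P(\mathbb{Q}_p)\cap \xi_p^{(0)} K_p^{n_p'+n_p}(\xi_p^{(0)})^{-1}$ directly and then apply $\kappa_P$. Write a general element $k = (k_{ij})\in K_p^{n_p'+n_p}$, so $k\in\GL_4(\mathbb{Z}_p)$ with bottom row $\equiv (0,0,0,1)\pmod{p^{n_p'+n_p}}$, form $g = \xi_p^{(0)} k (\xi_p^{(0)})^{-1}$, and impose that $g$ be block upper triangular, i.e. the lower-left $2\times 2$ block of $g$ vanishes. Because $\xi_p^{(0)}$ is the elementary matrix $\mathbf{1}_4 + E_{42}$, conjugation only alters rows $2,4$ and columns $2,4$ in a controlled way; setting the $(3,1),(3,2),(4,1),(4,2)$ entries of $g$ to zero will pin down the relevant entries of $k$. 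I expect the outcome to be that $k$ itself must be block upper triangular with lower-right block in $\GL_2(\mathbb{Z}_p)$ and, crucially, the mirahoric condition on the bottom row of $k$ transported through the conjugation becomes precisely the $K_p(n_p'+n_p)$ condition on the upper-left $2\times 2$ block after applying $\kappa_P$ — the bottom-row congruence $\equiv(0,0,0,1)$ on the $4\times 4$ level reinterprets, on the upper-left block, as the second row being $\equiv(0,1)\pmod{p^{n_p'+n_p}}$, which is exactly the defining condition of $K_p(n_p'+n_p)$. The lower-right block is unconstrained in $\GL_2(\mathbb{Z}_p)$, giving the stated product.

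The main obstacle I anticipate is purely the organization of the entry-by-entry linear algebra in part (ii): one must be careful that after conjugating by $\xi_p^{(0)}$ and intersecting with $P(\mathbb{Q}_p)$, no spurious extra congruence conditions are imposed on the upper-left block beyond those defining $K_p(n_p'+n_p)$, and that the $\GL_2(\mathbb{Z}_p)$ factor genuinely surjects (i.e. every element of $\GL_2(\mathbb{Z}_p)$ in the lower-right slot arises, which follows since those entries of $k$ are free subject only to $k\in\GL_4(\mathbb{Z}_p)$). A secondary subtlety is that $\kappa_P$ is a quotient map, so one checks that the unipotent radical $U_P$ part of $K_p^P(\xi_p^{(0)})$ is exactly killed and contributes nothing to the image; this is automatic once one verifies that $K_p^P(\xi_p^{(0)})$ contains all of $U_P(\mathbb{Z}_p)$ conjugated appropriately, but it should be stated. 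Part (i) is comparatively routine given the $u_4,u_5,u_6$ calculation already in hand.
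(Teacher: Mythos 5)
Your proposal follows the paper's route. For (i) the paper does exactly what you outline: $w_4$ and $w_5$ differ from $w_6$ by permutation matrices whose last row is $(0,0,0,1)$, hence lie in $K_p^{n'_p+n_p}$, and then $w_6$ is written explicitly as $p\,\xi_p^{(0)}k_1k_2$ with $p\in P(\mathbb{Q}_p)$ and $k_1,k_2\in K_p^{n'_p+n_p}$. One caution about your ``bookkeeping'': tracking the off-diagonal $1$ under conjugation gives $w_6^{-1}\xi_p^{(0)}w_6=\mathbf{1}_4+E_{24}\in K_p^{n'_p+n_p}$, but this alone does not identify the double cosets, since $w_6\notin K_p^{n'_p+n_p}$ (its bottom row is $(0,1,0,0)$); you genuinely need a factorization whose $K$-factor has last row congruent to $(0,0,0,1)$, as in the paper's identity. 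For (ii) the paper first replaces $\xi_p^{(0)}$ by $w_6$ using (i) and reads off $w_6kw_6^{-1}$, which is a pure permutation of the entries of $k$; your direct conjugation by $\xi_p^{(0)}=\mathbf{1}_4+E_{42}$ is a fine variant and even sidesteps the small point that $P(\mathbb{Q}_p)\cap\xi_p^{(0)}K_p^{n'_p+n_p}{\xi_p^{(0)}}^{-1}$ and $P(\mathbb{Q}_p)\cap w_6K_p^{n'_p+n_p}w_6^{-1}$ are a priori only conjugate by an element of $P(\mathbb{Q}_p)$.

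However, one predicted intermediate outcome in your (ii) is false and would sabotage your surjectivity argument if you leaned on it: the conditions do \emph{not} force $k$ to be block upper triangular. Setting $x:=\xi_p^{(0)}k{\xi_p^{(0)}}^{-1}$, the vanishing of the lower-left block of $x$ says $k_{31}=0$, $k_{32}=k_{34}$, $k_{41}=-k_{21}$, $k_{42}=k_{44}+k_{24}-k_{22}$, and the Levi blocks of $x$ are $\left(\begin{smallmatrix}k_{11}&k_{12}-k_{14}\\ k_{21}&k_{22}-k_{24}\end{smallmatrix}\right)$ and $\left(\begin{smallmatrix}k_{33}&k_{34}\\ k_{43}+k_{23}&k_{44}+k_{24}\end{smallmatrix}\right)$; the mirahoric congruences then give $k_{21}\equiv 0$ and $k_{22}-k_{24}=k_{44}-k_{42}\equiv 1\pmod{p^{n'_p+n_p}}$, so the first factor lands in $K_p(n'_p+n_p)$, as you say. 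But if you insisted $k$ itself be block upper triangular, then $x\in P(\mathbb{Q}_p)$ would force $k_{34}=0$ and the second Levi block would always be lower triangular, so you would not hit all of $\GL_2(\mathbb{Z}_p)$. Surjectivity requires $k$ with nonzero $(3,2)$ and $(4,1),(4,2)$ entries: given $\left(\begin{smallmatrix}a&b\\ c&d\end{smallmatrix}\right)\in K_p(n'_p+n_p)$ and $\left(\begin{smallmatrix}e&f\\ \gamma&h\end{smallmatrix}\right)\in\GL_2(\mathbb{Z}_p)$, the matrix $k=\left(\begin{smallmatrix}a&b&0&0\\ c&d+h-1&\gamma&h-1\\ 0&f&e&f\\ -c&1-d&0&1\end{smallmatrix}\right)$ lies in $K_p^{n'_p+n_p}$, satisfies $x\in P(\mathbb{Q}_p)$, and its Levi blocks are exactly the prescribed pair. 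With this correction your plan goes through and agrees with the paper's conclusion.
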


\begin{proof}
For (i), observe that
$$
		w_4 \ = \ w_6\left(\begin{smallmatrix}
			0 & 1 & 0 & 0 \\
			0 & 0 & 1 & 0 \\
			1 & 0 & 0 & 0 \\
			0 & 0 & 0 & 1
		\end{smallmatrix}\right) 
		\quad \text{and} \quad 
		\left(\begin{smallmatrix}
			0 & 1 & 0 & 0 \\
			0 & 0 & 1 & 0 \\
			1 & 0 & 0 & 0 \\
			0 & 0 & 0 & 1
		\end{smallmatrix}\right) \in \mirp{n_p + n'_p}
$$
and, similarly, 
$$
w_5 \ = \ w_6 \left(\begin{smallmatrix}
			1 & 0 & 0 & 0 \\
			0 & 0 & 1 & 0 \\
			0 & 1 & 0 & 0 \\
			0 & 0 & 0 & 1
		\end{smallmatrix}\right)
		\quad \text{ and } \quad 
		\left(\begin{smallmatrix}
			1 & 0 & 0 & 0 \\
			0 & 0 & 1 & 0 \\
			0 & 1 & 0 & 0 \\
			0 & 0 & 0 & 1
		\end{smallmatrix}\right) \in \mirp{n_p + n'_p}.
$$
Hence $P(\mathbb{Q}_p)w_4 \mirp{n_p + n'_p} = P(\mathbb{Q}_p)w_5 \mirp{n_p + n'_p} =
	P(\mathbb{Q}_p)w_6 \mirp{n_p + n'_p}.$ To get the last equality of (i), further observe that
	\begin{equation*}
		w_6=
		\left(\begin{smallmatrix}
			1 & 0 & 0 & 0 \\
			0 & -1 & 0 & 1 \\
			0 & 0 & 1 & 0 \\
			0 & 0 & 0 & 1
		\end{smallmatrix}\right)
		\left(\begin{smallmatrix}
			1 & 0 & 0 & 0 \\
			0 & 1 & 0 & 0 \\
			0 & 0 & 1 & 0 \\
			0 & 1 & 0 & 1
		\end{smallmatrix}\right)
		\left(\begin{smallmatrix}
			1 & 0 & 0 & 0 \\
			0 & 1 & 0 & -1 \\
			0 & 0 & 1 & 0 \\
			0 & 0 & 0 & 1
		\end{smallmatrix}\right)
		\left(\begin{smallmatrix}
			0 & 0 & 1 & 0 \\
			0 & 1 & 0 & 0 \\
			1 & 0 & 0 & 0 \\
			0 & 0 & 0 & 1
		\end{smallmatrix}\right)
	\end{equation*}
	with
$
		\left(\begin{smallmatrix}
			1 & 0 & 0 & 0 \\
			0 & -1 & 0 & 1 \\
			0 & 0 & 1 & 0 \\
			0 & 0 & 0 & 1
		\end{smallmatrix}\right) \in P(\mathbb{Q}_p)$ and 
$\left(\begin{smallmatrix}
			1 & 0 & 0 & 0 \\
			0 & 1 & 0 & -1 \\
			0 & 0 & 1 & 0 \\
			0 & 0 & 0 & 1
		\end{smallmatrix}\right), \left(\begin{smallmatrix}
			0 & 0 & 1 & 0 \\
			0 & 1 & 0 & 0 \\
			1 & 0 & 0 & 0 \\
			0 & 0 & 0 & 1
		\end{smallmatrix}\right) \in \mirp{n_p + n'_p}.$
This completes the proof of (i). 

For (ii), since 
$P(\mathbb{Q}_p) \cap \xi_p^{(0)}  \mirp{n_p + n'_p} {\xi_p^{(0)}}^{-1} = P(\mathbb{Q}_p) \cap w_6 \mirp{n_p + n'_p}w_6^{-1}$, 
and for an element 
$k =\left(\begin{smallmatrix}
		k_{11} & k_{12} & k_{13} & k_{14} \\
		k_{21} & k_{22} & k_{23} & k_{24} \\
		k_{31} & k_{32} & k_{33} & k_{34} \\
		k_{41} & k_{42} & k_{43} & k_{44}
	\end{smallmatrix}\right)$ in $\mirp{n_p + n'_p}$, 
one has
$w_6  k w_6^{-1} = 
	\left(\begin{smallmatrix}
		k_{33} & k_{34} & k_{31} & k_{32} \\
		k_{43} & k_{44} & k_{41} & k_{42} \\
		k_{13} & k_{14} & k_{11} & k_{12} \\
		k_{23} & k_{24} & k_{21} & k_{22}
	\end{smallmatrix}\right),$ 
(ii) follows. 
\end{proof}

From Lem.\,\ref{Lem:some-coset-reductions}, and the discussion preceding it, one has:
\[ 
\bigcup_{i=4,5,6 \ s \in \mathcal{R}_{n_p + n'_p}^{w_i}}
P(\mathbb{Q}_p) w_i s \mirp{n_p + n'_p} \ = \ 
P(\mathbb{Q}_p) \xi_p^{(0)} \mirp{n_p + n'_p}.
\]
Now, we will consider the other double cosets represented by $w_i s$ with $i = 1, 2, 3,$ and $s$ as before.  
For an element $ u  \in U_P^{-}(\mathbb{Z}_p)$ as before, and for $i=1,2,3,$ the matrices $u_i := w_i^{-1}uw_i$ are
\begin{equation*}
u_1 =
	\left(\begin{smallmatrix}
		1 & 0 & 0 & 0 \\
		0 & 1 & 0 & 0 \\
		x_1 & x_2 & 1 & 0 \\
		x_3 & x_4 & 0 & 1
	\end{smallmatrix}\right), \quad 
u_2 = 
	\left(\begin{smallmatrix}
		1 & 0 & 0 & 0 \\
		x_1 & 1 & x_2 & 0 \\
		0 & 0 & 1 & 0 \\
		x_3 & 0 & x_4 & 1
	\end{smallmatrix}\right), \quad 
u_3 = 
	\left(\begin{smallmatrix}
		1 & x_1 & x_2 & 0 \\
		0 & 1 & 0 & 0 \\
		0 & 0 & 1 & 0 \\
		0 & x_3 & x_4 & 1
	\end{smallmatrix}\right).
\end{equation*}

For $i = 1, 2, 3,$ if $w_i^{-1}uw_i \in \mathcal{R}_{n_p + n'_p}^{w_i}$ then since $\mathcal{R}_{n_p + n'_p}^{w_i} \subset I_4,$ it  is necessary 
that either $x_1,x_2,x_3,x_4 \in p\mathbb{Z}_p,$ or $x_1,x_3,x_4 \in p\mathbb{Z}_p,$ or $x_3, x_4 \in p \mathbb{Z}_p$ depending on whether $i=1$ or $2$ or $3$. 
Henceforth, assume $v_p(x_3), v_p(x_4) > 0.$ Moreover, $P(\mathbb{Q}_p) w_i w_i^{-1}uw_i \mirp{n_p + n'_p} = P(\mathbb{Q}_p) u \mirp{n_p + n'_p}$ 
since $w_i \in \mirp{n_p+n_p'}.$

\begin{lemma}
For $u \in U_P^{-}(\mathbb{Z}_p)$ such that $u \in \mathcal{R}_{n_p + n'_p}^{w_i}$ for $i=1,2,3,$ 
The double coset $P(\mathbb{Q}_p) u K_p^{n_p + n'_p}$ is also represented by one of the elements: 
\[ 
\xi_p^{(j)}:=\left(\begin{smallmatrix} 1 & 0 & 0 & 0 \\ 0 & 1 & 0 & 0\\ 0 & 0 & 1 & 0 \\ 0 & p^j & 0 & 1\end{smallmatrix}\right) , 
\]
for some $0 < j \leq n_p + n'_p.$ 
\end{lemma}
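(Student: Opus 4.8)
The plan is to transform $u$ into $\xi_p^{(j)}$ for a suitable $j$ by a sequence of elementary row operations applied on the left (which must land in $P(\mathbb{Q}_p)$) and elementary column operations applied on the right (which must land in $K_p^{n'_p+n_p}$), exactly in the spirit of Lem.~\ref{Lem:some-coset-reductions}. It is convenient to first record which operations are admissible. On the left one may perform any rational row operations and unit scalings \emph{except} adding a multiple of row $1$ or $2$ into row $3$ or $4$ (these leave the block-upper-triangular subgroup $P$), together with interchanges within rows $1,2$ and within rows $3,4$. On the right one may perform integral column operations, unit scalings, and interchanges that do not corrupt the bottom row of the ambient $4\times4$ matrix modulo $p^{n'_p+n_p}$; in particular one may freely subtract integral multiples of columns $1,2,3$ from one another and interchange columns $1$ and $2$. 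One may also assume $n'_p+n_p\geq 1$, since for $p\nmid N'N$ one has $K_p^{n'_p+n_p}=\GL_4(\mathbb{Z}_p)$ and there is a single double coset.

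First I would clear the entries $x_1,x_2$: subtracting $x_1$ times column $3$ from column $1$ and $x_2$ times column $3$ from column $2$ --- that is, right multiplication by $I-x_1E_{31}$ and $I-x_2E_{32}$, which lie in $K_p^{n'_p+n_p}$ because $x_1,x_2\in\mathbb{Z}_p$ and the bottom row is untouched --- turns $u$ into $\left(\begin{smallmatrix}1&0&0&0\\0&1&0&0\\0&0&1&0\\x_3&x_4&0&1\end{smallmatrix}\right)$.

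Next set $j:=\min(v_p(x_3),v_p(x_4))$. Since $x_3,x_4\in p\mathbb{Z}_p$ --- this is precisely the hypothesis $v_p(x_3),v_p(x_4)>0$ forced by $w_i^{-1}uw_i\in I_4$ for $i=1,2,3$ --- we have $j\geq1$. If $j\geq n'_p+n_p$, then $u\in K_p^{n'_p+n_p}$, and since also $\xi_p^{(n'_p+n_p)}\in K_p^{n'_p+n_p}$ one gets $P(\mathbb{Q}_p)uK_p^{n'_p+n_p}=P(\mathbb{Q}_p)K_p^{n'_p+n_p}=P(\mathbb{Q}_p)\xi_p^{(n'_p+n_p)}K_p^{n'_p+n_p}$, so the statement holds with the value $n'_p+n_p$. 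Otherwise $1\leq j\leq n'_p+n_p-1$. After possibly interchanging columns $1$ and $2$ and then rows $1$ and $2$ (to put the upper-left block back to the identity), I may assume $v_p(x_4)=j$; writing $x_4=p^jv$ with $v\in\mathbb{Z}_p^\times$, scaling column $2$ by $v^{-1}$ (in $K_p^{n'_p+n_p}$) and then row $2$ by $v$ (in $M_P\subset P(\mathbb{Q}_p)$) makes the $(4,2)$ entry equal to $p^j$ while restoring the $(2,2)$ entry to $1$. Finally, since the $(4,1)$ entry has valuation $\geq j$, write it as $p^jc$ with $c\in\mathbb{Z}_p$ and subtract $c$ times column $2$ from column $1$ (right multiplication by $I-cE_{21}\in K_p^{n'_p+n_p}$), which clears the $(4,1)$ entry at the cost of a $-c$ in position $(2,1)$; this is then removed by adding $c$ times row $1$ to row $2$, which is admissible since $(2,1)$ lies in the upper-left block of $P$. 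The outcome is exactly $\xi_p^{(j)}$, with $0<j\leq n'_p+n_p$.

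The hard part --- really only a matter of care --- will be the admissibility bookkeeping: at every step one must check that the elementary matrix used genuinely lies in $P(\mathbb{Q}_p)$ on the left or in $K_p^{n'_p+n_p}$ on the right. The forbidden moves are exactly the addition of rows $1$ or $2$ into rows $3$ or $4$ on the left, and any column manipulation spoiling the bottom row modulo $p^{n'_p+n_p}$ on the right; this is why the reduction is organised to kill the $(3,1)$ and $(3,2)$ positions with column operations first, and why the final clearing of the $(4,1)$ entry is effected by a column operation that is immediately repaired by a row operation confined to rows $1,2$. One should also not overlook the degenerate case $v_p(x_3),v_p(x_4)\geq n'_p+n_p$, in which $u$ itself lies in $K_p^{n'_p+n_p}$ and one simply takes $j=n'_p+n_p$.
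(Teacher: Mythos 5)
Your argument is correct. Every right-hand elementary matrix you use ($I-x_1E_{31}$, $I-x_2E_{32}$, the swap of the first two columns, $\mathrm{diag}(1,v^{-1},1,1)$, $I-cE_{21}$) has integral entries and bottom row exactly $(0,0,0,1)$, hence lies in $K_p^{n'_p+n_p}$, while every left-hand operation you perform (the swap of the first two rows, $\mathrm{diag}(1,v,1,1)$, $I+cE_{21}$) lies in $M_P(\mathbb{Q}_p)\subset P(\mathbb{Q}_p)$; the hypothesis $v_p(x_3),v_p(x_4)>0$ coming from $u\in\mathcal{R}^{w_i}_{n'_p+n_p}$ gives $j\geq 1$, and you correctly absorb the degenerate case $v_p(x_3),v_p(x_4)\geq n'_p+n_p$ into $j=n'_p+n_p$, using $\xi_p^{(n'_p+n_p)}\in K_p^{n'_p+n_p}$. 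Your route, however, is organised differently from the paper's. The paper attacks the bottom row first: after arranging $v_p(z)\geq v_p(w)$ by the swap lying in both $P(\mathbb{Q}_p)$ and $K_p^{n'_p+n_p}$, it removes the $(4,1)$ entry through a factorization whose $K_p^{n'_p+n_p}$-factors involve the unit $w^{-1}zp^{-(v_p(z)-v_p(w))}$, then removes the $(3,2)$ entry by a similar identity with $y^{-1}p^{v_p(y)}$, then splits off the $(3,1)$ entry as a separate $K_p^{n'_p+n_p}$-factor, and leaves the final normalisation of the surviving $(4,2)$ entry from a valuation-$j$ element to exactly $p^{j}$ implicit. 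You instead observe that the entire third row can be annihilated at the outset by column operations that never touch the fourth row, after which only the two bottom entries remain; these are handled by a swap, a unit scaling (which makes the reduction to exactly $p^{j}$ explicit), and one column operation repaired by a row operation confined to the Levi block. Both proofs produce the same representative $\xi_p^{(j)}$ with $0<j\leq n'_p+n_p$; yours avoids the paper's identities with denominators and is somewhat more economical, at the sole cost of the admissibility bookkeeping you already carry out step by step.
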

\begin{proof}
Let
$
u = \left(\begin{smallmatrix}
			1 & 0 & 0 & 0 \\
			0 & 1 & 0 & 0 \\
			x & y & 1 & 0 \\
			z & w & 0 & 1
		\end{smallmatrix}\right) \in U_P^{-}(\Z_p),$ 
with $v_p(z), v_p(w) > 0.$ If both $w, z = 0$ then $u \in \mirp{n_p + n'_p}$. So assume that's not the case. 
If necessary, conjugating $u$ by the matrix
$\left(\begin{smallmatrix}
		0 & 1 & 0 & 0 \\
		1 & 0 & 0 & 0 \\
		0 & 0 & 1 & 0 \\
		0 & 0 & 0 & 1
	\end{smallmatrix}\right)$
which belongs to both $P(\mathbb{Q}_p)$ and $\mirp{n_p + n'_p}$, assume
$v_p(z) \geq v_p(w) > 0$ with $v_p(w) \neq  \infty$ or in other words $v_p(z) \geq v_p(w) > 0$ and $w \neq 0.$
If $z = 0$ then we skip to the next step. If $z \neq 0$ then since
$$
\left(\begin{smallmatrix}
			1 & 0 & 0 & 0 \\
			0 & 1 & 0 & 0 \\
			x & y & 1 & 0 \\
			z & w & 0 & 1
		\end{smallmatrix}\right) 
= 
		\left(\begin{smallmatrix}
			z^{-1}w p^{v_p(z) - v_p(w)} & 0 & 0 & 0 \\
			- p^{v_p(z) - v_p(w)} & 1 & 0 & 0 \\
			(x z^{-1}w - y) p^{v_p(z) - v_p(w)} & y & 1 & 0 \\
			0 & w & 0 & 1
		\end{smallmatrix}\right) 
		\left(\begin{smallmatrix}
			1 & 0 & 0 & 0 \\
			p^{v_p(z) - v_p(w)} & 1 & 0 & 0 \\
			0 & 0 & 1 & 0\\
			0 & 0 & 0 & 1
		\end{smallmatrix}\right)
		\left(\begin{smallmatrix}
			w^{-1}z p^{-(v_p(z) - v_p(w))} & 0 & 0 & 0 \\
			0 & 1 & 0 & 0 \\
			0 & 0 & 1 & 0 \\
			0 & 0 & 0 & 1
		\end{smallmatrix}\right).
$$
and the observation that the last two matrices are in $\mirp{n_p + n'_p}$, one is reduced to the case that $z = 0$. 
Define $x' := (xz^{-1}w-y)p^{v_p(z) - v_p(w)}.$ It is clear that $x'\in \mathbb{Z}_p$.
Again, if $y = 0$ then skip to the next step. If $y \neq 0$ then since
$$
		\left(\begin{smallmatrix}
			1 & 0 & 0 & 0 \\
			0 & 1 & 0 & 0 \\
			x' & y & 1 & 0 \\
			0 & w & 0 & 1
		\end{smallmatrix}\right)
		= \left(\begin{smallmatrix}
			1 & 0 & 0 & 0 \\
			0 & y^{-1}p^{v_p(y)} & 0 & 0 \\
			x' & 0 & 1 & 0 \\
			0 & wy^{-1} v_p(y) & 0 & 1
		\end{smallmatrix}\right) 
		\left(\begin{smallmatrix}
			1 & 0 & 0 & 0 \\
			0 & 1 & 0 & 0 \\
			0 & p^{v_p(y)} & 1 & 0 \\
			0 & 0 & 0 & 1
		\end{smallmatrix}\right)
		\left(\begin{smallmatrix}
			1 & 0 & 0 & 0 \\
			0 & p^{-v_p(y)} y & 0 & 0\\
			0 & 0 & 1 & 0\\
			0 & 0 & 0 & 1
		\end{smallmatrix}\right)
$$
and the observation that the last two matrices are in $\mirp{n_p + n'_p}$ and so one can assume $y = 0.$ Note that $v_p(wy^{-1}p^{v_p(y)}) = v_p(w)$. 
Using 
	\[ \left(\begin{smallmatrix}
		1 & 0 & 0 & 0 \\
		0 & 1 & 0 & 0 \\
		b & 0 & 1 & 0 \\
		0 & c & 0 & 1
	\end{smallmatrix}\right)=
	\left(\begin{smallmatrix}
		1 & 0 & 0 & 0 \\
		0 & 1 & 0 & 0 \\
		0 & 0 & 1 & 0 \\
		0 & c & 0 & 1 
	\end{smallmatrix}\right)
	\left(\begin{smallmatrix}
		1 & 0 & 0 & 0 \\
		0 & 1 & 0 & 0 \\
		b & 0 & 1 & 0 \\
		0 & 0 & 0 & 1
	\end{smallmatrix}\right)
	\]
it can be assumed $x' = 0.$ Finally, if necessary, conjugating by a diagonal matrix which is in both 
$P(\mathbb{Q}_p)$ and $K_p^{(4)}(n_p + n'_p),$ one see that what remains is one of the $\xi_p^{(j)}.$
\end{proof}

\begin{theorem}
For $0\leq i \leq n_p + n'_p,$ we have 
$K_p^{M_P}(\xi_p^{(i)}) = \congp{n_p + n'_p - i} \times \congp{i}.$ In particular,
$$
K_p^{M_P}(\xi_p^{(n_p)}) = \congp{n'_p} \times \congp{n_p}, \quad \text{and} \quad
K_p^{M_P}(\xi_p^{(n'_p)}) = \congp{n_p} \times \congp{n'_p}.
$$ \label{thm: local-lemma-on-subgroup-of-the-levi}
\end{theorem}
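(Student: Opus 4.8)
The plan is to unwind the two definitions involved: writing $m:=n'_p+n_p$ and $\xi:=\xi_p^{(i)}$ (the matrix $\mathbf{1}_4$ with the single off‑diagonal entry $p^i$ in position $(4,2)$, so $\xi^{-1}$ is $\mathbf{1}_4$ with $-p^i$ there), I must compute the image under $\kappa_P$ of $P(\mathbb{Q}_p)\cap\xi K_p^{m}\xi^{-1}$. The first step is a mechanical conjugation formula: left multiplication by $\xi$ adds $p^i$ times row $2$ to row $4$, and right multiplication by $\xi^{-1}$ subtracts $p^i$ times column $4$ from column $2$; so for $k=(k_{ab})\in K_p^{m}$ I can read off every entry of $B:=\xi k\xi^{-1}$, and in particular its two diagonal $2\times 2$ blocks are
\[
\left(\begin{smallmatrix} k_{11} & k_{12}-p^ik_{14}\\ k_{21} & k_{22}-p^ik_{24}\end{smallmatrix}\right)
\qquad\text{and}\qquad
\left(\begin{smallmatrix} k_{33} & k_{34}\\ k_{43}+p^ik_{23} & k_{44}+p^ik_{24}\end{smallmatrix}\right),
\]
while $B\in P$, i.e.\ $B_{31}=B_{32}=B_{41}=B_{42}=0$, translates into $k_{31}=0$, $k_{32}=p^ik_{34}$, $k_{41}=-p^ik_{21}$, and $k_{42}=p^i(k_{44}+p^ik_{24}-k_{22})$.

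For the inclusion $K_p^{M_P}(\xi_p^{(i)})\subseteq K_p(m-i)\times K_p(i)$ I would feed the membership congruences of $K_p^{m}$, namely $k_{41},k_{42},k_{43}\equiv 0$ and $k_{44}\equiv 1\pmod{p^{m}}$, into the parabolic relations above. From $k_{41}=-p^ik_{21}\equiv 0\pmod{p^{m}}$ one gets $k_{21}\equiv 0\pmod{p^{m-i}}$; from $k_{42}\equiv 0$ together with $k_{44}\equiv 1$ one gets $k_{22}-p^ik_{24}\equiv 1\pmod{p^{m-i}}$; hence the first diagonal block of $B$ has bottom row $\equiv(0,1)\pmod{p^{m-i}}$. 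On the other hand $k_{43}\equiv 0$ and $k_{44}\equiv 1\pmod{p^{m}}$ directly give that the second block has bottom row $\equiv(0,1)\pmod{p^{i}}$. That the blocks lie in $\GL_2(\mathbb{Z}_p)$ rather than merely in $M_2(\mathbb{Z}_p)$ is a one‑line point: $B$ is block‑upper‑triangular with $\det B\in\mathbb{Z}_p^\times$, so the product of the two block determinants is a unit in $\mathbb{Z}_p$, whence each is. Thus $\kappa_P(B)\in K_p(m-i)\times K_p(i)$.

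The surjectivity is where a genuine idea is needed, and I expect it to be the main obstacle. The naive attempt—take $B$ block‑diagonal with the prescribed blocks $g_1=\left(\begin{smallmatrix}a&b\\c&d\end{smallmatrix}\right)$ and $g_2=\left(\begin{smallmatrix}e&f\\g&h\end{smallmatrix}\right)$—fails, because reversing the conjugation formula then forces $k=\xi^{-1}B\xi$ to have $(4,4)$‑entry $h$, which need not be $\equiv 1\pmod{p^{m}}$ (only $\pmod{p^{i}}$, from $g_2\in K_p(i)$). The fix is to exploit the freedom in the upper‑right block of $B$: take $B$ with diagonal blocks $g_1,g_2$, zero lower‑left block, zero $(1,3)$‑ and $(1,4)$‑entries, $B_{23}:=g/p^i$ and $B_{24}:=(h-1)/p^i$, the last two lying in $\mathbb{Z}_p$ exactly because $g\equiv 0$ and $h\equiv 1\pmod{p^{i}}$. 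Then $k:=\xi^{-1}B\xi$ has all entries in $\mathbb{Z}_p$, determinant $\det g_1\det g_2\in\mathbb{Z}_p^\times$, and bottom row precisely $(-p^ic,\;p^i(1-d),\;0,\;1)$, which is $\equiv(0,0,0,1)\pmod{p^{m}}$ since $c\equiv 0$ and $1-d\equiv 0\pmod{p^{m-i}}$ (here $g_1\in K_p(m-i)$ is used); so $k\in K_p^{m}$ and $\kappa_P(\xi k\xi^{-1})=(g_1,g_2)$. Combining the two inclusions yields $K_p^{M_P}(\xi_p^{(i)})=K_p(n'_p+n_p-i)\times K_p(i)$, and the two displayed identities are the instances $i=n_p$ and $i=n'_p$; as a check, $i=0$ recovers $K_p(n'_p+n_p)\times\GL_2(\mathbb{Z}_p)$, consistent with Lem.\,\ref{Lem:some-coset-reductions}(ii).
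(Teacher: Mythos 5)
Your proposal is correct and follows essentially the same route as the paper: conjugate $k\in K_p^{n'_p+n_p}$ by $\xi_p^{(i)}$, read off the diagonal blocks and the vanishing conditions from membership in $P(\mathbb{Q}_p)$, and feed in the mirahoric congruences to land in $K_p(n'_p+n_p-i)\times K_p(i)$. The only differences are cosmetic improvements: your block-triangularity argument for the unit determinants replaces the paper's expansion of $\det k$ along the last row, and your explicit construction (adjusting the upper-right block by $g/p^i$ and $(h-1)/p^i$) fills in the reverse containment that the paper simply declares easy.
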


\begin{proof} 
For $k = \left(\begin{smallmatrix}
	k_{11} & k_{12} & k_{13} & k_{14} \\
	k_{21} & k_{22} & k_{23} & k_{24} \\
	k_{31} & k_{32} & k_{33} & k_{34} \\
	k_{41} & k_{42} & k_{43} & k_{44}
\end{smallmatrix}\right) \in \mirp{n_p + n'_p}$ note that 
\begin{equation} 
\xi_p^{(i)} k {\xi_p^{(i)}}^{-1}  = \left(\begin{smallmatrix}
		k_{11} & -k_{14} p^{i} + k_{12} & k_{13} & k_{14} \\
		k_{21} & -k_{24} p^{i} + k_{22} & k_{23} & k_{24} \\
		k_{31} & -k_{34} p^{i} + k_{32} & k_{33} & k_{34} \\
		k_{21} p^{i} + k_{41} & -{\left(k_{24} p^{i} + k_{44}\right)} p^{i} + k_{22} p^{i} + k_{42} & k_{23} p^{i} + k_{43} & k_{24} p^{i} + k_{44}
	\end{smallmatrix}\right). \label{eqn: conjuagate-wrt-the-rep}
\end{equation}
The case $i = 0$ has already been proved. For $i=n_p + n'_p$, since $\xi_p^{(n_p + n'_p)} \in \mirp{n_p + n'_p}$, observe that
\begin{multline*}
K_p^M(\xi_p^{(n_p + n'_p)}) \ = \ \kappa_P( P(\mathbb{Q}_p) \cap \xi_p^{(n_p + n'_p)} \mirp{n_p + n'_p} {\xi_p^{(n_p + n'_p)}}^{-1}) \\
= \ \kappa_P( P(\mathbb{Q}_p) \cap  \mirp{n_p + n'_p} ) 
= \ {\rm GL}_2(\mathbb{Z}_p) \times \congp{n_p + n'_p}.
\end{multline*}
Assume now that $0< i < n_p + n'_p.$ Since $k_{44} \equiv 1 \pmod{p^{n_p + n'_p}},$ one has 
$k_{24}p^{i} + k_{44} \equiv 1 \pmod{p^{i}}.$ 
Also, since $v_p(k_{42}) \geq n_p + n'_p$ one has $v_p(k_{42}p^{-i}) \geq n_p + n'_p-i.$
The $(4,2)$-entry of the matrix in \eqref{eqn: conjuagate-wrt-the-rep} is $0$ (because we are looking at a situation when 
$\xi_p^{(i)} k {\xi_p^{(i)}}^{-1}$ is in $P(\mathbb{Q}_p)$), which is
\begin{multline*} 
-(k_{24}p^{i} + k_{44})p^{i} + k_{22}p^{i} + k_{42} = (-k_{24}p^{i} +k_{22})p^{i} -k_{44}p^{i} + k_{42} = 0 \\ \implies  
(-k_{24}p^{i} +k_{22}) = k_{44} - k_{42}p^{-i} \equiv 1 \pmod{p^{n_p +n'_p-i}}.
\end{multline*}
In other words, under the assumption $\xi_p^{(i)} k {\xi_p^{(i)}}^{-1} \in P(\mathbb{Q}_p),$ the $(2,2)$ and the $(4,4)$ entry of the matrix in 
\eqref{eqn: conjuagate-wrt-the-rep} are congruent to $1 \pmod{p^{n_p + n'_p-i}}$ and $1 \pmod{p^{i}},$ respectively.
Since $k_{21}p^{i} + k_{41} = 0$, we get that $v_p(k_{21}) \geq n_p + n'_p - i$. 
Similarly, $v_p(k_{23}p^{i} + k_{43}) \geq i$ as $v_p(k_{43}) \geq n_p + n'_p.$ 
Also, note that $-k_{34}p^{i} + k_{32}=0 \implies v_p(k_{32}) \geq i> 0$ and $k_{31} = 0.$ Now, calculating the determinant by expanding the last row we get 
	\begin{align*}
		\mathbb{Z}_p^\times \ni \det{k} & = -k_{44}[ k_{31}(\cdots) - k_{32}(\cdots) + k_{33}( k_{11} k_{22} - k_{12}k_{21})] \\
		&\qquad + k_{43}[\cdots] -k_{42}[\cdots] + k_{41}[\cdots]\\
		&= -k_{44}[0 - p^{i} (\cdots) + k_{33}(k_{11}k_{22} - p^{n_p + n'_p - i}(\cdots))] \\&\qquad + p^{n_p + n'_p}[\cdots] - p^{n_p + n'_p}[\cdots] + p^{n_p + n'_p}[\cdots]\\
		&= -k_{44}k_{33}k_{11}k_{22} + p^{i}(\cdots) + p^{n_p + n'_p -i}(\cdots)  \:\:\: (\text{after re-grouping}).
	\end{align*}
	This shows that $k_{11}k_{22}$ and $k_{33}k_{44}$ are units in $\mathbb{Z}_p$ as $ 0 < i < n_p + n'_p.$ Therefore,
$$
\det{\left(\begin{array}{rr}
				k_{11} & -k_{14} p^{i} + k_{12} \\
				k_{21} & -k_{24} p^{i} + k_{22}
			\end{array}\right)}  
= -{\left(k_{24} p^{i} - k_{22}\right)} k_{11} + {\left(k_{14}p^{i} - k_{12}\right)} k_{21} 
= k_{11}k_{22} + p^{i} (\cdots) \in \mathbb{Z}_p^\times.
$$
Similarly,
$$
\det \left(\begin{array}{rr}
		k_{33} & k_{34} \\
		k_{23} p^{i} + k_{43} & k_{24} p^{i} + k_{44}
	\end{array}\right) \in \mathbb{Z}_p^\times .
$$
Combined with the previous observations $v_p(k_{21}) \geq n_p + n'_p -i$ and $v_p(k_{23}p^{n_p} + k_{43}) \geq i,$ and $(-k_{24}p^i + k_{22}) \equiv 1 \pmod{p^{n_p + n'_p - i}}$ and  $k_{24}p^{n_p} + k_{44} \equiv 1 \pmod{p^i}$ shows that 
	\[ K_p^{M_P}(\xi_p^{(i)}) \subset \congp{n_p + n'_p - i}\times \congp{i}.\]

For the reverse containment take an arbitrary $\left( \left( \begin{smallmatrix}
	a & b \\ c & d
\end{smallmatrix}\right), \left( \begin{smallmatrix}
a' & b' \\ c' & d'
\end{smallmatrix}\right) \right) \in K_p^{(2)}(n_p+n'_p - i) \times K_p^{(2)}(i).$ Then, one checks that 
$$
\tilde{k} = \left( \begin{smallmatrix}
	a & b & 0 & 0 \\
	c & d + d'-1 & c'p^{-i} & (d'-1)p^{-i} \\
	0 & b'p^{i} & a' & b' \\
	-cp^i & (1-d)p^i & 0 & 1
\end{smallmatrix}\right)
$$ 
is in $K_p^{(4)}(n_p + n'_p), \,\xi_p^{(i)} \tilde{k} {\xi_p^{(i)}}^{-1} \in P(\mathbb{Q}_p)$ and 
$\kappa_P( \xi_p^{(i)} \tilde{k} {\xi_p^{(i)}}^{-1}) = 
\left( \left( \begin{smallmatrix}
	a & b \\ c & d
\end{smallmatrix}\right), \left( \begin{smallmatrix}
	a' & b' \\ c' & d'
\end{smallmatrix}\right) \right).$
\end{proof}

\subsection{A corollary for global level structures}

Let $\underline{i} = ( i_p)_{p|NN'}$ with $i_p \in \{0,\cdots, n_p + n'_p\}.$ Set 
$$	
\xi_f^{(\underline{i})} = \prod_{p|NN'} \{\xi_p^{(i_p)}\} \times \prod_{p \nmid NN'} \{\mathbf{1}_4\} \in {\rm GL}_4(\mathbb{A}_f), \quad 
N_{(\underline{i})} = \prod_{p | NN'} p^{i_p}, \quad 
N^{(\underline{i})} = NN'/N_{(\underline{i})}.
$$

\begin{corollary}
\label{corollary: main-corollary-on-coset-reprentatives}
For $K_f = \mirf{N'+N}$
	\begin{gather*}
		K_f^{M_P}(\xi_f^{(\underline{i})}) = \congf{N^{(\underline{i})}} \times \congf{N_{(\underline{i})}}.
	\end{gather*}
In particular, if $\underline{i} = (n_p)_{p|NN'}$ \textup{(}{\rm resp.}, $\underline{i} = (n'_p)_{p|NN'}$\textup{)} then we have
$$
K_f^{M_P}{(\xi_f^{(\underline{i})})} = \congf{N'} \times \congf{N} \quad 
\mbox{\textup{(}{\rm resp.}, $K_f^{M_P}{(\xi_f^{(\underline{i})})} = \congf{N} \times \congf{N'}$.\textup{)}}
$$
\end{corollary}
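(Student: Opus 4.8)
The plan is to assemble the claimed global identity from the local computation in the preceding theorem, one finite place at a time. Since the parabolic $P$, its unipotent radical $U_P$, the Levi projection $\kappa_P\colon P\to M_P$, the level group $K_f=K_f^{N'+N}=\prod_p K_p^{n_p'+n_p}$, and the element $\xi_f^{(\underline i)}=\prod_p\xi_p^{(i_p)}$ are all defined place-by-place over the finite adeles, intersection and conjugation are computed componentwise, so
$$
K_f^{M_P}(\xi_f^{(\underline i)}) \ = \ \kappa_P\Bigl(P(\A_f)\cap \xi_f^{(\underline i)} K_f^{N'+N}\,(\xi_f^{(\underline i)})^{-1}\Bigr) \ = \ \prod_{p<\infty}\kappa_P\bigl(P(\Q_p)\cap \xi_p^{(i_p)} K_p^{n_p'+n_p}(\xi_p^{(i_p)})^{-1}\bigr) \ = \ \prod_{p<\infty} K_p^{M_P}(\xi_p^{(i_p)}),
$$
where for $p\nmid NN'$ one reads $i_p=n_p=n_p'=0$ and $\xi_p^{(0)}=\mathbf 1_4$.

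Next I would handle the two types of places. For $p\nmid NN'$ we have $K_p^{n_p'+n_p}=\GL_4(\Z_p)$ and $\xi_p^{(0)}=\mathbf 1_4$, so $K_p^{M_P}(\mathbf 1_4)=\kappa_P(P(\Q_p)\cap\GL_4(\Z_p))=\GL_2(\Z_p)\times\GL_2(\Z_p)=K_p(0)\times K_p(0)$, which matches the formula $K_p(n_p'+n_p-i_p)\times K_p(i_p)$ with all three exponents zero. For $p\mid NN'$ the preceding theorem gives exactly $K_p^{M_P}(\xi_p^{(i_p)})=K_p(n_p'+n_p-i_p)\times K_p(i_p)$. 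Combining both cases,
$$
K_f^{M_P}(\xi_f^{(\underline i)}) \ = \ \Bigl(\prod_{p<\infty}K_p(n_p'+n_p-i_p)\Bigr)\times\Bigl(\prod_{p<\infty}K_p(i_p)\Bigr).
$$

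Finally I would identify the two factors with the asserted level groups. By definition $N_{\underline i}=\prod_{p\mid NN'}p^{i_p}$, so $v_p(N_{\underline i})=i_p$ for all $p$ (vacuously $0$ for $p\nmid NN'$), while $v_p(NN')=n_p+n_p'$, whence $v_p(N^{(\underline i)})=v_p(NN')-v_p(N_{\underline i})=n_p+n_p'-i_p$. Therefore $\prod_p K_p(i_p)=\prod_p K_p(v_p(N_{\underline i}))=K_1(N_{\underline i})$ and $\prod_p K_p(n_p'+n_p-i_p)=\prod_p K_p(v_p(N^{(\underline i)}))=K_1(N^{(\underline i)})$, proving the corollary; the two displayed special cases follow by specializing $i_p=n_p$ (so that $n_p'+n_p-i_p=n_p'$, giving $N^{(\underline i)}=N'$ and $N_{\underline i}=N$) or $i_p=n_p'$ (giving $N^{(\underline i)}=N$ and $N_{\underline i}=N'$). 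There is essentially no obstacle: all the substance sits in the preceding theorem, and the only points requiring a moment's care are that the global level data genuinely decomposes as a restricted product over finite places and that the convention $K_p(0)=\GL_2(\Z_p)$ makes the unramified places conform to the same formula.
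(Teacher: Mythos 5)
Your proposal is correct and is essentially the paper's own (implicit) argument: the corollary is deduced directly from the local theorem $K_p^{M_P}(\xi_p^{(i)})=K_p(n_p'+n_p-i)\times K_p(i)$ by taking the restricted product over finite places and matching exponents with $v_p(N_{\underline i})$ and $v_p(N^{(\underline i)})$. The only cosmetic point is that at $p\nmid NN'$ the paper puts $\mathbf 1_4$ (rather than the matrix $\xi_p^{(0)}$) as the local component, but since there $K_p^{0}=\GL_4(\Z_p)$ this makes no difference to the computation.
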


As a shorthand for the notation $\xi_f^{(\underline{i})}$ when $\underline{i} = (n_p)_{p|NN'}$ (resp., $\underline{i} = (n'_p)_{p|NN'}$) 
will be denoted as $\xi_f^{(N)}$ (resp., $\xi_f^{(N')}$.)

\medskip
\section{Integral structures on the induced space}
\label{sec:int-on-ind-space}
Parabolically induced representations appear in the cohomology of the Borel--Serre boundary stratum for a given parabolic subgroup 
$P$ in an ambient reductive group $G$. In arithmetic applications as in \cite{harder-raghuram} one compares two such pieces in the 
cohomology of the boundary. In this article, we need to further refine the constructions of {\it loc.\,cit.}\ to work integrally. 
In this section we define an integral structure on the invariants under of an open compact subgroup of an induced space 
via the Mackey isomorphism.

\smallskip

Suppose $V$ is an admissible $M_P(\mathbb{A}_f)$--module over $E$ (resp., $E_\mathfrak{l}$). 
Let ${^a}\text{Ind}(V)$ denote the algebraic induction from $P(\mathbb{A}_f)$ to $\GL_4(\mathbb{A}_f)$ of $V$ 
after inflating it to $P(\mathbb{A}_f).$ 
If $K_f$ is an open compact subgroup of $\GL_4(\mathbb{A}_f),$ then one has the Mackey isomorphism:
\begin{equation}
{^a}\text{Ind}(V)^{K_f} \xrightarrow{\sim} \bigoplus_{\xi_f \in P(\mathbb{A}_f) \backslash \GL_4(\mathbb{A}_f) / K_f} V^{K_f^{M_P}(\xi_f)},  
\quad \quad 
\phi_f \mapsto \sum_{\xi_f} \phi_f(\xi_f), \label{eqn: the-mackey-isomorphism}
\end{equation} 	where $K_f^{M_P}(\xi_f) = \kappa_P(P(\mathbb{A}_f) \cap \xi_f\, K_f \,\xi_f^{-1})$ is a subgroup of $M_P(\mathbb{A}_f)$ for every $\xi_f.$ 
Suppose now each $V^{K_f^{M_P}(\xi_f)}$ has an $\mathcal{O}_E$ (resp. $\mathcal{O}_\mathfrak{l}$)--lattice, 
say ${^\circ}V^{K_f^{M_P}(\xi_f)},$ then an $\mathcal{O}_E$ (resp. $\mathcal{O}_\mathfrak{l}$)--lattice in ${^a}\text{Ind}(V)^{K_f}$ 
is defined to be all the vectors $\phi_f$ in the algebraically induced space  such that $\phi_f(\xi_f) \in {^\circ}V^{K_f^{M_P}(\xi_f)}.$

\smallskip

Now, specialize to the mirahoric subgroup $K_f = \mirf{N+N'}.$  For Hecke modules in inner cohomology 
$\sigma' \in \textup{Coh}_!(G_2,\mu')$ and $\sigma \in \textup{Coh}_!(G_2, \mu),$ 
and for $A = E, E_\mathfrak{l}, \mathbb{C},$ define 
$I_4^\mathsf{S}(\sigma_f, \sigma'_f, \epsilon',A)$ to be the $\mirf{N+N'}$-invariants of the algebraic-parabolic induction of 
an isotypic component in the cohomology of $M_P$:
\begin{equation}
\label{eqn:I-sigma-sigma'}
	I_4^\mathsf{S}(\sigma_f, \sigma'_f, \epsilon',A) \ := \ 
	{^a}\text{Ind}\left(H^2_{!}(S^{M_P}, \widetilde{\mathcal{M}}_{\mu+\mu',A})(\tilde{\epsilon}'\otimes \sigma_f \otimes \sigma'_f)\right)^{\mirf{N+N'}},
\end{equation}
where $\tilde{\epsilon'} = \epsilon'\times\epsilon'.$  
Similarly, define the spaces: 
$$
I_4^\mathsf{S}(\sigma_f, \sigma''_f, \epsilon',A), \quad I_4^\mathsf{S}(\sigma'_f(2), \sigma_f(-2), \epsilon',A), 
\quad 
I_4^\mathsf{S}(\sigma''_f(2), \sigma_f(-2), \epsilon',A).
$$ 
The latter two are the $\mirf{N+N'}$-invariants of the representations induced from isotypic components in the cohomology of the $M_P$ with 
coefficients $\mathcal{M}_{\mu'(-2) + \mu(2), A}.$ 
Now, collect all these induced spaces (for $A = E, E_\mathfrak{l}, \mathbb{C}$): 
\begin{gather*} 
	I_4^\mathsf{S}(\mu+\mu', A) := \bigoplus_{\epsilon'}\bigoplus_{\sigma \in \text{Coh}_!(G_2,\mu)} 
	\bigoplus_{\sigma' \in \text{Coh}_!(G_2,\mu')} I_4^\mathsf{S}(\sigma_f, \sigma'_f, \epsilon',A),\\
	I_4^\mathsf{S}(\mu'(-2) + \mu(2), A) := \bigoplus_{\epsilon'}\bigoplus_{\sigma \in \text{Coh}_!(G_2,\mu)} 
	\bigoplus_{\sigma' \in \text{Coh}_!(G_2,\mu')} I_4^\mathsf{S}(\sigma'_f(2), \sigma_f(-2), \epsilon',A ).
\end{gather*}
The notations $I_4^\mathsf{S}(\mu + \mu', \epsilon', A)$ and $I_4^\mathsf{S}(\mu'(-2)+\mu(2), \epsilon', A)$ will mean the $\tilde{\epsilon}'$ isotypic component. 
There are only finitely many summands because we have taken $\mirf{N+N'}$ invariants.

\smallskip

Applying the isomorphism in \eqref{eqn: the-mackey-isomorphism} and Cor.\,\ref{corollary: main-corollary-on-coset-reprentatives} 
one gets for $A = E, E_\mathfrak{l}, \mathbb{C}$
\begin{gather*}
	I_4^\mathsf{S}(\mu+\mu', A) \cong \bigoplus_{\epsilon'}\bigoplus_{\substack{M, M' \\ MM' = NN'}} H_{!}^2(S^{M_P}_{M\times M'}, \widetilde{\mathcal{M}}_{\mu+\mu', A})(\tilde{\epsilon}'),\\
		I_4^\mathsf{S}(\mu'(-2) + \mu(2), A) \cong \bigoplus_{\epsilon'} \bigoplus_{\substack{M, M' \\ MM' = NN'}} H_{!}^2(S^{M_P}_{M'\times M}, \widetilde{\mathcal{M}}_{\mu'(-2) + \mu(2), A})(\tilde{\epsilon}').
\end{gather*}

\smallskip
Now we can appeal to the discussion in the beginning of this section; the $\mathcal{O}_E$ or $\mathcal{O}_\mathfrak{l}$--lattice are clear; 
define for ${A^\circ} = \mathcal{O}_E, \mathcal{O}_\mathfrak{l}$:
\begin{gather*}
	I_4^\mathsf{S}(\mu+\mu',A^\circ) := \bigoplus_{\epsilon'}\bigoplus_{\substack{M,M' \\ MM' = NN'}} \tilde{H}_{!}^2(S^{M_P}_{M\times M'}, \widetilde{\mathcal{M}}_{\mu+\mu', A^\circ}),\\
	I_4^\mathsf{S}(\mu'(-2) + \mu(2),A^\circ) := \bigoplus_{\epsilon'}\bigoplus_{\substack{M,M' \\ MM' = NN'}} \tilde{H}_{!}^2(S^{M_P}_{M'\times M}, \widetilde{\mathcal{M}}_{\mu'(-2) + \mu(2), A^\circ}).
\end{gather*}
These are lattices in $I_4^\mathsf{S}(\mu+\mu',A)$ and $I_4^\mathsf{S}(\mu'(-2) + \mu(2),A)$ respectively when $A = E, E_\mathfrak{l}.$ 
It follows from the definitions that for $A = \mathcal{O}_\mathfrak{l}, E, E_\mathfrak{l}, \mathbb{C}$: 
\begin{multline*}
	I_4^\mathsf{S}(\mu + \mu', \mathcal{O}_E) \otimes_{\mathcal{O}_E} A \cong I_4^\mathsf{S}(\mu+\mu', A), \quad \\ \text{and} \quad 
	I_4^\mathsf{S}(\mu'(-2) + \mu(2),\mathcal{O}_E)\otimes_{\mathcal{O}_E} A \cong I_4^\mathsf{S}(\mu'(-2) + \mu(2), A).
\end{multline*}

\medskip
\section{Congruence of the Eisenstein operator}
	
\subsection{Review of Harder and Raghuram \cite{harder-raghuram}}
We briefly summarize the technical results of \cite{harder-raghuram}; especially, Sect.\,5.3.7, Thm.\,5.12, and the proof of Thm.\,6.2 in Sect.\,6.3.7.

\smallskip

 Here we assume that the pair of weights $(\mu, \mu')$ satisfies the conditions of the combinatorial lemma (see \cite[Lem.\,7.14]{harder-raghuram}); 
this then gives us $\lambda$ a weight on $\GL_4/\mathbb{Q}$ which is of the form 
$\lambda = w^{-1}\cdot(\mu+\mu')$ for a Kostant representative $w$ with $l(w) = 2 = \dim(U_P)/2$. The Eisenstein operator comes about as follows: 
assume the pair of weights $(\mu,\mu')$ is on the \text{right} of the unitary axis. 
For $\tau' \in \textup{Coh}_!(G_2, \mu')$ and $\tau \in \textup{Coh}_!(G_2, \mu)$ the image of the composition of maps:
	\begin{multline*}
		H^{4}(S^{(4)}, \widetilde{\mathcal{M}}_{\lambda, E})^{\mirf{N+N'}} \xrightarrow{\mathfrak{r}^*} 
		H^4(\partial S^{(4)}, \widetilde{\mathcal{M}}_{\lambda, E})^{\mirf{N+N'}}\\ 
		\xrightarrow{\mathfrak{R}^4_{\tau_f, \tau'_f, \epsilon'}}
		 I_4^\mathsf{S}(\tau_f, \tau'_f, \epsilon',E) \oplus I_4^{\mathsf{S}}(\tau'_f(2), \tau_f(-2), \epsilon',E)
	\end{multline*}
is a $\mathsf{k}_{\tau}$-dimensional subspace of $I_4^\mathsf{S}(\tau_f, \tau'_f, \epsilon',E) \oplus I_4^{\mathsf{S}}(\tau'_f(2), \tau_f(-2), \epsilon',E),$ 
where $\mathsf{k}_\tau$ is the common dimension of the two summands $I_4^\mathsf{S}(\tau_f, \tau'_f, \epsilon',E)$ and  
$I_4^{\mathsf{S}}(\tau'_f(2), \tau_f(-2), \epsilon',E).$ The image, denoted as $\mathfrak{I}^4(\tau_f,\tau'_f,\epsilon',E)$, is of the form
	\begin{equation*}
		\mathfrak{I}^4(\tau_f,\tau'_f,\epsilon',E)= \{ (\phi_f , \phi_f + T_{\text{Eis}}(\tau, \tau', \epsilon',E) \phi_f ) \,|\,\, \phi_f \in I_4^\mathsf{S}(\tau_f, \tau'_f, \epsilon',E)\}
	\end{equation*}
where the Eisenstein operator $T_{\rm Eis}$ is such that
	\begin{equation*}
		T_{\textup{Eis}}(\tau,\tau',\epsilon', E) \otimes_{\iota} \mathbb{C} = T_{\textup{st}}(-2, \tau \otimes \tau')^\bullet, 
	\end{equation*}
i.e., the Eisenstein operator is a rational `avatar' of the map induced in cohomology by the standard intertwining operator which is only defined at the transcendental level. 
Now collect all the summands by running over all the Hecke modules in inner cohomology. Define: 
$$
\mathfrak{R}^4_{\mu,\mu'} \ := \ \sum_{\epsilon'} \sum_{\tau'_f} \sum_{\tau_f} \ \mathfrak{R}^4_{\tau_f, \tau'_f, \epsilon'}.
$$ 
Applying the above discussion on the Eisenstein operator, the image 
	\begin{multline}
		H^{4}(S^{(4)}, \widetilde{\mathcal{M}}_{\lambda, E})^{\mirf{N+N'}} \xrightarrow{\mathfrak{r}^*} H^4(\partial S^{(4)}, \widetilde{\mathcal{M}}_{\lambda, E})^{\mirf{N+N'}}\\ \xrightarrow{\mathfrak{R}^4_{\mu,\mu'}}
	I_4^\mathsf{S}(\mu+\mu',E) \oplus 	I_4^{\mathsf{S}}(\mu'(-2)+\mu(2),E) \label{eqn: third-integral-structure}
	\end{multline}
is a $\mathsf{k}$-dimensional subspace of $I_4^\mathsf{S}(\mu+\mu',E) \oplus I_4^{\mathsf{S}}(\mu'(-2)+\mu(2),E),$ where 
$\mathsf{k}$ now is the common dimension of $I_4^\mathsf{S}(\mu+\mu',E)$ and $I_4^{\mathsf{S}}(\mu'(-2)+\mu(2),E).$
The image of the composition of maps, denoted by $\mathfrak{I}^4(\mu,\mu',E)$, is of the form:
	\begin{equation*}
		\mathfrak{I}^4(\mu,\mu',E) \ = \ 
		\bigoplus_{\epsilon'}\bigoplus_{\tau'} \bigoplus_{\tau} 
		\{ (\phi_f, \phi_f  + T_{\text{Eis}}(\tau, \tau', \epsilon',E) \phi_f ) \,|\,\, \phi_f \in I_4^\mathsf{S}(\tau_f, \tau'_f, \epsilon',E)\}.
	\end{equation*}

\medskip
Define an $E$--linear isomorphism from the sum of induced representations to this image as follows:
$$		
\pi_{\mu+\mu'}^{\mathfrak{I}}: I_4^\mathsf{S}(\mu+\mu',E) \ \rightarrow \ \mathfrak{I}^4(\mu,\mu',E),
$$
given by 
$$
	\sum_{\tau,\tau', \epsilon'} \phi_{\tau_f,\tau'_f,\epsilon'} \ \mapsto \ 
	\sum_{\tau,\tau',\epsilon'} (\phi_{\tau_f,\tau'_f,\epsilon'}\,,\, \phi_{\tau_f,\tau'_f,\epsilon'} + 
		T_{\textup{Eis}}(\tau,\tau',\epsilon',E)\phi_{\tau_f,\tau'_f,\epsilon'}),
$$
and, similarly, define another $E$--linear isomorphism from the image to the sum of induced representations: 
$$
	\pi_{\mathfrak{I}}^{\mu'(-2)+\mu(2)}: \mathfrak{I}^4(\mu,\mu', E)  \ \rightarrow \ I_4^\mathsf{S}(\mu'(-2)+\mu(2),E)
$$
given by 
$$
\sum_{\tau,\tau',\epsilon'} (\phi_{\tau_f,\tau'_f,\epsilon'}\,,\, \phi_{\tau_f,\tau'_f,\epsilon'} + 
T_{\textup{Eis}}(\tau,\tau',\epsilon',E)\phi_{\tau_f,\tau'_f,\epsilon'}) \ \mapsto \ 
\sum_{\tau',\tau,\epsilon'} T_{\textup{Eis}}(\tau,\tau',\epsilon',E)\phi_{\tau_f,\tau'_f,\epsilon'},
$$
where $\phi_{\tau_f,\tau'_f,\epsilon'} \in I_4^\mathsf{S}(\tau_f, \tau'_f, \epsilon', E).$ 

\medskip

For $A = E, E_\mathfrak{l}, \mathbb{C}$ define
\begin{equation}
	T_{\textup{Eis}}(\tau,\tau',\epsilon',A) := T_{\textup{Eis}}(\tau,\tau',\epsilon',E)\otimes A.
\end{equation}

\subsection{Another integral structure on induced space}
\label{section: another-integral-structure}
Using \eqref{eqn: third-integral-structure} define an $\mathcal{O}_E$--lattice of full rank in $\mathfrak{I}^4(\mu,\mu',E)$ as follows:
\begin{gather*}
	\mathfrak{I}^4(\mu,\mu',\mathcal{O}_E) := \textup{Im}\left( \tilde{H}^4(\partial S^{(4)},\widetilde{\mathcal{M}}_{\lambda, \mathcal{O}_E})^{\mirf{N+N'}}
	 \xrightarrow{\mathfrak{R}^4_{\mu,\mu'}} \mathfrak{I}^4(\mu,\mu',E) \right),\\
	\mathfrak{I}^4(\mu,\mu',A) := \mathfrak{I}^4(\mu,\mu',\mathcal{O}_E)\otimes_{\mathcal{O}_E} A \qquad\text{for }A=\mathcal{O}_\mathfrak{l}, E_\mathfrak{l}, \mathbb{C}.
\end{gather*}
The extension of $\pi_{\mu+\mu'}^\mathfrak{I}$ and $\pi_{\mathfrak{I}}^{\mu'(-2) + \mu(2)}$ to the $\mathfrak{l}$--adic completions will again be denoted by the same symbols.
The $E$--linear isomorphisms $\pi_{\mu+\mu'}^\mathfrak{I}$ and $\pi_{\mathfrak{I}}^{\mu'(-2)+\mu(2)}$ need not 
preserve the $\mathcal{O}_E$--lattices in either of the co-domains. Also, to obtain the main result for other critical values, one would also like to consider 
Tate twists. For an integer $m$ with $-1 \leq m <\frac{k'-k}{2}-1$ consider the ideals 
\begin{gather} 
	\begin{aligned}
		\{ x \in \mathcal{O}_E \ | \  x \cdot \pi_{\mu(m) + \mu'}^\mathfrak{I} \left(I_4^\mathsf{S}(\mu(m)+ \mu,\mathcal{O}_E)\right) \subset \mathfrak{I}^4(\mu(m), \mu,\mathcal{O}_E) \},\\
		\{ x \in \mathcal{O}_E \ |\ x \cdot \pi^{\mu'(-2) + \mu(2+m)}_\mathfrak{I} 
		\left(\mathfrak{I}^4(\mu(m), \mu' , \mathcal{O}_E) \right) \subset I_4^\mathsf{S}(\mu'(-2) + \mu(2+m), \mathcal{O}_E)\}, 
	\end{aligned}
\end{gather}
and define the union of their supports to be the set $\mathsf{S}_{\textup{Eis}}$ of primes which we would like to avoid in Eisenstein cohomology. 
By definition it follows that $\mathsf{S}_{\textup{Eis}} = \mathsf{S}_{\textup{Eis}}(\mu,\mu',m)$ 
depends only on the weights $\mu'$ and $\mu$ and the finitely many Tate twists and not on any of the isotypic components of the cohomology group of $M_P.$
The following lemma follows from the definition of $\mathsf{S}_{\textup{Eis}}$: 

\begin{lemma}
	If $\mathfrak{l} \not\in S_{\textup{Eis}}$ then
	\begin{align}
		\begin{aligned}
			\pi_{\mu' + \mu}^\mathfrak{I} \left(I_4^\mathsf{S}(\mu(m)+ \mu',\mathcal{O}_\mathfrak{l})\right) &\subseteq \mathfrak{I}^4(\mu(m), \mu',\mathcal{O}_\mathfrak{l}), \\
			\pi^{\mu'(-2) + \mu(2+m)}_\mathfrak{I} \left(\mathfrak{I}^4(\mu(m), \mu' , \mathcal{O}_\mathfrak{l}) \right) &\subseteq I_4^\mathsf{S}(\mu'(-2)+\mu(2+m), \mathcal{O}_\mathfrak{l}).
		\end{aligned}
	\end{align} 
\end{lemma}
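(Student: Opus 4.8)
The plan is to deduce the lemma formally from the very definition of $\mathsf{S}_{\textup{Eis}}$, using the flatness of $\mathcal{O}_E\hookrightarrow\mathcal{O}_{\mathfrak l}$ together with the base-change compatibilities already recorded for all the integral structures in play. Fix an integer $m$ in the admissible range. I would first view $L := I_4^{\mathsf S}(\mu(m)+\mu',\mathcal{O}_E)$ and $M := \mathfrak I^4(\mu(m),\mu',\mathcal{O}_E)$ (the latter defined via \eqref{eqn: third-integral-structure}) as full-rank $\mathcal{O}_E$-lattices in the common finite-dimensional $E$-vector space $\mathfrak I^4(\mu(m),\mu',E)$, the former transported there by the $E$-linear isomorphism $\pi := \pi^{\mathfrak I}_{\mu(m)+\mu'}$. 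By construction, the first ideal written down just before the statement of $\mathsf{S}_{\textup{Eis}}$ is the colon (conductor) ideal $\mathfrak c_1 := \{x\in\mathcal{O}_E \mid x\,\pi(L)\subseteq M\} = \mathrm{Ann}_{\mathcal{O}_E}\!\bigl((\pi(L)+M)/M\bigr)$; since $\pi(L)$ and $M$ are commensurable full-rank lattices, this quotient is a finitely generated torsion module and $\mathfrak c_1$ is a nonzero ideal, and the same holds for the second ideal $\mathfrak c_2$ attached to the $E$-linear isomorphism $\pi^{\mu'(-2)+\mu(2+m)}_{\mathfrak I}$. Hence $\mathsf{S}_{\textup{Eis}}$ is exactly the (finite) set of primes of $\mathcal{O}_E$ dividing $\mathfrak c_1$ or $\mathfrak c_2$, and the hypothesis $\mathfrak l\notin\mathsf{S}_{\textup{Eis}}$ says precisely that $(\mathfrak c_1)_{\mathfrak l} = (\mathfrak c_2)_{\mathfrak l} = \mathcal{O}_{\mathfrak l}$.

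Next I would localize at $\mathfrak l$. Since $(\pi(L)+M)/M$ is finitely generated over the Noetherian ring $\mathcal{O}_E$ and $\mathcal{O}_E\to\mathcal{O}_{\mathfrak l}$ is flat, the formation of its annihilator commutes with this base change, so $(\mathfrak c_1)_{\mathfrak l} = \{x\in\mathcal{O}_{\mathfrak l} \mid x\,\pi(L_{\mathfrak l})\subseteq M_{\mathfrak l}\}$, the colon now being taken inside $\mathfrak I^4(\mu(m),\mu',E_{\mathfrak l})$. By the base-change isomorphisms $I_4^{\mathsf S}(\mu(m)+\mu',\mathcal{O}_E)\otimes_{\mathcal{O}_E}\mathcal{O}_{\mathfrak l}\cong I_4^{\mathsf S}(\mu(m)+\mu',\mathcal{O}_{\mathfrak l})$ and $\mathfrak I^4(\mu(m),\mu',\mathcal{O}_E)\otimes_{\mathcal{O}_E}\mathcal{O}_{\mathfrak l}\cong\mathfrak I^4(\mu(m),\mu',\mathcal{O}_{\mathfrak l})$, and because $T_{\textup{Eis}}(\tau,\tau',\epsilon',\mathcal{O}_{\mathfrak l}) = T_{\textup{Eis}}(\tau,\tau',\epsilon',E)\otimes\mathcal{O}_{\mathfrak l}$ — so that the $\mathfrak l$-adic extension of $\pi$ denoted by the same symbol is its base change — the left-hand side $\pi^{\mathfrak I}_{\mu'+\mu}\bigl(I_4^{\mathsf S}(\mu(m)+\mu',\mathcal{O}_{\mathfrak l})\bigr)$ of the first desired inclusion is identified with $\pi(L_{\mathfrak l})$ and the right-hand side with $M_{\mathfrak l}$. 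From $(\mathfrak c_1)_{\mathfrak l} = \mathcal{O}_{\mathfrak l}$ one gets $1\in(\mathfrak c_1)_{\mathfrak l}$, i.e. $\pi(L_{\mathfrak l})\subseteq M_{\mathfrak l}$, which is the first inclusion. The second inclusion follows verbatim with $\mathfrak c_1,\pi,L,M$ replaced by $\mathfrak c_2$, $\pi^{\mu'(-2)+\mu(2+m)}_{\mathfrak I}$, $\mathfrak I^4(\mu(m),\mu',\mathcal{O}_E)$, $I_4^{\mathsf S}(\mu'(-2)+\mu(2+m),\mathcal{O}_E)$.

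The whole argument is bookkeeping, so I do not expect a genuine obstacle; the two points I would make sure to get right are (i) that the colon ideal commutes with the flat base change $\mathcal{O}_E\hookrightarrow\mathcal{O}_{\mathfrak l}$, which holds because the pertinent quotient module is finitely generated over the Noetherian domain $\mathcal{O}_E$, and (ii) that the $\mathfrak l$-adic extensions of $\pi^{\mathfrak I}_{\mu(m)+\mu'}$ and of $\pi^{\mu'(-2)+\mu(2+m)}_{\mathfrak I}$ genuinely coincide with the base changes along $\mathcal{O}_E\to\mathcal{O}_{\mathfrak l}$ of their $\mathcal{O}_E$-rational avatars — this is immediate from $T_{\textup{Eis}}(\tau,\tau',\epsilon',A)=T_{\textup{Eis}}(\tau,\tau',\epsilon',E)\otimes A$ and from the functoriality isomorphisms for $I_4^{\mathsf S}$ and for the cohomology of the Levi established in the previous section and in \eqref{eqn: functorial-properties-of-cohomology-groups}. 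Everything else reduces to the elementary fact that a nonzero ideal of $\mathcal{O}_E$ becomes the unit ideal after localizing at all but finitely many primes.
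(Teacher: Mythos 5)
Your proposal is correct and is essentially the paper's own argument: the paper gives no proof beyond the remark that the lemma "follows from the definition of $\mathsf{S}_{\textup{Eis}}$", and your write-up simply spells out that observation (the two ideals are nonzero colon ideals of commensurable full-rank lattices, and $\mathfrak{l}\not\in\mathsf{S}_{\textup{Eis}}$ makes them become the unit ideal after the flat base change $\mathcal{O}_E\to\mathcal{O}_{\mathfrak l}$, compatibly with the base-change identifications of the lattices and of $T_{\textup{Eis}}$). No gap; your version just makes explicit the bookkeeping the authors leave implicit.
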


\medskip
\subsection{Congruence of the Eisenstein operator} 
\label{section: congruence-of-the-eisenstein-operator}

\begin{theorem}
\label{thm: congruence-of-eisenstein-operators}
Let $\phi'_f \in I_4^{\,\mathsf{S}}(\sigma_f, \sigma'_f, \epsilon', E_\mathfrak{l})$ and 
$\phi_f'' \in I_4^{\,\mathsf{S}}(\sigma_f, \sigma''_f, \epsilon', E_{\mathfrak{l}})$. Assume that $\mathfrak{l} \not\in \mathsf{S}_{\textup{Eis}}.$ We have:
$$ 
\phi'_f \,\,\equiv \phi_f'' \pmod{\mathfrak{l}^n} \ \implies \ 
T_{\textup{Eis}}(\sigma, \sigma', \epsilon', E_\mathfrak{l}) \phi'_f \,\,\equiv T_{\textup{Eis}}(\sigma, \sigma'', \epsilon',  E_\mathfrak{l}) \phi''_f \pmod{\mathfrak{l}^n}.
$$
\end{theorem}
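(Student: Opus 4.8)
The plan is to exhibit both Eisenstein operators as a single $\mathcal{O}_\mathfrak{l}$-linear map between integral lattices and then propagate the congruence by linearity. First I would form the composite
$$
\Psi \ := \ \pi_{\mathfrak{I}}^{\mu'(-2)+\mu(2)} \circ \pi_{\mu+\mu'}^{\mathfrak{I}} \ : \ I_4^{\mathsf{S}}(\mu+\mu', E_\mathfrak{l}) \ \longrightarrow \ I_4^{\mathsf{S}}(\mu'(-2)+\mu(2), E_\mathfrak{l}),
$$
and unwind the explicit formulas for $\pi_{\mu+\mu'}^{\mathfrak{I}}$ and $\pi_{\mathfrak{I}}^{\mu'(-2)+\mu(2)}$ recalled above. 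Since both maps are diagonal with respect to the decomposition indexed by $(\epsilon', \tau, \tau')$, one reads off that on the summand $I_4^{\mathsf{S}}(\tau_f, \tau'_f, \epsilon', E_\mathfrak{l})$ the map $\Psi$ is exactly $T_{\textup{Eis}}(\tau, \tau', \epsilon', E_\mathfrak{l})$ followed by the canonical inclusion into $I_4^{\mathsf{S}}(\mu'(-2)+\mu(2), E_\mathfrak{l})$. In particular, as $\phi'_f$ lies in the single block $(\sigma_f, \sigma'_f, \epsilon')$ and $\phi''_f$ in the single block $(\sigma_f, \sigma''_f, \epsilon')$,
$$
\Psi(\phi'_f) \ = \ T_{\textup{Eis}}(\sigma, \sigma', \epsilon', E_\mathfrak{l})\,\phi'_f, \qquad
\Psi(\phi''_f) \ = \ T_{\textup{Eis}}(\sigma, \sigma'', \epsilon', E_\mathfrak{l})\,\phi''_f.
$$
The emphasis here is that these two vectors live a priori in \emph{different} $E_\mathfrak{l}$-summands; the role of the hypothesis $\mathfrak{l}\notin\mathsf{S}_{\textup{Eis}}$ will be to let us compare their reductions inside one common $\mathcal{O}_\mathfrak{l}$-lattice.

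Next I would invoke the preceding lemma: since $\mathfrak{l}\notin\mathsf{S}_{\textup{Eis}}$, the map $\pi_{\mu+\mu'}^{\mathfrak{I}}$ carries the lattice $I_4^{\mathsf{S}}(\mu+\mu', \mathcal{O}_\mathfrak{l})$ into $\mathfrak{I}^4(\mu,\mu',\mathcal{O}_\mathfrak{l})$, and $\pi_{\mathfrak{I}}^{\mu'(-2)+\mu(2)}$ carries $\mathfrak{I}^4(\mu,\mu',\mathcal{O}_\mathfrak{l})$ into $I_4^{\mathsf{S}}(\mu'(-2)+\mu(2),\mathcal{O}_\mathfrak{l})$. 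Composing, $\Psi$ restricts to an $\mathcal{O}_\mathfrak{l}$-linear map
$$
\Psi \ : \ I_4^{\mathsf{S}}(\mu+\mu', \mathcal{O}_\mathfrak{l}) \ \longrightarrow \ I_4^{\mathsf{S}}(\mu'(-2)+\mu(2), \mathcal{O}_\mathfrak{l}),
$$
which moreover respects the $\tilde\epsilon'$-isotypic sublattices on either side, since $\Psi$ is assembled from Hecke- and sign-equivariant maps. Then the conclusion is elementary: by hypothesis $\phi'_f, \phi''_f \in I_4^{\mathsf{S}}(\mu+\mu', \epsilon', \mathcal{O}_\mathfrak{l})$ and $\phi'_f - \phi''_f \in \mathfrak{l}\, I_4^{\mathsf{S}}(\mu+\mu', \epsilon', \mathcal{O}_\mathfrak{l})$, so $\mathcal{O}_\mathfrak{l}$-linearity of $\Psi$ gives
$$
T_{\textup{Eis}}(\sigma, \sigma', \epsilon', E_\mathfrak{l})\,\phi'_f - T_{\textup{Eis}}(\sigma, \sigma'', \epsilon', E_\mathfrak{l})\,\phi''_f \ = \ \Psi(\phi'_f - \phi''_f) \ \in \ \mathfrak{l}\, I_4^{\mathsf{S}}(\mu'(-2)+\mu(2), \epsilon', \mathcal{O}_\mathfrak{l}),
$$
which is precisely the asserted congruence modulo $\mathfrak{l}$.

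The genuinely substantive input is the second step — that avoiding $\mathsf{S}_{\textup{Eis}}$ forces $\pi_{\mu+\mu'}^{\mathfrak{I}}$ and $\pi_{\mathfrak{I}}^{\mu'(-2)+\mu(2)}$ to preserve the integral lattices — but that has been isolated into the preceding lemma, so within this proof it is only a matter of careful bookkeeping. The two book-keeping points I would be most careful about are: (i) that the single-block vectors $\phi'_f$ and $\phi''_f$ are reduced inside the honest ambient lattice $I_4^{\mathsf{S}}(\mu+\mu', \epsilon', \mathcal{O}_\mathfrak{l})$, which is \emph{not} the $\mathcal{O}_\mathfrak{l}$-direct sum of the Hecke eigen-blocks, so the hypothesis $\phi'_f\equiv\phi''_f$ genuinely couples the two operators; and (ii) that the two indexings of the boundary cohomology lattice — by Hecke modules $(\tau,\tau')$ on the one hand, and by level factorizations $MM'=NN'$ on the other (via \cref{corollary: main-corollary-on-coset-reprentatives} and the Künneth isomorphism) — are identified compatibly under base change from $\mathcal{O}_E$ to $\mathcal{O}_\mathfrak{l}$.
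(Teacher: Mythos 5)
Your proposal is correct and follows essentially the same route as the paper: the paper's proof likewise applies the composite $\pi_{\mathfrak{I}}^{\mu'(-2)+\mu(2)} \circ \pi_{\mu+\mu'}^{\mathfrak{I}}$, uses the preceding lemma (that $\mathfrak{l}\notin\mathsf{S}_{\textup{Eis}}$ forces both maps to preserve the $\mathcal{O}_\mathfrak{l}$-lattices), and concludes by the elementary fact that a lattice-preserving linear map carries congruences mod $\mathfrak{l}$ to congruences mod $\mathfrak{l}$. Your added bookkeeping remarks (single-block supports, identification of the composite with $T_{\textup{Eis}}$ blockwise) are consistent with, and merely make explicit, what the paper's proof does.
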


\begin{proof}
Suppose $B \subset V$ and $B' \subset V'$ are $\mathcal{O}_\mathfrak{l}$-lattices inside vector spaces over $E_\mathfrak{l}$. Let $T: V \rightarrow V'$ 
be a morphism of vector spaces such that $T(B) \subset B'$ then for $x , y \in V$ one has $x \equiv y \pmod{\mathfrak{l}^n}$, which, by definition 
means $x - y \in \mathfrak{l}^n B,$ implies $T(x) \equiv T(y) \pmod{\mathfrak{l}^n},$ or that $ T(x) - T(y) \in \mathfrak{l}^n B' .$
Since $\mathfrak{l} \not\in S_{\text{Eis}}$ one has
$\phi'_f - \phi''_f  \in \mathfrak{l}^n \left( {I}_4^\mathsf{S}(\mu + \mu', \epsilon', \mathcal{O}_\mathfrak{l}) \right).$ 
From which one gets
$\pi_{\mu + \mu'}^\mathfrak{I} \left( \phi'_f\right) - \pi_{\mu + \mu'}^\mathfrak{I} \left( \phi''_f\right) \in \mathfrak{l}^n 
\left( {\mathfrak{I}}^4(\mu, \mu' , \epsilon', \mathcal{O}_\mathfrak{l}) \right)$. 
Hence, 
$$
\pi^{\mu'(-2) + \mu(2)}_\mathfrak{I} \left(\pi_{\mu + \mu'}^\mathfrak{I} \left( \phi'_f\right) \right) - 
\pi^{\mu'(-2) + \mu(2)}_\mathfrak{I} \left(\pi_{\mu + \mu'}^\mathfrak{I} \left( \phi''_f\right) \right)\in 
\mathfrak{l}^n  \left({I}_4^\mathsf{S}(\mu'(-2)+\mu(2), \epsilon',\mathcal{O}_\mathfrak{l}) \right),
$$
whence, 		
$$
T_{\text{Eis}}(\sigma, \sigma', \epsilon', E_\mathfrak{l})\phi_f' - T_{\text{Eis}}(\sigma, \sigma'', \epsilon', E_\mathfrak{l}) \phi_f'' \in 
\mathfrak{l}^n \left(  {I}_4^\mathsf{S}(\mu'(-2)+\mu(2), \epsilon', \mathcal{O}_\mathfrak{l}) \right).
$$
\end{proof}

Define vectors in the induced space
$I_4^\mathsf{S}(\mu+\mu', \epsilon', \mathcal{O}_\mathfrak{l})$ (see \eqref{eqn:I-sigma-sigma'} and the definitions immediately thereafter) which are 
supported only one double coset:
\begin{gather*}
	{^\circ}\phi'_f(\xi_f) = \begin{cases}
		{^\circ}v^{\epsilon'}_{\mu + \mu'}(h,h'^\rho) &\,\,\, \xi_f = \xi_f^{(N')}\\
		0 &\,\,\, \xi_f \neq \xi_f^{(N')}
	\end{cases} \quad \text{and}\quad 
	{^\circ}\tilde{\phi}'_f(\xi_f) = \begin{cases}
			{^\circ}v^{\epsilon'}_{\mu'(-2)+\mu(2)}(h'^\rho,h) &\,\,\, \xi_f = \xi_f^{(N)}\\
		0 &\,\,\, \xi_f \neq \xi_f^{(N)}.
	\end{cases}
\end{gather*}
Similarly, define vectors for the pair $(h,h'').$ 
Then, from definitions it follows that
${^\circ}\phi'_f \in I_4^\mathsf{S}(\sigma_f, \sigma'_f, \epsilon', E_\mathfrak{l}),$ 
${^\circ}\phi''_f \in I_4^\mathsf{S}(\sigma_f, \sigma''_f, \epsilon', E_\mathfrak{l})$
and that ${^\circ}\phi'_f \equiv  {^\circ}\phi''_f \pmod{\mathfrak{l}^n},$ i.e., 
$ {^\circ}\phi'_f -  {^\circ}\phi''_f \in\mathfrak{l}^nI_4^\mathsf{S}(\mu+\mu', \epsilon', \mathcal{O}_\mathfrak{l}).$ So we get the following

\begin{corollary} 
$T_{\textup{Eis}}(\sigma, \sigma', \epsilon', E_\mathfrak{l}) {^\circ}\phi'_f  \ \equiv \ 
T_{\textup{Eis}}(\sigma, \sigma'', \epsilon',  E_\mathfrak{l}) {^\circ}\phi''_f \pmod{\mathfrak{l}^n}.$
\end{corollary}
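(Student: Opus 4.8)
The plan is to reduce the statement to Thm.\,\ref{thm: congruence-of-eisenstein-operators} by verifying its two hypotheses for the specific vectors ${^\circ}\phi'_f$ and ${^\circ}\phi''_f$: namely that they are integral, i.e.\ lie in $I_4^\mathsf{S}(\mu+\mu', \epsilon', \mathcal{O}_\mathfrak{l})$, and that they are congruent modulo $\mathfrak{l}$. First I would unwind the definition of the lattice $I_4^\mathsf{S}(\mu+\mu', \epsilon', \mathcal{O}_\mathfrak{l})$ via the Mackey isomorphism \eqref{eqn: the-mackey-isomorphism}: a vector $\phi_f$ in the induced space is integral precisely when $\phi_f(\xi_f) \in {^\circ}V^{K_f^{M_P}(\xi_f)}$ for every double coset representative $\xi_f \in P(\mathbb{A}_f)\backslash \GL_4(\mathbb{A}_f)/K_f^{N'+N}$, where here ${^\circ}V^{K_f^{M_P}(\xi_f)}$ is the $\mathcal{O}_\mathfrak{l}$--lattice $\tilde{H}^2_!(S^{M_P}_{K_f^{M_P}(\xi_f)}, \widetilde{\mathcal{M}}_{\mu+\mu', \mathcal{O}_\mathfrak{l}})$ from Sect.\,\ref{sec:integral-kunneth-levi}.

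Next I would check the integrality of ${^\circ}\phi'_f$. By construction its only nonzero value is at $\xi_f = \xi_f^{(N')}$, where it equals ${^\circ}v^{\epsilon'}_{\mu + \mu'}(h,h'^\rho)$. By Cor.\,\ref{corollary: main-corollary-on-coset-reprentatives} (in the case $\underline{i} = (n'_p)_{p|NN'}$) we have $K_f^{M_P}(\xi_f^{(N')}) = K_1(N)\times K_1(N')$, so the relevant lattice is exactly $\tilde{H}^2_{!}(S^{M_P}_{N\times N'}, \widetilde{\mathcal{M}}_{\mu+\mu', \mathcal{O}_\mathfrak{l}})$, which via the integral K\"unneth isomorphism of Sect.\,\ref{sec:integral-kunneth-levi} is the image of $\tilde{H}^1_!(S_1^{(2)}(N), \widetilde{\mathcal{M}}_{\mu, \mathcal{O}_\mathfrak{l}}) \otimes_{\mathcal{O}_\mathfrak{l}} \tilde{H}^1_!(S_1^{(2)}(N'), \widetilde{\mathcal{M}}_{\mu', \mathcal{O}_\mathfrak{l}})$. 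The Corollary in Sect.\,\ref{sec:explicit-coh-classes} says precisely that ${^\circ}v^{\epsilon'}_{\mu + \mu'}(h,h'^\rho)$ lies in this lattice; hence ${^\circ}\phi'_f \in I_4^\mathsf{S}(\sigma_f,\sigma'_f,\epsilon',\mathcal{O}_\mathfrak{l}) \subset I_4^\mathsf{S}(\mu+\mu', \epsilon', \mathcal{O}_\mathfrak{l})$, and the same argument applies to ${^\circ}\phi''_f$ with $h''^\rho$ in place of $h'^\rho$.

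Then I would verify the congruence. The difference ${^\circ}\phi'_f - {^\circ}\phi''_f$ is supported only at $\xi_f^{(N')}$, where it equals ${^\circ}v^{\epsilon'}_{\mu + \mu'}(h,h'^\rho) - {^\circ}v^{\epsilon'}_{\mu + \mu'}(h,h''^\rho)$. By the Corollary in Sect.\,\ref{sec:explicit-coh-classes} this difference lies in $\mathfrak{l}\,\tilde{H}^2_{!}(S^{M_P}_{N\times N'}, \widetilde{\mathcal{M}}_{\mu+\mu', \mathcal{O}_\mathfrak{l}})$; chasing back through the Mackey identification this means ${^\circ}\phi'_f - {^\circ}\phi''_f \in \mathfrak{l}\, I_4^\mathsf{S}(\mu+\mu', \epsilon', \mathcal{O}_\mathfrak{l})$, i.e.\ ${^\circ}\phi'_f \equiv {^\circ}\phi''_f \pmod{\mathfrak{l}}$. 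Finally, since $\mathfrak{l}\notin \mathsf{S}_{\textup{Eis}}$ by hypothesis, I apply Thm.\,\ref{thm: congruence-of-eisenstein-operators} with $\phi'_f = {^\circ}\phi'_f$ and $\phi''_f = {^\circ}\phi''_f$ to obtain $T_{\textup{Eis}}(\sigma, \sigma', \epsilon', E_\mathfrak{l}){^\circ}\phi'_f \equiv T_{\textup{Eis}}(\sigma, \sigma'', \epsilon', E_\mathfrak{l}){^\circ}\phi''_f \pmod{\mathfrak{l}}$.

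The only genuinely delicate point — and the one I would take care to spell out — is the compatibility of the Mackey lattice with the tensor-product lattice at the single relevant cusp $\xi_f^{(N')}$; once Cor.\,\ref{corollary: main-corollary-on-coset-reprentatives} pins down $K_f^{M_P}(\xi_f^{(N')}) = K_1(N)\times K_1(N')$ and the integral K\"unneth isomorphism of Sect.\,\ref{sec:integral-kunneth-levi} identifies the corresponding lattice, everything else is a formal consequence of Thm.\,\ref{thm: congruence-of-eisenstein-operators}. There is no serious obstacle beyond bookkeeping.
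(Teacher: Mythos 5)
Your proposal is correct and follows essentially the same route as the paper: the paper also defines ${^\circ}\phi'_f, {^\circ}\phi''_f$ supported at the single coset $\xi_f^{(N')}$, notes that by the very definition of the lattice $I_4^\mathsf{S}(\mu+\mu',\epsilon',\mathcal{O}_\mathfrak{l})$ (via the Mackey isomorphism and Cor.\,\ref{corollary: main-corollary-on-coset-reprentatives}) these vectors are integral and congruent modulo $\mathfrak{l}$ thanks to the congruence of the classes from Sect.\,\ref{sec:explicit-coh-classes}, and then applies Thm.\,\ref{thm: congruence-of-eisenstein-operators}. Your write-up merely makes explicit the lattice identification at $\xi_f^{(N')}$ that the paper leaves implicit.
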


\medskip
\section{Computing the Eisenstein operator on some special vectors}

In this section the effect of $T_{\text{Eis}}(\sigma, \sigma', \epsilon', E_\mathfrak{l}) \otimes \mathbb{C}$ on 
${ ^\circ}\phi'_f = {^\circ}\phi'_f \otimes_{\iota} \mathbb{C}$ will be determined. 
To do so we shall introduce periods attached to cohomology classes by comparing them with certain canonically defined vectors at the transcendental level.

\subsection{Periods attached to the cohomology classes}
\label{section: periods-attached-to-the-cohomology-classes}
\subsubsection{For ${\rm GL}_2/\mathbb{Q}$}

For a dominant integral weight $\mu$ for $G_2$, recall from \ref{sec:rep-infty}, 
the relative Lie algebra cohomology 
$H^{1}(\mathfrak{g}_2, K_{2, \infty}, \mathbb{D}_{\mu} \otimes \mathcal{M}_{\mu, \mathbb{C}})$ is 
a two-dimensional space in which both the trivial and sign character for the action of ${\rm O}(2)/{\rm SO}(2)$ appear once; 
for such a character $\epsilon'$ of ${\rm O}(2)/{\rm SO}(2)$, the $\epsilon'$-isotypic component 
$H^{1}(\mathfrak{g}_2, K_{2, \infty}, \mathbb{D}_{\mu} \otimes \mathcal{M}_{\mu, \mathbb{C}})(\epsilon')$ is one-dimensional. 
Fix a basis $w^{\epsilon'}_{\infty}(\mu)$ for this one-dimensional space as in \cite[Sect.\,5.2.1]{harder-raghuram}. 

\smallskip
Now, in our situation of $h' \in S_{k'}(N',\chi')^{\textup{new}}$ and $h \in S_k(N,\chi)^{\textup{new}}$, 
let $\mathbf{h}$ and $\mathbf{h}'^\rho$ be $\C$-valued automorphic forms on $\GL_2(\mathbb{A})$ attached to $h$ and $h'^\rho,$ respectively. 
Let $\mathbf{h}_f$ and $\mathbf{h}'^\rho_f$ denote their restrictions to $\GL_2(\mathbb{A}_f)$ respectively.  We have isomorphisms 
\begin{gather*}
	\Phi_{(2)} :	
	H^{1}(\mathfrak{g}_2, K_{2, \infty}, \mathbb{D}_{\mu} \otimes \mathcal{M}_{\mu, \mathbb{C}})(\epsilon')
	\otimes \C\mathbf{h}_f \ \xrightarrow{\sim} \ 
	H^1_!(S^{(2)}_1(N), \mathcal{M}_{\mu,\mathbb{C}})(\epsilon'\times \sigma_f),\\
	\Phi'_{(2)} : H^{1}(\mathfrak{g}_2, K_{2, \infty}, \mathbb{D}_{\mu'}  \otimes \mathcal{M}_{\mu', \mathbb{C}})(\epsilon')
	\otimes \C\mathbf{h}'^\rho_f  \ \xrightarrow{\sim} \ 
	H^1_!(S^{(2)}_1(N'), \mathcal{M}_{\mu',\mathbb{C}})(\epsilon'\times \sigma'_f)
\end{gather*}
between one-dimensional spaces. But there are vectors which already the span one dimensional target-spaces, 
namely the the base change of ${^\circ}v^{\epsilon'}_{\mu}(h)$ and ${^\circ}v^{\epsilon'}_{\mu'}(h'^\rho)$ to $\mathbb{C}$ via the embedding $\iota$. 
Hence, there are two complex numbers
 $\Omega^{\epsilon'}(\Phi'_{(2)}, \mu', \sigma')$ and 
$\Omega^{\epsilon'}(\Phi_{(2)}, \mu, \sigma)$ such that
$$
\Phi_{(2)}(w_\infty^{\epsilon'}(\mu)\otimes \mathbf{h}_f) \ = \ 
\Omega^{\epsilon'}(\Phi_{(2)}, w_\infty^{\epsilon'}(\mu), \sigma) \,\,{^\circ}v_{\mu}^{\epsilon'}(h)
$$
and
$$
\Phi'_{(2)}(w_\infty^{\epsilon'}(\mu')\otimes \mathbf{h}'^\rho_f) \ = \  
\Omega^{\epsilon'}(\Phi'_{(2)}, w_\infty^{\epsilon'}(\mu'), \sigma') \,\,{^\circ}v_{\mu'}^{\epsilon'}(h'^\rho).
$$
Exactly as in \cite[Sect.\,5.2.4]{harder-raghuram}, there is an invariance with respect to even Tate twists; for the generators of the relative Lie algebra cohomology, 
one has $w_\infty^{\epsilon'}(\mu(2m)) = w_\infty^{\epsilon'}(\mu)$, and hence for the periods: 
\begin{equation} 
\Omega^{\epsilon'}(\Phi^?_{(2)}, w_\infty^{\epsilon'}(\mu^?(2m)), \sigma^?(-2m)) 
= \Omega^{\epsilon'}(\Phi^?_{(2)}, w_\infty^{\epsilon'}(\mu^?), \sigma^?). \label{eqn: period-under-tate-twist} 
\end{equation}
for $? \in \{ ', ''\}.$

\medskip
\subsubsection{For the Levi quotient $M_P$}
The preceding discussion on periods for $\GL_2$ naturally boot-straps via the K\"unneth theorem for periods for the cohomology classes for 
$M_P = G_2 \times G_2.$
Begin by fixing the basis element $w^{\epsilon'}_{\infty}(\mu + \mu')$ for
$$H^{1}(\mathfrak{g}_2, K_{2, \infty}, \mathbb{D}_{\mu} \otimes \mathcal{M}_{\mu, \mathbb{C}})(\epsilon') \otimes 
H^{1}(\mathfrak{g}_2, K_{2, \infty}, \mathbb{D}_{\mu'} \otimes \mathcal{M}_{\mu', \mathbb{C}})(\epsilon')$$ 
defined as $w^{\epsilon'}_{\infty}(\mu + \mu') := w_\infty^{\epsilon'}(\mu) \otimes w^{\epsilon'}_\infty(\mu').$ 
One has the isomorphism 
$\Phi_{M_P}$ from the one-dimensional space 
$$
\biggl(
H^{1}(\mathfrak{g}_2, K_{2, \infty}, \mathbb{D}_{\mu} \otimes \mathcal{M}_{\mu, \mathbb{C}})(\epsilon') \otimes 
H^{1}(\mathfrak{g}_2, K_{2, \infty}, \mathbb{D}_{\mu'} \otimes \mathcal{M}_{\mu', \mathbb{C}})(\epsilon')
\biggr) \otimes 
(\C \mathbf{h}_f \otimes \C \mathbf{h}'^\rho_f)
$$
to the one-dimensional space
$$
H_{!}^2 (S^{M_P}_{N \times N'}, \widetilde{\mathcal{M}}_{\mu + \mu', \mathbb{C}}) (\tilde{\epsilon}' \otimes {}\sigma_f  \otimes {}\sigma'_f). 
$$
Using the the base change of the element ${^\circ}v^{\epsilon'}_{\mu + \mu'}(h,h'^\rho)$ generating the target space, 
gives us a period $\Omega(\Phi_{M_P}, w^{\epsilon'}_{\infty}(\mu + \mu'), {}\sigma   \otimes {}\sigma' ).$ 
Analogously, there is a map $\widetilde{\Phi}_{M_P}$ and a basis element $w^{\epsilon'}_{\infty}(\mu'(-2)+ \mu(2)),$ for the weight $\mu'(-2) + \mu(2)$ and the representation $\sigma'(2)\otimes \sigma(-2)$ which gives the period $\Omega^{\tilde{\epsilon}'}(\widetilde{\Phi}_{M_P}, w^{\epsilon'}_{\infty}(\mu'(-2)+ \mu(2)),  {}\sigma'(2) \otimes {}\sigma(-2) ).$
Using \eqref{eqn: period-under-tate-twist} one has the following period relation: 

\begin{theorem}
\label{thm: periods-are-equal}
$$
\Omega^{\tilde{\epsilon}'}(\Phi_{M_P}, w^{\epsilon'}_{\infty}(\mu + \mu'), {}\sigma   \otimes {}\sigma') \ = \ 
\Omega^{\tilde{\epsilon}'}(\widetilde{\Phi}_{M_P}, w^{\epsilon'}_{\infty}(\mu'(-2)+ \mu(2)),  {}\sigma'(2) \otimes {}\sigma(-2) ).
$$
\end{theorem}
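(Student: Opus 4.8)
The plan is to reduce the identity to the Tate-twist invariance \eqref{eqn: period-under-tate-twist} of the two $\GL_2$-periods, by observing that the Levi period $\Omega(\Phi_{M_P},\cdot,\cdot)$ factors through the K\"unneth decomposition as the product of the periods $\Omega^{\epsilon'}(\Phi_{(2)},\mu,{}^\iota\sigma)$ and $\Omega^{\epsilon'}(\Phi'_{(2)},\mu',{}^\iota\sigma')$.

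First I would make the K\"unneth factorization of $\Phi_{M_P}$ explicit. By construction $w^{\epsilon'}_\infty(\mu+\mu')=w^{\epsilon'}_\infty(\mu)\otimes w^{\epsilon'}_\infty(\mu')$, the finite automorphic form attached to ${}^\iota\sigma\otimes{}^\iota\sigma'$ is $\mathbf{h}_f\otimes\mathbf{h}'^\rho_f$, and ${}^\circ v^{\epsilon'}_{\mu+\mu'}(h,h'^\rho)={}^\circ v^{\epsilon'}_\mu(h)\otimes{}^\circ v^{\epsilon'}_{\mu'}(h'^\rho)$; and, under the K\"unneth isomorphism of \ref{sec:kunneth-levi}, the map $\Phi_{M_P}$ is the tensor product $\Phi_{(2)}\otimes\Phi'_{(2)}$ (set up exactly as in \cite[Sect.\,5.2]{harder-raghuram}). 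Feeding $w^{\epsilon'}_\infty(\mu+\mu')\otimes(\mathbf{h}_f\otimes\mathbf{h}'^\rho_f)$ into $\Phi_{M_P}$ and using the two defining relations for $\Omega^{\epsilon'}(\Phi_{(2)},\mu,{}^\iota\sigma)$ and $\Omega^{\epsilon'}(\Phi'_{(2)},\mu',{}^\iota\sigma')$ then gives
\begin{equation*}
\Omega(\Phi_{M_P}, w^{\epsilon'}_\infty(\mu+\mu'), {}^\iota\sigma\otimes{}^\iota\sigma')
= \Omega^{\epsilon'}(\Phi_{(2)},\mu,{}^\iota\sigma)\cdot\Omega^{\epsilon'}(\Phi'_{(2)},\mu',{}^\iota\sigma').
\end{equation*}

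Second, I would run the same computation for the twisted data. The representation ${}^\iota\sigma'(2)\otimes{}^\iota\sigma(-2)$ has cohomology with respect to $\mu'(-2)+\mu(2)$, its attached finite form is $\mathbf{h}'^\rho_f\otimes\mathbf{h}_f$, and, writing $w^{\epsilon'}_\infty(\mu'(-2)+\mu(2))=w^{\epsilon'}_\infty(\mu'(-2))\otimes w^{\epsilon'}_\infty(\mu(2))$ together with ${}^\circ v^{\epsilon'}_{\mu'(-2)+\mu(2)}(h'^\rho,h)={}^\circ v^{\epsilon'}_{\mu'(-2)}(h'^\rho)\otimes{}^\circ v^{\epsilon'}_{\mu(2)}(h)$, the identical K\"unneth factorization yields
\begin{equation*}
\Omega(\Phi_{M_P}, w^{\epsilon'}_\infty(\mu'(-2)+\mu(2)), {}^\iota\sigma'(2)\otimes{}^\iota\sigma(-2))
= \Omega^{\epsilon'}(\Phi'_{(2)},\mu'(-2),{}^\iota\sigma'(2))\cdot\Omega^{\epsilon'}(\Phi_{(2)},\mu(2),{}^\iota\sigma(-2)).
\end{equation*}
Finally, \eqref{eqn: period-under-tate-twist} with $2m=-2$ on the first factor and with $2m=2$ on the second factor gives $\Omega^{\epsilon'}(\Phi'_{(2)},\mu'(-2),{}^\iota\sigma'(2))=\Omega^{\epsilon'}(\Phi'_{(2)},\mu',{}^\iota\sigma')$ and $\Omega^{\epsilon'}(\Phi_{(2)},\mu(2),{}^\iota\sigma(-2))=\Omega^{\epsilon'}(\Phi_{(2)},\mu,{}^\iota\sigma)$, so that the right-hand side collapses to the same product as the untwisted period, which is the claim.

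The main obstacle is the very first step: verifying that $\Phi_{M_P}$ really is $\Phi_{(2)}\otimes\Phi'_{(2)}$ under K\"unneth, i.e.\ that the normalizations of the generators $w^{\epsilon'}_\infty$ of relative Lie algebra cohomology and of the K\"unneth isomorphisms used in \ref{sec:kunneth-levi} and at the level of Lie algebra cohomology are mutually compatible, with no spurious sign appearing when one passes from bidegree $(1,1)$ to total degree $2$. Once that bookkeeping is in place, the rest is a formal manipulation of the defining relations for the periods together with the Tate-twist invariance already recorded in \eqref{eqn: period-under-tate-twist}.
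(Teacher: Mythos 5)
Your proposal is correct and is essentially the paper's own argument: the Levi period factors through the K\"unneth decomposition as the product of the two $\GL_2$-periods $\Omega^{\epsilon'}(\Phi_{(2)},\mu,{}^\iota\sigma)\cdot\Omega^{\epsilon'}(\Phi'_{(2)},\mu',{}^\iota\sigma')$, and the equality then follows from the even-Tate-twist invariance \eqref{eqn: period-under-tate-twist} applied with $2m=-2$ and $2m=2$, exactly as the paper indicates. The sign/ordering worry you raise is harmless, since both the archimedean generators and the integral classes on the twisted side are defined as ordered tensor products in the same order, and the resulting scalar is a product of the two $\GL_2$-periods in either case.
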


\medskip
\subsubsection{For the ambient group ${\rm GL}_4/\mathbb{Q}$}
The discussion above on periods for cohomology classes for the Levi $M_P$ naturally boot-straps via Delorme's lemma 
for periods for the cohomology classes for the ambient group $G_4 = \GL_4/\mathbb{Q}.$
By Delorme's lemma one has the isomorphism between the relative Lie algebra cohomology of a parabolically induced representation 
with that of the inducing representation:
\begin{multline*} 
H^{1}(\mathfrak{g}_2, K_{2, \infty}, \mathbb{D}_{\mu} \otimes \mathcal{M}_{\mu, \mathbb{C}})(\epsilon') 
\otimes H^{1}(\mathfrak{g}_2, K_{2, \infty}, \mathbb{D}_{\mu'} \otimes \mathcal{M}_{\mu', \mathbb{C}})(\epsilon')  \\
\cong \ 
H^4(\mathfrak{g}_4, K_{4,\infty}, \aInd(\D_\mu \otimes \D_{\mu'}) \otimes \mathcal{M}_{\lambda, \mathbb{C}})(\epsilon' \times \epsilon'),	
\end{multline*}
Here we assume that the pair of weights $(\mu, \mu')$ satisfies the conditions of the combinatorial lemma (see \cite[Lem.\,7.14]{harder-raghuram}); 
this then gives us $\lambda$ a weight on $\GL_4$ which is of the form 
$\lambda = w^{-1}\cdot(\mu+\mu')$ for a Kostant representative $w$ with $l(w) = 2 = \dim(U_P)/2$. 
The vector $w^{\epsilon'}_{\infty}(\mu + \mu')$ is now also to be thought of as a generator for the 
cohomology group on $G_4$ via Delorme's lemma. Similarly, for $w^{\epsilon'}_{\infty}(\mu'(-2) + \mu(2))$.

At the finite places fix vectors in the one-dimensional space of invariants under $\mirf{N+N'}$ of induced representations 
which are supported only on one double-coset: 
$$
\psi'_f \in \aInd_{P(\mathbb{A}_f)}^{G_4(\mathbb{A}_f)}(\sigma_f \otimes \sigma'_f)^{\mirf{N+N'}}, \qquad 
\tilde{\psi}'_f \in \aInd_{P(\mathbb{A}_f)}^{G_4(\mathbb{A}_f)}(\sigma'_f(2) \otimes \sigma_f(-2))^{\mirf{N+N'}}
$$ 
such that
\begin{gather*}
	\psi'_f(\xi_f) = \begin{cases}
		\mathbf{h}_f \otimes \mathbf{h}'^\rho_f &\,\,\, \xi_f = \xi_f^{(N')}\\
		0 &\,\,\, \xi_f \neq \xi_f^{(N')}, 
	\end{cases} \quad \text{and}\quad 
	\tilde{\psi}'_f(\xi_f) = \begin{cases}
		\mathbf{h}'^\rho_f(2) \otimes \mathbf{h}_f(-2) &\,\,\, \xi_f = \xi_f^{(N)}\\
		0 &\,\,\, \xi_f \neq \xi_f^{(N)}.
	\end{cases}
\end{gather*}
We have an isomorphism $\Phi_{(4)}$ between the one-dimensional space
$$
H^4(\mathfrak{g}_4, K_{4,\infty}, \aInd(\D_\mu \otimes \D_{\mu'}) \otimes \mathcal{M}_{\lambda, \mathbb{C}})(\epsilon' \times \epsilon')
\otimes 
\aInd_{P(\mathbb{A}_f)}^{G_4(\mathbb{A}_f)}(\sigma_f \otimes \sigma'_f)^{\mirf{N+N'}}
$$
and the space $ I_4^\mathsf{S}(\sigma_f, \sigma'_f, \epsilon', \mathbb{C})$ (see Sect.\,\ref{sec:int-on-ind-space}) giving a 
period construction via comparison of chosen basis elements:
\begin{equation}
\label{eqn: period-relation-induced-vectors}
\Phi_{(4)} (w^{\epsilon'}_{\infty}(\mu + \mu') \otimes \psi'_f) \ = \ 
\Omega^{\tilde{\epsilon}'}(\Phi_{M_P}, \, w^{\epsilon'}_{\infty}(\mu + \mu'), \, \sigma \otimes \sigma')
\ {^\circ} \phi'_f. 
\end{equation}
Similarly, we have a map $\widetilde{\Phi}_{(4)}$ such that 
\begin{equation*}
	\widetilde{\Phi}_{(4)} (w^{\epsilon'}_{\infty}(\mu'(-2) + \mu(2)) \otimes \tilde{\psi}'_f) \ = \ 
	\Omega^{\tilde{\epsilon}'}(\widetilde{\Phi}_{M_P}, \, w^{\epsilon'}_{\infty}(\mu'(-2) + \mu(2)), \, \sigma(2) \otimes \sigma'(-2))
	\ {^\circ} \tilde{\phi}'_f.
\end{equation*}

\medskip
\subsection{The standard intertwining operator on the special vectors}

The reader is referred to \cite[Sect.\,6.3.3]{harder-raghuram} for the definition and notations for the standard intertwining operator.

\subsubsection{At the infinite place}
Recall that we have assumed $(\mu,\mu')$ satisfies the conditions of \cite[Lem.\,7.14]{harder-raghuram}; in particular, the values of 
$L(s, \sigma_\infty \times \sigma'^\sfv_\infty)$ are finite at $s = -1$ and $s = -2.$
Define an operator between induced representations:
$$T_{\textup{loc}}(\sigma_\infty \otimes \sigma'_\infty): 
\aInd_{P_\infty}^{G_{4,\infty}}(\D_\mu \otimes \D_{\mu'}) \ \rightarrow \ \aInd_{P_\infty}^{G_{4,\infty}}(\D_{\mu'}(2) \otimes \D_{\mu}(-2))
$$
such that the map it induces at the level of the relative Lie algebra cohomology is pinned down by:
$$
T_{\textup{\text{loc}}}(\sigma_\infty \otimes \sigma'_\infty)^\bullet (w^{\epsilon'}_{\infty}(\mu + \mu')) \ = \ w^{\epsilon'}_{\infty}(\mu'(-2) + \mu(2)).
$$
On the other other, there is the standard intertwining operator 
$$
T_{\rm st}(-2, \sigma_\infty \otimes \sigma'_\infty): 
\aInd_{P_\infty}^{G_\infty}(\D_\mu \otimes \D_{\mu'}) \ \rightarrow \ \aInd_{P_\infty}^{G_\infty}(\D_{\mu'}(2) \otimes \D_{\mu}(-2)).
$$
The operator $T_{\textup{\text{loc}}}(\sigma_\infty \otimes \sigma'_\infty)^\bullet$ 
and the map induced at the level of cohomology by the standard intertwining operator are equal up to a scalar multiple. 
From \cite[Thm.\,7.25]{harder-raghuram}, there exists a $c'_\infty \in \Q^\times$ such that
\begin{equation*}
	T_{\text{st}}(-2, \sigma_\infty \otimes \sigma'_\infty)^\bullet (\epsilon') \ = \ 
	c'_\infty \frac{L(-2, \sigma_\infty \times \sigma'^\sfv_\infty)}{L(-1, \sigma_\infty \times \sigma'^\sfv_\infty)} 
	T_{\textup{\text{loc}}}(\sigma_\infty \otimes \sigma'_\infty)^\bullet.
\end{equation*}
Hence
\begin{equation}
\label{eqn:T-st-on-basis}
	T_{\text{st}}(-2, \sigma_\infty \otimes \sigma'_\infty)^\bullet (w^{\epsilon'}_{\infty}(\mu + \mu')) \ = \ 
	c'_\infty 
	\frac{L(-2, \sigma_\infty \times \sigma'^\sfv_\infty)}{L(-1, \sigma_\infty \times \sigma'^\sfv_\infty)} 
	w^{\epsilon'}_{\infty}(\mu'(-2) + \mu(2)).
\end{equation}
Note that $L(s, \sigma_\infty \times \sigma_\infty'^\sfv)$ defined in \textit{loc.cit.} 
is a nonzero constant multiple of $L_\infty(s, h \times h')$ defined in \ref{section: classical-rankin-selberg}, but if we take ratios of critical values, we get equality: 
$$\frac{L(s, \sigma_\infty \times \sigma_\infty'^\sfv)}{L(s+1, \sigma_\infty \times \sigma_\infty'^\sfv)} = \frac{L_\infty(s+k'-1, h \times h')}{L_\infty(s+k', h \times h')}.$$

\subsubsection{At the finite places}

Let $\mathsf{S}_f$ denote the set of all finite places where either $\sigma_f$ or $\sigma'_f$ is ramified; it is the support of the integer $NN'.$ Let 
$\mathsf{S}$ denote $\mathsf{S}_f$ together with the archimedean place. 
We will now compute the effect of the standard intertwining operator:
$$
T_{\rm st}(-2, \sigma_f \otimes \sigma_f): 
\aInd_{P(\A_f)}^{G(\A_f)}(\sigma_f \otimes \sigma'_f) \ \rightarrow \ 
\aInd_{P(\A_f)}^{G(\A_f)}( \sigma'_f(2) \otimes \sigma_f(-2))
$$
on the vector $\psi'_f$. By multiplicity-one for the invariants under the mirahoric subgroup $\mirf{N+N'}$, the operator maps  $\psi'_f$ to a multiple of $\tilde{\psi}'_f$.

\begin{theorem}
\label{thm:ratios-of-l-values-finite}
There exists a nonzero constant $c'_{\mathsf{S}_f} \in E$, such that 
	\[ T_{\text{st}}(-2, \sigma_f \otimes \sigma'_f) {\psi'}_f \ = \ 
	c'_{\mathsf{S}_f}\frac{L^\mathsf{S}(-2, \sigma \times \sigma'^\sfv)}{L^\mathsf{S}(-1, \sigma \times \sigma'^\sfv)}\, \tilde{\psi}_f'.\] 
\end{theorem}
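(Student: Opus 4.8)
The plan is to exploit the factorisation $T_{\textup{st}}(-2,{}^\iota\sigma_f\otimes{}^\iota\sigma'_f)=\bigotimes_{p<\infty}T_{\textup{st}}(-2,{}^\iota\sigma_p\otimes{}^\iota\sigma'_p)$ into local standard intertwining operators and to compute each local factor on the corresponding local component of $\psi'_f$. The first step is to pin down the source and target as $\GL_4(\A_f)$-modules. By the Mackey isomorphism \eqref{eqn: the-mackey-isomorphism} together with Cor.\,\ref{corollary: main-corollary-on-coset-reprentatives}, the space of $K_f^{N'+N}$-invariants in $\aInd({}^\iota\sigma_f\otimes{}^\iota\sigma'_f)$ decomposes as $\bigoplus_{\underline i}({}^\iota\sigma_f)^{K_1(N^{(\underline i)})}\otimes({}^\iota\sigma'_f)^{K_1(N_{\underline i})}$; since the conductors of ${}^\iota\sigma$ and ${}^\iota\sigma'$ are exactly $N$ and $N'$, local new vector theory for $\GL_2$ forces every summand other than the one at $\underline i=(n'_p)_p$ to vanish, and there the summand is one-dimensional, spanned by $\mathbf{h}_f\otimes\mathbf{h}'^\rho_f$ --- that is, this invariant space is the line through $\psi'_f$. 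Running the same argument on the target $\aInd({}^\iota\sigma'_f(2)\otimes{}^\iota\sigma_f(-2))$ isolates the double coset $\xi_f^{(N)}$ and the line through $\tilde{\psi}'_f$. Hence $T_{\textup{st}}(-2,{}^\iota\sigma_f\otimes{}^\iota\sigma'_f)$, being $\GL_4(\A_f)$-equivariant, automatically carries $\psi'_f$ to a scalar multiple of $\tilde{\psi}'_f$, and the problem reduces to identifying that scalar one place at a time.

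Next I would treat the unramified places $p\nmid NN'$. There the local component of $\psi'_f$ is the normalised spherical section, and since $P$ is a maximal parabolic with abelian unipotent radical $U_P$ on which the Levi $\GL_2\times\GL_2$ acts through the tensor product of the standard representation of the first factor with the contragredient of the second, the Gindikin--Karpelevich computation (as used in \cite{harder-raghuram}) gives that $T_{\textup{st}}(-2,{}^\iota\sigma_p\otimes{}^\iota\sigma'_p)$ sends this section to $\dfrac{L_p(-2,\,{}^\iota\sigma\times{}^\iota\sigma'^\vee)}{L_p(-1,\,{}^\iota\sigma\times{}^\iota\sigma'^\vee)}$ times the normalised spherical section of the target --- the local $L$-factors being finite, hence nonzero, at $s=-1,-2$ because those points are critical for $(\mu,\mu')$ in the sense of the combinatorial lemma of \cite{harder-raghuram}. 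Multiplying over all $p\nmid NN'$ produces the factor $L^{\mathsf{S}}(-2,{}^\iota\sigma\times{}^\iota\sigma'^\vee)/L^{\mathsf{S}}(-1,{}^\iota\sigma\times{}^\iota\sigma'^\vee)$ of the statement; convergence of this product and the absence of any spurious zero or pole at $s=-2$ follow from the ``right of the unitary axis'' hypothesis, which makes the relevant Eisenstein series holomorphic there, exactly as in \cite[Sect.\,6.3]{harder-raghuram}.

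The substantive step is the ramified places $p\mid NN'$. There the local component of $\psi'_f$ is supported on the single double coset $\xi_p^{(n'_p)}$ with value a tensor product of $\GL_2$-new vectors, and by the one-dimensionality established above its image under $T_{\textup{st}}(-2,{}^\iota\sigma_p\otimes{}^\iota\sigma'_p)$ equals $c'_p$ times the corresponding target vector for some $c'_p\in\C$; what must be proved is that $c'_p\neq 0$ and $c'_p\in E$. I would compute the defining integral of $T_{\textup{st}}(-2,{}^\iota\sigma_p\otimes{}^\iota\sigma'_p)$ on this vector directly: using the explicit double-coset representatives $\xi_p^{(i)}$ and the matrix identities of Sect.\,\ref{section: double-coset-representatives}, one stratifies the integration over the opposite unipotent by the $p$-adic valuations of the entries $x_1,\dots,x_4$, evaluates the new vectors on each stratum, and sums the resulting series. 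Imposing that $N$ and $N'$ be square-free and coprime --- so that at each ramified $p$ exactly one of ${}^\iota\sigma_p,{}^\iota\sigma'_p$ is ramified, of conductor exactly $p$ (a twist of Steinberg, or of a ramified principal series) --- makes this sum finite and explicit, and one reads off that $c'_p$ is a nonzero element of $E$. This mild-ramification bookkeeping is the main obstacle of the proof, and it is exactly where the restriction on the levels in the main theorems originates.

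Finally I would set $c'_{\mathsf{S}_f}:=\prod_{p\mid NN'}c'_p\in E^\times$ and combine with the unramified computation to obtain $T_{\textup{st}}(-2,{}^\iota\sigma_f\otimes{}^\iota\sigma'_f)\psi'_f=c'_{\mathsf{S}_f}\,\dfrac{L^{\mathsf{S}}(-2,{}^\iota\sigma\times{}^\iota\sigma'^\vee)}{L^{\mathsf{S}}(-1,{}^\iota\sigma\times{}^\iota\sigma'^\vee)}\,\tilde{\psi}'_f$, which is the assertion. Alternatively, rationality of $c'_{\mathsf{S}_f}$ can be seen a priori from the $\mathrm{Aut}(\C/E)$-equivariance of the finite-place intertwining operator on the $E$-rational structures, since no archimedean transcendence enters at the finite places.
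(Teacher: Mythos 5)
Your first two steps match the paper: the reduction to a single scalar via one-dimensionality of the mirahoric invariants (the paper asserts this directly from newform theory; your Mackey-plus-conductor argument is a fine way to get it and also pins down the supporting double cosets), and the Gindikin--Karpelevich computation at $p\nmid NN'$ producing the ratio $L^{\mathsf S}(-2,{}^\iota\sigma\times{}^\iota\sigma'^\vee)/L^{\mathsf S}(-1,{}^\iota\sigma\times{}^\iota\sigma'^\vee)$.

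The gap is at the ramified places. Thm.\,\ref{thm:ratios-of-l-values-finite} is stated for arbitrary levels $N,N'$, but your main argument there \emph{imposes} that $N$ and $N'$ be square-free and coprime in order to make the intertwining integral explicitly computable; that hypothesis is not available here. That explicit computation is precisely what the paper does separately in Sect.\,\ref{section: local-calculation}, and it is used only to upgrade the congruence to Thm.\,\ref{thm: second-main-theorem-on-l-values}; it is not how the present theorem is proved. For general level the paper instead invokes the main result of \cite{raghuram-cjm}, an arithmeticity (Galois-equivariance/rationality) theorem for local standard intertwining operators on $E$-rational structures, to conclude $c'_p\in E$. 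Your closing sentence gestures at exactly this (``can be seen a priori from $\mathrm{Aut}(\C/E)$-equivariance''), but treats it as obvious; it is a substantive theorem, not a formal consequence --- one has to know that the intertwining integral, in the range of convergence, is defined over the chosen rational structure (essentially a finite sum with rational measure normalizations) and commutes with the $\mathrm{Aut}(\C/E)$-action on the inducing data. Moreover, equivariance only gives $c'_p\in E$, not $c'_p\neq 0$: nonvanishing of the local operator on the new vector at the evaluation point is a separate issue for general ramification (outside the square-free coprime case you cannot read it off an explicit finite sum), and your proposal does not supply an argument for it. So as written, your proof establishes the theorem only under an extra hypothesis on the levels, and the general case rests on an unproved equivariance claim plus an unaddressed nonvanishing claim.
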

\begin{proof}
It is enough to compute the value
	$\left(T_{\text{st}}(2, \sigma'_f \otimes \sigma_f) {\psi'}_f\right)(\xi_f^{(N')}).$ Going through the definitions, there is a $c \in \mathbb{C}$ such that 
$T_{\text{st}}(-2, \sigma_f \otimes \sigma'_f) ({\psi'}_f(\xi_f^{(N')})) = c \,\,\mathbf{h}'^\rho_f(2) \otimes \mathbf{h}_f(-2).$
	The scalar $c$ can be determined by evalutaing at $\underline{\mathbf{1}} \in M_P(\mathbb{A}_f)$ as $(\mathbf{h}'^\rho_f(2) \otimes \mathbf{h}_f(-2))(\underline{\mathbf{1}}) = 1 \in \mathbb{C}.$
	At the unramified places $p \notin \mathsf{S}_f$ this is exactly the calculation of Langlands (known as the Gindikin-Karplevic formula) 
	that the constant is the ratio of local $L$-values; see Langlands \cite{langlands}. 
	At the finitely many ramified places we get scalars $c'_{\mathsf{S}_f} = \prod_{p \in \mathsf{S}_f} c'_p$; these  
	local constants are in $E$ follows from the main result in \cite{raghuram-cjm}. 
\end{proof}

When the levels $N$ and $N'$ of $h$ and $h'$ are square-free and coprime to each other, the local constants are explicitly 
calculated in Sect.\,\ref{section: local-calculation}, 
where it is shown that $c'_{\mathsf{S}_f}$ is exactly the product of ratios of the local $L$-values. {\it One hopes that this is true in all generality.}

\medskip
\subsubsection{At a global level}

Recall once again that we have assumed $(\mu,\mu')$ satisfies the conditions of \cite[Lem.\,7.14]{harder-raghuram}; in particular, 
$s = -1$ and $s = -2$ are critical points for $L(s, \sigma \times \sigma'^\sfv).$ Furthermore, we now assume that 
the pair $(\mu,\mu')$ is on the right of the unitary axis guaranteeing holomorphy of an Eisenstein series; see \cite[Thm.\,6.4]{harder-raghuram}.
The consequence of these conditions for the classical Rankin--Selberg $L$-functions were discussed in \ref{sec:set-up-eis-coh}. Also, recall $\iota: \hat{\bar{E}}_\mathfrak{l} \cong \mathbb{C}$ is an embedding fixed in the beginning.

\begin{theorem}
\label{thm: ratios-of-l-values}
	Under $T_{\textup{Eis}}(\sigma, \sigma',\epsilon',E_\mathfrak{l}) \otimes_{\iota}{\mathbb{C}}$  the image of ${^\circ}\phi'_f$ is
	\[ (T_{\textup{Eis}}(\sigma, \sigma',\epsilon',E_\mathfrak{l})\otimes_{\iota} \mathbb{C}) \,\,{^\circ}\phi'_f \ = \ 
	c'_\infty c'_{\mathsf{S}_f}
	\frac{L^{\mathsf{S}_f}(-2, \sigma \times \sigma'^\sfv)}
	{L^{\mathsf{S}_f}(-1, \sigma \times \sigma'^\sfv)} {^\circ}\tilde{\phi}_f'.\]
\end{theorem}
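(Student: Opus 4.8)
The plan is to trace the vector ${}^\iota{}^\circ\phi'_f$ through the three period comparisons established in the preceding subsections and then invoke the factorization of the global standard intertwining operator into its local components. First I would recall the defining relation \eqref{eqn: period-relation-induced-vectors}, which expresses ${}^\iota{}^\circ\phi'_f$ as a scalar multiple of $\Phi_{(4)}(w^{\epsilon'}_{\infty}(\mu+\mu')\otimes\psi'_f)$, the scalar being the Levi period $\Omega(\Phi_{M_P},w^{\epsilon'}_{\infty}(\mu+\mu'),{}^\iota\sigma\otimes{}^\iota\sigma')$. Since $T_{\textup{Eis}}(\sigma,\sigma',\epsilon',E_\mathfrak{l})\otimes_\iota\mathbb{C}=T_{\textup{st}}(-2,{}^\iota\sigma\otimes{}^\iota\sigma')^\bullet$ by the characterization in the review of \cite{harder-raghuram}, and since the global standard intertwining operator factors as a product of the archimedean operator $T_{\textup{st}}(-2,\sigma_\infty\otimes\sigma'_\infty)$ and the finite-part operator $T_{\textup{st}}(-2,{}^\iota\sigma_f\otimes{}^\iota\sigma'_f)$, I would apply these two factors to the two tensor slots $w^{\epsilon'}_{\infty}(\mu+\mu')$ and $\psi'_f$ respectively.

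Next I would substitute in the two computations already carried out: \eqref{eqn:T-st-on-basis} gives
$$
T_{\textup{st}}(-2,\sigma_\infty\otimes\sigma'_\infty)^\bullet(w^{\epsilon'}_{\infty}(\mu+\mu'))=c'_\infty\,\frac{L(-2,\sigma_\infty\times\sigma'^\vee_\infty)}{L(-1,\sigma_\infty\times\sigma'^\vee_\infty)}\,w^{\epsilon'}_{\infty}(\mu'(-2)+\mu(2)),
$$
and Thm.\,\ref{thm:ratios-of-l-values-finite} gives $T_{\textup{st}}(-2,{}^\iota\sigma_f\otimes{}^\iota\sigma'_f)\psi'_f=c'_{\mathsf{S}_f}\,\dfrac{L^{\mathsf{S}}(-2,{}^\iota\sigma\times{}^\iota\sigma'^\vee)}{L^{\mathsf{S}}(-1,{}^\iota\sigma\times{}^\iota\sigma'^\vee)}\,\tilde\psi'_f$. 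Multiplying the archimedean $L$-factor into the partial $L$-function $L^{\mathsf{S}}$ turns it into the full $L^{\mathsf{S}}$ with the archimedean factor still separated out — but since these are precisely the factors that combine, and the statement is phrased with $L^{\mathsf{S}}$ on both sides, the $L(-2,\sigma_\infty\times\sigma'^\vee_\infty)/L(-1,\sigma_\infty\times\sigma'^\vee_\infty)$ term simply sits as a rational scalar (finiteness at $s=-1,-2$ was assumed via the combinatorial lemma), and I would absorb it together with $c'_\infty$. Applying $\Phi_{(4)}$ to $w^{\epsilon'}_{\infty}(\mu'(-2)+\mu(2))\otimes\tilde\psi'_f$ and using the analogue of \eqref{eqn: period-relation-induced-vectors} on the target side yields $\Omega(\Phi_{M_P},w^{\epsilon'}_{\infty}(\mu'(-2)+\mu(2)),{}^\iota\sigma'(2)\otimes{}^\iota\sigma(-2))\,{}^\iota{}^\circ\tilde\phi'_f$.

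The remaining point is that the two Levi periods that appear — one on the source side, one on the target side — are equal; this is exactly Thm.\,\ref{thm: periods-are-equal}, which was proved using the Tate-twist invariance \eqref{eqn: period-under-tate-twist} of the archimedean generators. With that identification the Levi periods cancel, and what survives is precisely $c'_\infty c'_{\mathsf{S}_f}$ times the ratio of $L^{\mathsf{S}}$-values times ${}^\iota{}^\circ\tilde\phi'_f$, which is the claimed formula. I expect the main obstacle to be purely bookkeeping: checking that the archimedean operator $T_{\textup{st}}$ acts on the correct tensor slot of $\Phi_{(4)}(w^{\epsilon'}_{\infty}(\mu+\mu')\otimes\psi'_f)$ while the finite operator acts on the other, that Delorme's lemma intertwines the two pictures compatibly, and that the various Tate twists ($\mu\leadsto\mu(2)$, $\sigma\leadsto\sigma(-2)$) line up with the sheaf-level identifications so that the target vector really is ${}^\iota{}^\circ\tilde\phi'_f$ and not some twist of it; but all the genuinely analytic input (the Gindikin--Karplevic/Langlands computation and the archimedean constant $c'_\infty$) is already in hand from \cite{harder-raghuram} and Thm.\,\ref{thm:ratios-of-l-values-finite}.
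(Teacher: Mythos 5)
Your proposal is correct and follows essentially the same route as the paper's proof: identify $T_{\textup{Eis}}(\sigma,\sigma',\epsilon',E_\mathfrak{l})\otimes_\iota\mathbb{C}$ via the isomorphisms $\Phi_{(4)}$ with the factored operator $T_{\textup{st}}(-2,\sigma_\infty\otimes\sigma'_\infty)^\bullet\otimes T_{\textup{st}}(-2,{}^\iota\sigma_f\otimes{}^\iota\sigma'_f)$, then invoke \eqref{eqn:T-st-on-basis} and Thm.\,\ref{thm:ratios-of-l-values-finite}. You are in fact slightly more explicit than the paper in spelling out the cancellation of the two Levi periods via Thm.\,\ref{thm: periods-are-equal} and the absorption of the rational archimedean $L$-ratio into $c'_\infty$, both of which the paper leaves implicit.
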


\begin{proof} 
The map 
$
T_{\textup{Eis}}(\sigma, \sigma', \epsilon', E)\otimes_{\iota, E} \mathbb{C} : 
I_4^\mathsf{S}(\sigma_f, \sigma'_f, \epsilon', \mathbb{C}) \ \to \  I_4^\mathsf{S}(\sigma'_f(2), \sigma_f(-2), \epsilon', \mathbb{C})
$
after using the isomorphisms $\Phi_{(4)}$ and $\widetilde{\Phi}_{(4)}$ is the same as the map 
$T_{\text{st}}(-2, \sigma_\infty\otimes \sigma_\infty)^\bullet \otimes T_{\textup{st}}(-2, \sigma_f \otimes \sigma'_f).$ 
For convenience of notation, put $\Omega'$ and $\tilde{\Omega}'$ for the periods 
$\Omega^{\tilde{\epsilon}'}(\Phi_{M_P}, \, w^{\epsilon'}_{\infty}(\mu + \mu'), \, \sigma \otimes \sigma')$ and 
$\Omega^{\tilde{\epsilon}'}(\widetilde{\Phi}_{M_P}, w^{\epsilon'}_{\infty}(\mu'(-2) + \mu(2)), \sigma'(2) \otimes \sigma(-2))$, respectively. 
Also, put $T_{\text{st}}$ for $T_{\text{st}}(-2, \sigma_\infty\otimes \sigma_\infty)^\bullet \otimes T_{\textup{st}}(-2, \sigma_f \otimes \sigma'_f).$ Then 
\begin{multline*}
		(T_{\text{Eis}}(\sigma, \sigma', \epsilon',E_\mathfrak{l})\otimes_\iota\mathbb{C}) {^\circ}\phi'_f = \widetilde{\Phi}_{(4)} \circ T_{\text{st}} \circ \Phi_{(4)}^{-1}( {^\circ}\phi'_f) \\= \frac{1}{\Omega'} \widetilde{\Phi}_{(4)} \circ T_{\text{st}} \circ \Phi_{(4)}^{-1} (\Omega' \,{^\circ}\phi'_f ) = \frac{1}{\Omega'} \widetilde{\Phi}_{(4)} \circ T_{\text{st}} ( w_\infty^{\epsilon'}(\mu+\mu')\otimes \psi'_f),
\end{multline*}
which, due to Thm.\,\ref{thm:ratios-of-l-values-finite} and \eqref{eqn:T-st-on-basis}, is equal to
\begin{equation*}
	\frac{1}{\Omega'} c'_\infty c'_{\mathsf{S}_f} \frac{L^{\mathsf{S}_f}(-2, \sigma \times \sigma'^\sfv)}{L^{\mathsf{S}_f}(-1, \sigma \times \sigma'^\sfv) } \widetilde{\Phi}_{(4)}(w^{\epsilon'}_{\infty}(\mu'(-2)+ \mu(2)) \otimes \tilde{\psi}'_f) 
	= \frac{\tilde{\Omega}'}{\Omega'} c'_\infty c'_{\mathsf{S}_f} \frac{L^{\mathsf{S}_f}(-2, \sigma \times \sigma'^\sfv)}{L^{\mathsf{S}_f}(-1, \sigma \times \sigma'^\sfv) } {^\circ}\tilde{\phi}'_f.
\end{equation*}
Hence the theorem because the periods $\Omega'$ and $\tilde{\Omega}'$ are equal by Thm.\,\ref{thm: periods-are-equal}. 
\end{proof}

\bigskip
\section{The main theorems on congruences for the ratios of $L$-values}

\medskip
\subsection{Summary of notations} 
\label{sec:notations-summary}
We have primitive cusp forms $h', h'' \in S_{k'}(N',\chi')^{\textup{new}}$ and $h \in S_k(N,\chi)^{\textup{new}};$  
highest weights $\mu = (k-2,0)$ and $\mu' = (k'-2,0)$ assumed to be regular, i.e., $k',k>2$; a number field $E$ which is Galois over $\Q$ containing all the 
Fourier coefficients of $h, h', h''$; 
Hecke modules $\sigma \in \text{Coh}_!(G_2, \mu)$ and $\sigma' , \sigma'' \in \text{Coh}_!(G_2,\mu')$ such that 
for an embedding $\iota : E \to E_\mathfrak{l} \cong \C$ one has: 
$\sigma \cong \Pi(h)|\cdot|^{-k/2+1},$ 
$\sigma' \cong \Pi(h'^\rho)|\cdot|^{-k'/2+1},$ and 
$\sigma'' \cong \Pi(h''^\rho)|\cdot|^{-k'/2+1},$ where 
$\Pi(h)$ (resp., $\Pi(h'^\rho)$, $\Pi(h'^\rho)$) is the unitary cuspidal automorphic representation attached to $h$ (resp., $h'^\rho$, $h''^\rho$).
The pair $(\mu,\mu')$ is such that $s = -1$ and $s = -2$ are critical points for $L(s, \sigma \times \sigma'^\sfv)$ 
(\cite[Lem.\,7.14]{harder-raghuram}) and is on the right of the unitary axis (\cite[Sect.\,6.3.6]{harder-raghuram}).
Recall, the set $\mathsf{S}_N$ consists of all the prime ideals of $\mathcal{O}_E$ which divide $6N$, and the set $\mathsf{S}_{k}$ 
contains all the prime ideals of $ \mathfrak{p} \subset \mathcal{O}_E$ such that $p \leq k,$ where $p$ is the rational prime lying below $\mathfrak{p}.$

\medskip
\subsection{The main results on the right of the unitary axis}

The first theorem on congruences is stated in the context of \cite{harder-raghuram}. 

\begin{theorem}
\label{thm: main-theorem-on-the-ratios-of-lvalues-0}
Let notations be as in Sect.\,\ref{sec:notations-summary}. 
Suppose for a prime ideal $\mathfrak{l}$ in $E$ outside of 
$\mathsf{S}_{k'} \cup \mathsf{S}_{N'} \cup \mathsf{S}_{\textup{Eis}} \cup \mathsf{S}_{c'_\infty},$ one has  
$h' \equiv h'' \pmod{\mathfrak{l}^n}$ then there exist nonzero constant $c'_{\mathsf{S}_f}, c''_{\mathsf{S}_f}, \in E$ such that 
\begin{equation}
c'_{\mathsf{S}_f}\frac{L^{\mathsf{S}_f}(-2, \sigma \times \sigma'^\sfv )}{L^{\mathsf{S}_f}(-1, \sigma \times \sigma'^\sfv )} \ \equiv \ 
c''_{\mathsf{S}_f}\frac{L^{\mathsf{S}_f}(-2, \sigma \times \sigma''^\sfv )}{L^{\mathsf{S}_f}(-1, \sigma \times \sigma''^\sfv )}  
\pmod{\mathfrak{l}^n}.
\label{eqn: main-theorem-on-the-ratios-of-lvalues-0}
\end{equation}
\end{theorem}

\begin{proof}
From Thm.\,\ref{thm: ratios-of-l-values} one has
$$
	(T_{\text{Eis}}(\sigma, \sigma', \epsilon', E_\mathfrak{l})\otimes_{\iota}\mathbb{C}) {^\circ}\phi_f' 
	= c_\infty' c'_{\mathsf{S}_f}
	\frac{L^{\mathsf{S}_f}(-2, \sigma \times \sigma'^\sfv )}
	{L^{\mathsf{S}_f}(-1, \sigma \times \sigma'^\sfv )} 
	{^\circ}\tilde{\phi}'_f. 
$$
Similarly, from Thm.\,\ref{thm: ratios-of-l-values} for the pair $(\sigma, \sigma'')$ one has
$$
	(T_{\text{Eis}}(\sigma, \sigma'', \epsilon', E_\mathfrak{l})\otimes_{\iota}\mathbb{C})  {^\circ}\phi_f'' 
	= c_\infty'' c''_{\mathsf{S}_f}
	\frac{L^{\mathsf{S}_f}(-2, \sigma \times \sigma''^\sfv )}
	{L^{\mathsf{S}_f}(-1, \sigma \times \sigma''^\sfv )} {^\circ}\tilde{\phi}''_f.
$$
Note that $c'_\infty =c''_\infty$ because they depend only on the representations at infinity and $\sigma'_\infty = \sigma''_\infty$. 
Applying Thm.\,\ref{thm: congruence-of-eisenstein-operators} for the vectors ${^\circ}\phi'_f$ and ${^\circ}\phi''_f$ and then base-changing to $\mathbb{C}$ one gets
$$
	(T_{\text{Eis}}(\sigma, \sigma', \epsilon',E_\mathfrak{l})\otimes_{\iota} \mathbb{C}\,\,{^\circ}\phi_f')(\xi_f^{(N)}) 
	\equiv (T_{\text{Eis}}(\sigma, \sigma'', \epsilon',E_\mathfrak{l})\otimes_{\iota} \mathbb{C}\,\,{^\circ}\phi_f'')(\xi_f^{(N)}) \pmod{\mathfrak{l}^n},
$$
where $x \equiv y \pmod{\mathfrak{l}^n}$ means that $x - y 
\in \mathfrak{l}^n \left( H_!^2(S^{M_P}_{N' \times N}, \widetilde{\mathcal{M}}_{\mu'(-2)+\mu(2), \mathcal{O}_\mathfrak{l}})\otimes_\iota \mathbb{C} \right).$
By Thm.\,\ref{thm: ratios-of-l-values} one has
$$
c_\infty' c'_{\mathsf{S}_f}\frac{L^{\mathsf{S}_f}(-2, \sigma \times \sigma'^\sfv )}
{L^{\mathsf{S}_f}(-1, \sigma \times \sigma'^\sfv )} {^\circ}\tilde{\phi}'_f(\xi_f^{({N})}) 
\equiv 
c'_\infty c''_{\mathsf{S}_f}\frac{L^{\mathsf{S}_f}(-2, \sigma\times \sigma''^\sfv )}
{L^{\mathsf{S}_f}(-1, \sigma \times \sigma''^\sfv )} {^\circ}\tilde{\phi}''_f(\xi_f^{({N})}) 
\pmod{\mathfrak{l}^n}. 
$$
Since ${^\circ}\tilde{\phi}'_f(\xi_f^{({N})}) \equiv {^\circ}\tilde{\phi}''_f(\xi_f^{({N})}) \pmod{\mathfrak{l}^n}$, one has 
$$
	\Biggl(c'_\infty c'_{\mathsf{S}_f}
	\frac{L^{\mathsf{S}_f}(-2, \sigma \times \sigma'^\sfv )}
	{L^{\mathsf{S}_f}(-1, \sigma \times \sigma'^\sfv )} - 
	c'_\infty c''_{\mathsf{S}_f}
	\frac{L^{\mathsf{S}_f}(-2, \sigma \times \sigma''^\sfv )}
	{L^{\mathsf{S}_f}(-1, \sigma \times \sigma''^\sfv )}\Biggr) 
	{^\circ}\tilde{\phi'_f}(\xi_f^{({N})}) \\ \equiv 0 \pmod{\mathfrak{l}^n}. 
$$	
But ${^\circ}\tilde{\phi}'_f(\xi_f^{({N})}) = {^\circ}v_{\mu'(-2)}^{\epsilon'}(h'^\rho) \otimes 
{^\circ}v^{\epsilon'}_{\mu(2)}(h) \not\equiv 0 \pmod{{}\mathfrak{l}^n}.$ 
Hence \eqref{eqn: main-theorem-on-the-ratios-of-lvalues-0} follows since $\mathfrak{l} \notin \mathsf{S}_{c'_\infty}$.
\end{proof}

Now, transcribe Thm.\,\ref{thm: main-theorem-on-the-ratios-of-lvalues-0} into the context of classical Rankin--Selberg $L$-functions while incorporating 
Tate-twists to the get the following

\begin{theorem}
\label{thm:reg-sem-poly}
	Let $h',h'' \in S_{k'}(N',\chi')^{\text{new}}$ and $h \in S_k(N,\chi)^{\text{new}}$ with $k',k > 2$ and $k'-k > 2.$  
	Suppose for a prime ideal $\mathfrak{l}$ of $E$ outside of 
$\mathsf{S}_{k'} \cup \mathsf{S}_{N'} \cup \mathsf{S}_{\textup{Eis}} \cup \mathsf{S}_{c'_\infty},$ one has  
$h' \equiv h'' \pmod{\mathfrak{l}^n}$, and suppose also that 
	the mod-$\mathfrak{l}$ Galois representations attached to $h' $ and $h''$ are irreducible. Then,
	 for an integer $m$ and $-1\leq m < \frac{k'-k}{2} - 1,$ one has the congruence:
	\begin{equation}
	\label{eqn:reg-sem-poly}
		c'_{\mathsf{S}_f}(m)\, \frac{L^{\mathsf{S}_f}(k'-m-3, h\times h' )}{L^{\mathsf{S}_f}(k'-m-2, h\times h' )} \ \equiv \ 
		c''_{\mathsf{S}_f}(m)\frac{L^{\mathsf{S}_f}(k'-m-3, h\times h'' )}{L^{\mathsf{S}_f}(k'-m-2, h\times h'')}  \pmod{\mathfrak{l}^n}, 
	\end{equation} 
	where $c'_{\mathsf{S}_f}(m) = \prod_{p |NN'} c'_p(m)$ and $c''_{\mathsf{S}_f}(m) = \prod_{p |NN'} c''_p(m)$ with $c'_p(m),c''_p(m) \in E.$	\label{thm: first-main-theorem-on-l-values}
\end{theorem}

\begin{proof}
Recall that integral cohomology groups
${H}^1_!(S^{(2)}_1(N), \mathcal{\widetilde{\mathcal{M}}}_{\mu,\mathcal{O}_\mathfrak{l}}) \cong 
{H}^1_!(S^{(2)}_1(N), \mathcal{\widetilde{\mathcal{M}}}_{\mu(m),\mathcal{O}_\mathfrak{l}})$ are identified. 
Now apply Thm.\,\ref{thm: main-theorem-on-the-ratios-of-lvalues-0} to the pair $(\mu(m), \mu')$ and 
use the dictionary between classical and automorphic $L$-functions.
\end{proof}

If we impose a restriction on the ramifications, then we can improve 
Thm.\,\ref{thm:reg-sem-poly} to get the following best possible result on ratios of critical values for completed $L$-functions. 
One hopes that this is also true without any restriction on ramification. 

\medskip
\begin{theorem} 
\label{thm: second-main-theorem-on-l-values}
Let the notations and assumptions be as in Thm.\,\ref{thm: first-main-theorem-on-l-values}. 
Assume furthermore that the levels $N$ and $N'$ are square-free and relatively prime. Then for $-1\leq m < \frac{k'-k}{2} - 1$ 
	\begin{equation}
	\label{eqn:best-possible-cong}
		\frac{L(k'-m-3, h\times h' )}{L(k'-m-3, h\times h' )} \ \equiv \  
		\frac{L(k'-m-2, h\times h'' )}{L(k'-m-2, h\times h'')} \pmod{\mathfrak{l}^n}.
	\end{equation} 	
\end{theorem}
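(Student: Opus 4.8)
The plan is to deduce Theorem~\ref{thm: second-main-theorem-on-l-values} from Theorem~\ref{thm: first-main-theorem-on-l-values} by showing that, under the extra hypothesis that $N$ and $N'$ are square-free and coprime, the local constants $c'_p(m)$ and $c''_p(m)$ at the ramified places are \emph{exactly} the ratios of the correct local $L$-factors, so that $c'_{\mathsf{S}_f}(m)\,L^{\mathsf{S}_f}(k'-m-3,h\times h')/L^{\mathsf{S}_f}(k'-m-2,h\times h')$ completes to the ratio $L(k'-m-3,h\times h')/L(k'-m-2,h\times h')$ of full (completed) $L$-values, and likewise with $h''$. Once the ratio of partial $L$-values weighted by the Eisenstein normalizing constants equals the ratio of completed $L$-values, \eqref{eqn:best-possible-cong} is literally the statement of \eqref{thm: first-main-theorem-on-l-values} rewritten in terms of completed $L$-functions. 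So the real content is the explicit local computation of the standard intertwining operator at the bad primes, which is promised in Sect.\,\ref{section: local-calculation} (referenced after Theorem~\ref{thm:ratios-of-l-values-finite}).

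First I would recall the factorization $c'_{\mathsf{S}_f}(m) = \prod_{p\mid NN'} c'_p(m)$ from Theorem~\ref{thm:ratios-of-l-values-finite} and note that, because $N$ and $N'$ are square-free and coprime, at each prime $p\mid NN'$ exactly one of $h$, $h'$ (resp.\ $h''$) is ramified and it is ramified with conductor exponent $1$ (Steinberg or ramified-principal-series type with minimal conductor). This mild ramification is exactly the hypothesis under which the local intertwining operator at $p$ can be computed on the mirahoric-fixed vector $\psi'_f(\xi_p^{(i_p)})$ directly. The key step is: for such $p$, evaluate $T_{\mathrm{st}}(-2,{}^\iota\sigma_p\otimes{}^\iota\sigma'_p)$ on the chosen local vector supported on the single double coset $\xi_p^{(i_p)}$, using the double-coset representatives $\xi_p^{(j)}$ and the level computations $K_p^{M_P}(\xi_p^{(i)}) = K_p(n'_p+n_p-i)\times K_p(i)$ from Section~\ref{section: double-coset-representatives}. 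The upshot should be that the local scalar $c'_p(m)$ equals the ratio $L_p(-2,{}^\iota\sigma_p\times{}^\iota\sigma_p'^\vee)/L_p(-1,{}^\iota\sigma_p\times{}^\iota\sigma_p'^\vee)$ up to the contribution of the local root-number/$\varepsilon$-factor and the $p$-factor of the conductor normalization — and since exactly the same local representation data governs $h'$ and $h''$ at $p$ (they have the same nebentypus $\chi'$, same conductor, and by the mod-$\mathfrak l$ congruence the same local component type at the finitely many ramified primes, as the level is fixed), the discrepancy between the $h'$-version and the $h''$-version is accounted for and drops out after multiplying through by the Archimedean factor $c'_\infty = c''_\infty$ and the $\Gamma$-factors, which were already shown in Sect.\,\ref{sec:set-up-eis-coh} to exactly match the $(2\pi i)^{-2}$ shift between successive critical values.

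Concretely, once the local computation gives $c'_p(m) = L_p(\mathrm{finite})/L_p(\mathrm{finite})\cdot(\text{same at }h'')$, one assembles: $c'_{\mathsf{S}_f}(m)\dfrac{L^{\mathsf{S}_f}(k'-m-3,h\times h')}{L^{\mathsf{S}_f}(k'-m-2,h\times h')} = \dfrac{L(k'-m-3,h\times h')}{L(k'-m-2,h\times h')}$, where on the right the completed $L$-function includes the Archimedean factor $L_\infty(s,h\times h')=(2\pi)^{-2s}\Gamma(s)\Gamma(s+1-k)$, whose ratio at $s=k'-m-3$ versus $s=k'-m-2$ contributes precisely the rational constant absorbed by $c'_\infty$ (this is the content of Theorem~\ref{thm: ratios-of-l-values} combined with \eqref{eqn:auto-L-fn-classical-L-fn}). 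The identical computation holds for $h''$. Then \eqref{eqn:best-possible-cong} is exactly \eqref{thm: first-main-theorem-on-l-values} with the constants $c'_{\mathsf{S}_f}(m), c''_{\mathsf{S}_f}(m)$ absorbed into completing the $L$-functions, so the congruence modulo $\mathfrak l$ follows verbatim. I would also double-check that the finite set of bad primes does not grow: the square-free coprime hypothesis only simplifies the local constants, and $\mathsf{S}_{\mathrm{level}}$, $\mathsf{S}_{\mathrm{Eis}}$, $\mathsf{S}_{c_\infty}$, $\mathsf{S}_{k'}$ are already being avoided by assumption.

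The main obstacle I expect is the explicit local intertwining-operator calculation at the ramified primes — verifying that the local constant $c'_p(m)$ is genuinely the honest ratio of local $L$-factors (with correct powers of $p$ and correct $\varepsilon$-factor bookkeeping) rather than merely lying in $E^\times$ as in Theorem~\ref{thm:ratios-of-l-values-finite}. This requires carefully choosing the local test vector $\psi'_p$ on the double coset $\xi_p^{(i_p)}$, computing its image under $T_{\mathrm{st}}$ via the Iwasawa/Bruhat decomposition adapted to the Kostant representative $w$ with $l(w)=2$, and checking that in the Steinberg (and tamely-ramified-principal-series) cases the integral defining the local intertwining operator converges to exactly the expected $L$-ratio. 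The comparison between the $h'$ and $h''$ versions is then essentially formal once one knows the local component at $p$ depends only on data shared by $h'$ and $h''$; the delicate part is purely the single-prime $p$-adic integral, and everything else is assembling the global identity and invoking the earlier theorems.
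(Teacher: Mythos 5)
Your proposal is correct and follows essentially the same route as the paper: the paper's proof of this theorem is precisely the reduction to Thm.\,\ref{thm: first-main-theorem-on-l-values} together with the explicit local intertwining computation of Sect.\,\ref{section: local-calculation} (Thm.\,\ref{thm:local-calculation}), which, under the square-free coprime level hypothesis, identifies $c'_p(m)$ and $c''_p(m)$ with the honest ratios of local $L$-factors so that the partial $L$-ratios complete to the full ones. The obstacle you flag — that the single-prime $p$-adic integral must yield exactly the local $L$-ratio rather than just an element of $E^\times$ — is exactly the content the paper supplies in that section.
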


\begin{proof}
Under the hypothesis on the levels $N$ and $N'$, for $p | NN'$, it is proved in Sect.\,\ref{section: local-calculation} that 
$$
		c'_p(m) \ = \ \frac{L_p(k'-3-m, h\times h' )}{L_p(k'-2-m, h \times h' )}.
$$
Similarly for $c''_p(m).$ Then the congruence in \eqref{eqn:reg-sem-poly} becomes \eqref{eqn:best-possible-cong}. 
\end{proof}

\medskip
\subsection{The left of the unitary axis}
\label{subsection: left-of-unitary-axis}
If the pair $(\mu, \mu')$ is on the left of the unitary axis, then we reverse the direction of the intertwining operator and consider the intertwining operator
$$
T_{\rm st}(s)|_{s = 2} : \aInd_P^G({}\sigma'(2) \times {}\sigma(-m-2)) \ \longrightarrow
 \ \aInd_P^G\left({}\sigma(-m) \times {}\sigma')\right). 
$$
Now we are in the right of the unitary axis for the pair $(\mu'(-2),\mu(m+2)).$ One can now define the $E$-linear isomorphisms analogous to
$\pi_{\mu+\mu'}^\mathfrak{I}$ and $\pi_{\mathfrak{I}}^{\mu'(-2) + \mu(2+m)},$ say $\pi^{\mathfrak{I}}_{\mu'(-2)+\mu(m+2)}$ and 
$\pi_{\mathfrak{I}}^{\mu(m) + \mu'}.$ Enlarge the set $\mathsf{S}_{\text{Eis}}$ and $\mathsf{S}_{c'_\infty}$ if necessary, 
which we shall denote again by $\mathsf{S}_{\text{Eis}}$ and $\mathsf{S}_{c'_\infty}$, respectively.
Assuming $\mathfrak{l} \not\in \mathsf{S}_{k'} \cup \mathsf{S}_{N'} \cup \mathsf{S}_{\text{Eis}} \cup \mathsf{S}_{c'_\infty},$ 
for an integer $m$ and $\frac{k'-k}{2} - 1 \leq m < k'-k-2$ one gets 
\begin{equation*}
	\tilde{c}'_{\mathsf{S}_f}(m)\dfrac{L^{\mathsf{S}_f}(2, \sigma' \times \sigma(-m)^\sfv)}
	{L^{\mathsf{S}_f}(3, \sigma' \times \sigma(-m)^\sfv)} - 
	\tilde{c}''_{\mathsf{S}_f}(m)\dfrac{L^{\mathsf{S}_f}(2, \sigma'' \times \sigma(-m)^\sfv)}
	{L^{\mathsf{S}_f}(3, \sigma'' \times \sigma(-m)^\sfv)} \in \mathfrak{l}^n
\end{equation*}
which is equivalent to 
\begin{equation*}
	\tilde{c}'_{\mathsf{S}_f}(m)\dfrac{L^{\mathsf{S}_f}(m+k+1, h'^\rho \times h^\rho)}
	{L^{\mathsf{S}_f}(m+k+2, h'^\rho \times h^\rho)} - 
	\tilde{c}''_{\mathsf{S}_f}(m)\dfrac{L^{\mathsf{S}_f}(m+k+1, h'^\rho \times h^\rho)}{L^{\mathsf{S}_f}(m+k+2, h'^\rho \times h^\rho)} \in \mathfrak{l}^n.
\end{equation*}
Yet we cannot conclude from \eqref{eqn: relation-between-ratios} that the ratios to the left of the line of the symmetry are congruent modulo $\mathfrak{l}^n$, 
because $\mathfrak{l} \not\in\mathsf{S}_{\text{Eis}} \cup \mathsf{S}_{c'_\infty}$ ensures that
\begin{equation}
	\tilde{c}'_{\mathsf{S}_f}(m)\dfrac{L^{\mathsf{S}_f}(m+k+1, h'^\rho \times h^\rho)}{L^{\mathsf{S}_f}(m+k+2, h'^\rho \times h^\rho)}, \quad
	\tilde{c}''_{\mathsf{S}_f}(m)\dfrac{L^{\mathsf{S}_f}(m+k+1, h''^\rho \times h^\rho)}{L^{\mathsf{S}_f}(m+k+2, h''^\rho \times h^\rho)} 
	\label{eqn: left-of-the-line-of-symmetry}
\end{equation}
are in $\mathcal{O}_\mathfrak{l},$ but they  need not be in $\mathcal{O}_\mathfrak{l}^\times.$ 
Therefore, if one further assumes that the quantities in \eqref{eqn: left-of-the-line-of-symmetry} 
are $\mathfrak{l}$-adic units then one gets, using the functional equation, for $k\leq m < k'-1$: 
\begin{equation*}
	c'_{\mathsf{S}_f}(m)\dfrac{L^{\mathsf{S}_f}(m, h \times h')}{L^{\mathsf{S}_f}(m+1, h \times h')}  \ \equiv \ 
	c'_{\mathsf{S}_f}(m)\dfrac{L^{\mathsf{S}_f}(m, h \times h'')}{L^{\mathsf{S}_f}(m+1, h \times h'')} \pmod{\mathfrak{l}^n}. 
\end{equation*}
As before, if furthermore the levels $N$ and $N'$ are square-free and relatively prime, then for $k\leq m <k'-1$, one has the congruence: 
\begin{equation*}
	\dfrac{L(m, h \times h')}{L(m+1, h \times h')} \ \equiv \  \dfrac{L(m, h \times h'')}{L(m+1, h\times h'')} \pmod{\mathfrak{l}^n}.
\end{equation*}

\medskip
\subsection{Varying the modular forms of lower weight}

For convenience, here we consider the twist $\mu(-m)$ instead of $\mu(m)$. 
For the integers $ 3 \leq m \leq k-k'+1 $, the two successive $L$-values considered below are critical. 
The pair $(\mu(-m), \mu')$ is on the right of the unitary axis only for integers $m$ and $\frac{k-k'}{2} + 1 < m \leq k-k'+1.$ 
Then going through the above proof with appropriately modified $\mathsf{S}_{\text{Eis}}$ and $\mathsf{S}_{c'_\infty}$ one gets: 

\begin{theorem}
\label{thm: varying-the-lower-weight}
	Let $h \in S_k(N,\chi)$ and $h',h'' \in S_{k'}(N',\chi')$  with $k',k > 2$ and $k-k' \geq 2.$  
	Assume for $\mathfrak{l} \not\in \mathsf{S}_{k'} \cup \mathsf{S}_{N'} \cup \mathsf{S}_{\textup{Eis}} \cup \mathsf{S}_{c'_\infty}$ that 
	$h' \equiv h'' \pmod{\mathfrak{l}^n}$,  
	and the mod-$\mathfrak{l}$ Galois representations attached to $h' $ and $h''$ both irreducible, 
	then for an integer $m$ with $\frac{k-k'}{2} + 1 < m \leq k-k'+1,$ one has
	\begin{equation*}
		c'_{\mathsf{S}_f}(m)\frac{L^{\mathsf{S}_f}(k'+m-3, h\times h' )}{L^{\mathsf{S}_f}(k'+m-2, h\times h' )} \ \equiv \ 
		 c''_{\mathsf{S}_f}(m)\frac{L^{\mathsf{S}_f}(k'+m-3, h\times h'' )}{L^{\mathsf{S}_f}(k+m-2, h\times h'')} \pmod{\mathfrak{l}^n},
	\end{equation*} 
	where $c'_{\mathsf{S}_f}(m) = \prod_{p |NN'} c'_p(m)$ and $c''_{\mathsf{S}_f}(m) = \prod_{p |NN'} c''_p(m)$ with $c'_p(m),c''_p(m) \in E.$
	If the levels $N$ and $N'$ are square-free and relatively prime then
	\begin{equation*}
		\frac{L(k'+m-3, h\times h' )}{L(k'+m-2, h\times h' )}  \ \equiv \  \frac{L(k'+m-3, h\times h'' )}{L(k'+m-2, h\times h'')} \pmod{\mathfrak{l}^n}.
	\end{equation*}
\end{theorem}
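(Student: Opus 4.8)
The plan is to run the Eisenstein-cohomology argument behind Thm.\,\ref{thm:reg-sem-poly} and Thm.\,\ref{thm: second-main-theorem-on-l-values} essentially verbatim, but with the Tate twist $\mu(-m)$ in place of $\mu(m)$ and with the finite sets $\mathsf{S}_{\textup{Eis}}$ and $\mathsf{S}_{c_\infty}$ re-defined so as to depend on these twists. Since $\mathfrak{l}\notin\mathsf{S}_{k'}$ one has $l>k'$, and together with the absolute irreducibility of the mod-$\mathfrak{l}$ Galois representations of $h'$ and $h''$ this makes $(\mathcal{O}_\mathfrak{l},\,h_{k'}(N',\mathcal{O}_\mathfrak{l}),\,H^1_!(X_1(N'),\underline{\mathcal{M}}_{\mu',\mathcal{O}_\mathfrak{l}})(\epsilon'),\,\mathfrak{L})$ a freely Gorenstein datum; the formalism of Sect.\,\ref{section: gorenstein-formalism} then yields classes ${^\circ}v^{\epsilon'}_{\mu'}(h'^\rho)$ and ${^\circ}v^{\epsilon'}_{\mu'}(h''^\rho)$ in $\tilde{H}^1_!(S_1^{(2)}(N'),\widetilde{\mathcal{M}}_{\mu',\mathcal{O}_\mathfrak{l}})(\epsilon')$ that are non-zero mod $\mathfrak{l}$ but congruent mod $\mathfrak{l}$. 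The auxiliary class ${^\circ}v^{\epsilon'}_{\mu}(h)$ is fixed once and for all; because the integral structure is independent of Tate twist we use it as ${^\circ}v^{\epsilon'}_{\mu(-m)}(h)$. Tensoring via the K\"unneth isomorphism for $M_P=G_2\times G_2$ gives classes in $\tilde{H}^2_!(S^{M_P}_{N\times N'},\widetilde{\mathcal{M}}_{\mu(-m)+\mu',\mathcal{O}_\mathfrak{l}})$ attached to $(h,h'^\rho)$ and $(h,h''^\rho)$ that are non-zero mod $\mathfrak{l}$ and congruent mod $\mathfrak{l}$, exactly as in the Corollary of Sect.\,\ref{sec:explicit-coh-classes}.

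Next comes the combinatorial bookkeeping. As $k>k'$, the critical strip of $L(s,h\times h')$ is $\{k'\le s\le k-1\}$, so the two consecutive values $L(k'+m-3,h\times h')$ and $L(k'+m-2,h\times h')$ are both critical precisely when $3\le m\le k-k'+1$; via the dictionary \eqref{eqn:auto-L-fn-classical-L-fn} read with $-m$, namely $L(s,\sigma(m)\times\sigma'^\vee)=L(s+k'+m-1,h\times h')$, these are $L(-2,{^\iota}\sigma(m)\times{^\iota}\sigma'^\vee)$ and $L(-1,{^\iota}\sigma(m)\times{^\iota}\sigma'^\vee)$, so the pair $(\mu(-m),\mu')$ satisfies the combinatorial lemma \cite[Lem.\,7.14]{harder-raghuram} and $s=-1,-2$ are critical. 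The remaining constraint $\frac{k-k'}{2}+1<m$ is the $-m$-analogue of the inequality in Sect.\,\ref{sec:set-up-eis-coh} and says exactly that $(\mu(-m),\mu')$ lies on the right of the unitary axis, which guarantees holomorphy of the associated Eisenstein series (\cite[6.3.6]{harder-raghuram}); since $k-k'\ge2$ the interval $\frac{k-k'}{2}+1<m\le k-k'+1$ is non-empty and sits inside $3\le m\le k-k'+1$. With these conditions in place one constructs, exactly as in Sect.\,\ref{sec:int-on-ind-space}--\ref{section: another-integral-structure}, the induced spaces $I_4^\mathsf{S}(\mu(-m)+\mu',\cdot)$ and $I_4^\mathsf{S}(\mu'(-2)+\mu(m+2),\cdot)$ with their $\mathcal{O}_E$- and $\mathcal{O}_\mathfrak{l}$-lattices, the $E$-linear maps $\pi_{\mu(-m)+\mu'}^{\mathfrak{I}}$ and $\pi_{\mathfrak{I}}^{\mu'(-2)+\mu(m+2)}$, the re-defined twist-dependent set $\mathsf{S}_{\textup{Eis}}$, and the Eisenstein operator $T_{\textup{Eis}}(\sigma,\sigma',\epsilon',E_\mathfrak{l})$ together with its companion for $\sigma''$.

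The congruence argument then proceeds as in the proof of Thm.\,\ref{thm: main-theorem-on-the-ratios-of-lvalues-0}. Form the vectors ${^\circ}\phi'_f,{^\circ}\phi''_f\in I_4^\mathsf{S}(\mu(-m)+\mu',\epsilon',\mathcal{O}_\mathfrak{l})$ supported on the single double coset $\xi_f^{(N')}$ with values ${^\circ}v^{\epsilon'}_{\mu(-m)+\mu'}(h,h'^\rho)$, resp.\ ${^\circ}v^{\epsilon'}_{\mu(-m)+\mu'}(h,h''^\rho)$; these are congruent mod $\mathfrak{l}$. Since $\mathfrak{l}\notin\mathsf{S}_{\textup{Eis}}$, Thm.\,\ref{thm: congruence-of-eisenstein-operators} gives $T_{\textup{Eis}}(\sigma,\sigma',\epsilon',E_\mathfrak{l}){^\circ}\phi'_f\equiv T_{\textup{Eis}}(\sigma,\sigma'',\epsilon',E_\mathfrak{l}){^\circ}\phi''_f\pmod{\mathfrak{l}}$. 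On the other hand, by the $-m$-twisted analogue of Thm.\,\ref{thm: ratios-of-l-values}, $T_{\textup{Eis}}(\sigma,\sigma',\epsilon',E_\mathfrak{l})\otimes_\iota\mathbb{C}$ sends ${^\iota}{^\circ}\phi'_f$ to $c'_\infty\,c'_{\mathsf{S}_f}(m)\,\dfrac{L^\mathsf{S}(-2,{^\iota}\sigma(m)\times{^\iota}\sigma'^\vee)}{L^\mathsf{S}(-1,{^\iota}\sigma(m)\times{^\iota}\sigma'^\vee)}\,{^\iota}{^\circ}\tilde\phi'_f$, and likewise for the pair $(\sigma,\sigma'')$, with the \emph{same} $c'_\infty=c''_\infty$ since these depend only on $\sigma'_\infty=\sigma''_\infty$. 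Evaluating both sides at $\xi_f^{(N)}$, using ${^\iota}{^\circ}\tilde\phi'_f(\xi_f^{(N)})\equiv{^\iota}{^\circ}\tilde\phi''_f(\xi_f^{(N)})\not\equiv0\pmod{{^\iota}\mathfrak{l}}$ and $\mathfrak{l}\notin\mathsf{S}_{c_\infty}$, and subtracting, one gets the congruence of the weighted ratios $c'_{\mathsf{S}_f}(m)\,L^{\mathsf{S}_f}(-2,\ldots)/L^{\mathsf{S}_f}(-1,\ldots)$, which via the dictionary becomes the first displayed congruence of the theorem. Finally, if $N$ and $N'$ are square-free and coprime, the local intertwining computation of Sect.\,\ref{section: local-calculation}, carried out with the $-m$ shift, shows $c'_p(m)=L_p(k'+m-3,h\times h')/L_p(k'+m-2,h\times h')$ and similarly for $c''_p(m)$; multiplying the partial $L$-functions by the missing Euler factors at $p\mid NN'$ (the archimedean $\Gamma$-factors being identical in numerator and denominator and cancelling in the ratio) converts the weighted congruence into the clean one.

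The one genuinely new piece of work, and the main obstacle, is the combinatorial/geometric verification: that $(\mu(-m),\mu')$ meets the hypotheses of \cite[Lem.\,7.14]{harder-raghuram}, singles out a unique Kostant representative $w\in W^P$ of length $2$ with $\lambda=w^{-1}\cdot(\mu(-m)+\mu')$ dominant for $G_4$, and lies to the right of the unitary axis exactly on the stated interval $\frac{k-k'}{2}+1<m\le k-k'+1$ --- together with re-defining $\mathsf{S}_{\textup{Eis}}$ and $\mathsf{S}_{c_\infty}$ for these twists and re-running the finite-place computation of Sect.\,\ref{section: local-calculation} with the shift $-m$. Once these checks are in place, the remainder is a line-for-line copy of the arguments already given.
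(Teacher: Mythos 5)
Your proposal is correct and is essentially the paper's own proof: the paper establishes Thm.\,\ref{thm: varying-the-lower-weight} precisely by replacing the twists $\mu(m)$ with $\mu(-m)$, noting that two successive values are critical exactly for $3\leq m\leq k-k'+1$ and that $(\mu(-m),\mu')$ lies on the right of the unitary axis exactly for $\frac{k-k'}{2}+1<m\leq k-k'+1$, redefining $\mathsf{S}_{\textup{Eis}}$ and $\mathsf{S}_{c_\infty}$ for these twists, and then rerunning the arguments of Thm.\,\ref{thm:reg-sem-poly} and Thm.\,\ref{thm: second-main-theorem-on-l-values} together with the local computation of Sect.\,\ref{section: local-calculation}, which is exactly what you do. The only blemishes are cosmetic: the target induced space should carry the weight $\mu'(-2)+\mu(2-m)$ rather than $\mu(m+2)$, and your parenthetical that the archimedean $\Gamma$-factors are ``identical in numerator and denominator'' is not literally true (they sit at successive arguments); but since the archimedean contribution is the same for the $h'$- and $h''$-ratios and is controlled by $c'_\infty$ and the hypothesis $\mathfrak{l}\notin\mathsf{S}_{c_\infty}$, nothing in the argument is affected.
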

The reader should bear in mind that $c'_{\mathsf{S}_f}(m), c''_{\mathsf{S}_f}(m)$ in Thm.\,\ref{thm: first-main-theorem-on-l-values} and 
Thm.\,\ref{thm: varying-the-lower-weight} are different. 
When one is on the left of the unitary axis same remarks as in Sect.\,\ref{subsection: left-of-unitary-axis} applies here.

\bigskip
\subsection{A non-example}
In the companion paper \cite[Section 3.4]{narayanan-raghuram} there is a non-example, i.e., a specific situation when 
the ratios of the Rankin--Selberg $L$-values at certain critical values are not congruent. Let $h \in S_{26}({\rm SL}_2(\mathbb{Z}))$ and $h', h'' \in S_{13}(\Gamma_1(3)).$
Fix $h'$ to be form with rational Fourier coefficients and $h''$ to be the newform whose coefficients lie in 
$K:= \mathbb{Q}(\sqrt{-8424})$ an imaginary quadratic extension. Let $\mathfrak{l}$ be a prime ideal of $K$ lying above $13$. 
It happens that for all $n \in \mathbb{N}$ 
\begin{equation}
	a(n,h') \equiv a(n,h'') \! \pmod{\mathfrak{l}} \quad \textup{but} \quad 
	\frac{L(24, h \times h')}{L(25,h'\times h)} \not\equiv	\frac{L(24, h \times h'')}{L(25,h\times h'')} \! \pmod{\mathfrak{l}}.
\end{equation}
The levels are square-free and coprime to each other yet the congruence for the ratios of this particular $L$-values fail. There are two reasons our main theorem does not hold here. First the hypothesis $l > k'$ is violated as $l = k' = 13.$ The second being the hypothesis that the mod $l$ Galois representation 
$\varrho_{\Theta'}$ obatined from $h'$ is irreducible is \textit{not} satisfied here. There exists an Eisenstein series 
$E_{13} \in M_{13}(\Gamma_1(3))$ with $q$-expansion $ E_{13} = \frac{55601}{3} +  q - 4095 q^{2} +  q^{3} + 16773121 q^{4} \dots$ and that
\begin{equation*}
	E_{13} \equiv h' \equiv h'' \pmod{\mathfrak{l}},
\end{equation*}
which can be seen from Sturm's bound. It should be observed however that the ratios of $L$-values at other critical points are still congruent modulo $\mathfrak{l}$ 
even though it follows outside the purview of our main theorems of this paper.

\bigskip
\section{A local calculation}
\label{section: local-calculation}

As promised in the proof of Thm.\,\ref{thm: second-main-theorem-on-l-values}, in this section we 
compute the local constant $c'_p(m)$; see Thm.\,\ref{thm:local-calculation} below. 
Recall that $N$ and $N'$ are relatively prime square-free integers. 
Let $\sigma_p$ and $\sigma_p'$ denote the local representation at a prime $p|NN'$ obtained from the cusp forms 
$h \in S_k(N,\chi)^{\text{new}}$ and $h'^\rho \in S_{k'}(N',\chi')^{\text{new}}$ from the isomorphism in \eqref{eqn:choice-of-repns}. 
From Thm.\,\ref{thm: local-lemma-on-subgroup-of-the-levi} on double coset representatives the spaces of invariant vectors 
$$I_P^{G_4}(s, \sigma_p \otimes \sigma'_p)^{\mirp{n_p + n'_p}} \ \ {\rm and} \ \ 
I_P^{G_4}(-s, \sigma'_p \otimes \sigma_p)^{\mirp{n_p + n'_p}}$$ 
are both one-dimensional. 
Let $\phi'_p \in I_P^{G_4}(s, \sigma_p \otimes \sigma'_p)$ 
(resp., $\tilde{\phi}'_p \in I_P^{G_4}(-s, \sigma'_p \otimes \sigma_p)$) be vectors which span the one-dimensional spaces. 
Then $\phi'_p$ (resp., $\tilde{\phi}'_p$) is supported only on the double coset 
$P(\mathbb{Q}_p) \xi_p^{(n'_p)} \mirp{n_p + n_p'}$ (resp., $P(\mathbb{Q}_p) \xi_p^{(n_p)} \mirp{n_p + n_p'}$). 
Consider the standard intertwining operator
$T_{\text{st}}(s,\sigma_p \otimes \sigma'_p) : I_P^{G_4}(s, \sigma_p \otimes \sigma'_p) \rightarrow 
I_P^{G_4}(-s, \sigma'_p \otimes \sigma_p)$ given by the integral 
$$
T_{\text{st}}(s, \sigma_p \otimes \sigma'_p)(\phi'_p)(g) \ = \ \int_{U_P(\mathbb{Q}_p)} \phi'_p(w_0^{-1}u g) du,
$$
where 
$w_0 = \left(\begin{smallmatrix}
	0 & 0 & 1 & 0\\ 0& 0& 0 & 1\\ 1& 0 & 0 & 0 \\ 0 & 1 & 0 & 0
\end{smallmatrix}\right).$
The integral converges when $s=-2$ (see, for example, \cite{harder-raghuram}). Since it is a map between two one-dimensional spaces 
there exists $c'_p \in \mathbb{C}$ such that 
$$
(T_{\text{st}}(-2, \sigma_p \otimes \sigma'_p)(\phi'_p) \ = \ c'_p \tilde{\phi}'_p.
$$
In the main result of this section, see Thm.\,\ref{thm:local-calculation} below, 
we evaluate the constant $c'_p$, where, without loss of generality, we take $\sigma_p \cong \textup{St}\otimes \chi_p$ 
is an unramified twist of the Steinberg representation and $\sigma'_p \cong \pi(\chi'_{1,p}, \chi'_{2,p})$ is an unramified principal series representation.

\subsection{Fixing canonical new vectors}

Let $\sigma_p \cong \text{St}\otimes \chi_p$ where $\chi_p$ is unramified, hence $n_p =1 $. Fix the new vector 
$v_p \in\text{St}\otimes \chi_p \subsetneq  I_{B_2}^{G_2}(|\cdot|^{1/2}\chi_p, |\cdot |^{-1/2}\chi_p)$ as follows: 
It is a map $v_p : G_2(\mathbb{Q}_p) \rightarrow \mathbb{C}$ such that 
\begin{equation}
\left(\sigma_p \left(\begin{smallmatrix} t_1 & * \\ & t_2 \end{smallmatrix}\right)\right) v_p)(\mathbf{1}_2) = \chi_p(t_1) \chi_p(t_2), \quad 
v_p(\mathbf{1}_2) = 1 \quad \text{ and } v_p(\mathbf{w}) = -1/p, 
\end{equation}
where $\mathbf{w} := \left(\begin{smallmatrix}  0 & -1 \\ 1 & 0 \end{smallmatrix}\right).$ See Schmidt \cite[Sect.\,2.1]{ralf-schmidt}. 
This normalization is done so that there is a \textit{canonical} isomorphism between the $G_2(\mathbb{Q}_p)$ representation generated by $\mathbf{h}_f|_{G_2(\mathbb{Q}_p)}$ and $\text{St}\otimes \chi_p.$ 

\smallskip

For $\sigma'_p \cong \pi(\chi'_{1,p}, \chi'_{2,p})$, an unramified principal series representation, as one has $n_p$ is $0$. 
The characters $\chi'_{1,p}$ and $ \chi'_{2,p}$ are unramified.
The normalized spherical vector of $\sigma'_p$ is a function $v'_p : G_2(\mathbb{Q}_p) \rightarrow \mathbb{C}$ such that
\begin{equation}
	\left(\sigma'_p \left(\begin{smallmatrix} t_1 & * \\ & t_2 \end{smallmatrix}\right)\right) v'_p)(1) = |t_1 t_2^{-1}|^{1/2} \chi'_{1,p}(t_1) \chi'_{2,p}(t_2).
\end{equation}
Again there is a \textit{canonical} isomorphism between the $G_2(\mathbb{Q}_p)$ representation generated by $\mathbf{h}'^\rho_f|_{G_2(\mathbb{Q}_p)}$ and $\pi(\chi_{1,p}', \chi_{2,p}').$

One has the following relations with Fourier coefficients: 
\begin{equation} \label{eqn: relation-with-fourier-coefficient}
\chi_p(p) = p^{-1/2}a(p,h), \quad \chi'_{1,p}(p) + \chi'_{2,p}(p) = p^{-1/2} a(p,h'^\rho) , \quad \chi'_{1,p}(p)\chi'_{2,p}(p) = p^{k'-2}\chi'^{-1}(p).
\end{equation}

\subsection{Fixing vectors $\phi'_p$ and $\tilde{\phi}'_p$}

 Given $f_p \in I_P^{G_4}(s, \sigma_p \otimes \sigma'_p)$, since $f_p(g) \in V_{\sigma_p} \otimes V_{\sigma'_p},$ and  
the local representations being subrepresentations of induced representations, one can evaluate $f_p(g)$ at an element of 
$\underline{m}\in M_P(\mathbb{Q}_p)$ to get a complex number. Also,
$f_p(g)(\underline{m}) = (\underline{m}\cdot f_p(g))(\underline{\mathbf{1}}) = f_p(\underline{m}\cdot g)(\underline{\mathbf{1}}).$
So one can identify the induced vector $f_p(g)$ with the complex number $f_p(g)(\underline{\mathbf{1}})$.
Next, since $n_p = 1, n'_p = 0$ and $n_p + n'_p = 1$ the coset representative $\xi_p^{(n_p)}$ in Sect.\,\ref{section: double-coset-representatives} is an element of $\mirp{n_p + n'_p}.$ 
Hence $P(\mathbb{Q}_p) \xi_p^{(n_p)}\mirp{n_p +n'_p} = P(\mathbb{Q}_p) \mathbf{1}_4 \mirp{n_p +n'_p}.$ 
To make evaluations less cumbersome we take $\xi_p^{(n_p)} = \mathbf{1}_4$.  
Fix the vectors in $I_P^{G_4}(-2,\sigma_p \otimes \sigma'_p)$ and $I_P^{G_4}(2, \sigma'_p \otimes \sigma_p)$ by: 
\begin{equation}
	\phi'_p(\xi_p^{(n'_p)}) = v_p \otimes v'_p, \quad \text{and } \quad \tilde{\phi}'_p(\xi_p^{(n_p)}) = v'_p \otimes v_p,
\end{equation}
respectively. So, 
$\phi'_p(\xi_p^{(n'_p)})(\underline{\mathbf{1}}) = (v_p \otimes v'_p)(\underline{\mathbf{1}}) = 1$ and 
$\phi'_p(\xi_p^{(n_p)})(\mathbf{w}, \mathbf{1}) = v'_p(\mathbf{w}) v_p(\mathbf{1}) = -1/p,$ where $(\mathbf{w}, \mathbf{1}) \in M_P(\mathbb{Q}_p).$ 
Since $\tilde{\phi}'_p$ is normalized as $\tilde{\phi}'_p(\xi_p^{(n_p)})(\underline{\mathbf{1}}) = 1$, to determine the scalar $c'_p$, 
it is enough to evaluate the integral at $\underline{\mathbf{1}}$, i.e.,
\[c'_p = (T_{\text{st}}(-2, \sigma_p \otimes \sigma'_p)\phi'_p)(\xi_p^{(n_p)})(\underline{\mathbf{1}}) = (T_{\text{st}}(-2, \sigma_p \otimes \sigma'_p)\phi'_p)(\mathbf{1}_4)(\underline{\mathbf{1}}).\]
This constant $c'_p$ will be shown to be exactly the ratio of the local $L$-values: 

\begin{theorem}
\label{thm:local-calculation}
	With the above assumptions on the local components for $p|NN'$
	$$
		c'_p = \frac{L_p(k'-3, h\times h')}{L_p(k'-2, h \times h')}.
	$$
\end{theorem}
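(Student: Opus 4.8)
The plan is to compute the intertwining integral
\[
c'_p \ = \ \bigl(T_{\text{st}}(-2, \sigma_p \otimes \sigma'_p)\phi'_p\bigr)(\mathbf{1}_4)(\underline{\mathbf 1}) \ = \ \int_{U_P(\mathbb Q_p)} \phi'_p(w_0^{-1} u)(\underline{\mathbf 1})\, du
\]
by unfolding the unipotent integration over the four-dimensional group $U_P(\mathbb Q_p)$ and reducing it, via an Iwasawa-type decomposition of $w_0^{-1}u$ relative to $P$, to a product of rank-one integrals for $\mathrm{GL}_2$. First I would parametrize $u = \left(\begin{smallmatrix} \mathbf 1_2 & Z \\ 0 & \mathbf 1_2 \end{smallmatrix}\right)$ with $Z$ a $2\times 2$ matrix over $\mathbb Q_p$, and split the domain according to whether $Z \in M_2(\mathbb Z_p)$ or not. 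On the piece $Z \in M_2(\mathbb Z_p)$ the element $w_0^{-1}u$ lies in $P(\mathbb Q_p)\cdot K_p^{n'_p+n_p}$ up to the support condition, and since $\phi'_p$ is supported on the single double coset $P(\mathbb Q_p)\xi_p^{(n'_p)}K_p^{n'_p+n_p} = P(\mathbb Q_p)\mathbf 1_4 K_p^{n'_p+n_p}$ (using $n'_p+n_p = 1$), the integrand is constant equal to $\phi'_p(\mathbf 1_4)(\underline{\mathbf 1}) = 1$; this contributes the volume of $M_2(\mathbb Z_p)$, which is $1$.

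For $Z \notin M_2(\mathbb Z_p)$, I would use the standard device of writing $Z$ via its elementary divisors, i.e. $Z = k_1 \operatorname{diag}(p^{-a},p^{-b}) k_2$ with $k_1,k_2 \in \mathrm{GL}_2(\mathbb Z_p)$ and $a \ge b$, $a \ge 1$, absorb $k_1,k_2$ into the $M_P(\mathbb Z_p)$ and $K_p^{n'_p+n_p}$ factors (here one must be careful: only $k_2 \in K_p(1)$ in the lower-right block can be absorbed directly, so one first reduces to the \emph{diagonal} unipotents and tracks the Iwasawa decomposition of $w_0^{-1}\left(\begin{smallmatrix}\mathbf 1_2 & \operatorname{diag}(p^{-a},p^{-b})\\ 0 & \mathbf 1_2\end{smallmatrix}\right)$). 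Decomposing $w_0^{-1}u = \underline{m}(u)\, n(u)\, \kappa(u)$ with $\underline m(u) \in M_P(\mathbb Q_p)$, $n(u) \in U_P(\mathbb Q_p)$, $\kappa(u)$ in a maximal compact, the Levi part $\underline m(u)$ acts on $\phi'_p$ through the representation $(|\cdot|^{s}$-twist of$)\, \sigma_p \otimes \sigma'_p$ at $s=-2$; evaluating $\bigl(\underline m(u)\cdot \phi'_p(\mathbf 1_4)\bigr)(\underline{\mathbf 1})$ produces the value of $v_p$ on a $\mathrm{GL}_2$-diagonal element (governed by $\chi_p$, using $v_p(\mathbf 1_2)=1$, $v_p(\mathbf w) = -1/p$) times the value of $v'_p$ on a $\mathrm{GL}_2$-diagonal element (governed by the spherical formula with $\chi'_{1,p},\chi'_{2,p}$), times the modulus character $\delta_P$ to the appropriate power fixed by $s=-2$ and the normalization of induction. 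The resulting sum over $a \ge b$, with the correct volume weights $p^{\#}$ for the fibers, is a two-variable geometric-type series whose explicit summation is the heart of the calculation.

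I expect the main obstacle to be precisely this bookkeeping of the Iwasawa decomposition of $w_0^{-1}u$ together with the correct power of $|\det|$ / $\delta_P$, so that the geometric series sums to
\[
\frac{L_p(k'-2,\, h\times h')}{L_p(k'-1,\, h\times h')},
\]
where the numerator comes from the convergent factor produced by the Steinberg vector $v_p$ (whose local $L$-factor is $(1-\chi_p(p)p^{-s-1+k'-2}/\dots)^{-1}$-type) against the two unramified parameters $\chi'_{i,p}$, and the denominator is the shifted companion. A clean way to organize this is to note that the degree-$4$ local factor $L_p(s,h\times h')$ for $\sigma_p = \mathrm{St}\otimes\chi_p$ unramified-twist and $\sigma'_p$ unramified principal series has \emph{two} nonzero Euler factors (the Steinberg kills two of the four), so the ratio at the two consecutive critical points $s = k'-2$ and $s=k'-1$ is a ratio of two linear factors — exactly the shape one gets from the $(a,b)$-series after summing the $b$-variable (which contributes the unramified $\mathrm{GL}_2\times\mathrm{GL}_1$ piece) and the $a$-variable (the Steinberg piece). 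Finally I would match this with the classical local Euler factor $L_p(s,h\times h')$ via the dictionary $\chi_p(p) = a(p,h)$, $\chi'_{1,p}(p)\chi'_{2,p}(p) = p^{k'-2}\chi'^{-1}(p)$, $\chi'_{1,p}(p)+\chi'_{2,p}(p) = a(p,h'^\rho)p^{1/2}$, recalling $\sigma'$ was built from $h'^\rho$ so that the relevant $L$-function is $L_p(s,h\times h')$, and carefully tracking the shifts in \eqref{eqn:auto-L-fn-classical-L-fn} that translate the representation-theoretic arguments $s=-2,-1$ into the classical arguments $k'-2,k'-1$.
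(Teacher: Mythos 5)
There is a genuine gap, and it sits exactly where your plan defers the work. First, your treatment of the piece $Z\in M_2(\mathbb{Z}_p)$ is based on a misidentification of double cosets: for such $Z$ the integrand equals $\phi'_p(w_0^{-1})$, and $w_0^{-1}=w_0$ lies in the \emph{non-identity} coset $P(\mathbb{Q}_p)\xi_p^{(0)}K_p^{n'_p+n_p}$ (this is Lem.\,\ref{Lem:some-coset-reductions}, since $w_0=w_6$), whereas it is $\xi_p^{(n_p)}=\xi_p^{(1)}$, not $\xi_p^{(n'_p)}=\xi_p^{(0)}$, that lies in $K_p^{n'_p+n_p}$; the two cosets $P\xi_p^{(0)}K_p^{n'_p+n_p}$ and $P\mathbf{1}_4K_p^{n'_p+n_p}$ are distinct. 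So ``the integrand is constant equal to $\phi'_p(\mathbf{1}_4)(\underline{\mathbf{1}})=1$'' is not justified: evaluating $\phi'_p(w_0^{-1})$ requires an explicit $P$--$K$ factorization of $w_0$ and produces a Levi translate of the chosen vector, and it is precisely this support dichotomy (which coset each manipulated argument lands in) that decides which pieces of the integral vanish and which survive; with the cosets interchanged, that bookkeeping comes out wrong.

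Second, the core reduction you propose --- writing $Z=k_1\,\mathrm{diag}(p^{-a},p^{-b})\,k_2$ and summing a two-variable geometric series --- is the spherical Gindikin--Karpelevich argument, and it does not apply here: $\phi'_p$ is only invariant under the mirahoric group $K_p^{n'_p+n_p}$ and its values involve the Steinberg new vector $v_p$, which is merely Iwahori-fixed, so neither the right factor built from $(k_1^{-1},k_2)$ nor the left Levi factor (which acts through $\sigma_p\otimes\sigma'_p$ and does not drop out upon evaluation at $\underline{\mathbf{1}}$) can be discarded. You acknowledge this obstruction but then simply ``reduce to diagonal unipotents'' and declare the resulting series to be the heart of the calculation --- but that reduction and that summation \emph{are} the theorem; nothing in the proposal shows the $(a,b)$-series with naive volume weights is the correct object. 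The paper avoids the Cartan decomposition entirely: it eliminates the coordinates $x_3,x_1,x_4,x_2$ one at a time, splitting each into $v_p(x_i)\ge 0$ and $v_p(x_i)<0$, using the explicit matrix identities of Sect.\,\ref{sec:matrix-identities} and Lem.\,\ref{lemma: main-lemma} to land either in the supporting coset (yielding the one-variable geometric series of Lem.\,\ref{lemma: ratios-of-l-values}) or in the complementary coset (yielding zero); only two of the four coordinates contribute, which is exactly how the ratio of the two surviving Euler factors $\frac{L_p(k'-2,h\times h')}{L_p(k'-1,h\times h')}$ emerges. Your final dictionary ($\chi_p(p)=a(p,h)$, Satake parameters of $h'^\rho$, the Steinberg killing two of the four Euler factors) is correct, but the analytic core of the proof is missing and the route as described would need substantial repair to work with a non-spherical, single-coset-supported vector.
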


\medskip
\subsection{Certain formal integrals}
Fix a measure on $\mathbb{Q}_p$ by $\int_{\mathbb{Z}_p} dg = 1.$ Let $x_1, x_2, x_3, x_4 \in \mathbb{Q}_p^\times.$ Define
$$
	t(x_1) = \left(\begin{smallmatrix}
		1/x_1 & & & \\ & 1 & & \\ & & x_1 & \\ & & & 1
	\end{smallmatrix}\right),\ 
	t(x_2) = \left(\begin{smallmatrix}
		1 & & & \\ & 1/x_2 & & \\ & & x_2 & \\ & & & 1
	\end{smallmatrix}\right), \ 
	t(x_3) = \left(\begin{smallmatrix}
		1/x_3 & & & \\ & 1 & & \\ & & 1 & \\ & & & x_3
	\end{smallmatrix}\right), \ 
	t(x_4) = \left(\begin{smallmatrix}
		1 & & & \\ & 1/x_4 & & \\ & & 1 & \\ & & & x_4
	\end{smallmatrix}\right).
$$
For $\underline{m} = (m_1, m_2) \in M(\mathbb{Q}_p)$ let $\delta_P(\underline{m}) = |\det(m_1)|^2 |\det(m_2)|^{-2}.$ For any $x_i$'s as above and $ M \in \mathbb{Z}$ formally define the operators
\begin{equation}
\label{eqn: less-than-M-operator}
T_{< M} (x_i) \ := \ \int\limits_{\substack{{x_i \in \mathbb{Q}_p}\\ v_p(x_4) < M}} \delta_P(t(x_i))^{1/2} \,\,\,\underline{\sigma}_p'(t(x_i)) dx_i, \qquad 
T_{\geq M} (x_i) \ := \ \int\limits_{\substack{{x_i \in \mathbb{Q}_p} \\ v_p(x_i) \geq M}} dx_i, 
\end{equation}
where $\underline{\sigma}_p'(t(x_i)) = (\sigma_p \otimes \sigma'_p) (t(x_i)).$ 

\begin{lemma}
	One has
	\begin{equation}
		(T_{\geq 0}(x_1)\phi'_p)(\xi_p^{(n_p)}) (\underline{\mathbf{1}}) + \left(T_{< 0 }(x_1)\phi'_p\right)(\xi_p^{(n_p)})(\underline{\mathbf{1}}) = \frac{1-p\,\, \chi_p(p) \chi'_{1,p}(p^{-1})}{1-p^{2}\,\, \chi_p(p) \chi'_{1,p}(p^{-1})}.
	\end{equation}
	Similarly,
	\begin{align*}
		(T_{\geq 0}(x_2)\phi'_p)(\xi_p^{(n_p)}) (\underline{\mathbf{1}}) + \left(T_{< 0 }(x_2)\phi'_p\right)(\xi_p^{(n_p)})(\underline{\mathbf{1}}) = 
		\frac{1-p\,\, \chi_p(p) \chi'_{1,p}(p^{-1})}{1-p^{2}\,\, \chi_p(p) \chi'_{1,p}(p^{-1})},\\
		(T_{\geq 0}(x_3)\phi'_p)(\xi_p^{(n_p)}) (\underline{\mathbf{1}}) + \left(T_{< 0 }(x_3)\phi'_p\right)(\xi_p^{(n_p)})(\underline{\mathbf{1}}) = 
		\frac{1-p\,\, \chi_p(p) \chi'_{2,p}(p^{-1})}{1-p^{2}\,\, \chi_p(p) \chi'_{2,p}(p^{-1})},\\
		(T_{\geq 0}(x_4)\phi'_p)(\xi_p^{(n_p)}) (\underline{\mathbf{1}}) + \left(T_{< 0 }(x_4)\phi'_p\right)(\xi_p^{(n_p)})(\underline{\mathbf{1}}) = 
		\frac{1-p\,\, \chi_p(p) \chi'_{2,p}(p^{-1})}{1-p^{2}\,\, \chi_p(p) \chi'_{2,p}(p^{-1})}.
	\end{align*} \label{lemma: ratios-of-l-values}
\end{lemma}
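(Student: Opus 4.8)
The plan is to reduce the computation of the two-piece sum to a geometric series in the group-algebra action of the diagonal torus on the induced vector $\phi'_p$, exploiting that $\phi'_p$ is supported on a single double coset and that $\sigma_p = \mathrm{St}\otimes\chi_p$, $\sigma'_p = \pi(\chi'_{1,p},\chi'_{2,p})$ are explicitly given. I would treat the case of $x_1$ in detail; the other three are verbatim the same after swapping the roles of the two characters $\chi'_{1,p},\chi'_{2,p}$ (for $x_3,x_4$) or relabeling the torus coordinate (for $x_2$), so only one computation is genuinely carried out. First I would write $t(x_1)=\mathrm{diag}(x_1^{-1},1,x_1,1)$ and split the $x_1$-integral over the valuation: for $v_p(x_1)\geq 0$ one is integrating the constant function $1$ over $\mathbb{Z}_p$ with our normalization $\int_{\mathbb{Z}_p}dx=1$, giving the ``$T_{\geq 0}$'' term a simple closed form; for $v_p(x_1)<0$ one stratifies $\{v_p(x_1)=-n\}$ for $n\geq 1$, on each stratum the measure is $p^{n}-p^{n-1}=p^{n}(1-p^{-1})$, and the modulus character $\delta(t(x_1))^{1/2}$ and the action of $\underline{\sigma}'_p(t(x_1))$ on the new vector $v_p\otimes v'_p$ contribute an explicit scalar of the shape $p^{-2n}(\chi_p(p)\chi'_{1,p}(p^{-1}))^{n}$ after moving $t(x_1)$ into the argument and using the prescribed values of $v_p$ (namely $v_p(\mathbf 1_2)=1$, $v_p(\mathbf w)=-1/p$) together with the normalized spherical vector property of $v'_p$.

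The key steps, in order, are: (i) use $\phi'_p(\xi_p^{(n_p)})(\underline{\mathbf 1})$ being normalized and the identification of induced vectors with complex numbers from Sect.\,\ref{section: local-calculation}, so that evaluating $(T_\bullet(x_1)\phi'_p)(\xi_p^{(n_p)})(1)$ becomes an honest scalar integral; (ii) compute $\delta(t(x_1))^{1/2}$ explicitly as a power of $|x_1|_p$ from the $(2,2)$-parabolic root data, and combine it with the eigencharacter $|t_1t_2^{-1}|^{1/2}\chi'_{1,p}(t_1)\chi'_{2,p}(t_2)$ by which $v'_p$ transforms and with the character $\chi_p\otimes\chi_p$ by which $v_p$ transforms on the Borel; (iii) sum the resulting geometric series $(1-p^{-1})\sum_{n\geq 1}p^{-2n}(\chi_p(p)\chi'_{1,p}(p^{-1}))^{n}$, valid since $|\chi_p(p)|=1$ and $|\chi'_{1,p}(p)|$ is bounded by a fixed power of $p$ so that with the $p^{-2n}$ weight the series converges (equivalently, this is the analytic-continuation/formal-identity convention already used for these Tate-type integrals in \cite{harder-raghuram}); (iv) add the $T_{\geq 0}$ contribution and simplify to the closed form
\[
\frac{1-p^{-1}\,\chi_p(p)\chi'_{1,p}(p^{-1})}{1-p^{-2}\,\chi_p(p)\chi'_{1,p}(p^{-1})}.
\]
Then (v) repeat for $x_2$ (which gives the same answer, as $t(x_2)$ acts through the same character combination after accounting for the second coordinate of the first $\mathrm{GL}_2$-block) and for $x_3,x_4$ (which give the analogous expressions with $\chi'_{2,p}$ in place of $\chi'_{1,p}$), using in each case that $v_p$ and $v'_p$ are new vectors fixed by the relevant local congruence subgroup so that the off-support terms vanish.

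I expect the main obstacle to be step (ii)–(iii): bookkeeping the exact power of $p$ coming from $\delta^{1/2}$ against the character values so that the geometric ratio lands at $p^{-2}$ (and not $p^{-1}$ or $p^{-3}$) in the denominator, i.e.\ pinning down that the intertwining is evaluated at $s=-2$ and that the Steinberg new-vector value $v_p(\mathbf w)=-1/p$ enters with the correct sign and power. A secondary subtlety is justifying the interchange of the $x_1$-integral with the evaluation and the convergence/formal-sum of the geometric series in the range where $(\mu,\mu')$ is on the right of the unitary axis; this is handled exactly as the analogous computations in \cite{harder-raghuram}, to which I would appeal rather than re-derive the convergence from scratch. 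Once the single-variable integral $T_{\geq 0}(x_1)+T_{<0}(x_1)$ is evaluated, the remaining three identities are immediate by symmetry of the torus coordinates and the product structure $v_p\otimes v'_p$, so no further essential work is needed.
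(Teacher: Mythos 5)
Your proposal is correct and follows essentially the same route as the paper: the $v_p(x_1)\geq 0$ part contributes $1$, the negative-valuation part is stratified by $v_p(x_1)=-n$ and summed as the geometric series $(1-p^{-1})\sum_{n\geq 1}p^{-2n}\bigl(\chi_p(p)\chi'_{1,p}(p^{-1})\bigr)^{n}$, giving the stated ratio, with the other three variables handled by the same symmetry appeal the paper itself makes. One bookkeeping remark: as literally written, your stratum-wise factors (measure $p^{n}(1-p^{-1})$ times a scalar of shape $p^{-2n}\bigl(\chi_p(p)\chi'_{1,p}(p^{-1})\bigr)^{n}$) would multiply to a ratio $p^{-n}$ rather than $p^{-2n}$; the scalar coming from $\delta(t(x_1))^{-1/2}$ and the torus action on $v_p\otimes v'_p$ must come out as $p^{-3n}\bigl(\chi_p(p)\chi'_{1,p}(p^{-1})\bigr)^{n}$, after which your displayed series and final answer agree exactly with the paper's computation.
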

\begin{proof}
We prove it only when $i=1$. Other cases are similar. 
$$
\phi'_p(\xi_p^{n_p}) (\underline{\mathbf{1}}) + \left(T_{< 0 }(x_1)\phi'_p)(\xi_p^{(n_p)})\right)(\underline{\mathbf{1}}) 
= 1 + \int\limits_{\substack{{x_i \in \mathbb{Q}_p}\\ v_p(x_1) < 0}} \delta_P(t(x_i))^{1/2} \,\,\left(\underline{\sigma}_p'(t(x_1)) \phi'_p\right)(\xi_p^{(n_p)})
(\underline{\mathbf{1}}) dx_1, 
$$	
the right hand side evaluates to
\begin{multline*}
1 + \sum_{M=1}^\infty  \int_{p^{-M}\mathbb{Z}_p^\times} \delta_P(t(x_i))^{1/2}\,\, \left(\underline{\sigma}_p'(t(x_1)) 
\phi'_p\right)(\xi_p^{(n_p)})(\underline{\mathbf{1}})dx_1 \\
= 1+ \sum_{M=1}^\infty \left(\frac{p-1}{p}\right) p^{2M} \chi_p(p^M) \chi'_{1,p}(p^{-M}) \phi'_p(\mathbf{1}_4)(\underline{\mathbf{1}}), 
\end{multline*}
further simplifying as
\begin{multline*}
1+ \left(\frac{p-1}{p}\right) \sum_{M=1}^\infty  (p^{2} \,\,\chi_p(p) \chi'_{1,p}(p^{-1}))^M  \\ 
= 1 + \left(\frac{p-1}{p}\right) \left(\frac{p^{2} \,\,\chi_p(p) \chi'_{1,p}(p^{-1})}{1-p^{2}\,\, \chi_p(p)\,\, \chi'_{1,p}(p^{-1})}\right) 
= \frac{1-p\,\,\chi_p(p) \chi'_{1,p}(p^{-1})}{1-p^{2} \,\,\chi_p(p) \,\,\chi'_{1,p}(p^{-1})}.
\end{multline*}
\end{proof}
The convergence is guaranteed here because we will be in the context of \cite{harder-raghuram}.

\subsubsection{Preliminaries on measure}
Fix the product measure $dx_1 dx_2 dx_3 dx_4$ on $U_P(\mathbb{Q}_p)$ normalized by ${\rm vol}(U_P(\mathbb{Z}_p)) = 1.$ 
Suppose $\underline{m} =(m_1,m_2) \in M_P(\mathbb{Q}_p)$ then for $f_p \in C^{\infty}(G_4(\mathbb{Q}_p))$
\begin{equation}
\int f_p\left(\underline{m} \left(\begin{smallmatrix}
		1 & & & \\
		0 & 1 & & \\
		x_1 & x_2 & 1 & \\
		x_3 & x_4 & 0 & 1
	\end{smallmatrix}\right) \underline{m}^{-1}\right) dx_1dx_2 dx_3 dx_4 = \delta_P(\underline{m})^{1/2} \int f_p\left(\left(\begin{smallmatrix}
		1 & & & \\
		0 & 1 & & \\
		x_1 & x_2 & 1 & \\
		x_3 & x_4 & 0 & 1
	\end{smallmatrix}\right)\right)dx_1dx_2 dx_3 dx_4, \label{eqn: conjuagation-by-levi}
\end{equation}
where $\delta_P(\underline{m}) = |\det(m_1)|^2 |\det(m_2)|^{-2}.$

\subsubsection{Some matrix identities}
\label{sec:matrix-identities}
Let us record some matrix identities in $\GL_4(\mathbb{Q}_p)$ which will be useful in Lem.\,\ref{lemma: main-lemma}.
$$
	\left(\begin{smallmatrix}
		\frac{1}{c} & 0 & 0 & 0 \\
		0 & 1 & 0 & 0 \\
		0 & 0 & 1 & 0 \\
		0 & 0 & 0 & c
	\end{smallmatrix}\right)^{-1} \left(\begin{smallmatrix}
		1 & 0 & 0 & \frac{1}{c} \\
		0 & 1 & 0 & 0 \\
		0 & 0 & 1 & 0 \\
		0 & 0 & 0 & 1
	\end{smallmatrix}\right)^{-1} \left(\begin{smallmatrix}
		1 & 0 & 0 & 0 \\
		0 & 1 & 0 & 0 \\
		a & b & 1 & 0 \\
		0 & d & 0 & 1
	\end{smallmatrix}\right)
	\left(\begin{smallmatrix}
		1 & 0 & 0 & \frac{1}{c} \\
		0 & 1 & 0 & 0 \\
		0 & 0 & 1 & 0 \\
		0 & 0 & 0 & 1
	\end{smallmatrix}\right)
	\left(\begin{smallmatrix}
		\frac{1}{c} & 0 & 0 & 0 \\
		0 & 1 & 0 & 0 \\
		0 & 0 & 1 & 0 \\
		0 & 0 & 0 & c
	\end{smallmatrix}\right) = 
	\left(\begin{smallmatrix}
		1 & -d & 0 & 0 \\
		0 & 1 & 0 & 0 \\
		\frac{a}{c} & b & 1 & a \\
		0 & \frac{d}{c} & 0 & 1
	\end{smallmatrix}\right). 
$$

$$
	\left(\begin{smallmatrix}
		1 & -d & 0 & 0 \\
		0 & 1 & 0 & 0 \\
		\frac{a}{c} & b & 1 & a \\
		0 & \frac{d}{c} & 0 & 1
	\end{smallmatrix}\right)
	=	\left(\begin{smallmatrix}
		1 & -d & 0 & 0 \\
		0 & 1 & 0 & 0 \\
		0 & 0 & 1 & a \\
		0 & 0 & 0 & 1
	\end{smallmatrix}\right) 
	\left(\begin{smallmatrix}
		1 & 0 & 0 & 0 \\
		0 & 1 & 0 & 0 \\
		\frac{a}{c} & b - \frac{a d}{c} & 1 & 0 \\
		0 & \frac{d}{c} & 0 & 1
	\end{smallmatrix}\right). 
$$

$$
	\left(\begin{smallmatrix}
		1 & 0 & 0 & 0 \\
		0 & \frac{1}{d} & 0 & 0 \\
		0 & 0 & 1 & 0 \\
		0 & 0 & 0 & d
	\end{smallmatrix}\right)^{-1} 
	\left(\begin{smallmatrix}
		1 & 0 & 0 & 0 \\
		0 & 1 & 0 & \frac{1}{d} \\
		0 & 0 & 1 & 0 \\
		0 & 0 & 0 & 1
	\end{smallmatrix}\right)^{-1}
	\left(\begin{smallmatrix}
		1 & 0 & 0 & 0 \\
		0 & 1 & 0 & 0 \\
		a & b & 1 & 0 \\
		c & 0 & 0 & 1
	\end{smallmatrix}\right) 
	\left(\begin{smallmatrix}
		1 & 0 & 0 & 0 \\
		0 & 1 & 0 & \frac{1}{d} \\
		0 & 0 & 1 & 0 \\
		0 & 0 & 0 & 1
	\end{smallmatrix}\right)
	\left(\begin{smallmatrix}
		1 & 0 & 0 & 0 \\
		0 & \frac{1}{d} & 0 & 0 \\
		0 & 0 & 1 & 0 \\
		0 & 0 & 0 & d
	\end{smallmatrix}\right) = \left(\begin{smallmatrix}
		1 & 0 & 0 & 0 \\
		-c & 1 & 0 & 0 \\
		a & \frac{b}{d} & 1 & b \\
		\frac{c}{d} & 0 & 0 & 1
	\end{smallmatrix}\right).
$$

$$
	\left(\begin{smallmatrix}
		1 & 0 & 0 & 0 \\
		-c & 1 & 0 & 0 \\
		a & \frac{b}{d} & 1 & b \\
		\frac{c}{d} & 0 & 0 & 1
	\end{smallmatrix}\right) = \left(\begin{smallmatrix}
		1 & 0 & 0 & 0 \\
		-c & 1 & 0 & 0 \\
		0 & 0 & 1 & b \\
		0 & 0 & 0 & 1
	\end{smallmatrix}\right)\left(\begin{smallmatrix}
		1 & 0 & 0 & 0 \\
		0 & 1 & 0 & 0 \\
		a - \frac{b c}{d} & \frac{b}{d} & 1 & 0 \\
		\frac{c}{d} & 0 & 0 & 1
	\end{smallmatrix}\right) 
$$

$$
	\left(\begin{smallmatrix}
		\frac{1}{a} & 0 & 0 & 0 \\
		0 & 1 & 0 & 0 \\
		0 & 0 & a & 0 \\
		0 & 0 & 0 & 1
	\end{smallmatrix}\right)^{-1}
	\left(\begin{smallmatrix}
		1 & 0 & \frac{1}{a} & 0 \\
		0 & 1 & 0 & 0 \\
		0 & 0 & 1 & 0 \\
		0 & 0 & 0 & 1
	\end{smallmatrix}\right)^{-1}
	\left(\begin{smallmatrix}
		1 & 0 & 0 & 0 \\
		0 & 1 & 0 & 0 \\
		0 & b & 1 & 0 \\
		c & d & 0 & 1
	\end{smallmatrix}\right) \\
	\left(\begin{smallmatrix}
		1 & 0 & \frac{1}{a} & 0 \\
		0 & 1 & 0 & 0 \\
		0 & 0 & 1 & 0 \\
		0 & 0 & 0 & 1
	\end{smallmatrix}\right)
	\left(\begin{smallmatrix}
		\frac{1}{a} & 0 & 0 & 0 \\
		0 & 1 & 0 & 0 \\
		0 & 0 & a & 0 \\
		0 & 0 & 0 & 1
	\end{smallmatrix}\right) = \left(\begin{smallmatrix}
		1 & -b & 0 & 0 \\
		0 & 1 & 0 & 0 \\
		0 & \frac{b}{a} & 1 & 0 \\
		\frac{c}{a} & d & c & 1
	\end{smallmatrix}\right) 
$$

$$
	\left(\begin{smallmatrix}
		1 & -b & 0 & 0 \\
		0 & 1 & 0 & 0 \\
		0 & \frac{b}{a} & 1 & 0 \\
		\frac{c}{a} & d & c & 1
	\end{smallmatrix}\right) =
	\left(\begin{smallmatrix}
		1 & -b & 0 & 0 \\
		0 & 1 & 0 & 0 \\
		0 & 0 & 1 & 0 \\
		0 & 0 & c & 1
	\end{smallmatrix}\right)\left(\begin{smallmatrix}
		1 & 0 & 0 & 0 \\
		0 & 1 & 0 & 0 \\
		0 & \frac{b}{a} & 1 & 0 \\
		\frac{c}{a} & -\frac{b c}{a} + d & 0 & 1
	\end{smallmatrix}\right). 
$$

\subsubsection{Some integrals within the unipotent radical of the Borel subgroup of $\GL_4$.}
\label{sec:unipotent-integral-identities}
Define the following matrices in $U_4(\mathbb{Q}_p)$, where $U_4$ is the unipotent radical of the upper triangular $B_4\subset G_4 = \GL_4/\mathbb{Q}.$
$$
	u(x_1) = 	\left(\begin{smallmatrix}
		1 & 0 & 1/{x_1} & 0 \\
		0 & 1 & 0 & 0 \\
		0 & 0 & 1 & 0 \\
		0 & 0 & 0 & 1
	\end{smallmatrix}\right), \ 
	u(x_2)= \left(\begin{smallmatrix}
		1 & 0 & 0 & 0 \\
		0 & 1 & {1}/{x_{2}} & 0 \\
		0 & 0 & 1 & 0 \\
		0 & 0 & 0 & 1
	\end{smallmatrix}\right),
	\quad u(x_3) =	\left(\begin{smallmatrix}
	1 & 0 & 0 & 1/x_3 \\
	0 & 1 & 0 & 0 \\
	0 & 0 & 1 & 0 \\
	0 & 0 & 0 & 1
	\end{smallmatrix}\right),\ 
	 u(x_4) = \left(\begin{smallmatrix} 1 & 0& 0& 0 \\ 0 & 1 &0 & 1/{x_4}\\ 0 & 0 & 1 &0 \\ 0 & 0 & 0 & 1 \end{smallmatrix}\right),
$$
where $x_1, x_2, x_3, x_4 \in \mathbb{Q}_p^\times$.

\begin{lemma}
\label{lemma: main-lemma}
	If $f_p \in C^\infty(G_4(\mathbb{Q}_p))$ with $f_p(ug) = f(g)$ for all $u \in U_4(\mathbb{Q}_p)$ we have the following three identities: 
		\begin{multline}
			\int f_p\left(t(x_3)^{-1} u(x_3)^{-1} \left(\begin{smallmatrix}
				1 & 0 & 0 & 0 \\
				0 & 1 & 0 & 0 \\
				x_1 & x_2  & 1 & 0 \\
				0 & x_4 & 0 & 1
			\end{smallmatrix}\right) u(x_3) t(x_3) \right) dx_1 dx_2 dx_4\\
			= \delta_P(t(x_3))^{1/2} \int f_p\left( \left(\begin{smallmatrix}
				1 & 0 & 0 & 0 \\
				0 & 1 & 0 & 0 \\
				x_1 & x_2  & 1 & 0 \\
				0 & x_4 & 0 & 1
			\end{smallmatrix}\right)\right) dx_1 dx_2 dx_4, 
			\label{eqn: eliminating-c}
		\end{multline}
		
		\begin{multline}
			\int f_p \left( t(x_4)^{-1}u(x_4)^{-1} \left(\begin{smallmatrix} 1 & & & \\ 0 & 1 & & \\ x_1 & x_2 & 1 & \\ 0 & 0 & 0 & 1 \end{smallmatrix}\right) 
			u(x_4)t(x_4)\right) dx_1 dx_2 \\
			= \delta_P(t(x_4))^{1/2} \int f_p\left( \left(\begin{smallmatrix} 1 & & & \\ 0 & 1 & & \\ x_1 & x_2 & 1 & \\ 0 & 0 & 0 & 1 \end{smallmatrix}\right)\right) dx_1 dx_2,
			\label{eqn: eliminating-d}
		\end{multline}
		
		\begin{multline}
			\int f_p \left( t(x_1)^{-1}u(x_1)^{-1} \left(\begin{smallmatrix} 1 & & & \\ 0 & 1 & & \\ 0 & x_2 & 1 & \\ 0 & 0 & 0 & 1\end{smallmatrix}\right)
			u(x_1) t(x_1) \right) dx_2 \\
			= \delta_P(t(x_4))^{1/2} \int f_p\left( \left(\begin{smallmatrix} 1 & & & \\ 0 & 1 & & \\ 0 & x_2 & 1 & \\ 0 & 0 & 0 & 1\end{smallmatrix}\right) \right) dx_2.
			\label{eqn: eliminating-a}
		\end{multline}
\end{lemma}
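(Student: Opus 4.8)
The plan is to prove all three identities by one and the same device. In each of the three cases write $n$ for the displayed element of $U_P^-(\mathbb Q_p)$ and $u(x_\bullet),t(x_\bullet)$ for the unipotent and diagonal factors appearing in the conjugation. First, conjugate $n$ past $u(x_\bullet)t(x_\bullet)$ using the explicit matrix identities collected in Section~\ref{sec:matrix-identities}: these rewrite $t(x_\bullet)^{-1}u(x_\bullet)^{-1}\,n\,u(x_\bullet)\,t(x_\bullet)$ as a product of an element of $U_4(\mathbb Q_p)$ with a unipotent of the same shape as $n$ but with some entries rescaled by powers of $x_\bullet$. Since $f_p$ is left $U_4(\mathbb Q_p)$-invariant, applying $f_p$ kills the $U_4$-factor, so the left-hand integrand equals $f_p$ evaluated at the rescaled unipotent. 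The identity then drops out of the change of variables absorbing that rescaling, whose Jacobian over $\mathbb Q_p$ is $\delta(t(x_\bullet))^{-1/2}$. Only $\mathbb Q_p$-linear algebra and the translation and dilation invariance of the additive Haar measure enter.

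In detail, for \eqref{eqn: eliminating-c} take the first pair of identities in Section~\ref{sec:matrix-identities} with $a=x_1$, $b=x_2$, $c=x_3$, $d=x_4$: the first gives
\[
t(x_3)^{-1}u(x_3)^{-1}\left(\begin{smallmatrix}1&0&0&0\\0&1&0&0\\x_1&x_2&1&0\\0&x_4&0&1\end{smallmatrix}\right)u(x_3)t(x_3)
=\left(\begin{smallmatrix}1&-x_4&0&0\\0&1&0&0\\x_1/x_3&x_2&1&x_1\\0&x_4/x_3&0&1\end{smallmatrix}\right),
\]
and the second factors the right-hand matrix as $\left(\begin{smallmatrix}1&-x_4&0&0\\0&1&0&0\\0&0&1&x_1\\0&0&0&1\end{smallmatrix}\right)\cdot\left(\begin{smallmatrix}1&0&0&0\\0&1&0&0\\x_1/x_3&x_2-x_1x_4/x_3&1&0\\0&x_4/x_3&0&1\end{smallmatrix}\right)$, whose first factor lies in $U_4(\mathbb Q_p)$. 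By left $U_4(\mathbb Q_p)$-invariance of $f_p$ the integrand on the left of \eqref{eqn: eliminating-c} equals $f_p$ evaluated at $\left(\begin{smallmatrix}1&0&0&0\\0&1&0&0\\x_1/x_3&x_2-x_1x_4/x_3&1&0\\0&x_4/x_3&0&1\end{smallmatrix}\right)$. The substitution $y_1=x_1/x_3$, $y_4=x_4/x_3$, $y_2=x_2-x_1x_4/x_3$ is a bijection of $\mathbb Q_p^3$ onto itself for each fixed $x_3$ with constant Jacobian $\det\big(\partial(y_1,y_2,y_4)/\partial(x_1,x_2,x_4)\big)=x_3^{-2}$, so that $dx_1\,dx_2\,dx_4=\delta(t(x_3))^{-1/2}\,dy_1\,dy_2\,dy_4$ and the left side of \eqref{eqn: eliminating-c} equals $\delta(t(x_3))^{-1/2}$ times its right side. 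Identities \eqref{eqn: eliminating-d} and \eqref{eqn: eliminating-a} follow identically from the second and third pairs of identities in Section~\ref{sec:matrix-identities}, now specialized to $c=0$ (and also $d=0$ in the last case): the conjugate once more splits off a $U_4(\mathbb Q_p)$-factor, the leftover unipotent has a single off-diagonal entry rescaled by $x_4^{-1}$, respectively $x_1^{-1}$, and integrating out that rescaling yields the factor on the right.

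The substantive effort is bookkeeping rather than any new idea. One must check that the zero patterns of the displayed unipotents are compatible with the shapes demanded by the identities of Section~\ref{sec:matrix-identities} (so that the $c=0$, resp.\ $c=d=0$, specializations are legitimate and the split-off factor is genuinely in $U_4(\mathbb Q_p)$), and that each substitution $(x_1,\dots)\mapsto(y_1,\dots)$ is a measure-theoretic bijection of the relevant $\mathbb Q_p^{\,j}$ with the stated constant Jacobian, so that the asserted equality holds at the level of integrands and hence of the (formal) integrals. As each matrix identity in Section~\ref{sec:matrix-identities} is a one-line product computation and the Jacobian of an affine-linear substitution over $\mathbb Q_p$ is immediate, the only realistic pitfall is an index or sign slip; I do not anticipate a genuine obstacle.
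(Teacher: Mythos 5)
Your handling of the first identity \eqref{eqn: eliminating-c} is correct and is essentially the paper's own argument: the same two matrix identities from Section~\ref{sec:matrix-identities}, left $U_4(\mathbb{Q}_p)$-invariance to discard the upper-triangular factor, and then the measure bookkeeping. The only real difference is cosmetic: you absorb the shear $x_2\mapsto x_2-x_1x_4/x_3$ and the two dilations $x_1\mapsto x_1/x_3$, $x_4\mapsto x_4/x_3$ into a single substitution with Jacobian $x_3^{-2}$, whereas the paper first translates away the shear and then appeals to its conjugation formula \eqref{eqn: conjuagation-by-levi}; both routes give the factor $|x_3|_p^{2}=\delta(t(x_3))^{-1/2}$, and your direct Jacobian computation is if anything cleaner, since \eqref{eqn: conjuagation-by-levi} is stated for the full four-variable unipotent radical while only three variables are integrated here.

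The gap is in your last claim, that \eqref{eqn: eliminating-d} and \eqref{eqn: eliminating-a} ``follow identically'' and that ``integrating out that rescaling yields the factor on the right.'' Carry out the computation you describe: for \eqref{eqn: eliminating-d}, with $c=0$ the conjugate splits as a $U_4$-factor times the unipotent with lower-left entries $(x_1,\,x_2/x_4)$, so only the single coordinate $x_2$ is dilated and the substitution produces the factor $|x_4|_p$; likewise \eqref{eqn: eliminating-a} produces $|x_1|_p$. But with the definition $\delta(\underline m)=|\det m_1|^2|\det m_2|^{-2}$ given after \eqref{eqn: conjuagation-by-levi}, one has $\delta(t(x_4))^{-1/2}=|x_4|_p^{2}$ and $\delta(t(x_1))^{-1/2}=|x_1|_p^{2}$, so what your argument actually yields is $\delta^{-1/4}$, not the $\delta^{-1/2}$ appearing in the statement (and the right-hand side of \eqref{eqn: eliminating-a} as printed involves $t(x_4)$, although only $t(x_1)$ occurs on the left, which is evidently a slip). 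Unlike the first identity, where two coordinates are rescaled and the exponents do match, in the second and third identities only one coordinate is rescaled, so the normalization changes; either the stated factors must be read as $|x_4|_p$ and $|x_1|_p$ (equivalently a different convention for $\delta$), or this must be flagged, but it cannot be asserted that the factor on the right simply drops out. You are not missing any idea of the paper --- the paper proves only \eqref{eqn: eliminating-c} and declares the others similar --- but your blanket assertion glosses over exactly the point where the power of $\delta$ differs, and a one-line Jacobian check would have exposed it.
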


\begin{proof}
To begin, the integral in \eqref{eqn: eliminating-c} simplifies as
$$
		\int f_p\left(\left(\begin{smallmatrix}
			1 & -x_4 & 0 & 0 \\
			0 & 1 & 0 & 0 \\
			\frac{x_1}{x_3} & x_2 & 1 & x_1 \\
			0 & \frac{x_4}{x_3} & 0 & 1
		\end{smallmatrix}\right) \right) dx_1 dx_2 dx_4 
\ = \ 
\int f_p\left( \left(\begin{smallmatrix}
			1 & -x_4 & 0 & 0 \\
			0 & 1 & 0 & 0 \\
			0 & 0 & 1 & x_1 \\
			0 & 0 & 0 & 1
		\end{smallmatrix}\right) 
		\left(\begin{smallmatrix}
			1 & 0 & 0 & 0 \\
			0 & 1 & 0 & 0 \\
			\frac{x_1}{x_3} & x_2 - \frac{x_1 x_4}{x_3} & 1 & 0 \\
			0 & \frac{x_4}{x_3} & 0 & 1
		\end{smallmatrix}\right)\right)dx_1 dx_2 dx_4. 
$$		
Due to the invariance of $f_p$ under $U_4(\mathbb{Q}_p)$, the integral on the right is the same as
$$
\int f_p \left(\left(\begin{smallmatrix}
			1 & 0 & 0 & 0 \\
			0 & 1 & 0 & 0 \\
			\frac{x_1}{x_3} & x_2 - \frac{x_1 x_4}{x_3} & 1 & 0 \\
			0 & \frac{x_4}{x_3} & 0 & 1
		\end{smallmatrix}\right) \right) dx_1 dx_2 dx_4 
\ = \ 
\int f_p\left( \left(\begin{smallmatrix}
			1 & 0 & 0 & 0 \\
			0 & 1 & 0 & 0 \\
			\frac{x_1}{x_3} & x_2  & 1 & 0 \\
			0 & \frac{x_4}{x_3} & 0 & 1
		\end{smallmatrix}\right) \right) dx_1 dx_2 dx_4 
$$
which evaluates to 
\begin{multline*}
\int f_p\left( \left(\begin{smallmatrix}
			\frac{1}{x_3} & 0 & 0 & 0 \\
			0 & 1 & 0 & 0 \\
			0 & 0 & 1 & 0 \\
			0 & 0 & 0 & x_3
		\end{smallmatrix}\right)^{-1}
		\left(\begin{smallmatrix}
			1 & 0 & 0 & 0 \\
			0 & 1 & 0 & 0 \\
			x_1 & x_2  & 1 & 0 \\
			0 & x_4 & 0 & 1
		\end{smallmatrix}\right)
		\left(\begin{smallmatrix}
			\frac{1}{x_3} & 0 & 0 & 0 \\
			0 & 1 & 0 & 0 \\
			0 & 0 & 1 & 0 \\
			0 & 0 & 0 & x_3
		\end{smallmatrix}\right)\right) dx_1 dx_2 dx_4 \\
\ = \ \delta_P\left(t(x_3)\right)^{1/2} \int f_p\left( \left(\begin{smallmatrix}
			1 & 0 & 0 & 0 \\
			0 & 1 & 0 & 0 \\
			x_1 & x_2  & 1 & 0 \\
			0 & x_4 & 0 & 1
		\end{smallmatrix}\right)\right) dx_1 dx_2 dx_4, 
\end{multline*}
the last equality is due to \eqref{eqn: conjuagation-by-levi}. This completes the verification of \eqref{eqn: eliminating-c}. 
The other integrals are similar. 
\end{proof}

\subsection{The evaluation}
The purpose of this section is to evaluate the constant
\begin{multline}
	I:=	\int \phi'_p\left(w_0^{-1} \left(\begin{smallmatrix}
		1 & 0 & x_{1} & x_{2} \\
		0 & 1 & x_{3} & x_{4} \\
		0 & 0 & 1 & 0 \\
		0 & 0 & 0 & 1
	\end{smallmatrix}\right) \xi_p^{(n_p)} \right)(\underline{\mathbf{1}}) dx_3 dx_1 dx_4 dx_2\\
	= \int \phi'_p\left(w_0^{-1} \left(\begin{smallmatrix}
		1 & 0 & x_{1} & x_{2} \\
		0 & 1 & x_{3} & x_{4} \\
		0 & 0 & 1 & 0 \\
		0 & 0 & 0 & 1
	\end{smallmatrix}\right) \right)(\underline{\mathbf{1}}) dx_3 dx_1 dx_4 dx_2.
\end{multline}
This is due to the choice we made $\xi_p^{(n_p)} = \mathbf{1}_4.$ Writing out the evaluation at $\underline{\mathbf{1}}$ makes the notation cumbersome. So we shall drop them and assume it implicitly.

\subsubsection{Eliminating the variable $x_3$}
Split the innermost integral as 
$$
\int_{x_3 \in \mathbb{Q}_p} = \int_{x_3 \in \mathbb{Q}_p:\,\, v_p(x_3) \geq  0} + \int_{x_3 \in \mathbb{Q}_p:\,\, v_p(x_3) < 0}.
$$
If $v_p(x_3) \geq 0$ then
$\left(\begin{smallmatrix}
		1 & 0 & 0 & 0 \\
		0 & 1 & x_{3} & 0 \\
		0 & 0 & 1 & 0 \\
		0 & 0 & 0 & 1
	\end{smallmatrix}\right) \in \mirp{n_p + n'_p},$ hence the first integral is 
\begin{multline}
	\int
	\limits_{x_3 \in \mathbb{Q}_p:\,\, v_p(x_3) \geq 0} \phi'_p\left(w_0^{-1} \left(\begin{smallmatrix}
		1 & 0 & x_{1} & x_{2} \\
		0 & 1 & x_{3} & x_{4} \\
		0 & 0 & 1 & 0 \\
		0 & 0 & 0 & 1
	\end{smallmatrix}\right) \right) dx_3 dx_1 dx_4 dx_2  \\ = \int
	\limits_{x_3 \in \mathbb{Q}_p:\,\, v_p(x_3) \geq 0} \phi'_p\left(w_0^{-1} \left(\begin{smallmatrix}
		1 & 0 & x_{1} & x_{2} \\
		0 & 1 & 0 & x_{4} \\
		0 & 0 & 1 & 0 \\
		0 & 0 & 0 & 1
	\end{smallmatrix}\right) \right) dx_3 dx_1 dx_4 dx_2 \\ = T_{\geq 0}(x_3)\biggl[\int \phi'_p\left(w_0^{-1} \left(\begin{smallmatrix}
	1 & 0 & x_{1} & x_{2} \\
	0 & 1 & 0 & x_{4} \\
	0 & 0 & 1 & 0 \\
	0 & 0 & 0 & 1
	\end{smallmatrix}\right)  \right) dx_1 dx_4 dx_2 \biggr].  \label{eqn: x3-geq-0}
\end{multline}
If $x_3 \in \mathbb{Q}_p^\times $ and $v_p(x_3) < 0$, then second integral is
$$
\int \phi'_p\left(w_0^{-1} \left(\begin{smallmatrix}
		1 & 0 & x_{1} & x_{2} \\
		0 & 1 & x_{3} & x_{4} \\
		0 & 0 & 1 & 0 \\
		0 & 0 & 0 & 1
	\end{smallmatrix}\right)  \right)dx_3 dx_1 dx_4 dx_2 
	= \int \phi'_p \left( \left(\begin{smallmatrix}
		1 & 0 & 0 & 0 \\
		0 & 1 & 0 & 0 \\
		x_{1} & x_{2} & 1 & 0 \\
		0 & x_{4} & 0 & 1
	\end{smallmatrix}\right) \left(\begin{smallmatrix}
		0 & 0 & 1 & 0 \\
		0 & 0 & 0 & 1 \\
		1 & 0 & 0 & 0 \\
		0 & 1 & x_3 & 0
	\end{smallmatrix}\right) \right) dx_3 dx_1 dx_4 dx_2.
$$
which can be written as
$$	
\int \phi'_p \biggl( \left(\begin{smallmatrix}
		1 & 0 & 0 & 0 \\
		0 & 1 & 0 & 0 \\
		x_{1} & x_{2} & 1 & 0 \\
		0 & x_{4} & 0 & 1
	\end{smallmatrix}\right) 
	\left(\begin{smallmatrix}
		1 & 0 & 0 & \frac{1}{x_{3}} \\
		0 & 1 & 0 & 0 \\
		0 & 0 & 1 & 0 \\
		0 & 0 & 0 & 1
	\end{smallmatrix}\right) 
	\left(\begin{smallmatrix}
		\frac{1}{x_{3}} & 0 & 0 & 0 \\
		0 & 1 & 0 & 0 \\
		0 & 0 & 1 & 0 \\
		0 & 0 & 0 & x_{3}
	\end{smallmatrix}\right)
	\left(\begin{smallmatrix}
		1 & 0 & 0 & 0 \\
		0 & 0 & 0 & 1 \\
		0 & 0 & 1 & 0 \\
		0 & 1 & 0 & 0
	\end{smallmatrix}\right)
	\left(\begin{smallmatrix}
		0 & -1 & 0 & 0 \\
		0 & \frac{1}{x_{3}} & 1 & 0 \\
		1 & 0 & 0 & 0 \\
		0 & 0 & 0 & 1
	\end{smallmatrix}\right)
	\biggr) dx_3 dx_4 dx_1 dx_2
$$
When $x_3 \in \mathbb{Q}_p^\times $ and $v_p(x_3) < 0 $, 
$
	\left(\begin{smallmatrix}
		0 & -1 & 0 & 0 \\
		0 & \frac{1}{x_{3}} & 1 & 0 \\
		1 & 0 & 0 & 0 \\
		0 & 0 & 0 & 1
	\end{smallmatrix}\right) \in \mirp{n_p + n_p'}.
$
Hence it simplifies as
$$
	\int\limits_{x_3 \in \mathbb{Q}_p^\times,\,\, v_p(x_3)< 0} \phi'_p \left( \left(\begin{smallmatrix}
		1 & 0 & 0 & 0 \\
		0 & 1 & 0 & 0 \\
		x_{1} & x_{2} & 1 & 0 \\
		0 & x_{4} & 0 & 1
	\end{smallmatrix}\right) u(x_3) t(x_3) \left(\begin{smallmatrix}
		1 & 0 & 0 & 0 \\
		0 & 0 & 0 & 1 \\
		0 & 0 & 1 & 0 \\
		0 & 1 & 0 & 0
	\end{smallmatrix}\right)\right) dx_3 dx_1 dx_4 dx_2,
$$
which can be written as
$$
\int\limits_{x_3 \in \mathbb{Q}_p^\times,\,\, v_p(x_3)< 0}  \underline{\sigma}_p'(t(x_3)) \cdot \phi'_p \biggl( t(x_3)^{-1} u(x_3^{-1}) \left(\begin{smallmatrix}
		1 & 0 & 0 & 0 \\
		0 & 1 & 0 & 0 \\
		x_{1} & x_{2} & 1 & 0 \\
		0 & x_{4} & 0 & 1
	\end{smallmatrix}\right) \\ u(x_3) t(x_3) \left(\begin{smallmatrix}
		1 & 0 & 0 & 0 \\
		0 & 0 & 0 & 1 \\
		0 & 0 & 1 & 0 \\
		0 & 1 & 0 & 0
	\end{smallmatrix}\right) \biggr)  dx_3 dx_4 dx_1 dx_2,
$$
which in turn becomes
$$	
 \int\limits_{x_3 \in \mathbb{Q}_p^\times,\,\, v_p(x_3) < 0}  \delta_P(t(x_3))^{1/2}\underline{\sigma}_p'(t(x_3)) \cdot \phi'_p \biggl( \left(\begin{smallmatrix}
		1 & 0 & 0 & 0 \\
		0 & 1 & 0 & 0 \\
		x_{1} & x_{2} & 1 & 0 \\
		0 & x_{4} & 0 & 1
	\end{smallmatrix}\right) 
	\left(\begin{smallmatrix}
		1 & 0 & 0 & 0 \\
		0 & 0 & 0 & 1 \\
		0 & 0 & 1 & 0 \\
		0 & 1 & 0 & 0
	\end{smallmatrix}\right) \biggr)  dx_3 dx_4 dx_1 dx_2.
$$
Using the notation in \eqref{eqn: less-than-M-operator} we can write the above integral as
\begin{multline*}
T_{<0}(t(x_3)) \biggl[\int\phi'_p\biggl( \left(\begin{smallmatrix}
		1 & 0 & 0 & 0 \\
		0 & 1 & 0 & 0 \\
		x_{1} & x_{2} & 1 & 0 \\
		0 & x_{4} & 0 & 1
	\end{smallmatrix}\right)  \left(\begin{smallmatrix}
		1 & 0 & 0 & 0 \\
		0 & 0 & 0 & 1 \\
		0 & 0 & 1 & 0 \\
		0 & 1 & 0 & 0
	\end{smallmatrix}\right)  \biggr) dx_1 dx_4 dx_2 \biggr] \\
\ = \ 
T_{<0}(t(x_3)) \biggl[ \int \phi'_p \biggl( w_0^{-1}\left(\begin{smallmatrix}
		1 & 0 & x_{1} & x_{2} \\
		0 & 1 & 0 & x_{4} \\
		0 & 0 & 1 & 0 \\
		0 & 0 & 0 & 1
	\end{smallmatrix}\right)  w_0\left(\begin{smallmatrix}
		1 & 0 & 0 & 0 \\
		0 & 0 & 0 & 1 \\
		0 & 0 & 1 & 0 \\
		0 & 1 & 0 & 0
	\end{smallmatrix}\right)  \biggr) dx_1 dx_4 dx_2 \biggr], \\
\ = \
T_{<0}(t(x_3))\biggl[ \int \phi'_p \biggl( w_0^{-1}\left(\begin{smallmatrix}
		1 & 0 & x_{1} & x_{2} \\
		0 & 1 & 0 & x_{4} \\
		0 & 0 & 1 & 0 \\
		0 & 0 & 0 & 1
	\end{smallmatrix}\right) \left(\begin{smallmatrix}
		0 & 0 & 1 & 0 \\
		0 & 1 & 0 & 0 \\
		1 & 0 & 0 & 0 \\
		0 & 0 & 0 & 1
	\end{smallmatrix}\right)  \biggr) dx_1 dx_4 dx_2 \biggr].
\end{multline*}
The last integral equals
\begin{equation}
	T_{<0}(t(x_3))\biggl[ \int \phi'_p \biggl( w_0^{-1}\left(\begin{smallmatrix}
		1 & 0 & x_{1} & x_{2} \\
		0 & 1 & 0 & x_{4} \\
		0 & 0 & 1 & 0 \\
		0 & 0 & 0 & 1
	\end{smallmatrix}\right)   \biggr) dx_1 dx_4 dx_2 \biggr] \label{eqn: x3-less-than-0}.
\end{equation}
Combine \eqref{eqn: x3-geq-0} and \eqref{eqn: x3-less-than-0} to get
\begin{equation}
	I = \left(T_{\geq 0}(x_3) + T_{< 0 }(x_3) \right)\biggl[ \int \phi'_p \biggl( w_0^{-1}\left(\begin{smallmatrix}
		1 & 0 & x_{1} & x_{2} \\
		0 & 1 & 0 & x_{4} \\
		0 & 0 & 1 & 0 \\
		0 & 0 & 0 & 1
	\end{smallmatrix}\right)  \biggr) dx_1 dx_4 dx_2\biggr]. \label{eqn: variable-x3-eliminated}
\end{equation}

\medskip
\subsubsection{Eliminating the variable $x_1$}
Carrying out a very similar computation with the the inner integral in \eqref{eqn: variable-x3-eliminated}, by splitting it up as 
$\int\limits_{x_1 \in \mathbb{Q}_p} = \int\limits_{x_1 \in \mathbb{Q}_p:\,\, v_p(x_1) \geq 0} + \int\limits_{x_1 \in \mathbb{Q}_p^:\,\, v_p(x_1) < 0 },$ 
 the integral $I$ equals
\begin{equation}
	I = (T_{\geq 0}(x_3) + T_{<0}(x_3))(T_{\geq 0}(x_1) + T_{<0}(x_1)) \biggl[ \int \phi'_p \biggl( w_0^{-1}\left(\begin{smallmatrix}
		1 & 0 & 0 & x_{2} \\
		0 & 1 & 0 & x_{4} \\
		0 & 0 & 1 & 0 \\
		0 & 0 & 0 & 1
	\end{smallmatrix}\right)   \biggr) dx_4 dx_2 \biggr]. \label{eqn: variable-x1-eliminated}
\end{equation}

\medskip
\subsubsection{Eliminating the variable $x_4$}
Once again, carrying out a very similar computation with the the inner integral in \eqref{eqn: variable-x1-eliminated}, we now get
\begin{multline}
	I = (T_{\geq 0}(x_3) + T_{<0}(x_3))(T_{\geq 0}(x_1) + T_{<0}(x_1)) \bigl[ \\ T_{\geq 0}(x_4) \biggl[ \int \phi'_p \biggl( w_0^{-1}\left(\begin{smallmatrix}
		1 & 0 & 0 & x_{2} \\
		0 & 1 & 0 & 0 \\
		0 & 0 & 1 & 0 \\
		0 & 0 & 0 & 1
	\end{smallmatrix}\right)   \biggr) dx_2 \biggr] + T_{< 0}(x_4) \biggl[ \int \phi'_p \biggl( w_0^{-1}\left(\begin{smallmatrix}
		1 & 0 & 0 & x_{2} \\
		0 & 1 & 0 & 0 \\
		0 & 0 & 1 & 0 \\
		0 & 0 & 0 & 1
	\end{smallmatrix}\right)  w_0  \biggr) dx_2 \biggr] \bigr]. \label{eqn: variable-x4-eliminated}
\end{multline}

\medskip
\subsubsection{Eliminating the variable $x_2$}
There are two integrals in \eqref{eqn: variable-x4-eliminated} to be evaluated now. 
The second integral in \eqref{eqn: variable-x4-eliminated} will turn out to be $0,$ i.e.,
\begin{equation}
	\int \phi_p \biggl( w_0^{-1}\left(\begin{smallmatrix}
		1 & 0 & 0 & x_2 \\
		0 & 1 & 0 & 0 \\
		0 & 0 & 1 & 0 \\
		0 & 0 & 0 & 1
	\end{smallmatrix}\right)w_0 \biggr)dx_2 = 0. \label{eqn: integral-with-x2-w0-is-zero}
\end{equation}
To see this, split the integral $\int\limits_{x_2 \in \mathbb{Q}_p} = \int\limits_{x_2 \in \mathbb{Q}_p:\,\, v_p(x_2) \geq 0} + \int\limits_{x_2 \in \mathbb{Q}_p:\,\, v_p(x_2) < 0},$ 
and observe that 
$$
	\int\limits_{x_2 \in \mathbb{Q}_p:\,\, v_p(x_2) \geq 0} \phi'_p\left(w_0^{-1} \left(\begin{smallmatrix}
		1 & 0 & 0 & x_{2} \\
		0 & 1 & 0 & 0 \\
		0 & 0 & 1 & 0 \\
		0 & 0 & 0 & 1
	\end{smallmatrix}\right) w_0  \right)  dx_2 = \int \phi'_p\left( \left(\begin{smallmatrix}
	1 & 0 & 0 & 0 \\
	0 & 1 & 0 & 0 \\
	0 & x_{2} & 1 & 0 \\
	0 & 0 & 0 & 1
	\end{smallmatrix}\right)  \right)  dx_2 = 0 
$$
because $\left(\begin{smallmatrix}
	1 & 0 & 0 & 0 \\
	0 & 1 & 0 & 0 \\
	0 & x_{2} & 1 & 0 \\
	0 & 0 & 0 & 1
\end{smallmatrix}\right) \in \mirp{n_p + n'_p} \subset P(\mathbb{Q}_p) \mathbf{1}_4 \mirp{n_p + n'_p}$ and 
$\phi'_p$ is \textit{not} supported in the coset $P(\mathbb{Q}_p) \mathbf{1}_4 \mirp{n_p + n'_p}.$ Similarly, 
\begin{equation*}
	\int\limits_{v(x_2) < 0} \phi'_p \left( \left(\begin{smallmatrix}
		0 & 0 & 1 & 0 \\
		0 & 0 & 0 & 1 \\
		1 & 0 & 0 & x_{2} \\
		0 & 1 & 0 & 0
	\end{smallmatrix}\right) w_0 \right) dx_2
	= \int \phi'_p \left( u(x_2) t(x_2)   \left(\begin{smallmatrix}
		1 & 0 & 0 & 0 \\
		0 & 0 & -1 & 0 \\
		0 & 1 & \frac{1}{x_{2}} & 0 \\
		0 & 0 & 0 & 1
	\end{smallmatrix}\right)\right) dx_2 
	= 0,
\end{equation*}
since 
$\left(\begin{smallmatrix}
		1 & 0 & 0 & 0 \\
		0 & 0 & -1 & 0 \\
		0 & 1 & \frac{1}{x_{2}} & 0 \\
		0 & 0 & 0 & 1
	\end{smallmatrix}\right) \in \mirp{n_p + n'_p}.$ 
This proves \eqref{eqn: integral-with-x2-w0-is-zero}. Next, we evaluate the first integral in \eqref{eqn: variable-x4-eliminated} 
\[\int \phi'_p \biggl( w_0^{-1}\left(\begin{smallmatrix}
	1 & 0 & 0 & x_{2} \\
	0 & 1 & 0 & 0 \\
	0 & 0 & 1 & 0 \\
	0 & 0 & 0 & 1
\end{smallmatrix}\right)   \biggr) dx_2,\]
By once again splitting the integral as 
$\int\limits_{x_2 \in \mathbb{Q}_p} = \int\limits_{x_2 \in \mathbb{Q}_p:\,\, v_p(x_2) \geq 0} + \int\limits_{x_2 \in \mathbb{Q}_p:\,\, v_p(x_2) < 0};$ the first of which equals
\[\int\limits_{x_2 \in \mathbb{Q}_p: \,\, v_p(x_2) \geq 0} \phi'_p \biggl( w_0^{-1}\left(\begin{smallmatrix}
	1 & 0 & 0 & x_{2} \\
	0 & 1 & 0 & 0 \\
	0 & 0 & 1 & 0 \\
	0 & 0 & 0 & 1
\end{smallmatrix}\right)   \biggr) dx_2 = \int\limits_{x_2 \in \mathbb{Q}_p: \,\, v_p(x_2) \geq 0} \phi'_p(w_0^{-1}) dx_2 = (v_p\otimes v'_p)(\underline{\mathbf{1}});\]
(the last equality is due to the facts, that $w_0^{-1}= Q\xi_{p}^{(n_p)}K_1K_2$ for some $Q\in P(\mathbb{Q}_p)$ and 
$K_1, K_2 \in \mirp{n_p + n'_p}$ and that $\phi'_p$ is supported in $ P(\mathbb{Q}_p) \xi_p^{(n_p)} \mirp{n_p + n'_p}$ 
and on the double coset it takes the value $v_p \otimes v'_p.$ Recall that in the beginning the evaluation at $\underline{\mathbf{1}}$ was assumed implicitly.) The latter integral over $v_p(x_2) < 0$ is
\begin{multline*}
\int\limits_{x_2 \in \mathbb{Q}_p: \,\, v_p(x_2) < 0} \phi'_p \biggl( w_0^{-1}\left(\begin{smallmatrix}
		1 & 0 & 0 & x_{2} \\
		0 & 1 & 0 & 0 \\
		0 & 0 & 1 & 0 \\
		0 & 0 & 0 & 1
	\end{smallmatrix}\right)    \biggr) dx_2 \\ 
= \ 
\int \phi'_p\left(\left(\begin{smallmatrix}
		1 & 0 & 0 & 0 \\
		0 & 1 & \frac{1}{x_{2}} & 0 \\
		0 & 0 & 1 & 0 \\
		0 & 0 & 0 & 1
	\end{smallmatrix}\right) \left(\begin{smallmatrix}
		1 & 0 & 0 & 0 \\
		0 & \frac{1}{x_{2}} & 0 & 0 \\
		0 & 0 & x_{2} & 0 \\
		0 & 0 & 0 & 1
	\end{smallmatrix}\right) \left(\begin{smallmatrix}
		0 & 1 & 0 & 0 \\
		1 & 0 & 0 & 0 \\
		0 & 0 & 0 & 1 \\
		0 & 0 & 1 & 0
	\end{smallmatrix}\right) \left(\begin{smallmatrix}
	-1 & 0 & 0 & 0 \\
	0 & 0 & 1 & 0 \\
	0 & 1 & 0 & 0 \\
	\frac{1}{x_{2}} & 0 & 0 & 1
	\end{smallmatrix}\right) \right) dx_2 
\ = \ 0.
\end{multline*}
Again due to the fact $\phi'_p$ is not supported in $P(\mathbb{Q}_p) \mathbf{1}_4 \mirp{n_p + n'_p}.$
Therefore,
\[\int \phi'_p \biggl( w_0^{-1}\left(\begin{smallmatrix}
	1 & 0 & 0 & x_{2} \\
	0 & 1 & 0 & 0 \\
	0 & 0 & 1 & 0 \\
	0 & 0 & 0 & 1
\end{smallmatrix}\right)    \biggr) dx_2 = T_{\geq 0} (x_2)\phi'_p(\xi_p^{(n_p)}).\]

\medskip
\subsubsection{Final evaluation}
Using the above calculations, \eqref{eqn: variable-x4-eliminated} reduces to
\begin{multline*}
I = (T_{\geq 0}(x_3) + T_{<0}(x_3))(T_{\geq 0}(x_1) + T_{<0}(x_1)) \\ 
\left(T_{\geq 0}(x_4) \biggl[ \int \phi_p \biggl( w_0^{-1}
	\left(\begin{smallmatrix}
		1 & 0 & 0 & x_{2} \\
		0 & 1 & 0 & 0 \\
		0 & 0 & 1 & 0 \\
		0 & 0 & 0 & 1
	\end{smallmatrix}\right)   \biggr) dx_2 \biggr] + T_{< 0}(x_4) \biggl[ \int \phi'_p \biggl( w_0^{-1}
	\left(\begin{smallmatrix}
		1 & 0 & 0 & x_{2} \\
		0 & 1 & 0 & 0 \\
		0 & 0 & 1 & 0 \\
		0 & 0 & 0 & 1
	\end{smallmatrix}\right)  w_0  \biggr) dx_2 \biggr] \right)
\end{multline*}
which reduces to 
$$
(T_{\geq 0}(x_3) + T_{<0}(x_3))(T_{\geq 0}(x_1) + T_{<0}(x_1))
\left(T_{\geq 0}(x_4) \biggl[ \int \phi'_p \biggl( w_0^{-1}
	\left(\begin{smallmatrix}
		1 & 0 & 0 & x_{2} \\
		0 & 1 & 0 & 0 \\
		0 & 0 & 1 & 0 \\
		0 & 0 & 0 & 1
	\end{smallmatrix}\right)   \biggr) dx_2 + 0 \biggr] \right)
$$
which in turn simplifies as
$$
(T_{\geq 0}(x_3) + T_{<0}(x_3))(T_{\geq 0}(x_1) + T_{<0}(x_1))
\left(T_{\geq 0}(x_4) \biggl[ T_{\geq 0}(x_2) \phi'_p(\xi_p^{(n_p)})(\underline{\mathbf{1}})\biggr] \right).
$$
Using Lem.\,\ref{lemma: ratios-of-l-values} for $x_2, x_1$ and $x_3$, we get:
$$
I 
\ = \ 
	\left(\dfrac{1-p \,\,\chi_p(p) \chi'_{1,p}(p^{-1})}{1-p^{2} \,\,\chi_p(p) \,\,\chi'_{1,p}(p^{-1})}\right)
	\left( \dfrac{1-p \,\,\chi_p(p) \,\,\chi'_{2,p}(p^{-1})}{1-p^{2} \,\,\chi_p(p) \,\,\chi'_{2,p}(p^{-1})} \right) 
\ = \ 
\frac{L_p(k'-3, h\times h')}{L_p(k'-2, h \times h')}.
$$
The last equality is due to \eqref{eqn: relation-with-fourier-coefficient}. 

Similar computation yields the results for the other possible twists as well.

\bigskip

\end{document}